\tikzset{
    state/.style={
           rectangle,
           rounded corners,
           draw=black, very thick,
           minimum height=2em,
           inner sep=2pt,
           text centered,
           },
}
\tikzset{global scale/.style={
    scale=#1,
    every node/.append style={scale=#1}
  }
}
\definecolor{darkgreen}{rgb}{0.00,0.50,0.25}
\definecolor{purple}{rgb}{0.54,0.17,0.89}
\definecolor{purple1}{rgb}{1.00,0.00,1.00}
\titleformat*{\subsection}{\bfseries}
\theoremstyle{plain}                       % default
\newtheorem{lemma}{Lemma}[section]
\newtheorem{theorem}[lemma]{Theorem}
\newtheorem{corollary}[lemma]{Corollary}
\newtheorem{remark}[lemma]{Remark}
\newtheorem{proposition}[lemma]{Proposition}
\theoremstyle{remark}
\numberwithin{equation}{section}
\def\Xint#1{\mathchoice
  {\XXint\displaystyle\textstyle{#1}}%
  {\XXint\textstyle\scriptstyle{#1}}%
  {\XXint\scriptstyle\scriptscriptstyle{#1}}%
  {\XXint\scriptscriptstyle\scriptscriptstyle{#1}}%
  \!\int}
\def\XXint#1#2#3{{\setbox0=\hbox{$#1{#2#3}{\int}$}
  \vcenter{\hbox{$#2#3$}}\kern-.5\wd0}}
\def\dashint{\Xint-}
\DeclareMathOperator*{\esssup}{ess\,sup}
\begin{document}
\allowdisplaybreaks
%\pagestyle{myheadings}
%\markboth{$~$ \hfill {\rm Q. Xu,} \hfill $~$}
%{$~$ \hfill {\rm  } \hfill$~$}
%\author{Li Wang
%%\thanks{Email: lwang10@lzu.edu.cn.}
%\quad Qiang Xu
%\thanks{Corresponding author}
%\thanks{Email: xuqiang09@lzu.edu.cn.}
%\quad Peihao Zhao
%%\thanks{Email: phzhao@lzu.edu.cn.}
%\\
%School of Mathematics and Statistics, Lanzhou University, \\
%Gansu, 730000, PR China.
%\vspace{0.5cm}
%}

%\author[a]{Claudia Raithel\thanks{Email: claudia.raithel@tuwien.ac.at.}}

\author[a]{Li Wang \thanks{Email:li$\_$wang@lzu.edu.cn}
}

\author[a]{Qiang Xu \thanks{Email: xuq@lzu.edu.cn}
}

%\author[a]{Zhifei Zhang\thanks{Email: zfzhang@math.pku.edu.cn.}}
%

%\affil[a]{TU Wien, Wiedner Hauptstrasse 8-10, 1040 Wien, Austria
%\authorcr Email:wangli@math.pku.edu.cn %\authorcr is for a new line.
%}

\affil[a]{School of Mathematics and Statistics, Lanzhou University, Lanzhou, 730000, China.}

%

%\author{Weiren Zhao
%\thanks{Email: xuqiang@math.pku.edu.cn.}
%\thanks{This work was supported by the National Natural Science Foundation of China (Grant No. 11471147).}

\title{
\textbf{Boundary estimates and Green function's expansion for
elliptic systems with random coefficients}
}

\maketitle
\begin{abstract}
We investigate boundary estimates for elliptic operators with stationary random coefficients exhibiting integrable correlations, arising from  stochastic homogenization theory.
As practical applications, we establish decay estimates for Green functions in both quenched and annealed senses. Furthermore, we derive notable annealed estimates for boundary correctors, including central limit theorem (CLT)-scaling type estimates.
By extending the lemma of Bella, Giunti, and Otto \cite{Bella-Giunti-Otto17} to accommodate boundary conditions, we ultimately obtain error estimates for the two-scale expansion of Green functions at the level of mixed derivatives, thereby establishing connections to other related fields.

\smallskip
\textbf{Key words: stochastic homogenization, boundary correctors, boundary estimates, Green function's expansion}
\end{abstract}

\tableofcontents

\section{Introduction}

\subsection{Motivation and assumptions}

\noindent
From the perspective of quantitative stochastic homogenization theory,
the error estimate for two-scale expansion of Green functions plays a pivotal role in various applied research areas, including boundary layer problems \cite{Armstrong-Kuusi-Mourrat-Prange17,Gerard-Masmoudi12,Shen-Zhuge18},
homogenization error estimates \cite{Avellaneda-Lin91,Kenig-Lin-Shen14}, systematic error estimates in the Representative Volume Element (RVE) method \cite{Clozeau-Josien-Otto-Xu,Gloria-Neukamm-Otto15}, and
some artificial boundary condition problems \cite{Lu-Otto21,Lu-Otto-Wang21}.
For this application-oriented goal, the present work is
devoted to establishing the error estimates of two-scale expansion of Green functions at the level of the mixed derivatives.
To obtain an almost sharp decay estimate, we resort to boundary correctors, which in turn requires some annealed boundary estimates, and this requirement constitutes another focus of the paper.

%Unlike the framework based upon quantified ergodicity under a finite range assumption, established by S. Armstrong and C. Smart \cite{Armstrong-Smart16} and culminated with the monograph \cite{Armstrong-Kuusi-Mourrat19}, we adopt the Gloria-Neukamm-Otto's strategy \cite{Gloria-Neukamm-Otto20,Gloria-Neukamm-Otto21}, which seems to be applicable to more probabilistic models addressed in \cite{Torquato02}.
By redefining the \emph{minimal radius}, it is possible to treat
Gloria-Neukamm-Otto's results on the correctors as the ``input'', and then apply the deterministic arguments for boundary Lipschitz estimates
(developed by Shen \cite{Shen16,Shen18} originally inspired by Armstrong and Smart \cite{Armstrong-Smart16}) to the random setting.
This approach’s advantage lies in reducing the study of boundary correctors to annealed regularity estimates (with the aid of Green functions), rather than directly repeating the sensitive estimates established for correctors in \cite{Gloria-Otto11,Gloria-Neukamm-Otto20,Gloria-Neukamm-Otto21,Josien-Otto22}. Nevertheless, sensitive estimates remain indispensable for CLT-scaling type estimates.

%In a certain sense,
%this paper shows that some ideas and methods developed from periodic homogenization can also benefit the quantitative stochastic homogenization theory.

Precisely, we consider a family of elliptic operators in divergence form
\begin{equation*}
\mathcal{L}_\varepsilon :=-\nabla\cdot a^\varepsilon\nabla = -\nabla\cdot a(\cdot/\varepsilon)\nabla,\qquad \varepsilon>0,
\end{equation*}
where $a$ satisfies $\lambda$-uniformly elliptic conditions, i.e., for some
$\lambda\in(0,1)$ there holds
\begin{equation}\label{a:1}
\lambda|\xi|^2 \leq  \xi\cdot a(x)\xi \leq \lambda^{-1}|\xi|^2
\qquad
\forall\xi\in\mathbb{R}^{d}\text{~and~}x\in\mathbb{R}^d.
\end{equation}
Although the subject of this paper is elliptic systems, we shall adopt scalar notation and terminology throughout for the sake of expository clarity.

One can introduce the configuration space that is the set
of coefficient fields satisfying $\eqref{a:1}$, equipped with a
probability measure, referred to as an ensemble with
the expectation denoted by $\langle\cdot\rangle$.
This ensemble is assumed \emph{stationary} (i.e., for all shift vectors $z\in\mathbb{R}^d$, $a(\cdot+z)$ and $a(\cdot)$ have the same law under $\langle\cdot\rangle$), and additionally satisfies the \emph{spectral gap condition}
\begin{equation}\label{a:2}
 \big\langle (F-\langle F\rangle)^2 \big\rangle \leq \lambda_1
 \Big\langle\int_{\mathbb{R}^d}\Big(\dashint_{B_1(x)}\big|\frac{\partial F}{\partial a}\big|\Big)^2 dx\Big\rangle,
\end{equation}
where the random tensor field $\frac{\partial F}{\partial a}$ (depending on $(a,x)$) is the functional derivative of $F$ with respect to $a$, defined via
\begin{equation*}\label{functional}
      \delta F:= \lim_{\varepsilon\to 0} \frac{F(a+\varepsilon\delta a)
       -F(a)}{\varepsilon}
       =\int_{\mathbb{R}^d} \frac{\partial F(a)}{\partial a_{ij}(x)}
       (\delta a)_{ij}(x)dx,
\end{equation*}
and the Einstein summation convention for repeated indices is
applied throughout.
Moreover, the admissible coefficients satisfy the \emph{local smoothness} assumption, i.e., there exists $\bar{\sigma}\in(0,1)$ such that for any $0<\sigma<\bar{\sigma}$
and $1\leq \gamma<\infty$ there holds
\begin{equation}\label{a:3}
 \big\langle  \|a\|_{C^{0,\sigma}(B_1)}^\gamma
 \big\rangle \leq \lambda_2(d,\lambda,\sigma,\gamma).
\end{equation}

Let $B_r(x)$ denote the open ball centered at $x$ of radius $r$, abbreviated as $B$,
and we omit $x$ when $x=0$. For this shorthand,
let $x_B$ and $r_B$ represent the center and radius of $B$, respectively.
Let $\Omega\subset\mathbb{R}^d$ with $d\geq 2$ be a bounded domain, and
$R_0\geq 8$ represents the diameter of $\Omega$ throughout the paper
(where $R_0\geq 8$ is not essential but suited for later discussions).
The notation ``$\lesssim_{\lambda,d,\cdots}$'' means that the multiplicative constant depends on $\lambda,d,\cdots$ (see Subsection
$\ref{notation}$ for the detailed explanations and other notations used in the paper).

%where the symbol ``$\lesssim_{\lambda,\lambda_1,\cdots,\lambda_n}$'' reads ``$\leq C(\lambda,\lambda_1,\cdots,\lambda_n)$, and
%the definition of the functional derivative of $F$ with respect to $a$ can be found in $\eqref{functional}$ (or see \cite[pp.15]{Josien-Otto22}).

%Throughout the paper, let $\Omega\subset\mathbb{R}^d$ with $d\geq 2$
%be a bounded domain, and $R_0$ represents the diameter of $\Omega$.
%Let $B_r(x)$ be an open ball with radius $r$ and centered at $x$.
%To plainly describe a local boundary estimate, it is fine to assume that there is a new coordinate system in $\mathbb{R}^d$ obtained
%from the standard Euclidean coordinate system by translation and rotation so that
%\begin{equation*}
% B_r(0)\cap \Omega
% = B_r(0)\cap\big\{(x^\prime,x_d)\in\mathbb{R}^d:x^\prime\in\mathbb{R}^{d-1}
% ~\text{and}~ x_d>\psi(x^\prime)\big\},
%\end{equation*}
%where $\psi:\mathbb{R}^{d-1}\to\mathbb{R}$ is a boundary function, satisfying
%\begin{equation}\label{}
%\left\{\begin{aligned}
%&\psi(0) = 0,~\text{supp}(\psi)\subset\{x'\in\mathbb{R}^{d-1}:|x'|\leq 1\},
%\|\nabla\psi\|_{L^\infty(\mathbb{R}^{d-1})}\leq M_0;\\
%&|\nabla\psi(x')-\nabla\psi(y')|\leq \omega_{\text{B}}(|x'-y'|)
%\quad \forall x',y'\in\mathbb{R}^{d-1}.
%\end{aligned}\right.
%\end{equation}

\subsection{Main results}

\begin{theorem}[boundary regularity estimates at large scales]\label{thm:1}
Let $\Omega\subset\mathbb{R}^d$ with $d\geq 2$ be a bounded domain, and
$\varepsilon\in(0,1]$.
Suppose that $\langle\cdot\rangle$ satisfies the spectral gap condition $\eqref{a:2}$ and the
admissible coefficients satisfy
the regularity condition $\eqref{a:3}$. Let $u_\varepsilon$ be a weak solution to $\mathcal{L}_\varepsilon(u_\varepsilon) = 0$ in
$B_4\cap\Omega$ and $u_\varepsilon =0$ on $B_4\cap\partial\Omega$
with $\partial\Omega\ni\{0\}$.
We can obtain the following boundary regularity estimates:
\begin{itemize}
  \item [\emph{(i)}] \emph{H\"older's estimate}. If $\Omega$ is
    the bounded $C^1$ region and $\alpha\in(0,1)$. Then, there exists a stationary random field $\chi_*\geq 1$, satisfying  $\langle\chi_*^\beta\rangle\lesssim_{\lambda,\lambda_1,\lambda_2,d,\beta} 1$ for any $\beta\in[1,\infty)$, such that for all    $\varepsilon\leq\big(\inf_{x\in\mathbb{R}^d}
      \inf_{a}\chi_{*}(a,x)\big)^{-1}$, the following estimate
\begin{equation}\label{pri:hol}
\Big(\dashint_{B_r\cap\Omega}|\nabla u_\varepsilon|^2
\Big)^{1/2}
\lesssim_{\lambda,d,\partial\Omega,\alpha} r^{\alpha-1}
\Big(\dashint_{B_1\cap\Omega}|\nabla u_\varepsilon|^2
\Big)^{1/2}
\end{equation}
holds for any $r\in[\varepsilon\chi_*, 1]$.
  \item [\emph{(ii)}] \emph{Lipschitz estimate}. If
  $\Omega$ is the bounded $C^{1,\tau}$ region with $\tau\in(0,1]$.
  Then, there exists a stationary random field $\chi_{**}\geq 1$ with $\langle\chi_{**}^\beta\rangle\lesssim_{\lambda,\lambda_1,\lambda_2,d,\beta} 1$ for any $\beta\in[1,\infty)$, such that for all $\varepsilon\leq
  \big(\inf_{x\in\mathbb{R}^d}\inf_a\chi_{**}(a,x)\big)^{-1}$, there holds
\begin{equation}\label{pri:lip}
\Big(\dashint_{B_r\cap\Omega}|\nabla u_\varepsilon|^2
\Big)^{1/2}
\lesssim_{\lambda,d,\partial\Omega,\tau}
\Big(\dashint_{B_1\cap\Omega}|\nabla u_\varepsilon|^2
\Big)^{1/2}
\end{equation}
for any $r\in[\varepsilon\chi_{**},1]$.
\end{itemize}
\end{theorem}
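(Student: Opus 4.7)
The plan is to import the quantitative corrector estimates of Gloria-Neukamm-Otto as a black box and run a deterministic excess-decay scheme in the spirit of Shen (itself inspired by Armstrong-Smart). The stochastic content of the theorem is entirely captured by a \emph{minimal radius} at which the sublinearity of the correctors $(\phi,\sigma)$ turns on; once this radius is controlled, the remainder of the argument is deterministic. Concretely, I would define the stationary field
\[
\chi_*(a,x):=\inf\Bigl\{R\geq 1:\ \sup_{r\geq R}\frac{1}{r^2}\dashint_{B_r(x)}\bigl(|\phi|^2+|\sigma|^2\bigr)\leq \delta_0\Bigr\},
\]
with a small deterministic threshold $\delta_0=\delta_0(d,\lambda,\alpha)$ to be fixed later. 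Stationarity of $(\phi,\sigma)$ makes $\chi_*$ stationary, and the stretched-exponential bounds for $(\phi,\sigma)$ yield $\langle\chi_*^\beta\rangle\lesssim 1$ for every $\beta<\infty$ via a standard Chebyshev/layer-cake computation. For the Lipschitz case I would upgrade to $\chi_{**}$ by including in the threshold a Campanato-type seminorm of $(\phi,\sigma)$ compatible with the $C^{1,\tau}$ regularity of the homogenized boundary datum.

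Next, I would carry out a one-step excess-decay estimate. Fix $r\in[\varepsilon\chi_*,1]$, flatten $\partial\Omega\cap B_r$, and let $u_0$ solve $\mathcal{L}_0 u_0=0$ in $B_r\cap\Omega$ with $u_0=u_\varepsilon$ on $\partial(B_r\cap\Omega)$. Testing the equation for the modified two-scale error $u_\varepsilon-u_0-\varepsilon\eta\,\phi_j(\cdot/\varepsilon)\,\partial_j u_0$, with $\eta$ a cutoff supported in the interior and vanishing in an $\varepsilon\chi_*$-thin boundary layer, produces an $H^1$-type inequality
\[
\Bigl(\dashint_{B_{r/2}\cap\Omega}|\nabla(u_\varepsilon-u_0)|^2\Bigr)^{1/2}\leq \omega\!\Bigl(\frac{\varepsilon\chi_*}{r}\Bigr)\Bigl(\dashint_{B_r\cap\Omega}|\nabla u_\varepsilon|^2\Bigr)^{1/2},
\]
with $\omega(s)\to 0$ as $s\to 0$. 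Combined with boundary $C^{1,\alpha}$ (resp.\ $C^{1,\tau}$) estimates for the constant-coefficient operator $\mathcal{L}_0$ on $C^1$ (resp.\ $C^{1,\tau}$) domains, this gives the Campanato-type inequality
\[
\Phi(\theta r)\leq \Bigl(\theta^{2\alpha}+C\,\omega\!\bigl(\tfrac{\varepsilon\chi_*}{r}\bigr)^2\Bigr)\Phi(r),\qquad \Phi(r):=\dashint_{B_r\cap\Omega}|\nabla u_\varepsilon|^2.
\]
Choosing $\theta$ small and then $\delta_0$ small absorbs the perturbation, and iterating over the dyadic scales between $\varepsilon\chi_*$ and $1$ yields the algebraic decay \eqref{pri:hol}. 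Estimate \eqref{pri:lip} follows by the same scheme, with $\alpha$ replaced by $\tau$ and $\chi_*$ replaced by $\chi_{**}$; the loss coming from $C^{1,\tau}$ boundary regularity of $\mathcal{L}_0$-solutions is again absorbed by shrinking the threshold in the definition of $\chi_{**}$.

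The main obstacle is the boundary handling of the two-scale comparison. The natural two-scale function $\varepsilon\phi_j(\cdot/\varepsilon)\partial_j u_0$ does not vanish on $\partial\Omega$, so its use near the boundary requires a cutoff, and the thin layer on which $\eta$ transitions produces an error of order $(\varepsilon\chi_*/r)^{1/2}$ (after a boundary Caccioppoli on the layer) rather than the interior rate $\varepsilon\chi_*/r$. Keeping this layer contribution subdominant compared to $\theta^\alpha$ (resp.\ $\theta^\tau$), uniformly in the local geometry of $\partial\Omega$, requires a careful choice of the flattening map and of the cutoff, together with a boundary Caccioppoli inequality for $u_\varepsilon-u_0$. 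Once this single geometric-analytic step is in place, the Campanato iteration and the passage from dyadic to continuous scales $r\in[\varepsilon\chi_*,1]$ are routine, and the moment bounds on $\chi_*,\chi_{**}$ promoted from the GNO corrector theory close the statement.
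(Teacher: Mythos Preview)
Your overall strategy---import the Gloria--Neukamm--Otto corrector bounds to define a minimal radius, then run a deterministic Campanato iteration \`a la Shen/Armstrong--Smart---matches the paper's. Two points of divergence are worth noting.

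First, the paper sidesteps your boundary-layer worry by using \emph{harmonic replacement} rather than a cutoff two-scale expansion: one solves $\nabla\cdot\bar a\nabla\bar u_r=0$ in $D_{2r}$ with $\bar u_r=u_\varepsilon$ on $\partial D_{2r}$, and proves (via the two-scale expansion applied \emph{to $\bar u_r$}, which has matching boundary data) an $L^2$ approximation bound of the form $\dashint_{D_r}|u_\varepsilon-\bar u_r|^2\leq \nu^2\dashint_{D_{2r}}|u_\varepsilon|^2$ for $r\geq\varepsilon\chi_*$. No thin-layer contribution ever appears, so the concern you flag as the ``main obstacle'' simply does not arise. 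Your cutoff scheme can be made to work, but it is the harder road.

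Second---and this is a genuine gap---your treatment of the Lipschitz case (ii) is too glib. Iterating $\Phi(\theta r)\leq(\theta^{2\alpha}+C\omega^2)\Phi(r)$ with $\Phi(r)=\dashint_{D_r}|\nabla u_\varepsilon|^2$ cannot give a Lipschitz bound by ``replacing $\alpha$ by $\tau$'': the constant-coefficient boundary estimate does not produce $\Phi_0(\theta r)\leq\theta^2\Phi_0(r)$, only decay of the \emph{tilt-excess} $G(r;v):=r^{-1}\inf_M(\dashint_{D_r}|v-Mx|^2)^{1/2}$. The paper therefore iterates on $G(r;u_\varepsilon)$, tracks the drift of the optimal slope $|M_r|$ via $\sup_{r\leq s,t\leq 2r}||M_s|-|M_t||\lesssim G(2r;u_\varepsilon)$, and---crucially---needs a \emph{quantitative} approximation rate $\dashint_{D_r}|u_\varepsilon-\bar u_r|^2\lesssim c_*(\varepsilon/r)^{2\sigma_0}\dashint_{D_{2r}}|u_\varepsilon|^2$ (not merely $\omega(\varepsilon\chi_*/r)\to 0$), so that the error is integrable in $dr/r$ over $[\varepsilon\chi_{**},1]$. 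The Lipschitz radius is then $\chi_{**}:=(4C_\vartheta c_*)^{1/\sigma_0}$, with $c_*$ a random constant built from $L^{2p}$ oscillations of $(\phi,\sigma)$. Without this upgrade---excess instead of energy, quantitative instead of qualitative error, integral instead of dyadic iteration---your scheme stalls at H\"older.
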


Once boundary regularity estimates are established,
one can construct Green functions (see e.g. \cite{Dong-Kim09,Hofmann-Kim07,Taylor-Kim-Brow13}), denoted by $G_\varepsilon(x,\cdot)$, as follows:
\begin{equation}\label{pde:2}
\left\{\begin{aligned}
\mathcal{L}_\varepsilon^{*}(G_\varepsilon(x,\cdot))
&= \delta_x &\quad&\text{in}\quad\Omega;\\
G_\varepsilon(x,\cdot)&=0
&\quad&\text{on}\quad\partial\Omega,
\end{aligned}\right.
\end{equation}
where $\mathcal{L}_\varepsilon^{*}:=-\nabla\cdot a^{*}(\cdot/\varepsilon)\nabla$ is the adjoint operator of  $\mathcal{L}_\varepsilon$, and $\delta_x$ represents the Dirac delta function with pole at $x\in\Omega$.
Let $f\in C_0^\infty(\Omega;\mathbb{R}^d)$ and $u_\varepsilon$ be associated with $\mathcal{L}_\varepsilon(u_\varepsilon)=\nabla\cdot f$ in $\Omega$ and $u_\varepsilon = 0$ on $\partial\Omega$.
Then $u_\varepsilon$ can be plainly represented by
\begin{equation}\label{eq:2.1}
\begin{aligned}
u_\varepsilon(x)
=-\int_{\Omega} f(y)\cdot\nabla G_\varepsilon(x,y)dy.
\end{aligned}
\end{equation}

We now introduce
the so-called Dirichlet boundary correctors, i.e., for any  $i\in\{1,\cdots,d\}$, consider the following equations:
\begin{equation}\label{pde:10}
\left\{\begin{aligned}
\mathcal{L}_\varepsilon(\tilde{\Phi}_{\varepsilon,i})
 &= \nabla\cdot a^\varepsilon e_i
 \quad &\text{in}&\quad \Omega;\\
 \tilde{\Phi}_{\varepsilon,i} &= 0
 \quad &\text{on}&\quad \partial\Omega,
\end{aligned}\right.
\end{equation}
where $e_i$ is the canonical basis of $\mathbb{R}^d$.
In light of  the right-hand side of $\eqref{pde:10}$, it is natural to recall the definition of the corrector (denoted by $\phi$), i.e.,
\begin{equation}\label{corrector}
 \nabla\cdot a(\nabla\phi_i +e_i) = 0
 \quad\text{in}\quad\mathbb{R}^d,
\end{equation}
anchored at the zero point.
It is known from Gloria-Neukamm-Otto's strategy that, under the hypotheses $\eqref{a:2}$ and $\eqref{a:3}$, for any $p\in[1,\infty)$, one can derive higher moment estimates $\langle|\nabla\phi|^p\rangle^{\frac{1}{p}}
\lesssim_{\lambda,\lambda_1,\lambda_2,d,p} 1$ and the
fluctuation estimate (see Lemma $\ref{lemma:*3}$), i.e.,
\begin{equation}\label{pri:**3}
 \big\langle\big|\phi(x)
 -\phi(0)\big|^p \big\rangle^{\frac{1}{p}}
 \lesssim_{\lambda,\lambda_1,\lambda_2,d,p} \mu_d(x):=\left\{\begin{aligned}
% &\sqrt{2+|x|}; &~&d=1;\\
 &\ln^{\frac{1}{2}} (2+|x|); &~& d=2;\\
 & 1 &~& d>2.
 \end{aligned}\right.
\end{equation}

The following work focuses on transferring annealed estimates from correctors to Dirichlet boundary correctors, which plays a crucial role in
the subsequent study.

\begin{theorem}[boundary correctors]\label{thm:2}
Let $\Omega\ni \{0\}$ be a bounded $C^{1,\tau}$ domain with
the uniform interior ball condition and $\tau\in(0,1]$,
and $\varepsilon\in(0,1]$.
Suppose that $\langle\cdot\rangle$ satisfies the spectral gap condition $\eqref{a:2}$, and the
admissible coefficients satisfy the smoothness condition $\eqref{a:3}$. Let $\tilde{\Phi}_{\varepsilon}=
\{\tilde{\Phi}_{\varepsilon,i}\}_{i=1}^d$ be the solution of
the equations $\eqref{pde:10}$.
Then, for any $p\in[1,\infty)$, there hold the following annealed estimates:
\begin{subequations}
\begin{align}
&\esssup_{x\in\Omega} \big\langle|\tilde{\Phi}_{\varepsilon}(x)|^p\big\rangle^{\frac{1}{p}}
 \lesssim \varepsilon\mu_{d}(R_0/\varepsilon); \label{pri:1.a}\\
&\esssup_{x\in\Omega}\big\langle|\nabla\tilde{\Phi}_{\varepsilon}(x)
|^p\big\rangle^{\frac{1}{p}}
\lesssim\mu_d(R_0/\varepsilon).
\label{pri:1.b}
\end{align}
\end{subequations}
Moreover, let $Q_{\varepsilon} := \tilde{\Phi}_{\varepsilon}
-\varepsilon\phi(\cdot/\varepsilon)$ and $\delta(z):=\text{dist}(z,\partial\Omega)$ with $z\in\Omega$.
Then, we have
\begin{equation}\label{pri:1.c}
 \big\langle|\nabla Q_{\varepsilon}(z)|^p\big\rangle^{\frac{1}{p}}
 \lesssim \mu_d(R_0/\varepsilon)
 \min\Big\{1,\frac{\varepsilon}{\delta(z)}\Big\}
 \qquad \forall z\in\Omega,
\end{equation}
where the multiplicative constant depends on $\lambda,\lambda_1,\lambda_2,d,\Omega,\tau$, and $p$.
\end{theorem}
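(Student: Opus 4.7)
The plan is to exploit the decomposition $\tilde{\Phi}_\varepsilon = Q_\varepsilon + \varepsilon\phi^\varepsilon$ and to propagate the known annealed information on $\phi$ from the whole space to the boundary of $\Omega$ through the Poisson kernel of $\mathcal{L}_\varepsilon$. By \eqref{corrector}, the rescaled corrector $\varepsilon\phi_i^\varepsilon$ solves $\mathcal{L}_\varepsilon(\varepsilon\phi_i^\varepsilon)=\nabla\cdot a^\varepsilon e_i$ on $\mathbb R^d$, so $Q_{\varepsilon,i}$ is $\mathcal{L}_\varepsilon$-harmonic in $\Omega$ with Dirichlet data $-\varepsilon\phi_i^\varepsilon|_{\partial\Omega}$. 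Fixing the representative of $\phi$ via $\phi(0)=0$ (note $0\in\partial\Omega$), the fluctuation estimate \eqref{pri:**3} immediately yields
\begin{equation*}
\langle|\varepsilon\phi^\varepsilon(y)|^p\rangle^{1/p}
=\varepsilon\,\langle|\phi(y/\varepsilon)-\phi(0)|^p\rangle^{1/p}
\lesssim \varepsilon\mu_d(|y|/\varepsilon)\leq\varepsilon\mu_d(R_0/\varepsilon),
\qquad\forall\, y\in\overline\Omega,
\end{equation*}
which already takes care of the corrector contribution to \eqref{pri:1.a} and \eqref{pri:1.b}.

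Next I would represent $Q_\varepsilon$ through the Poisson kernel $P_\varepsilon(z,y):=-n(y)\cdot (a^*)^\varepsilon(y)\nabla_y G_\varepsilon(z,y)$, giving $Q_\varepsilon(z)=-\int_{\partial\Omega}P_\varepsilon(z,y)\,\varepsilon\phi^\varepsilon(y)\,d\sigma(y)$. Combining the large-scale boundary Lipschitz estimate \eqref{pri:lip} of Theorem \ref{thm:1} with Caccioppoli's inequality and the vanishing of $G_\varepsilon(z,\cdot)$ on $\partial\Omega$, one derives the quenched pointwise bounds
\begin{equation*}
|P_\varepsilon(z,y)|\lesssim \mathcal{C}(a)\,\frac{\delta(z)}{|z-y|^{d}},
\qquad
|\nabla_z P_\varepsilon(z,y)|\lesssim \mathcal{C}(a)\,\frac{1}{|z-y|^{d}},
\end{equation*}
where $\mathcal{C}(a)$ is a stationary random variable that absorbs the $\chi_{**}$-weights and the Schauder constant from \eqref{a:3} and has moments of every order under $\langle\cdot\rangle$. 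Inserting these into the Poisson representation, the standard bound $\delta(z)\int_{\partial\Omega}|z-y|^{-d}d\sigma(y)\lesssim 1$ together with H\"older in $y$ and Cauchy--Schwarz in $\langle\cdot\rangle$ deliver $\langle|Q_\varepsilon(z)|^p\rangle^{1/p}\lesssim\varepsilon\mu_d(R_0/\varepsilon)$, and adding back $\varepsilon\phi^\varepsilon$ establishes \eqref{pri:1.a}.

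For the gradient estimate \eqref{pri:1.c} I would split on the size of $\delta(z)$. When $\delta(z)\geq\varepsilon$, since $Q_\varepsilon$ is $\mathcal L_\varepsilon$-harmonic, the interior large-scale Lipschitz estimate on $B_{\delta(z)/2}(z)$, followed by Caccioppoli, produces $|\nabla Q_\varepsilon(z)|\lesssim \delta(z)^{-1}\sup_{B_{\delta(z)/2}(z)}|Q_\varepsilon|$, and the previous paragraph delivers the $\varepsilon/\delta(z)$ gain. When $\delta(z)<\varepsilon$ I would differentiate the Poisson representation directly, using the $\nabla_z P_\varepsilon$ bound above, which after taking moments yields the factor $\mu_d(R_0/\varepsilon)$ without extra gain. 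Estimate \eqref{pri:1.b} then follows from $\nabla\tilde\Phi_\varepsilon=\nabla Q_\varepsilon+\nabla(\varepsilon\phi^\varepsilon)$ together with the known bound $\langle|\nabla\phi|^p\rangle^{1/p}\lesssim 1$. The main obstacle will be producing the quenched Poisson kernel bounds with a random multiplicative constant $\mathcal{C}(a)$ whose moments of all orders survive the various Cauchy--Schwarz splittings: the large-scale boundary Lipschitz estimate only operates above scale $\varepsilon\chi_{**}$, so careful matching with Schauder regularity at scales below $\varepsilon\chi_{**}$ through \eqref{a:3} is required, and the product of random constants must remain $L^p(\langle\cdot\rangle)$-integrable; a closely related subtlety is the case $\delta(z)<\varepsilon$ of \eqref{pri:1.c}, where the nonvanishing boundary data of $Q_\varepsilon$ prevents a direct application of Theorem \ref{thm:1}.
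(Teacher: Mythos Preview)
Your overall strategy---the decomposition $\tilde\Phi_\varepsilon=Q_\varepsilon+\varepsilon\phi^\varepsilon$, the Poisson representation of $Q_\varepsilon$, and the use of the annealed bounds on $\phi$---is exactly the paper's route, and your treatment of \eqref{pri:1.a} and of the regime $\delta(z)\geq\varepsilon$ in \eqref{pri:1.c} matches the paper's Steps~1 and~3 of Proposition~\ref{P:5} essentially verbatim.

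The genuine gap is the boundary regime $\delta(z)<\varepsilon$ of \eqref{pri:1.c}. Your plan there is to ``differentiate the Poisson representation directly, using the $\nabla_z P_\varepsilon$ bound above,'' claiming this yields the factor $\mu_d(R_0/\varepsilon)$. It does not: with your bound $|\nabla_z P_\varepsilon(z,y)|\lesssim \mathcal{C}(a)\,|z-y|^{-d}$ one obtains, after moments,
\[
\big\langle|\nabla Q_\varepsilon(z)|^p\big\rangle^{1/p}
\lesssim \varepsilon\,\mu_d(R_0/\varepsilon)\int_{\partial\Omega}\frac{dS(y)}{|z-y|^{d}}
\sim \frac{\varepsilon\,\mu_d(R_0/\varepsilon)}{\delta(z)},
\]
since the surface integral of $|z-y|^{-d}$ diverges like $\delta(z)^{-1}$; for $\delta(z)\ll\varepsilon$ this blows up and does not give the uniform bound $\mu_d(R_0/\varepsilon)$ required by $\min\{1,\varepsilon/\delta(z)\}=1$. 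You correctly flag this regime as subtle at the end, but the proposed fix is not a fix.

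The paper closes this gap with an additional ingredient you are missing: Lemma~\ref{lemma:4.1}, which gives the ``intermediate'' annealed estimate
\[
\Big\langle\Big(\dashint_{D_r(x_0)}|\nabla Q_\varepsilon|^2\Big)^{p/2}\Big\rangle^{1/p}\lesssim \mu_d(R_0/\varepsilon)
\qquad\text{for }x_0\in\partial\Omega,\ r\geq\varepsilon.
\]
This in turn rests on the annealed \emph{layer-type} estimate of Proposition~\ref{P:2}, namely $\int_{O_R}\langle|\nabla_y G_\varepsilon(x,y)|^p\rangle^{1/p}\,dy\lesssim R^{1-\sigma}[\delta(x)]^\sigma$, proved via the weighted quenched decay estimates of Lemmas~\ref{lemma:5}--\ref{lemma:3.2}. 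With Lemma~\ref{lemma:4.1} in hand, the paper controls $\|\nabla Q_\varepsilon\|_{L^\infty(D_{3\varepsilon}(x_0))}$ by classical Schauder at scale $\varepsilon$ applied to $\Phi_{\varepsilon}=\tilde\Phi_\varepsilon+x$ (which has \emph{smooth} boundary data $x$, avoiding the obstruction you identified), feeding in the $L^2$-average bound from Lemma~\ref{lemma:4.1}; see the derivation of \eqref{pri:3.4}. This mechanism---passing from the Poisson kernel on $\partial\Omega$ to the volume Green's function integrated over a thin layer $O_{2r}$---is precisely what makes the near-boundary case work, and it is absent from your proposal.
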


Similar to the CLT-scaling of $\nabla\phi$ (see e.g. \cite{Gloria-Neukamm-Otto21,Josien-Otto22}), with the help of Theorem
$\ref{thm:2}$, sensitive estimates, and
a bootstrap argument, one can derive CLT-scaling type estimates for Dirichlet boundary correctors.
The following estimate seems to be optimal up to a $\kappa$-loss at most.
The notation
``$\lfloor x\rfloor$'' denotes the largest integer not more than $x$, and ``$\lceil x\rceil$'' represents the smallest integer not less than $x$.

\begin{theorem}[CLT-scaling type estimates]\label{thm:2*}
Let $\Omega\subset\mathbb{R}^d$ (with $d\geq 2$) be a bounded
$C^{\lfloor d/2\rfloor+1}$ domain with $\Omega\ni\{0\}$.
Suppose that the ensemble $\langle\cdot\rangle$ satisfies
$\eqref{a:2}$ and $\eqref{a:3}$.
Let $p\in[1,\infty)$, $\varepsilon\in(0,1]$, and $Q_\varepsilon$ be given as in Theorem $\ref{thm:2}$.  Then, there exists $\kappa_d$, satisfying
$\kappa_2=0$ if $d=2$ and $\kappa_d=\kappa$ if $d\geq 3$ with
$0<\kappa\ll 1$,  such that the following estimate
\begin{equation}\label{pri:1.3}
 \big\langle|\nabla Q_\varepsilon(z)|^p\big\rangle^{\frac{1}{p}}
 \lesssim \mu_d^{\lceil\frac{d}{2}\rceil}(R_0/\varepsilon)
 \Big(\frac{\varepsilon}{\delta(z)}\Big)^{\frac{d}{2}-\kappa_d}
\end{equation}
holds for any $z\in\Omega$ and $\delta(z)\geq 2\varepsilon$, where
the multiplicative constant relies on
$\lambda,\lambda_1,\lambda_2,d,\Omega,p$, and $\kappa$ at most.
Moreover, let $\tilde{\Phi}_{\varepsilon}=
\{\tilde{\Phi}_{\varepsilon,i}\}_{i=1}^d$ be the solution of
the equations $\eqref{pde:10}$.
Then, for all $2B\subset\Omega$ with
$\text{dist}(x_B,\partial\Omega)\geq 2r_B$ and $r_B\geq 2\varepsilon$,
and any $g\in C_0^\infty(2B)$ satisfying $g=\textbf{1}_B$ on $B$
with $|\nabla^k g|\lesssim_{d,k}1/r_B^{k+d}$ for any positive integer
$k\in\mathbb{N}$, we obtain the CLT-scaling type estimate
\begin{equation}\label{pri:1.4}
\Big\langle\big|\int_{\Omega}\nabla\tilde{\Phi}_{\varepsilon}\cdot g\big|^{p}
\Big\rangle^{\frac{1}{p}}
\lesssim_{\lambda,\lambda_1,\lambda_2,d,\Omega,p}
\mu_d^{\lceil\frac{d}{2}\rceil+1}(R_0/\varepsilon)
\Big(\frac{\varepsilon}{r_B}\Big)^{\frac{d}{2}},
\end{equation}
where $\textbf{1}_B$ represents the indicator function of the region $B$.
\end{theorem}

For further application scenarios,
we study the asymptotic behavior of $\nabla_x\nabla_y G_\varepsilon(x,y)$ in the annealed sense, as $\varepsilon\to 0$, and we present it below.

\begin{theorem}[Green function's expansion]\label{thm:3}
Let $\Omega\ni \{0\}$ be a bounded $C^{2,\tau}$ domain with $\tau\in(0,1]$, and $\varepsilon\in(0,1]$.
Suppose that the ensemble $\langle\cdot\rangle$ satisfies $\eqref{a:2}$ and
$\eqref{a:3}$. Assume that $G_\varepsilon(x,\cdot)$ is the Green function given by $\eqref{pde:2}$. Let
$\tilde{\Phi}_{\varepsilon,i}$ be the solution of
the equations $\eqref{pde:10}$, and we define $\Phi_{\varepsilon,i}:=\tilde{\Phi}_{\varepsilon,i}+x_i$
(the adjoint boundary corrector is denoted by
$\Phi_{\varepsilon,i}^{*}$). We introduce the corresponding truncated
expansion, i.e.,
\begin{equation}\label{eq:6.1}
\mathcal{E}_{\varepsilon}(x,y)
:= \nabla_x\nabla_y G_\varepsilon(x,y)
+\nabla\Phi_{\varepsilon,i}(x)
\nabla\Phi_{\varepsilon,j}^{*}(y)\partial_{ij}\overline{G}(x,y),
\end{equation}
where the effective Green function $\overline{G}$ is determined by  $-\nabla\cdot\bar{a}\nabla \overline{G}(\cdot,0) = \delta_0$ in $\Omega$ with zero-Dirichlet boundary conditions.
Then, for any $p\in[1,\infty)$, there holds the annealed decay estimate
\begin{equation}\label{pri:6.1}
 |x-y|^{d+1}
 \big\langle|\mathcal{E}_{\varepsilon}(x,y)|^p\big\rangle^{\frac{1}{p}}
 \lesssim_{\lambda,\lambda_1,\lambda_2,d,\Omega,\tau,p} \varepsilon\mu_d^2(R_0/\varepsilon)\ln(|x-y|/\varepsilon+2)
\end{equation}
for any $x,y\in\Omega$ with $|x-y|\geq 2\varepsilon$.
\end{theorem}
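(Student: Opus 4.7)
My plan is to exhibit $\mathcal{E}_\varepsilon(x,y)$ as the mixed-derivative error of a carefully chosen two-variable two-scale expansion of the Dirichlet Green function $G_\varepsilon$, and to estimate it by combining the Dirichlet-corrector bounds of Theorem \ref{thm:2} with the boundary-adapted Bella-Giunti-Otto lemma (the extension of \cite{Bella-Giunti-Otto17} announced in the abstract). Concretely, I would introduce an ansatz
\[
G_\varepsilon^{\mathrm{TS}}(x,y):=\overline{G}(x,y)+\tilde{\Phi}_{\varepsilon,i}(x)\,\partial_{x_i}\overline{G}(x,y)+\tilde{\Phi}^{*}_{\varepsilon,j}(y)\,\partial_{y_j}\overline{G}(x,y)+\tilde{\Phi}_{\varepsilon,i}(x)\tilde{\Phi}^{*}_{\varepsilon,j}(y)\,\partial_{x_i}\partial_{y_j}\overline{G}(x,y),
\]
whose product term is precisely what produces, upon applying $\nabla_x\nabla_y$, the double-corrector expression $\nabla\Phi_{\varepsilon,i}(x)\,\nabla\Phi^{*}_{\varepsilon,j}(y)\,\partial_{ij}\overline{G}(x,y)$ appearing in $\mathcal{E}_\varepsilon$ (with the correct sign coming from the approximate antisymmetry $\partial_{y_j}\overline{G}\approx-\partial_{x_j}\overline{G}$, up to boundary corrections). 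Using Dirichlet correctors, which vanish on $\partial\Omega$, has the crucial benefit that $G_\varepsilon^{\mathrm{TS}}$ respects the zero boundary condition of $G_\varepsilon$ exactly — an option that is unavailable with the whole-space corrector $\varepsilon\phi(\cdot/\varepsilon)$.

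I would then derive the $\mathcal{L}_\varepsilon$-equation (in $x$, with $y$ frozen) satisfied by the residual $w_\varepsilon:=G_\varepsilon-G_\varepsilon^{\mathrm{TS}}$; its right-hand side is the standard two-scale commutator, rewritten in divergence form via a flux corrector, plus additional cross terms coming from the Leibniz rule on the product piece. Using the splitting $\nabla\Phi_{\varepsilon,i}=e_i+\nabla\phi_i(\cdot/\varepsilon)+\nabla Q_{\varepsilon,i}$ supplied by Theorem \ref{thm:2}, this right-hand side decomposes into bulk terms (controlled by the Gloria-Neukamm-Otto whole-space annealed corrector bounds) and boundary-layer terms (controlled by the annealed estimates \eqref{pri:1.a}--\eqref{pri:1.c}). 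The boundary-adapted Bella-Giunti-Otto lemma is the essential device that integrates these pointwise corrector bounds against the smooth kernel $\partial_{ij}\overline{G}$ in a sharp way; each of the two variables contributes a factor $\mu_d(R_0/\varepsilon)$, producing the stated $\varepsilon\,\mu_d^2(R_0/\varepsilon)$ scaling. The logarithmic factor $\ln(|x-y|/\varepsilon+2)$ arises from a dyadic integration of the $|x-y|^{-d}$ singularity of $\partial_{ij}\overline{G}$ against the profile $\min(1,\varepsilon/\delta(\cdot))$ of $\nabla Q_\varepsilon$ over boundary shells.

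To upgrade the averaged estimate into the pointwise kernel bound \eqref{pri:6.1} with the sharp $|x-y|^{-(d+1)}$ decay, I would apply the annealed large-scale Lipschitz estimate — Theorem \ref{thm:1}(ii) near $\partial\Omega$ together with the standard interior counterpart — to $\mathcal{E}_\varepsilon$ viewed as a function of one variable in a ball of radius $|x-y|/2$ centred at $x$ or $y$, with the other variable frozen. The main obstacle I expect is the simultaneous handling of the \emph{two} two-scale expansions at the mixed-derivative level: naively differentiating a single-variable expansion in the other variable costs a factor of $\varepsilon^{-1}$ through the small-scale oscillations, and the entire purpose of the product ansatz, together with the boundary-adapted Bella-Giunti-Otto lemma, is to absorb this loss symmetrically in both $x$ and $y$. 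Theorem \ref{thm:2}'s quantitative control of $\nabla Q_\varepsilon=\nabla\tilde{\Phi}_\varepsilon-\nabla\phi(\cdot/\varepsilon)$ is exactly what permits this boundary extension of \cite{Bella-Giunti-Otto17}, whose original formulation is confined to the whole space.
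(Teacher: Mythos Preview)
Your product ansatz with Dirichlet correctors, the resulting residual equation, and the appeal to a boundary version of the Bella--Giunti--Otto (BGO) lemma all match the paper's framework. However, two concrete ingredients are missing and your description of how BGO enters is not right. The paper does not use BGO to ``integrate corrector bounds against $\partial_{ij}\overline G$.'' Rather, after freezing $x_0$ and taking $\partial_{x_k}$, it splits the residual $V_\varepsilon^k(x_0,\cdot)$ into a near-field part (source supported in $D_r(\bar y_0)$, handled directly via the Green-function representation and Proposition~\ref{P:4}) and a far-field part that is $a^{*\varepsilon}$-\emph{harmonic} in $D_r(\bar y_0)$. The Lipschitz estimate is applied to this harmonic far-field piece --- not to $\mathcal{E}_\varepsilon$, which does not satisfy a clean equation --- and BGO (Lemma~\ref{lemma:4}) then bounds its averaged gradient by the weak norm $\sup_g \langle|\int V_\varepsilon^k\,g|^p\rangle^{1/p}/(r^3\sup|\nabla^2 g|)$.

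The step you do not have is how to control this weak norm. The paper uses a \emph{duality trick}: for each test $g$ one solves $\mathcal{L}_\varepsilon u_\varepsilon=g$ and $\mathcal{L}_0 u_0=g$ in $\Omega$ with zero Dirichlet data and observes that $\int V_\varepsilon^k(x_0,\cdot)\,g=\partial_{x_k}z_\varepsilon(x_0)+(\text{easy term})$, where $z_\varepsilon=u_\varepsilon-(1+\tilde\Phi_{\varepsilon,i}\partial_i)u_0$ is the first-order homogenization error for a \emph{smooth} right-hand side. Estimating $\nabla z_\varepsilon(x_0)$ pointwise is then done by yet another near/far splitting, and the far piece requires the \emph{annealed Calder\'on--Zygmund estimate} (Lemma~\ref{P:7*}), an ingredient you do not mention and without which the argument does not close. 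Finally, your explanation of the logarithm is off: it does not arise from boundary shells against $\min(1,\varepsilon/\delta)$, but from the annulus integrals $\int_{U_r\setminus U_\varepsilon}|z-y|^{-d}\,dz\sim\ln(r/\varepsilon)$ appearing in the near-field Green-function representations (the terms $I_4$, $I_{31}$ and $J_3$, $J_4$ in the paper's Steps~2 and~5).
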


A few remarks on theorems are in order.

\begin{remark}
\emph{To the authors' best knowledge,
with the help of boundary correctors originally developed by Avellaneda and Lin \cite{Avellaneda-Lin87} for quantitative periodic homogenization theory, Fischer and Raithel \cite{Fischer-Raithel17} first derived the intrinsic large-scale regularity theory for random elliptic operators on the half-space $\mathbb{R}^d_{+}$, inspired by Gloria, Neukamm, and Otto \cite{Gloria-Neukamm-Otto20}, and the estimate similar to $\eqref{pri:lip}$
was known as the mean-value property (see \cite[Theorem 2]{Fischer-Raithel17}).
Under the finite range condition on the ensemble $\langle\cdot\rangle$,
the estimate $\eqref{pri:lip}$ has already been shown by
Armstrong, Kuusi, and Mourrat \cite[Theorem 3.26]{Armstrong-Kuusi-Mourrat19}, traced back to Armstrong and Smart \cite{Armstrong-Smart16}. Basically, the methods mentioned above are different in nature. With the advantage of the boundary corrector, Fischer and Raithel's method does not rely on
any quantified ergodicity assumption, while the approach developed by  Armstrong and Smart \cite{Armstrong-Kuusi-Mourrat19,Armstrong-Smart16} starts from quantifying  the approximating error at each of large scales, which is equivalent to quantifying the sublinearity of correctors and therefore requires a  quantified ergodicity like $\eqref{a:2}$ as the precondition. However, a significant advantage of Armstrong and Smart's method is that it can avoid introducing boundary correctors to obtain boundary regularity estimates. So, we took their approach to prove Theorem $\ref{thm:1}$, while the present contribution is to
find a novel form of \emph{minimal radius} allowing
Gloria-Neukamm-Otto's results on the correctors \cite{Gloria-Neukamm-Otto20,Gloria-Neukamm-Otto21} to be further applied.
In addition, our proof is a little more concise, which is mainly due to the simplification of Campanato’s iteration made by Shen \cite[Lemma 8.5]{Shen16} and the fact that we did not pursue the optimal stochastic integrability of the minimum radius presented in Theorem $\ref{thm:1}$.
Meanwhile, we should mention that Armstrong and Kuusi have already upgraded their quantitative theory under very general mixing conditions in \cite{Armstrong-Kuusi22}, compared to the previous monograph \cite{Armstrong-Kuusi-Mourrat19}, which contains the assumption $\eqref{a:2}$. One can rewrite ``line-by-line'' from \cite{Armstrong-Kuusi-Mourrat19} under the assumptions of \cite{Armstrong-Kuusi22} to get Theorem $\ref{thm:1}$ (with optimal
stochastic integrability). However, we point out that
the assumption $\eqref{a:2}$ is not directly
involved in our proofs (except of CLT-scaling type estimates), and it is merely used
as one way to guarantee the almost-surely finiteness of the minimum radius stated in Theorem $\ref{thm:1}$ (different from $r_*$ in \cite[Theorem 1]{Gloria-Neukamm-Otto20} or
$\mathcal{X}$ in \cite[Theorem 1.21]{Armstrong-Kuusi22}). In fact,
it is still unclear for us whether the minimum radius given here
(i.e., $\chi_{**}$ for Lipschitz estimates)
can be weakened into some new one
independent of quantitative assumption on the ensemble $\langle\cdot\rangle$.
%In a certain sense, Theorem $\ref{thm:1}$ manages to fill this gap for the readers.
}
\end{remark}

%Finally, we mention that although S. Armstrong and his co-authors claimed in their monograph \cite{Armstrong-Kuusi-Mourrat19} that their arguments would be adapted for the ensemble satisfying the spectral gap or log-Sobolev inequalities\footnote{S. Armstrong et. al have further worked in this direction recently (see \cite{Armstrong-Kuusi22}).},
%non-experts may still be concerned about how this can be done easily, since A. Gloria, S. Neukamm, and F. Otto's work seems to follow a completely different set of ideas or languages.

\begin{remark}
\emph{Different from the idea of the massive extended correctors \cite{Gloria-Neukamm-Otto20} employed by Gloria et al. in a sensitive regime,
if taking $\Omega=\square_{R_0}:=(-R_0,R_0)^d$ in $\eqref{pde:10}$,
boundary correctors (defined on any large cube $\square_{mR_0}$ with $\varepsilon=1/m$ therein) have already been used
by Armstrong et al. \cite{Armstrong-Kuusi-Mourrat19} in renormalization arguments,
which start from energy-type quantities
that display a simpler dependence on the coefficients in the sense of  large scales and culminate with the optimal quantitative bounds on the first-order correctors (see \cite[Theorem 4.1]{Armstrong-Kuusi-Mourrat19}). Therefore, roughly speaking, the road map shown in Theorem $\ref{thm:2}$ is the other way around, and is reasonable to
acquire much stronger estimates $\eqref{pri:1.a}$ and $\eqref{pri:1.b}$ compared to the counterparts stated in \cite[Propositions 1.8,1.10]{Armstrong-Kuusi-Mourrat19}. However, the proof of
Theorem $\ref{thm:2}$ is totally deterministic, and the main idea
may be found in periodic homogenization theory (see e.g. \cite{Shen18}).}
\end{remark}

\begin{remark}
\emph{In periodic homogenization theory, boundary correctors are mainly used to eliminate the adverse effects of the boundary layer, playing a key role in deriving boundary regularity estimates and sharp homogenization errors (see e.g. \cite{Avellaneda-Lin87,Kenig-Lin-Shen14,Shen18}).
In stochastic homogenization, their fluctuation estimates pose
a tough problem. The first result analogous to $\eqref{pri:1.3}$ was recently established by Bella, Fischer, Josien, and Raithel \cite{Bella-Fischer-Josien-Raithel-24} for the half-space $\mathbb{R}^d_{+}$. Their method cannot be directly extended to general regions because their proof depends on the property that the boundary corrector is stationary along the tangential direction of the boundary of $\mathbb{R}^d_{+}$, which is not clearly true for the boundary corrector on general domains (even for the case of $d\geq 3$). In short, the estimates $\eqref{pri:1.3}$ and
$\eqref{pri:1.4}$ are based on a type of ``a bootstrap argument''
developed by Bella et al. \cite{Bella-Fischer-Josien-Raithel-24}, which includes three ingredients: an improved Caccioppoli's inequality\footnote{Its idea is similar to that shown in \cite[Lemma 4]{Bella-Fischer-Josien-Raithel-24}, which can be traced back to the contributions of Armstrong et al. \cite{Armstrong-Kuusi-Mourrat19,Armstrong-Kuusi-Mourrat-16} in the multiscale Poincaré inequality.}, one-order improvement, and sensitive estimates (see Section $\ref{sec:6}$). The statement of Theorem $\ref{thm:2*}$ may not surprise experts, while the novelty of Theorem $\ref{thm:2*}$ is more reflected in its proof. Besides, the estimate $\eqref{pri:1.4}$ may be applied to studying artificial boundary condition problems (avoiding imposing the second-order corrector
as in \cite{Lu-Otto-Wang21} for $d=3$), as well as upgrading heterogeneous
multiscale method (HMM) errors
stated in \cite[Theorem 1.3]{E-M-Zhang05}, and it will be addressed in a separate work.}
\end{remark}

\begin{remark}
\emph{The estimate analogous to $\eqref{pri:6.1}$ in a periodic setting was established
by Kenig, Lin, and Shen \cite{Kenig-Lin-Shen14} for a bounded $C^{3,\tau}$ domain, and by Avellaneda and Lin \cite{Avellaneda-Lin91} for the whole space, via a duality argument. This methodology can be extended to aperiodic cases, and we refer the reader to the work by Blanc, Josien, and Le Bris \cite{Blanc-Josien-LeBris-20} for further details on this aspect.
In a random setting, Bella et al. \cite{Bella-Giunti-Otto17} developed a similar result in the quenched sense. Recently, without boundary conditions, the previous conclusions in \cite{Bella-Giunti-Otto17} were extended to
a higher-order expansion in the annealed sense by Clozeau et al. \cite{Clozeau-Josien-Otto-Xu}, where the key ingredient is
a weak norm estimate (see Lemma $\ref{lemma:4}$). Besides,
with the aid of parabolic semigroup, Armstrong et al.
derived optimal convergence of Green functions, and we refer the reader
to \cite[Chapters 8, 9]{Armstrong-Kuusi-Mourrat19} for the details.}
\end{remark}

\subsection{Organization of the paper}

\noindent
In Section $\ref{sec:2}$, we first establish large-scale boundary H\"older estimates in Proposition $\ref{P:1}$ and give the related uniform estimates
in Corollary $\ref{corollary:2.1}$. Then, we discuss large-scale boundary
Lipschitz estimates in Proposition $\ref{P:3}$. Similarly, we state
the related uniform estimates in Corollary $\ref{corollary:2.2}$, which together with Corollary $\ref{corollary:2.1}$ benefits the study of Green functions.

In Section $\ref{sec:3}$, we show some annealed decay estimates of
Green functions in Proposition $\ref{P:4}$, which is due to the related
quenched decay estimates given in Lemma $\ref{lemma:3}$. Besides,
to handle the boundary correctors, we manage to establish the so-called
annealed layer-type estimate in Proposition $\ref{P:2}$, which is based upon the related decay estimates with a distance weight (see Lemmas $\ref{lemma:5}$, $\ref{lemma:3.1}$, and $\ref{lemma:3.2}$).

In Section $\ref{sec:4}$, we study boundary correctors. We first establish an interesting boundary estimate in Lemma $\ref{lemma:4.1}$, which is stronger than energy-type one but weaker than the boundary Lipschitz estimate. Based upon this lemma, we complete the proof of Proposition $\ref{P:5}$ by four steps, which implies all the results of Theorem $\ref{thm:2}$.

In Section $\ref{sec:5}$, we focus on the demonstration of Theorem $\ref{thm:3}$. There are two important ingredients involved. One is
the weak norm estimate $\eqref{pri:27}$ stated in Lemma $\ref{lemma:4}$
(we also call it Bella-Giunti-Otto's lemma),
the other is annealed Calder\'on-Zygmund estimates stated in Lemma $\ref{P:7*}$ whose proof has been shown in \cite{Wang-Xu23-1}.

In Section $\ref{sec:6}$, we introduce the bootstrap method\footnote{This is the only part of this article that involves nondeterministic arguments}. It consists of three parts: the improved Caccioppoli's inequality (see Lemma $\ref{lemma:7.1}$), one-order improvement (see Lemma $\ref{lemma:7.2}$), and sensitive estimates (see Lemma $\ref{lemma:7.3}$).
These ingredients are combined to establish the proof of Theorem $\ref{thm:2*}$.
The paper concludes with an appendix that presents the precise form of scale-invariant estimates (see Lemma $\ref{lemma:8.1}$). These estimates play a fundamental role in the analysis of Sections $\ref{sec:2}$ and $\ref{sec:6}$.

\subsection{Notations}\label{notation}

\begin{enumerate}
  \item Notation for estimates.
\begin{enumerate}
  \item $\lesssim$ and $\gtrsim$ stand for $\leq$ and $\geq$
  up to a multiplicative constant,
  which may depend on some given parameters in the paper,
  but never on $\varepsilon$. The subscript form $\lesssim_{\lambda,\cdots,\lambda_n}$ means that the constant depends only on parameters $\lambda,\cdots,\lambda_n$.
  In addition, we also use superscripts like $\lesssim^{(2.1)}$ to indicate the formula or estimate referenced.
  We write $\sim$ when both $\lesssim$ and $\gtrsim$ hold.
  \item We use $\gg$ instead of $\gtrsim$ to indicate that the multiplicative constant is much larger than 1 (but still finite),
      and it's similarly for $\ll$.
\end{enumerate}
  \item Notation for derivatives.
  \begin{enumerate}
    \item Spatial derivatives:
  $\nabla v = (\partial_1 v, \cdots, \partial_d v)$ is the gradient of $v$, where
  $\partial_i v = \partial v /\partial x_i$ denotes the
  $i^{\text{th}}$ derivative of $v$.
  $\nabla^2 v$  denotes the Hessian matrix of $v$;
  $\nabla\cdot v=\sum_{i=1}^d \partial_i v_i$
  denotes the divergence of $v$, where
  $v = (v_1,\cdots,v_d)$ is a vector-valued function.
   \item $\nabla_x$ denotes
  the gradient with respect to the variable $x$.
  Let $\alpha\in\mathbb{N}^d$ be a multi-index, i.e.,
  $\alpha=(\alpha_1,\alpha_2,\cdots,\alpha_d)$. We call
  the order of the multi-index as $|\alpha|=\sum_{i=1}^{d}\alpha_i$.
  For any $|\alpha|>1$, we define
  $\nabla^{|\alpha|} u=\nabla^\alpha u:=\frac{\partial^{|\alpha|}u}{\partial x_1^{\alpha_1}\cdots\partial x_d^{\alpha_d}}$.
  \end{enumerate}

  \item Geometric notation.
  \begin{enumerate}
  \item $d\geq 2$ is the dimension, and $R_0$ represents the diameter of $\Omega$.
  \item For a ball $B$, we set $B=B_{r_B}(x_B)$ and abusively write
  $\alpha B:=B_{\alpha r_B}(x_B)$.
%    \item Let $B_r(x)$ denote the open ball centered at $x$ of radius $r$. For any ball $B\subset\mathbb{R}^d$,
%        let $x_B$ and $r_B$ represent the center and radius of $B$, respectively.
%        Let $D_{*,\varepsilon}(x):=\Omega\cap B_{*,\varepsilon}(x)$ with $B_{*,\varepsilon}(x):=B_{\varepsilon\chi_{*}(x/\varepsilon)}(x)$, where $\chi_*$ is the minimum radius given in Lemma $\ref{lemma:1}$ or Corollary $\ref{cor:1}$.
%    \item The layer set of $\Omega$ is denoted
%    by $O_{n\varepsilon}
%    :=\{x\in\Omega:\text{dist}(x,\partial\Omega)<n\varepsilon\}$.
%    The co-layer set is defined by $\Sigma_{n\varepsilon}:=
%    \Omega\setminus O_{n\varepsilon}$.
\item For any $x\in\partial\Omega$ we set $D_r(x):= B_r(x)\cap\Omega$ and $\Delta_r(x):= B_r(x)\cap\partial\Omega$, and omit $x$ whenever  $x=0$. For any $x\in\Omega$, we set  $U_r(x):=B_r(x)\cap\Omega$.
\item $O_r:=\{x\in\Omega:\text{dist}(x,\partial\Omega)\leq r\}$
  denotes the layer part of $\Omega$ with $r>0$.
\item $\delta(x):=\text{dist}(x,\partial\Omega)$ represents
the distance from $x$ to $\partial\Omega$.
\item $dS$ is the surface measure on $\partial\Omega$.
  \end{enumerate}

\item Notation for functions.
\begin{enumerate}
    \item The function $\textbf{1}_{E}$ is the indicator function of the region $E$.
\item $a\vee b:=\max\{a,b\}$; and $a\wedge b:=\min\{a,b\}$;
  The notation
``$\lfloor x\rfloor$'' denotes the largest integer not more than $x$, and ``$\lceil x\rceil$'' represents the smallest integer not less than $x$.
\item We denote $F(\cdot/\varepsilon)$ by $F^{\varepsilon}(\cdot)$ for simplicity, and $$(f)_r = \dashint_{B_r} f  = \frac{1}{|B_r|}\int_{B_r} f(x) dx.$$
\item We denote the support of $f$ by $\text{supp}(f)$.
\item The nontangential maximal function of $u$ is defined by
\begin{equation}\label{mf}
(u)^{*}(Q):=\sup\big\{|u(x)|:x\in\Gamma_{N_{0}}(Q)\big\}
\qquad\forall Q\in\partial\Omega,
\end{equation}
  \end{enumerate}
where $\Gamma_{N_{0}}(Q):=\{x\in\Omega:|x-x_0|\leq N_{0}
\text{dist}(x,\partial\Omega)\}$ is the cone with vertex $Q$
and aperture $N_{0}$, and $N_{0}>1$ may be sufficiently large.
\end{enumerate}

Finally, we mention that:
(1)
When we say the multiplicative constant depends on $\partial\Omega$,
it means that
the constant relies on the character of the domain
(determined by a certain norm of functions
that locally characterize $\partial\Omega$. See e.g. $\eqref{boundary-1}$); (2) If we say the multiplicative constant depends on $\Omega$,
it means that the constant depends not only on $\partial\Omega$ but also on the diameter $R_0$. In particular,
if $\partial\Omega$ is uniformly regular (see the beginning of Section $\ref{sec:2}$),
it will rely on some scaling-invariant quantities such as $R_0^{\tau}M_0$ and $R_0^{k-1+\tau}M_0$ (see Lemmas $\ref{lemma:8.1}$,$\ref{lemma:8.2}$ for the details);
(3) The estimate of the pair $(X,Y)$ is equivalent to
the individual estimate of $X$ and $Y$.

\section{Quenched boundary estimates}\label{sec:2}

\noindent
We first introduce the term of uniform boundary regularity (or see e.g. \cite[pp.84]{Adams-03}). Let $k\geq 1$ be an integer, and $0\leq \tau\leq 1$.
For any ball $B$ with $x_B\in\partial\Omega$, let $D=B\cap\Omega$
and $\Delta=B\cap\partial\Omega$. There exists a $C^{k,\tau}$ function $\psi$ (up to
rotating and translating a standard Euclidean coordinate) such that $D=\{(x',x_d)\in\mathbb{R}^d:x'\in\mathbb{R}^{d-1},~x_d>\psi(x')\}\cap B_{r_B}(0)$,
where $\psi$ satisfies
\begin{equation}\label{boundary-1}
  \psi(0)=0,
%  \quad
%  \nabla\psi(0)=0,
  \quad
  \|\nabla\psi\|_{L^\infty(\mathbb{R}^{d-1})}
  \leq m_0,
  \quad
  \text{and}
  \quad
  [\nabla^k\psi]_{C^{0,\tau}(\mathbb{R}^{d-1})}\leq M_0.
\end{equation}
%$\psi(0)=0$, $\nabla\psi(0)=0$, and $[\nabla\psi]_{C^{0,\tau}(\mathbb{R}^{d-1})}\leq M_0$.

If the constants $(m_0,M_0)$ are independent of the choice of the ball $B$, then
$\Omega$ is called a uniform $C^{k,\tau}$ domain.
By interpolation inequalities (see e.g. \cite[Lemma 6.34]{Gilbarg-Trudinger-01}), there exists a constant $C$, depending only on $d,k,\tau$,  such that
$\|\psi\|_{C^{k,\tau}(\mathbb{R}^{d-1})}\leq C(m_0+M_0)$.
In particular, when $k=1$ and $\tau = 0$, we correspondingly refer to it as a uniform Lipschitz or $C^1$  region.
To characterize the $C^1$ property, we additionally introduce
\begin{equation}\label{boundary-2}
|\nabla\psi(x')-\nabla\psi(y')|\leq \omega(|x'-y'|)
\qquad \forall x',y'\in\mathbb{R}^{d-1},
\end{equation}
where $\omega$ is a fixed nondecreasing continuous function on $[0,\infty)$ with $\omega(0)=0$. We mention that a bounded $C^{1,\tau}$ region with
the uniform interior ball condition is the uniform $C^{1,\tau}$ domain.

\subsection{H\"older estimates}
\begin{proposition}\label{P:1}
Let $\Omega\subset\mathbb{R}^d$ (with $d\geq 2$) be a bounded $C^1$ domain.
Let $0<\varepsilon\leq 1$ and $0<\alpha<1$. Suppose that $\langle\cdot\rangle$ satisfies the spectral gap condition $\eqref{a:2}$ and
the admissible coefficients satisfy
the regularity condition $\eqref{a:3}$. Given
$x\in\partial\Omega$ and $R\geq 1$, let $u_\varepsilon$ be a weak solution to
\begin{equation}\label{pde:1}
\left\{\begin{aligned}
\mathcal{L}_\varepsilon(u_\varepsilon) &= F
&\quad&\text{in}~~ D_{2R}(x);\\
u_\varepsilon &= 0
&\quad&\text{on}~~ \Delta_{2R}(x).
\end{aligned}\right.
\end{equation}
Then, there exists a stationary random field $\chi_*\geq 1$, satisfying  $\langle\chi_*^\beta\rangle\lesssim_{\lambda,\lambda_1,\lambda_2,d,\beta} 1$
for any $\beta\in[1,\infty)$, such that for all $\varepsilon\leq \varepsilon_{1}:=\big(\inf_{y\in\mathbb{R}^d}
      \inf_{a}\chi_{*}(a,y)\big)^{-1}$, there holds
\begin{equation}\label{pri:2.1}
\Big(\dashint_{D_r(x)}|\nabla u_\varepsilon|^2
\Big)^{1/2}
\lesssim_{\lambda,d,\alpha,\partial\Omega,q} \Big(\frac{r}{R}\Big)^{\alpha-1}
\bigg\{\Big(\dashint_{D_R(x)}|\nabla u_\varepsilon|^2
\Big)^{\frac{1}{2}}
+ R^{1-\frac{d}{q}}\|F\|_{L^q(D_R(x))}\bigg\}
\end{equation}
for any $r\in[\varepsilon\chi_*(x/\varepsilon), R]$ and
$q>\frac{d}{2}$.
\end{proposition}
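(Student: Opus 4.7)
The plan is to adapt Z. Shen's deterministic Campanato-type iteration \cite[Lemma 8.5]{Shen16} to the random setting, treating the Gloria-Neukamm-Otto sublinearity estimates of the first-order corrector $\phi$ and its companion flux corrector as a black-box input through a suitable minimal radius, in the spirit of Armstrong-Smart \cite{Armstrong-Smart16}. The role of the quantitative assumptions $\eqref{a:2}$-$\eqref{a:3}$ is solely to deliver the moment bounds on $\chi_*$; the iteration itself is fully deterministic.

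First I introduce the minimal radius as the smallest $\chi_*(a,y)\geq 1$ such that
\begin{equation*}
\sup_{s\geq \chi_*(a,y)}\frac{1}{s}\bigg(\dashint_{B_s(y)}\Big|(\phi,\sigma)(z)-\dashint_{B_s(y)}(\phi,\sigma)\Big|^2 dz\bigg)^{1/2}\leq \eta_0,
\end{equation*}
where $\sigma=\{\sigma_{jkl}\}$ is the skew-symmetric flux corrector attached to $\phi$ via $a(e_j+\nabla\phi_j)-\bar a e_j=\nabla\cdot\sigma_j$, and $\eta_0=\eta_0(\lambda,d,\alpha,\partial\Omega)\in(0,1)$ is to be fixed below. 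Stationarity of $\nabla\phi$ and $\nabla\sigma$ transfers to stationarity of $\chi_*$, and the sublinearity bound $\eqref{pri:**3}$ together with the higher-moment estimates $\langle|\nabla\phi|^p\rangle^{1/p}\lesssim 1$ (and their analogues for $\sigma$) under $\eqref{a:2}$-$\eqref{a:3}$ yield $\langle\chi_*^\beta\rangle\lesssim 1$ for every $\beta<\infty$.

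Next I perform a one-step improvement on dyadic scales. For a parameter $\theta\in(0,1/8)$ to be chosen, and $r\in[\varepsilon\chi_*(x/\varepsilon),1]$, let $\bar u$ be the solution of $\bar{\mathcal{L}}\bar u=F$ in $D_r(x)$ subject to $\bar u=u_\varepsilon$ on $\partial D_r(x)\setminus\Delta_r(x)$ and $\bar u=0$ on $\Delta_r(x)$. Since $\bar a$ is constant and $\partial\Omega\in C^1$, a classical Morrey-Campanato estimate for constant-coefficient systems in $C^1$ domains gives
\begin{equation*}
\Big(\dashint_{D_{\theta r}(x)}|\nabla\bar u|^2\Big)^{1/2}\leq C_\alpha\theta^{\alpha-1}\bigg\{\Big(\dashint_{D_r(x)}|\nabla\bar u|^2\Big)^{1/2}+r^{2-d/q}\|F\|_{L^q(D_r(x))}\bigg\}.
\end{equation*}
To compare with $u_\varepsilon$ I use the two-scale ansatz $w_\varepsilon:=u_\varepsilon-\bar u-\varepsilon\phi_i^\varepsilon(\partial_i\bar u)\eta_r$, where $\eta_r$ is a smooth cutoff supported in $D_r(x)$ and vanishing within a $C\varepsilon\chi_*(x/\varepsilon)$-neighborhood of $\partial D_r(x)$. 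Using the divergence-form identity for $\sigma$, the energy inequality for $\mathcal{L}_\varepsilon w_\varepsilon$ reduces to terms containing $\varepsilon\phi^\varepsilon$ and $\varepsilon\sigma^\varepsilon$ on the support of $\nabla\eta_r$, which the defining bound of $\chi_*$ at scale $\sim\varepsilon\chi_*$ controls by $\eta_0\|\nabla\bar u\|_{L^2(D_r(x))}$; the boundary-layer loss introduced by the cutoff is absorbed via Meyers' self-improvement and a trace-type inequality on $O_{C\varepsilon\chi_*}\cap D_r(x)$, exploiting $\partial\Omega\in C^1$. This produces the homogenization error bound
\begin{equation*}
\Big(\dashint_{D_r(x)}|\nabla(u_\varepsilon-\bar u)|^2\Big)^{1/2}\lesssim \eta_0\Big(\dashint_{D_r(x)}|\nabla u_\varepsilon|^2\Big)^{1/2}+Cr^{2-d/q}\|F\|_{L^q(D_r(x))}.
\end{equation*}

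Combining the two displays and setting $\Phi(r):=(\dashint_{D_r(x)}|\nabla u_\varepsilon|^2)^{1/2}$, the one-step inequality reads
\begin{equation*}
\Phi(\theta r)\leq\big(C_\alpha\theta^{\alpha-1}+C(\theta)\eta_0\big)\Phi(r)+C(\theta)r^{2-d/q}\|F\|_{L^q(D_r(x))}.
\end{equation*}
Fixing $\theta$ small (depending on $\alpha$) and then $\eta_0$ small in terms of $\theta$ (which pins down $\chi_*$), Shen's iteration lemma \cite[Lemma 8.5]{Shen16} promotes this one-step inequality to $\Phi(r)\lesssim r^{\alpha-1}(\Phi(1)+\|F\|_{L^q(D_1(x))})$ over the full range $r\in[\varepsilon\chi_*(x/\varepsilon),1]$, which is exactly $\eqref{pri:2.1}$. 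The main obstacle is the boundary-layer estimate for $w_\varepsilon$: because $\phi^\varepsilon$ is not pointwise bounded in the random setting, every $L^\infty$-step of the periodic argument must be replaced by an $L^2$-average of $\phi-\dashint\phi$ on balls of radius $\varepsilon\chi_*$, and the trace of $\varepsilon\phi^\varepsilon\nabla\bar u$ on $\partial\Omega\cap B_r(x)$ has to be re-expressed as an interior $L^2$-average over the layer, using the thinness $\varepsilon\chi_*/r$ together with the $C^1$-regularity of the boundary.
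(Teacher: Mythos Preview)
Your overall strategy---approximate by the homogenized solution on $D_r$, control the error through sublinearity of $(\phi,\sigma)$ encoded in a minimal radius, then feed a one-step inequality into Shen's iteration lemma---is the same as the paper's. The gap is the displayed homogenization error bound
\[
\Big(\dashint_{D_r(x)}|\nabla(u_\varepsilon-\bar u)|^2\Big)^{1/2}\lesssim \eta_0\Big(\dashint_{D_r(x)}|\nabla u_\varepsilon|^2\Big)^{1/2}+\cdots,
\]
which is false. Your energy argument for $w_\varepsilon=u_\varepsilon-\bar u-\varepsilon\phi_i^\varepsilon(\partial_i\bar u)\eta_r$ controls $\nabla w_\varepsilon$, but passing from $\nabla w_\varepsilon$ to $\nabla(u_\varepsilon-\bar u)$ reinstates the term $(\nabla\phi_i)^\varepsilon(\partial_i\bar u)\eta_r$, which is $O(1)$ in $L^2$, not $O(\eta_0)$: the minimal-radius condition bounds the sublinear growth of $(\phi,\sigma)$, not the size of $\nabla\phi$. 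Consequently the one-step inequality for $\Phi(r)=(\dashint_{D_r}|\nabla u_\varepsilon|^2)^{1/2}$ does not close.

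The paper's Lemma~\ref{lemma:1} sidesteps this by proving the error at the level of the functions,
\[
\Big(\dashint_{D_r}|u_\varepsilon-\bar u_r|^2\Big)^{1/2}\leq \nu\bigg\{\Big(\dashint_{D_{2r}}|u_\varepsilon|^2\Big)^{1/2}+r\Big(\dashint_{D_{2r}}|F|^{2p'}\Big)^{\frac{1}{2p'}}\bigg\},
\]
which is exactly what corrector sublinearity delivers. The Campanato iteration of \cite[Theorem~5.1]{Shen16} then runs on a quantity comparable to $r^{-1}(\dashint_{D_r}|u_\varepsilon|^2)^{1/2}$, and the gradient estimate $\eqref{pri:2.1}$ is recovered afterwards via Caccioppoli's inequality using $u_\varepsilon=0$ on $\Delta_r$. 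A related technical point: the paper's minimal radius $\eqref{key-1}$ measures oscillations of $(\phi,\sigma)$ in $L^{2p}$ rather than $L^2$; the extra integrability is needed in the error estimate because $\bar u_r$ inherits only $H^1$ boundary data on $\partial D_{2r}\setminus\partial\Omega$ and one appeals to Meyers' self-improvement there.
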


\begin{lemma}[extended correctors \cite{Gloria-Neukamm-Otto20}]\label{lemma:*3}
Assume that $\langle\cdot\rangle$ satisfies stationary and ergodic\footnote{In terms of ``ergodic'', we refer the reader to \cite[pp.105]{Gloria-Neukamm-Otto20} or \cite[pp.222-225]{Jikov-Kozlov-Oleinik94} for the definition.} conditions.
Then, there exist two random  fields $\{\phi_i\}_{i=1}^{d}$
and $\{\sigma_{ijk}\}_{i,j,k=1}^{d}$ with the following
properties: the gradient fields $\nabla\phi_i$ and
$\nabla\sigma_{ijk}$ are stationary in the sense of
$\nabla\phi_{i}(a;x+z)=\nabla\phi_i(a(\cdot+z);x)$ for any shift
vector $z\in\mathbb{R}^d$ (and likewise for $\nabla\sigma_{ijk}$).
Moreover, $(\phi,\sigma)$, which are called extended correctors, satisfy
\begin{equation*}%\label{eq:*2.1}
\begin{aligned}
\nabla\cdot a(\nabla\phi_i+e_i) &= 0;\\
\nabla\cdot\sigma_i &= q_i:= a(\nabla\phi_i+e_i)-\bar{a}e_i;\\
-\Delta\sigma_{ijk} &= \partial_j q_{ik} - \partial_k q_{ij},
\end{aligned}
\end{equation*}
in distributional sense on $\mathbb{R}^d$ with
$\bar{a}e_i:=\big\langle a(\nabla\phi_i +e_i)\big\rangle$ and
$(\nabla\cdot\sigma_i)_j:=\partial_k\sigma_{ijk}$. Also,
the field $\sigma$ is skew-symmetric in its last indices, that is
$\sigma_{ijk} = -\sigma_{ikj}$.
%\begin{equation}\label{}
% \sigma_{ijk} = -\sigma_{ikj}.
%\end{equation}
Furthermore, if $\langle\cdot\rangle$ additionally satisfies
the spectral gap condition $\eqref{a:2}$, as well as $\eqref{a:3}$.
Let $\mu_d$ be given as in $\eqref{pri:**3}$.
For any $p\in[1,\infty)$, there hold $\langle|\nabla\phi|^p\rangle^{\frac{1}{p}}
\lesssim_{\lambda,\lambda_1,\lambda_2,d,p} 1$,
\begin{equation}\label{pri:*2}
\Big\langle\big|\int_{\mathbb{R}^d}\nabla (\phi,\sigma)\cdot g\big|^{2p}\Big\rangle^{\frac{1}{p}}
\lesssim_{\lambda,\lambda_1,\lambda_2,d,p} \int_{\mathbb{R}^d}|g|^2
\end{equation}
for all deterministic vector fields $g$, and
\begin{equation*}
 \Big\langle\big|(\phi,\sigma)(x)
 -(\phi,\sigma)(0)\big|^p \Big\rangle^{\frac{1}{p}}
 \lesssim_{\lambda,\lambda_1,\lambda_2,d,p}
 \left\{\begin{aligned}
 &(1+|x|)^{\frac{1}{2}} &\quad& d=1;\\
 &\mu_d(x) &\quad& d\geq 2.
 \end{aligned}\right.
\end{equation*}
\end{lemma}

\begin{proof}
See \cite[Lemma 1]{Gloria-Neukamm-Otto20} coupled with \cite[Proposition 4.1]{Josien-Otto22}.
\end{proof}

\begin{lemma}[qualitative error]\label{lemma:1}
Let $\Omega$ be a bounded Lipschitz domain, and $\varepsilon\in(0,1]$.
Assume the same assumptions as in Proposition
$\ref{P:1}$. Let $p\in(1,\infty)$, $\gamma\in[1,\infty]$ and $p',\gamma'$ be the associated conjugate index satisfying
$0<p'-1\ll 1$ and $0<\gamma'-1\ll 1$. Then, for any $\nu>0$, there exist $\theta\in (0,1)$, depending on $\lambda,d,\gamma,p,m_0$, and $\nu$, and the stationary random field:
\begin{equation}\label{key-1}
\chi_{*}(0):=
 \inf\Big\{l\geq 1:
 \Big(\frac{1}{R}\Big)^{\frac{1}{\gamma p'}}
 \Big(\dashint_{B_{2R}}
 |(\phi,\sigma)-(\phi,\sigma)_{2R}|^{2p}\Big)^{\frac{1}{p}}\leq \theta, ~\forall R\geq l\Big\}\vee \theta^{-p},
\end{equation}
such that
%$\langle \chi_*^\beta\rangle\lesssim_{\lambda,\lambda_1,\lambda_2,d,\beta} 1$ for any $\beta<\infty
%$,
%\begin{equation}\label{key:1}
%\Big(\frac{1}{R}\Big)^{\frac{1}{\gamma p'}}
% \Big(\dashint_{B_{2R}}
% |(\phi,\sigma)-(\phi,\sigma)_{2R}|^{2p}\Big)^{\frac{1}{p}}\leq \theta
% \qquad \forall R\geq \chi_*,
%\end{equation}
for any solution $u_\varepsilon$ of $\eqref{pde:1}$ with $x=0$, there exists $\bar{u}_r\in H^1(D_{2r})$ satisfying
$\nabla\cdot \bar{a} \nabla \bar{u}_r = F$ in $D_{2r}$
with $\bar{u}_r = u_\varepsilon$ on $\partial D_{2r}$, such that
\begin{equation}\label{pri:1}
\dashint_{D_r}|u_\varepsilon - \bar{u}_r|^2 \leq \nu^2
\Big\{
 \dashint_{D_{2r}}|u_\varepsilon|^2
+ r^2\Big(\dashint_{D_{2r}}|F|^{2p'}\Big)^{\frac{1}{2p'}}
\Big\}
\end{equation}
holds for any $r\geq \varepsilon\chi_*(0)$. Moreover,
we have $\langle\chi_*^\beta\rangle\lesssim_{\lambda,\lambda_1,\lambda_2,d,\beta} 1$
for any $\beta\in[1,\infty)$.
\end{lemma}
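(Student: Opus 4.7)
The plan is to adapt the classical two-scale expansion of periodic homogenization to the stochastic setting by using the extended correctors $(\phi,\sigma)$ from Lemma~\ref{lemma:*3}. Set $R:=r/\varepsilon\geq\chi_*(0)$, and let $\eta\in C_0^\infty(D_{2r})$ be a cutoff with $\eta\equiv 1$ on $D_{2r-\rho}$, $|\nabla\eta|\lesssim\rho^{-1}$, and $\rho\sim r$ to be tuned. Introduce the corrected difference
\begin{equation*}
w_\varepsilon \;:=\; u_\varepsilon - \bar u_r - \varepsilon\,\eta\,\phi_i^\varepsilon\,\partial_i\bar u_r,
\end{equation*}
which vanishes on $\partial D_{2r}$ since $u_\varepsilon=\bar u_r$ there and $\eta|_{\partial D_{2r}}=0$; this is the natural test function because it satisfies a homogeneous Dirichlet condition on the artificial cut.

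Using the corrector equation, the flux identity $a(\nabla\phi_i+e_i)=\bar a e_i+\nabla\cdot\sigma_i$, and the skew-symmetry of $\sigma$ from Lemma~\ref{lemma:*3}, one rewrites $-\nabla\cdot a^\varepsilon\nabla w_\varepsilon$ in divergence form, with two kinds of flux: an interior ``oscillation~$\times$~Hessian'' term of the shape $\varepsilon\bigl(\phi^\varepsilon-(\phi^\varepsilon)_{2r},\,\sigma^\varepsilon-(\sigma^\varepsilon)_{2r}\bigr)\nabla(\eta\,\nabla\bar u_r)$, and a boundary-collar term $(1-\eta)(\bar a-a^\varepsilon)\nabla\bar u_r$ supported in the $\rho$-neighborhood of $\partial D_{2r}$. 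Testing with $w_\varepsilon$ and applying H\"older with the conjugate pair $(2p,2p')$ for the $a$-oscillations and $(2\gamma,2\gamma')$ for the Hessian of $\bar u_r$, together with Meyers' higher-integrability and Caccioppoli's inequality for the constant-coefficient solution $\bar u_r$ (legitimate because $p',\gamma'$ are taken close enough to $1$), I would arrive at
\begin{equation*}
\dashint_{D_{2r}}|\nabla w_\varepsilon|^2 \;\lesssim\; \Big[R^{-\frac{1}{\gamma p'}}\Big(\dashint_{B_{2R}}|(\phi,\sigma)-(\phi,\sigma)_{2R}|^{2p}\Big)^{\frac{1}{2p}}\Big]^{2}\, r^{-2}\Big\{\dashint_{D_{2r}}|u_\varepsilon|^2+r^2\Big(\dashint_{D_{2r}}|F|^{2p'}\Big)^{\frac{1}{p'}}\Big\}
\end{equation*}
plus a collar remainder of order $(\rho/r)^{1/\gamma'}$. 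By the very definition~\eqref{key-1} of $\chi_*(0)$, the bracketed quantity is bounded by $\theta^{2}$. The triangle inequality $\|u_\varepsilon-\bar u_r\|_{L^2(D_r)}\leq\|w_\varepsilon\|_{L^2(D_r)}+\varepsilon\|\phi^\varepsilon\nabla\bar u_r\|_{L^2(D_r)}$, Poincar\'e for $w_\varepsilon$, and $\varepsilon\|\phi^\varepsilon\|_{L^{2\gamma}(D_{2r})}\lesssim r\theta$ at scale $R\geq\chi_*(0)$ then deliver~\eqref{pri:1} upon fixing $\rho\sim r$ and choosing $\theta=\theta(\nu)$ sufficiently small.

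Stationarity of $\chi_*$ is built in by construction, while the moment bound $\langle\chi_*^\beta\rangle\lesssim 1$ for every $\beta<\infty$ follows by Markov's inequality applied to the stationary $L^{2p}$ oscillation estimate $\bigl\langle|(\phi,\sigma)(x)-(\phi,\sigma)(0)|^{2p}\bigr\rangle^{1/(2p)}\lesssim\mu_d(x)$ from Lemma~\ref{lemma:*3}, summed over a dyadic sequence of radii: since $\mu_d$ grows at most logarithmically, the tail of the defining event in~\eqref{key-1} is stretched-exponential and all polynomial moments of $\chi_*$ are finite. The main obstacle I foresee is the simultaneous matching of the two H\"older pairs $(p,p')$ and $(\gamma,\gamma')$: the smallnesses $p'-1$ and $\gamma'-1$ must be compatible with Meyers' theorem on $\bar u_r$ and with extracting the deterministic power $R^{-1/(\gamma p')}$ from~\eqref{key-1}, while $p$ and $\gamma$ must remain finite so that the oscillation moments of $(\phi,\sigma)$ stay bounded. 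A careful tracking of the boundary-collar scaling $\rho/r$ then closes the loop.
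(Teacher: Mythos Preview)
Your approach is correct and is precisely the two-scale expansion argument the paper has in mind (the paper omits the proof and defers to \cite[Lemma~2.2]{Wang-Xu22}, which follows the same route: corrected difference $w_\varepsilon$, flux corrector $\sigma$ to put the error in divergence form, H\"older with the pairs $(2p,2p')$ and $(2\gamma,2\gamma')$, Meyers for $\bar u_r$, and the definition of $\chi_*$ to absorb the oscillation factor). Two small points to tighten: first, ``$\rho\sim r$'' must mean $\rho=c(\nu)\,r$ with $c$ chosen small so that the collar term $(\rho/r)^{1/\gamma'}=c^{1/\gamma'}\leq \nu^2/2$---otherwise that term is $O(1)$ and does not vanish; second, ``stretched-exponential'' overstates the tail: what Chebyshev with arbitrarily high moments $q$ actually gives is polynomial decay $\mathbb{P}(\chi_*>l)\lesssim_q (\theta\, l^{1/(\gamma p')}/\mu_d(l)^2)^{-q}$, which is enough for $\langle\chi_*^\beta\rangle<\infty$ for all $\beta$ but is not an exponential bound.
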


\begin{remark}
\emph{If the admissible coefficient $a$ is symmetry, we can choose $\gamma=1$. The proof of Lemma $\ref{lemma:1}$ is similar to that given in \cite[Lemma 2.2]{Wang-Xu22}, so is omitted here. Based upon
the approximating error $\eqref{pri:1}$ at large scales,
it is not hard to
show the proof of Proposition $\ref{P:1}$ by an analogous argument
developed in \cite[Theorem 5.1]{Shen16}, and is therefore left to the reader.}
\end{remark}

%\medskip
%\textbf{Proof of Proposition $\ref{P:1}$.}
%The existence of the minimal radius $\chi_*$ is given by $\eqref{key-1}$, and its $\beta$-th moment estimate consequently follows from the fluctuation estimate $\eqref{pri:3}$. We first show
%the estimate $\eqref{pri:2.1}$ for the case $x=0$. On account of the estimate $\eqref{pri:1}$ and boundary H\"older regularity of $\bar{u}_r$, we can derive the desired estimate $\eqref{pri:2.1}$, and a rigorous proof can be found in \cite[Theorem 1]{Wang-Xu23-2}. For the case $x\not=0$, by
%rescaling argument, it is not hard to see the corresponding estimate $\eqref{pri:1}$ holds for $r\geq \varepsilon\chi_*(x/\varepsilon)$,
%and the rest of the proof is the same as the previous proof.

\begin{corollary}[Uniform H\"older estimates]\label{corollary:2.1}
Let $\alpha\in(0,1)$ and $\chi_*$ with $\varepsilon_1$ be given as in
Proposition $\ref{P:1}$.
Assume the same conditions as in Proposition $\ref{P:1}$. Let $u_\varepsilon$ be a weak solution to $\eqref{pde:1}$
with given $x\in\partial\Omega$ and $R\geq 1$. Then, there exists a stationary random field given by
$\mathcal{H}_*(z):=[a]_{C^{0,\sigma}(B_1(z/\varepsilon))}^{
\frac{1}{\sigma}(\frac{d}{2}+1)}
\chi_{*}^{\frac{d}{2}}(z/\varepsilon)$ with $z\in\mathbb{R}^d$
and $\varepsilon\in(0,\varepsilon_1]$,
such that
\begin{equation}\label{pri:3}
\Big(\dashint_{D_r(x)}|\nabla u_\varepsilon|^2
\Big)^{\frac{1}{2}}
\lesssim_{\lambda,d,\alpha,\partial\Omega,q}
\mathcal{H}_*(x) \Big(\frac{r}{R}\Big)^{\alpha-1}
\bigg\{
\Big(\dashint_{D_R(x)}|\nabla u_\varepsilon|^2
\Big)^{\frac{1}{2}}
+R\Big(\dashint_{D_R(x)}|F|^q
\Big)^{\frac{1}{q}}
\bigg\}
\end{equation}
holds for any $0<r\leq R$ and $q>(d/2)$. Also, for any $\beta\in[1,\infty)$,  we have
\begin{equation}\label{random-1}
 \big\langle\mathcal{H}_{*}^{\beta}(z)\big\rangle^{1/\beta}
 \lesssim_{\lambda,\lambda_1,\lambda_2,d,\beta} 1
 \qquad \forall z\in\mathbb{R}^d.
\end{equation}
Moreover, if $\Omega$ is the bounded uniform $C^1$ domain and $\varepsilon\in(0,1]$,
for any $B$ with $B\subset D_{R}$ or $x_B\in\Delta_R$, one can obtain that
%suppose that $u_\varepsilon$ is a solution to
%the equations $\mathcal{L}_\varepsilon(u_\varepsilon) = 0$ in
%$B_r(x)\cap\Omega$
%with $u_\varepsilon = 0$ on $B_r(x)\cap\partial\Omega$, then
\begin{equation}\label{pri:2.6}
\begin{aligned}
&|u_\varepsilon(x_B)|
\lesssim
\mathcal{H}_*(x_B)\bigg\{\Big(\dashint_{B\cap\Omega}|u_\varepsilon|^2
\Big)^{1/2}
+ r_B^2\Big(\dashint_{B\cap\Omega}|F|^q
\Big)^{1/q}\bigg\};\quad \text{and}\\
&[u_\varepsilon]_{C^{0,\alpha}(x_B)}
\lesssim r_B^{1-\alpha}\mathcal{H}_*(x_B)
\bigg\{\Big(\dashint_{B\cap\Omega}|\nabla u_\varepsilon|^2
\Big)^{1/2}
+ r_B\Big(\dashint_{B\cap\Omega}|F|^q
\Big)^{1/q}\bigg\},
\end{aligned}
\end{equation}
where $[u_\varepsilon]_{C^{0,\alpha}(x_B)}$ is the $\alpha$-H\"older
coefficient of $u_\varepsilon$ at $x_B\in\Delta_R$ with respect to $B\cap\Omega$ (see e.g. \cite[pp.52]{Gilbarg-Trudinger-01}). The multiplicative constants depend on
$\lambda, d$, $m_0$, $\omega(t)$ in $\eqref{boundary-2}$,  and $\alpha$ at most.
\end{corollary}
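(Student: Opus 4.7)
The plan is to combine Proposition~\ref{P:1}, which handles the large scales $r\geq\rho(x):=\varepsilon\chi_*(x/\varepsilon)$, with classical small-scale Schauder/H\"older estimates valid at the $\varepsilon$-scale (where $a^\varepsilon$ rescales to a unit-scale $C^{0,\sigma}$ coefficient). The random field $\mathcal{H}_*$ is exactly the multiplicative constant produced by gluing the two regimes together.

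For the large-scale regime, given $R\in(0,1]$ I would consider the rescaling $v(y):=u_\varepsilon(Ry)/R$, which satisfies $\mathcal{L}_{\varepsilon/R}(v)=RF(R\,\cdot\,)$ on $D_{4/R}(x/R)$ and vanishes on $\Delta_{4/R}(x/R)$. By stationarity of $\chi_*$, the associated minimal radius for $v$ at the boundary point $x/R$ equals $(\varepsilon/R)\chi_*(x/\varepsilon)$, so the hypothesis of Proposition~\ref{P:1} for $v$ translates to the hypothesis $r\geq\rho(x)$ for $u_\varepsilon$. Applying \eqref{pri:2.1} to $v$ and unwinding the rescaling yields \eqref{pri:3} with prefactor $1$ in place of $\mathcal{H}_*(x)$, valid in the regime $\rho(x)\leq r\leq R\leq 1$.

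For the small-scale regime $r<\rho(x)$, I would perform a further rescaling by $\varepsilon$: the function $w(y):=u_\varepsilon(\varepsilon y)/\varepsilon$ satisfies $-\nabla\cdot a(y)\nabla w=\varepsilon F(\varepsilon y)$ on a domain with the \emph{unit-scale} coefficient $a\in C^{0,\sigma}_{\mathrm{loc}}$. A classical boundary Schauder/H\"older estimate (via freezing the coefficient near $x/\varepsilon$ and flattening the boundary) yields an $L^\infty$-gradient bound on $D_{1/2}(x/\varepsilon)$ in terms of the $L^2$-average and the $L^q$-norm of the right-hand side, with multiplicative constant polynomial in $[a]_{C^{0,\sigma}(B_1(x/\varepsilon))}^{1/\sigma}$. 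Unwinding the rescaling and converting this pointwise bound into the $L^2$-average on $D_r(x)$---the volume inflation from a ball of radius $r/\varepsilon\leq\chi_*$ up to one of comparable radius contributing the factor $\chi_*^{d/2}$---delivers exactly the exponents $(d/2+1)/\sigma$ on $[a]_{C^{0,\sigma}(B_1(x/\varepsilon))}$ and $d/2$ on $\chi_*(x/\varepsilon)$ that appear in $\mathcal{H}_*(x)$. Splitting the range $[r,R]$ at $\rho(x)$ and concatenating the two estimates then gives \eqref{pri:3}.

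The moment bound \eqref{random-1} follows from H\"older's inequality combined with the all-moment control of $\chi_*$ (Lemma~\ref{lemma:1}) and of $[a]_{C^{0,\sigma}(B_1(z/\varepsilon))}$, which by stationarity is identically distributed to $[a]_{C^{0,\sigma}(B_1)}$ and hence enjoys \eqref{a:3}. For \eqref{pri:2.6}, the H\"older seminorm at $x_B\in\Delta_2$ follows from \eqref{pri:3} by a standard Campanato-type characterization (exploiting the Dirichlet condition $u_\varepsilon(x_B)=0$), while the $L^\infty$ bound at an interior center $B\subset D_4$ follows from the interior analog of \eqref{pri:3} (proved identically with $B_r$ in place of $D_r$) together with a Morrey-type embedding. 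The main obstacle is the small-scale step: one has to carefully track the powers $(d/2+1)/\sigma$ and $d/2$ through both the $\varepsilon$-rescaling and the pointwise-to-average conversion, so that the final product matches the stated form of $\mathcal{H}_*$; the rest is routine rescaling and standard embedding.
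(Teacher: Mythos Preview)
Your proposal is correct and follows essentially the same strategy as the paper: Proposition~\ref{P:1} handles scales $r\geq\varepsilon\chi_*(x/\varepsilon)$, classical $C^{0,\sigma}$-regularity (rescaled by $\varepsilon$) handles scales $r\leq\varepsilon$, and the moment bound~\eqref{random-1} and the Morrey/Campanato step for~\eqref{pri:2.6} are exactly as you describe. The paper organizes the small-scale regime into two separate cases rather than one: a ``middle'' range $\varepsilon\leq r<\varepsilon\chi_*(x/\varepsilon)$ handled purely by the trivial enlargement $\dashint_{D_r}\lesssim(\varepsilon\chi_*/r)^d\dashint_{D_{\varepsilon\chi_*}}\leq\chi_*^d\dashint_{D_{\varepsilon\chi_*}}$, and a ``small'' range $0<r<\varepsilon$ handled by the classical estimate (in the paper via $[u_\varepsilon]_{C^{0,\alpha}}$ and Caccioppoli rather than an $L^\infty$-gradient bound, but your Schauder variant works equally well). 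This three-case split makes transparent that the factor $\chi_*^{d/2}$ arises \emph{only} from the middle-scale volume inflation $D_\varepsilon\hookrightarrow D_{\varepsilon\chi_*}$, not from the pointwise-to-average conversion on $D_r$ as your phrasing suggests; your two-regime description blurs this slightly, since for $\varepsilon<r<\varepsilon\chi_*$ the $L^\infty$-bound on $D_{\varepsilon/2}(x)$ does not control $D_r(x)$ and one must instead use the direct enlargement. This is a presentational point rather than a gap, and once you split at both $\varepsilon$ and $\varepsilon\chi_*$ your argument goes through verbatim.
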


\begin{remark}\label{remark:2.1}
\emph{$\mathcal{H}_*$ given in Corollary $\ref{corollary:2.1}$ is a
scaling-invariant quantity\footnote{To see this, let $x=Ry$ with $x\in D_R$
and $\varepsilon'=\varepsilon/R$. One can observe that
$y/\varepsilon' = x/\varepsilon$ and therefore
$\mathcal{H}_{*}(y)=\mathcal{H}_{*}(x)$.
}, and so are the estimated constants in
the estimates $\eqref{pri:2.6}$.
}
\end{remark}

%$[a]_{C^{0,\sigma}(B_1(y/\varepsilon'))}
%\chi_{*}(y/\varepsilon')=
%[a]_{C^{0,\sigma}(B_1(x/\varepsilon))}
%\chi_{*}(x/\varepsilon)$
%Set $\tilde{u}_{\varepsilon'}(y)= u_\varepsilon(Ry)=u_\varepsilon(x)$, we have $\nabla\cdot a(\cdot/\varepsilon')\nabla \tilde{u}_{\varepsilon'} = 0$ in $\tilde{D}_1$ and
%$\tilde{u}_{\varepsilon'} = 0$ on $\tilde{\Delta}_1$, where the
%boundary function $y_d=\tilde{\psi}_{R}(y_1,\cdots,y_{d-1})
%=:\frac{1}{R}\psi(Ry_1,\cdots,Ry_{d-1})$ and
%$|(y_1,\cdots,y_{d-1})|\leq 1$.
%Applying the estimate $\eqref{pri:2.6}$
%to $\tilde{u}_{\varepsilon'}$, the estimated constant is given by
%$H_{*}(y)$

\begin{proof}
It suffices to show $\eqref{pri:3}$ and $\eqref{pri:2.6}$ in the case of $\varepsilon\in(0,\varepsilon_1]$, since the estimate $\eqref{pri:2.6}$ in
the case of $\varepsilon\in(\varepsilon_1,1]$ can be absorbed into
the former one by a blow up argument.
The arguments to the estimate $\eqref{pri:3}$
can be divided into three parts according to the scales:
For the ease of statement, it is fine to assume $F=0$.

Case 1: $\varepsilon\chi_*(x/\varepsilon)\leq r\leq R$ is for the large scales, which had done in $\eqref{pri:2.1}$;

Case 2: $\varepsilon\leq r<\varepsilon\chi_*(x/\varepsilon)$ is for the middle scales. We have
\begin{equation}\label{f:2.27}
\begin{aligned}
 \big[\chi_*(\frac{x}{\varepsilon})\big]^{-d}
& \dashint_{D_r(x)}|\nabla u_\varepsilon|^2
 \lesssim_{d} \Big(\frac{\varepsilon}{r}\Big)^{d}
\dashint_{D_{\varepsilon\chi_*(x/\varepsilon)}(x)}|\nabla u_\varepsilon|^2\\
& \lesssim^{\eqref{pri:2.1}}
\Big(\frac{\varepsilon\chi_*(x/\varepsilon)}{R}\Big)^{2(\alpha-1)}
 \dashint_{D_R(x)}|\nabla u_\varepsilon|^2
\lesssim
\Big(\frac{r}{R}\Big)^{2(\alpha-1)}\dashint_{D_R(x)}|\nabla u_\varepsilon|^2.
\end{aligned}
\end{equation}

Case 3: $0<r<\varepsilon$ is for the small scales. By applying classical H\"older estimates (the accurate dependence of the coefficient $a$ can be found by a covering technique (see e.g. \cite[Lemma A.3]{Josien-Otto22})), and Proposition $\ref{P:1}$, we obtain
\begin{equation*}
\begin{aligned}
 \dashint_{D_r(x)}|\nabla u_\varepsilon|^2
& \lesssim_{d} r^{2\alpha-2}[u_\varepsilon]_{C^{0,\alpha}(D_{\varepsilon}(x))}^2
\lesssim (r/\varepsilon)^{2\alpha-2}
[a]_{C^{0,\sigma}(B_1(x/\varepsilon))}^{\frac{2}{\sigma}(\frac{d}{2}+1)}
\dashint_{D_{2\varepsilon}(x)}|\nabla u_\varepsilon|^2\\
&\lesssim^{\eqref{f:2.27}}
[a]_{C^{0,\sigma}(B_1(x/\varepsilon))}^{\frac{2}{\sigma}(\frac{d}{2}+1)}
\big[\chi_*(x/\varepsilon)\big]^d
\Big(\frac{r}{R}\Big)^{2(\alpha-1)}\dashint_{D_R}|\nabla u_\varepsilon|^2,
\end{aligned}
\end{equation*}
where from the second inequality above, the estimated constant begins to depend on $\lambda, d$, $M_0$, $\omega(t)$ in $\eqref{boundary-2}$,  and $\alpha$ at most.

Combining the three cases, and taking $\mathcal{H}_*(z):=[a]_{C^{0,\sigma}(B_1(z/\varepsilon))}^{\frac{1}{\sigma}(\frac{d}{2}+1)}
\chi_{*}^{\frac{d}{2}}(z/\varepsilon)$ finally
leads to the desired estimate $\eqref{pri:3}$ in the case of $\varepsilon\in(0,\varepsilon_1]$. For the case of $\varepsilon\in(\varepsilon_1,1]$,
it is not hard to observe that
by setting $\alpha_{\varepsilon}:=\varepsilon_1/\varepsilon$ it concludes that $\alpha_{\varepsilon}\in[\varepsilon_1,1)$ and  $a(x/\varepsilon)=a(\alpha_{\varepsilon}x/\varepsilon_1)
=:\tilde{a}(x/\varepsilon_1)$. Thus, one can appeal
to the former case by taking
$\mathcal{H}_*(\tilde{z}):=
[\tilde{a}]_{C^{0,\sigma}(B_1(\tilde{z}/\varepsilon_1))
}^{\frac{1}{\sigma}(\frac{d}{2}+1)}
\chi_{*}^{\frac{d}{2}}(\tilde{z}/\varepsilon_1)$ with
$\tilde{z}=\alpha_\varepsilon z$. On the other hand,
we note that $[\tilde{a}]_{C^{0,\sigma}(B_{1}(\tilde{z}/\varepsilon_1))}
= \alpha_\varepsilon^\sigma[a]_{C^{0,\sigma}
(B_{\alpha_\varepsilon}(z/\varepsilon))}
\leq [a]_{C^{0,\sigma}(B_{1}(z/\varepsilon))}$, and this leads to
the desired result.
Concerned with the stated estimate $\eqref{pri:2.6}$,  we can
employ Morrey's theorem (see e.g.
\cite[Theorem 7.19]{Gilbarg-Trudinger-01}) together with the estimate $\eqref{pri:3}$
to achieve $\eqref{pri:2.6}$.
%and \textcolor[rgb]{1.00,0.00,0.00}{the multiplicative constant
%in $\eqref{pri:3}$ will additionally rely on $\varepsilon_1$.}
%As a result, the case $R\geq \varepsilon$ follows from a rescaling argument, while the case $0<R<\varepsilon$ can be derived from the classical regularity estimates.
Finally, the estimate $\eqref{random-1}$ follows from
the assumption $\eqref{a:3}$ and Proposition $\ref{P:1}$.
\end{proof}

\subsection{Lipschitz estimates}

\begin{proposition}\label{P:3}
Let $\Omega$ be a bounded $C^{1,\tau}$ domain with $\tau\in(0,1]$, and
$\varepsilon\in(0,1]$. Suppose that $\langle\cdot\rangle$ satisfies the spectral gap condition $\eqref{a:2}$
and the admissible coefficients satisfy the regularity condition $\eqref{a:3}$. Let $u_\varepsilon$ be a weak solution to $\eqref{pde:1}$ with
$x=0$, $F=0$, and $R\geq 1$ therein.
Then, there exists a stationary random field $\chi_{**}\geq 1$ with $\langle\chi_{**}^\beta\rangle\lesssim_{\lambda,\lambda_1,\lambda_2,d,\beta} 1$ for any $\beta\in[1,\infty)$, such that for all $\varepsilon\leq \varepsilon_{2}:=\big(\inf_{x\in\mathbb{R}^d}
      \inf_{a}\chi_{**}(a,x)\big)^{-1}$ we have
\begin{equation}\label{pri:2.4}
\Big(\dashint_{D_r}|\nabla u_\varepsilon|^2
\Big)^{1/2}
\lesssim_{\lambda,d,\Omega,\tau}
\Big(\dashint_{D_R}|\nabla u_\varepsilon|^2
\Big)^{1/2}
\end{equation}
for any $r\in[\varepsilon\chi_{**}(0),R]$.
\end{proposition}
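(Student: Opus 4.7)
The plan is to mirror the structure used for Proposition \ref{P:1} and deploy Z. Shen's simplified Campanato iteration \cite[Lemma 8.5]{Shen16}, but now with a subtracted linear part to exploit the extra $\tau$-regularity of the $C^{1,\tau}$ boundary. After a local $C^{1,\tau}$ diffeomorphism flattening $\partial\Omega$ near $\{0\}$ so that $\Omega\cap B_1\subset\{x_d>0\}$, I introduce the excess
\begin{equation*}
H(r) := \inf_{q \in \mathbb{R}^d}\, \frac{1}{r}\Big(\dashint_{D_r}|u_\varepsilon - q\, x_d|^2\Big)^{1/2},
\end{equation*}
together with an augmented version $\widetilde{H}(r) := H(r) + C_{\partial\Omega}\, r^{\tau} \big(\dashint_{D_r}|\nabla u_\varepsilon|^2\big)^{1/2}$ designed to absorb the lower-order terms generated by the straightening diffeomorphism.

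First, by the same construction as in Lemma \ref{lemma:1} (applied to $u_\varepsilon - q\, x_d$, which solves an equation with right-hand side of divergence form $\nabla\cdot (a^\varepsilon q e_d)$ after the flattening), I would produce, for each $r \geq \varepsilon\chi_*(0)$, a homogenized approximation $\bar u_r$ on $D_{2r}$ with $\bar u_r = u_\varepsilon$ on $\partial D_{2r}$ and the qualitative error
\begin{equation*}
\Big(\dashint_{D_r}|u_\varepsilon - \bar u_r|^2\Big)^{1/2} \leq \nu\, r\, \widetilde{H}(2r).
\end{equation*}
Secondly, since $\bar{u}_r$ is a weak solution of the constant-coefficient operator $-\nabla\cdot \bar a \nabla$ vanishing on the $C^{1,\tau}$ portion $\Delta_{2r}$, classical boundary Schauder estimates yield, for any $\theta\in(0,1/4]$,
\begin{equation*}
\inf_{q\in\mathbb{R}^d}\, \frac{1}{\theta r}\Big(\dashint_{D_{\theta r}}|\bar u_r - q\, x_d|^2\Big)^{1/2} \leq C\, \theta^{\tau}\, \widetilde{H}(2r).
\end{equation*}
Combining via the triangle inequality gives the one-step contraction
\begin{equation*}
\widetilde{H}(\theta r) \leq C\bigl(\theta^{\tau} + \theta^{-1}\nu\bigr)\widetilde{H}(2r),
\end{equation*}
and selecting first $\theta$ so small that $C\theta^{\tau} \leq 1/4$, and then $\nu$ so small that $C\theta^{-1}\nu \leq 1/4$, converts this into $\widetilde{H}(\theta r) \leq \tfrac{1}{2}\widetilde{H}(2r)$.

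Iterating on the geometric sequence $r_k=2\theta^k$ yields $\widetilde H(r)\lesssim \widetilde H(1)$ uniformly for $r\in[\varepsilon\chi_{**}(0), 1]$, where $\chi_{**}$ is defined by the same mechanism as $\chi_*$ in \eqref{key-1} but with the smallness parameter $\theta$ now dictated by the exponent $\tau$ (and a correspondingly larger floor $\theta^{-p}$). Standard Caccioppoli-type upgrades then convert the $L^2$ control of $u_\varepsilon - q\, x_d$ into the $L^2$ control of $\nabla u_\varepsilon$, yielding \eqref{pri:2.4}. The moment bound $\langle\chi_{**}^\beta\rangle\lesssim 1$ for all $\beta<\infty$ follows by the same route as for $\chi_*$: smallness of the oscillation of $(\phi,\sigma)$ is achieved almost surely thanks to Lemma \ref{lemma:*3}, and the $\beta$-moments result from higher-moment estimates on the correctors.

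The most delicate point is the quantitative coupling between $\theta$ and $\nu$: the Schauder constant in the second step blows up as $\theta\to 0$, while $\nu$ must be chosen \emph{after} $\theta$ to ensure contractivity; this hierarchy fixes the threshold entering the definition of $\chi_{**}$ and forces the additional floor compared to $\chi_*$. A secondary difficulty is the geometric linearization of $\partial\Omega$ at each scale: the $C^{1,\tau}$ curvature contributes lower-order terms of size $r^{\tau}$ that must be absorbed into the augmented excess $\widetilde H$, and one must verify that they do not destroy the contraction factor over the entire iteration range.
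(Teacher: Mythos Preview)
Your proposed one-step contraction $\widetilde H(\theta r)\leq\tfrac12\widetilde H(2r)$ does not follow from the qualitative approximation of Lemma~\ref{lemma:1}. The estimate produced by Lemma~\ref{lemma:1} (with $F=0$) is
\[
\Big(\dashint_{D_r}|u_\varepsilon-\bar u_r|^2\Big)^{1/2}\leq \nu\Big(\dashint_{D_{2r}}|u_\varepsilon|^2\Big)^{1/2},
\]
and the right-hand side is \emph{not} controlled by $r\widetilde H(2r)$: writing $u_\varepsilon=(u_\varepsilon-q_{2r}x_d)+q_{2r}x_d$ one picks up, in addition to $rH(2r)$, the slope contribution $r|q_{2r}|$, which is of order $r\big(\dashint_{D_{2r}}|\nabla u_\varepsilon|^2\big)^{1/2}$ and does not decay. (Applying the approximation instead to $u_\varepsilon-q\,x_d$ does not help: the induced right-hand side $\nabla\cdot(a^\varepsilon q e_d)$ reproduces exactly the same $r|q|$ term in the error.) The correct one-step inequality therefore has the form
\[
\widetilde H(\theta r)\leq \tfrac12\,\widetilde H(2r)+C\nu\,|q_{2r}|,
\]
and since $|q_{2r}|$ stays of order one in the Lipschitz regime while the iteration visits $\sim\log(1/r)$ scales, the accumulated remainder diverges logarithmically and the argument collapses. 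This is the well-known obstruction: a constant rate $\omega\equiv\nu$ fails the Dini condition $\int_0^1\omega(r)\,\tfrac{dr}{r}<\infty$ underlying Shen's iteration \cite[Lemma 8.5]{Shen16}; qualitative smallness suffices for H\"older (Proposition~\ref{P:1}) but not for Lipschitz.

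The paper repairs this by replacing Lemma~\ref{lemma:1} with the \emph{quantitative} approximation of Lemma~\ref{lemma:2}, which supplies a genuine power rate $\epsilon=c_*(0)(\varepsilon/r)^{\sigma_0}$ in place of your fixed $\nu$. The slope is then controlled separately (Step~2 of the paper's proof) via $|M_\rho|\lesssim|M_1|+G(1)+\int_{\rho/2}^{1}G(r)\,\tfrac{dr}{r}$, and the decay of $\epsilon$ makes the integral $\int_\rho^{1/2}\epsilon(r)\,[\cdots]\,\tfrac{dr}{r}$ absorbable (Step~3, estimate~\eqref{f:2.14}). Correspondingly, $\chi_{**}$ is defined not through the threshold mechanism of~\eqref{key-1} but as $\chi_{**}=(4C_\vartheta c_*)^{1/\sigma_0}$ in terms of the random constant $c_*$ of~\eqref{key-2}. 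To fix your argument you must swap in Lemma~\ref{lemma:2}, keep the slope term explicit in the iteration, and run the integral version of Shen's lemma rather than a direct geometric contraction.
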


\begin{lemma}[quantitative error]\label{lemma:2}
Let $\Omega$ be a bounded Lipschitz domain, and $\varepsilon\in(0,1]$.
Assume the same conditions on the ensemble $\langle\cdot\rangle$ as those in Proposition
$\ref{P:3}$. Let $p$ with $\gamma$ be given as in Lemma $\ref{lemma:1}$, and additionally satisfies $\gamma>\frac{1}{4(p'-1)}$ and $\sigma_0:=1/(4\gamma p')$.
Then, there exists a stationary random field
\begin{equation}\label{key-2}
  c_*(0):=\sup_{R\geq 1} R^{-2\sigma_0}\Big(\dashint_{B_{2R}}
  |(\phi,\sigma)-(\phi,\sigma)_{2R}|^{2p}\Big)^{\frac{1}{p}}\vee 1,
\end{equation}
 with $\langle c_{*}^\beta\rangle
\lesssim_{\lambda,\lambda_1,\lambda_2,d,\beta}1$
for any $\beta\in[1,\infty)$, such that for any
solution $u_\varepsilon$ to $\eqref{pde:1}$ with $x=0$ and $F=0$ therein,
there exists the approximating function $\bar{u}_r\in H^1(D_{2r})$ satisfying $\nabla\cdot \bar{a} \nabla \bar{u}_r = 0$ in $D_{2r}$
with $\bar{u}_r = u_\varepsilon$ on $\partial D_{2r}$, such that
\begin{equation}\label{pri:2.3}
\dashint_{D_r}|u_\varepsilon - \bar{u}_r|^2
\lesssim_{\lambda,d,p,\gamma,m_0}
 c_*(0)\Big(\frac{\varepsilon}{r}
 \Big)^{2\sigma_0}\dashint_{D_{2r}}|u_\varepsilon|^2
\end{equation}
holds for any $r\geq \varepsilon$, where $m_0$ as stated in $\eqref{boundary-1}$ denotes the (local) Lipschitz
character of $\Omega$.
\end{lemma}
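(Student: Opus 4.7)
The plan is to adapt Shen's two-scale expansion trick to the Dirichlet problem in the random setting, upgrading the qualitative approximation of Lemma~\ref{lemma:1} to a quantitative rate by threading the extended correctors $(\phi,\sigma)$ from Lemma~\ref{lemma:*3} through the energy identity. First I set $w_\varepsilon:=u_\varepsilon-\bar u_r\in H^1_0(D_{2r})$, which satisfies $\mathcal{L}_\varepsilon w_\varepsilon=-\nabla\cdot[(a^\varepsilon-\bar a)\nabla\bar u_r]$, and fix a cutoff $\eta\in C_c^\infty(D_{7r/4})$ with $\eta\equiv 1$ on $D_{3r/2}$ and $|\nabla\eta|\lesssim r^{-1}$. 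Introduce the corrected error
\[
z_\varepsilon:=w_\varepsilon-\varepsilon\,\eta\,\phi_j^{\varepsilon}\,\partial_j\bar u_r\in H^1_0(D_{2r}).
\]
Using the corrector identity $a(e_j+\nabla\phi_j)-\bar a e_j=\nabla\cdot\sigma_j$ together with the skew-symmetry $\sigma_{ijk}=-\sigma_{ikj}$, the equation for $z_\varepsilon$ can be put in divergence form whose right-hand side collects, schematically, $\varepsilon$ times $\bigl((\phi,\sigma)^\varepsilon-(\phi,\sigma)_{2R}\bigr)$ contracted with first derivatives of $\eta$ and of $\nabla\bar u_r$, where $R:=r/\varepsilon$. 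The subtraction of the constant $(\phi,\sigma)_{2R}$ is free and is precisely what activates the scale-dependent fluctuation quantity appearing in~\eqref{key-2}.

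Next I apply the Poincar\'e--energy estimate to $z_\varepsilon$ and split the resulting integral by H\"older with dual exponents $(p,p')$ for the correctors against the derivatives of $\bar u_r$, and with $(\gamma,\gamma')$ to trade the small measure of the boundary layer $\text{supp}(\nabla\eta)$ against the higher integrability of $\nabla\bar u_r$ up to $\partial D_{2r}$ furnished by Meyers' theorem for constant-coefficient equations on a Lipschitz domain. After rescaling to $B_{2R}$, the averaged $L^{2p}$-fluctuation of $(\phi,\sigma)$ is bounded by $c_*(0)\,R^{2\sigma_0}$ directly from~\eqref{key-2}, and combined with the $\varepsilon^2$ prefactor this produces the claimed gain $c_*(0)(\varepsilon/r)^{2\sigma_0}$. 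The bulk term involving $\eta|\nabla^2\bar u_r|$ is absorbed using interior Caccioppoli inequalities for $\bar u_r$ inside $\{\eta=1\}$, while the boundary-layer term involving $|\nabla\eta||\nabla\bar u_r|$ is controlled by the global energy estimate for $\bar u_r$ on $D_{2r}$ together with the Caccioppoli inequality for $u_\varepsilon$ on $\Delta_4$, which converts $\|\nabla u_\varepsilon\|_{L^2(D_{2r})}^2$ into $r^{-2}\dashint_{D_{2r}}|u_\varepsilon|^2$ (up to a harmless enlargement of the ball). Translating the $H^1_0$-bound on $z_\varepsilon$ back into an $L^2$-bound on $w_\varepsilon$ via $w_\varepsilon=z_\varepsilon+\varepsilon\eta\,\phi_j^\varepsilon\,\partial_j\bar u_r$ then yields~\eqref{pri:2.3}.

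The main obstacle is exactly this boundary-layer term generated by $\nabla\eta$: since the correctors do not vanish on $\partial\Omega$, one cannot extend the cutoff to all of $D_{2r}$ without destroying the Dirichlet condition of $z_\varepsilon$, and the cost is an $L^{2\gamma'}$-bound on $\nabla\bar u_r$ right up to the Lipschitz boundary. The technical requirements $\gamma>1/(4(p'-1))$ and the specific choice $\sigma_0=1/(4\gamma p')$ are tuned exactly so that $2\gamma'$ stays inside the narrow Meyers' window available on a merely Lipschitz domain while still leaving the prescribed power $(\varepsilon/r)^{2\sigma_0}$. Finally, for the stochastic integrability $\langle c_*^\beta\rangle\lesssim_{\lambda,\lambda_1,\lambda_2,d,\beta}1$, I would replace $\sup_{R\geq 1}$ by a dyadic sup over $R_k=2^k$, invoke the pointwise bound $\langle|(\phi,\sigma)(x)-(\phi,\sigma)(0)|^q\rangle^{1/q}\lesssim\mu_d(x)$ from Lemma~\ref{lemma:*3} together with stationarity to bound each dyadic term $R_k^{-2\sigma_0\beta}\langle\bigl[\dashint_{B_{2R_k}}|(\phi,\sigma)-(\phi,\sigma)_{2R_k}|^{2p}\bigr]^{\beta/p}\rangle$, and then sum the resulting geometric series in $k$, the polynomial decay $R_k^{-2\sigma_0\beta}$ easily absorbing the at-most-logarithmic growth of $\mu_d(R_k)^{2\beta}$.
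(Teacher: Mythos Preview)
Your strategy---the two-scale ansatz $z_\varepsilon=w_\varepsilon-\varepsilon\eta\phi_j^\varepsilon\partial_j\bar u_r$, the flux-corrector identity, Meyers' improvement for $\nabla\bar u_r$, and the dyadic summation for $\langle c_*^\beta\rangle$---is the correct one and is what the paper has in mind (the proof there is omitted with a pointer to \cite[Lemma 2.2]{Wang-Xu22}, which is built on exactly these ingredients).

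However, your cutoff is misconfigured in a way that breaks the argument. When you put the equation for $z_\varepsilon$ in divergence form, besides the terms that genuinely carry a factor $\varepsilon(\phi,\sigma)^\varepsilon$ you also produce the residual
\[
(a^\varepsilon-\bar a)\,(1-\eta)\,\nabla\bar u_r,
\]
which has \emph{no} $\varepsilon$-prefactor and is supported on $\{\eta<1\}$. With your choice $\eta\equiv1$ on $D_{3r/2}$ and $\operatorname{supp}(\eta)\subset D_{7r/4}$, this set has measure comparable to $|D_{2r}|$, so the term contributes $O(1)$ to the energy estimate and the bound collapses; your description of the right-hand side as ``schematically $\varepsilon$ times $(\phi,\sigma)^\varepsilon$'' misses precisely this piece. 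Relatedly, the $(\gamma,\gamma')$ split you invoke ``to trade the small measure of the boundary layer $\operatorname{supp}(\nabla\eta)$'' buys nothing, since that annulus of width $r/4$ is not small. The remedy is to take $\eta=1$ outside a \emph{thin} layer of width $t$ around all of $\partial D_{2r}$ (this also guarantees $z_\varepsilon\in H^1_0(D_{2r})$, which fails with a purely radial cutoff since $\partial_n\bar u_r$ need not vanish on $\Delta_{2r}$), with $|\nabla\eta|\lesssim t^{-1}$ and $|\{\eta<1\}|\sim t\,r^{d-1}$. Choosing $t\sim\varepsilon$ and running H\"older with $(p,p')$ on the correctors and $(\gamma,\gamma')$ on the layer indicator, the dominant contribution balances the layer-measure gain $(\varepsilon/r)^{1/(\gamma p')}$ against the corrector growth $c_*(0)(r/\varepsilon)^{2\sigma_0}$; with $\sigma_0=1/(4\gamma p')$ these combine to exactly $c_*(0)(\varepsilon/r)^{2\sigma_0}$, which is \eqref{pri:2.3} and explains the otherwise mysterious choice of $\sigma_0$.
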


\begin{proof}
The proof is similar to that given in \cite[Lemma 2.2]{Wang-Xu22},
and is therefore omitted.
\end{proof}

\medskip

\textbf{Proof of Proposition $\ref{P:3}$.}
Let $0<\alpha\leq \tau$ and $0<\vartheta\leq 1/4$, and
$\chi_{**}$ is the stationary random field whose expression is
determined by
$\chi_{**}:=(4C_{\vartheta}c_*)^{1/\sigma_0}$,
where $\sigma_0$ together with $c_*$
is given as in Lemma $\ref{lemma:2}$, and $C_{\vartheta}$
depends on $\lambda,d,p,\gamma,m_0$, and $\vartheta$.
The proof has been divided into three steps.

%Let $0<\alpha<1$ and $0<\varepsilon<\varepsilon_0$ (where $\varepsilon_0$
%is given in Lemma $\ref{lemma:2}$, and the case $\varepsilon\in(\varepsilon_0,1]$ is trivially reduced to the classical Schauder theory by rescaling arguments).
%Let $\chi_{**}$ be the stationary random field whose expression will be
%determined later on, and for any fixed $r$ satisfying $\varepsilon\chi_{**}\leq r\leq (1/4)$. The proof has been divided into
%three steps.

\textbf{Step 1.} Establish an iteration inequality.
We first construct
the approximating solution $\bar{u}_r$ to $\nabla\cdot\bar{a}\nabla \bar{u}_r = 0$ in $D_{2r}$ with $\bar{u}_r = u_\varepsilon$
on $\partial D_{2r}$.
For simplicity, we introduce the following
notation
\begin{equation*}
G(t;v):= \frac{1}{t}\inf_{M\in\mathbb{R}^{d\times d}}\Big(\dashint_{D_t}|v-Mx|^2\Big)^{\frac{1}{2}},
\end{equation*}
and let $M_t$ be such that $G(t;v) = \big(\dashint_{D_t}|v-M_tx|^2\big)^{\frac{1}{2}}$.
Then, by the boundary Schauder estimate for elliptic operator with constant coefficients (see e.g. \cite[Theorem 5.21]{Giaquinta-Martinazzi12} or
\cite[Theorem 4.16]{Gilbarg-Trudinger-01}), for any $0<\vartheta\leq1/4$, we have
\begin{equation}\label{f:2.9}
\begin{aligned}
G(\vartheta r;\bar{u}_r)
\leq^{\eqref{pri:8.1*}} C_\alpha \vartheta^{\alpha} G(r;\bar{u}_r),
\end{aligned}
\end{equation}
where $C_\alpha$ depends on $\lambda,d,\tau,\alpha$, and $\Omega$\footnote{When $R \leq 1$, the constant $C_{\alpha}$ on $\Omega$ only depends on the regularity of the local boundary of $\Omega$. However, as $R$ may approach the scale of $\Omega$, the constant $C_{\alpha}$ on $\Omega$ also acquires a dependence on the size of the domain, specifically on $R_0$. To facilitate scaling arguments in subsequent sections, we provide in the appendix a more precise characterization of how this estimate depends on $\Omega$, quantified in terms of
$R_0^{\tau}M_0$ (see Lemma $\ref{lemma:8.1}$).}.
Let $\epsilon:= c_*(0)(\varepsilon/r)^{\sigma_0}$. Appealing to
the approximating results stated in Lemma $\ref{lemma:2}$, there holds
\begin{equation*}
\begin{aligned}
 G(\vartheta r;u_\varepsilon)
&\leq \frac{1}{\vartheta r}
\Big(\dashint_{D_{\vartheta r}}|u_\varepsilon-\bar{u}_r|^2\Big)^{\frac{1}{2}}
+ G(\vartheta r;\bar{u}_r)\\
&\lesssim_d\vartheta^{-\frac{d}{2}-1}\frac{1}{r}
\Big(\dashint_{D_{r}}|u_\varepsilon-\bar{u}_r|^2\Big)^{\frac{1}{2}}
+ G(\vartheta r;\bar{u}_r)\\
&\leq^{\eqref{pri:2.3},\eqref{f:2.9}} C_{\vartheta}\epsilon
\Big(\dashint_{D_{2r}}|u_\varepsilon|^2\Big)^{\frac{1}{2}}
+ C_\alpha\vartheta^\alpha G(r;\bar{u}_r).
\end{aligned}
\end{equation*}
By using the triangle inequality and $\eqref{pri:2.3}$ again,
we obtain the iteration inequality
\begin{equation}\label{f:33}
\begin{aligned}
G(\vartheta r;u_\varepsilon)
%&\leq
%C_0\vartheta^\alpha G(r;u_\varepsilon)+
%C_{\vartheta}\frac{\epsilon}{2r}
%\Big(\dashint_{D_{2r}}|u_\varepsilon|^2\Big)^{\frac{1}{2}} \\
&\leq
C_\alpha\vartheta^\alpha G(r;u_\varepsilon)+
C_{\vartheta}\epsilon
\Big\{G(2r;u_\varepsilon)+|M_{2r}|\Big\}.
\end{aligned}
\end{equation}

\textbf{Step 2.} Control the slop $|M_{2r}|$ in $\eqref{f:33}$, i.e.,
for any $\rho\in[\varepsilon\chi_{**},R/2]$, we claim that
\begin{equation}\label{f:2.15}
  |M_{\rho}|
  \lesssim_{d} \bigg\{
   |M_R| + G(R,u_\varepsilon)
+  \int_{\rho/2}^{R}\frac{G(r,u_\varepsilon)}{r}dr\bigg\}.
\end{equation}

We start from the mean-value property of linear functions.
For any $r\leq s,t\leq 2r$, there holds
\begin{equation*}
 |M_s-M_t|\lesssim_{d} \frac{1}{r}
   \Big(\dashint_{D_{r}}
   |M_{s}x-M_{t}x|^2\Big)^{\frac{1}{2}}
   \lesssim G(2r;u_\varepsilon),
\end{equation*}
which further implies
\begin{equation}\label{f:2.10}
\sup_{r\leq s,t\leq 2r}\big||M_s|-|M_t|\big|
\lesssim G(2r;u_\varepsilon).
\end{equation}
Then, let $\rho\in [\varepsilon\chi_{**},R/2]$, and it follows from
$\eqref{f:2.10}$ that
\begin{equation*}
\begin{aligned}
\int_{\rho}^{R/2}\frac{|M_r|}{r}dr
&\leq \int_{\rho}^{R/2}\frac{|M_{2r}|}{r}dr
+ C\int_{\rho}^{R/2}\frac{G(2r;u_\varepsilon)}{r}dr,
\end{aligned}
\end{equation*}
where $C$ depends only on $d$.
By changing variables, the above inequality
gives us
\begin{equation}\label{f:2.11}
\begin{aligned}
\int_{\rho}^{2\rho}\frac{|M_r|}{r}dr
&\leq \int_{\frac{R}{2}}^{R}\frac{|M_r|}{r}dr
+ C \int_{2\rho}^{R}\frac{G(r,u_\varepsilon)}{r}dr\\
&\lesssim^{\eqref{f:2.10}} |M_R| + G(R,u_\varepsilon)
+ C \int_{2\rho}^{R}\frac{G(r,u_\varepsilon)}{r}dr.
\end{aligned}
\end{equation}

In light of $\eqref{f:2.10}$, there holds
$|M_{\rho}|\leq |M_r|+CG(2\rho,u_\varepsilon)$
for any fixed $\rho\in [\varepsilon\chi_{**},R/2]$ and $\rho\leq r\leq 2\rho$, and we further derive that
\begin{equation*}
\begin{aligned}
   |M_{\rho}| &\lesssim
   \int_{\rho}^{2\rho}\frac{|M_r|}{r}dr +
   G(2\rho,u_\varepsilon)\\
   &\lesssim^{\eqref{f:2.11}} \bigg\{
   |M_R| + G(R,u_\varepsilon)
+  G(2\rho,u_\varepsilon)
+  \int_{2\rho}^{R}\frac{G(r,u_\varepsilon)}{r}dr\bigg\}.
\end{aligned}
\end{equation*}

This together with the estimate
\begin{equation}\label{f:2.1}
G(2\rho,u_\varepsilon) \lesssim_{d} \int_{2\rho}^{R}\frac{G(r,u_\varepsilon)}{r}dr + G(R,u_\varepsilon)
\end{equation}
leads to the desired estimate $\eqref{f:2.15}$.
As a result, plugging the estimate $\eqref{f:2.15}$ back into
$\eqref{f:33}$, we further arrive at
\begin{equation}\label{f:2.12}
\begin{aligned}
G(\vartheta r;u_\varepsilon)
%&\leq
%C_0\vartheta^\alpha G(r;u_\varepsilon)+
%C\vartheta^{-\frac{d}{2}-1}\epsilon
%\Big\{G(2r;u_\varepsilon)+|M_{2r}|\Big\}\\
&\leq
C_\alpha\vartheta^\alpha G(r;u_\varepsilon)+
+ C_{\vartheta}\epsilon
\bigg\{
   |M_R| + G(R,u_\varepsilon)
+  \int_{r}^{R}\frac{G(s,u_\varepsilon)}{s}ds\bigg\}.
\end{aligned}
\end{equation}

\textbf{Step 3.} Establish the desired estimate $\eqref{pri:2.4}$.
We start from integrating both sides of $\eqref{f:2.12}$ from
$\rho$ to $R$ with respect to the measure $dr/r$,
\begin{equation}\label{f:2.13}
\begin{aligned}
\int_{\rho}^{R}
G(\vartheta r;u_\varepsilon)\frac{dr}{r}
&\leq
C_{\alpha}\vartheta^\alpha
\int_{\rho}^{R}
G(r;u_\varepsilon)\frac{dr}{r}
+ C_\vartheta
\int_{\rho}^{R}\epsilon
\Big\{|M_R| + G(R,u_\varepsilon)\Big\}\frac{dr}{r}\\
&+ C_\vartheta
\int_{\rho}^{R}\epsilon
\int_{r}^{R}\frac{G(s,u_\varepsilon)}{s}ds\frac{dr}{r}.
\end{aligned}
\end{equation}
Then, we first handle the last term in the right-hand side above. Recalling $\epsilon=c_*(0)(\varepsilon/r)^{\sigma_0}$ one can carried out a straightforward computation (and there is no confusion as we omitted $0$ in $c_*(0)$). By using
Fubini's theorem and the condition $\rho\geq \varepsilon\chi_{**}$
in order, we have
\begin{equation}\label{f:2.14}
\begin{aligned}
C_{\vartheta}c_*\varepsilon^{\sigma_{0}}
&\int_{\rho}^{R}
\int_{r}^{R}\frac{G(s,u_\varepsilon)}{s}
ds\frac{dr}{r^{1+\sigma_{0}}}
= C_{\vartheta}c_*\varepsilon^{\sigma_{0}}
\int_{\rho}^{R}\frac{G(s,u_\varepsilon)}{s}ds
\int_{\rho}^{s}\frac{dr}{r^{1+\sigma_{0}}}\\
&\lesssim  C_{\vartheta}c_*\varepsilon^{\sigma_{0}}\rho^{-\sigma_{0}}
\int_{\rho}^{R}\frac{G(r,u_\varepsilon)}{r}dr
\leq   C_{\vartheta}\chi_{**}^{-\sigma_{0}}
\int_{\rho}^{R}\frac{G(r,u_\varepsilon)}{r}dr
\leq\frac{1}{4}
\int_{\rho}^{R}\frac{G(r,u_\varepsilon)}{r}dr,
\end{aligned}
\end{equation}
where we also employ $\chi_{**} = (4C_\vartheta c_*)^{\frac{1}{\sigma_{0}}}$
in the last inequality. Combining the estimates $\eqref{f:2.13}$ and
$\eqref{f:2.14}$, choosing some $\vartheta\in(0,1/4]$ such
that $C_{\alpha}\vartheta^{\alpha}\leq (1/4)$,
and changing variables, one can obtain that
\begin{equation*}
\int_{\vartheta\rho}^{\vartheta R}
G(r;u_\varepsilon)\frac{dr}{r}
\leq \frac{1}{2}\int_{\rho}^{R}\frac{G(r,u_\varepsilon)}{r}dr
+\Big\{|M_R| + G(R,u_\varepsilon)\Big\},
\end{equation*}
which further implies
%\begin{equation}\label{}
%\int_{\vartheta\rho}^{\vartheta/2}
%G(r;u_\varepsilon)\frac{dr}{r}
%\leq \int_{\vartheta/2}^{1/2}\frac{G(r,u_\varepsilon)}{r}dr
%+\Big\{|M_1| + G(1,u_\varepsilon)\Big\},
%\end{equation}
%which implies
\begin{equation}\label{f:2.16}
\begin{aligned}
\int_{\vartheta\rho}^{R}
G(r;u_\varepsilon)\frac{dr}{r}
\leq 2\int_{\vartheta R}^{R}\frac{G(r,u_\varepsilon)}{r}dr
+2\Big\{|M_R| + G(R,u_\varepsilon)\Big\}
\lesssim_{\vartheta} \Big\{|M_R| + G(R,u_\varepsilon)\Big\}.
\end{aligned}
\end{equation}

Consequently, for any $\varepsilon\chi_{**}\leq r\leq R$ there holds
\begin{equation}\label{f:2.17}
G(r;u_\varepsilon)+ |M_r|
\lesssim^{\eqref{f:2.1},\eqref{f:2.15},\eqref{f:2.16}} \Big\{|M_R| + G(R,u_\varepsilon)\Big\}.
\end{equation}
Therefore, it follows from
Caccioppoli's inequality, the estimate $\eqref{f:2.17}$, and Poincar\'e's inequality that
\begin{equation*}
\begin{aligned}
\Big(\dashint_{D_r}|\nabla u_\varepsilon|^2\Big)^{\frac{1}{2}}
&\lesssim \frac{1}{r}\Big(\dashint_{D_r}|u_\varepsilon|^2\Big)^{\frac{1}{2}}
\leq  G(r,u_\varepsilon) + |M_r| \\
&\lesssim^{\eqref{f:2.17}} \Big\{|M_R| + G(R,u_\varepsilon)\Big\}
\lesssim \frac{1}{R}\Big(\dashint_{D_R}|u_\varepsilon|^2\Big)^{\frac{1}{2}}
+|M_R|
\lesssim \Big(\dashint_{D_R}|\nabla u_\varepsilon|^2\Big)^{\frac{1}{2}},
\end{aligned}
\end{equation*}
which leads to the desired estimate $\eqref{pri:2.4}$, and
this completes the whole proof.
\qed

\medskip

\begin{corollary}[uniform Lipschitz estimates]\label{corollary:2.2}
Let $\chi_{**}$, $\varepsilon_2$, $\Omega$, and $\tau$ be given as in
Proposition $\ref{P:3}$. Assume
the ensemble $\langle\cdot\rangle$ satisfies $\eqref{a:2}$ and
$\eqref{a:3}$.
Let $u_\varepsilon$ be a weak solution to $\eqref{pde:1}$
with $F=0$ and $R\geq 1$ therein.
Then, there exists a stationary random field given by
$\mathcal{C}_*(z):=
[a]_{C^{0,\sigma}(B_1(z/\varepsilon))}^{\frac{1}{\sigma}(\frac{d}{2}+1)}
\chi_{**}^{\frac{d}{2}}(z/\varepsilon)$ with $z\in\mathbb{R}^d$
and $\varepsilon\in(0,\varepsilon_2]$,
such that
\begin{equation}\label{pri:20}
\Big(\dashint_{D_r(x)}|\nabla u_\varepsilon|^2
\Big)^{\frac{1}{2}}
\lesssim_{\lambda,d,\Omega,\tau}
\mathcal{C}_*(x)
\Big(\dashint_{D_R(x)}|\nabla u_\varepsilon|^2
\Big)^{\frac{1}{2}}
\quad\text{with}\quad
\langle\mathcal{C}_{*}^\beta
\rangle^{\frac{1}{\beta}}\lesssim 1
\end{equation}
holds for all $0<r\leq R$ and $\beta\in[1,\infty)$.
Moreover, if $\Omega$ is the bounded uniform $C^{1,\tau}$ domain and
$\varepsilon\in(0,1]$, for any $B$ with $B\subset D_R$ or $x_B\in\Delta_R$, and for any $1\leq p<\bar{p}<\infty$, we obtain that
\begin{subequations}
\begin{align}
 &|\nabla u_\varepsilon(x_B)|\lesssim
 \mathcal{C}_*(x_B) \Big(\dashint_{B\cap\Omega}|\nabla u_\varepsilon|^2
\Big)^{1/2};\label{pri:2.11} \\
 &\big\langle |\nabla u_\varepsilon(x_B)|^p
\big\rangle^{\frac{1}{p}}
\lesssim
\Big\langle\Big(\dashint_{B\cap\Omega}|\nabla u_\varepsilon|^2
\Big)^{\bar{p}/2}\Big\rangle^{\frac{1}{\bar{p}}}, \label{pri:2.10}
\end{align}
\end{subequations}
where the multiplicative constant depends
on $\lambda,\lambda_1,\lambda_2,d,\tau,m_0,R_0^{\tau}M_0$, and $p$ at most.
\end{corollary}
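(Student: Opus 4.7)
My plan is to mirror the three-scale dissection used in the proof of Corollary \ref{corollary:2.1}, now with Proposition \ref{P:3} (large-scale Lipschitz) playing the role that Proposition \ref{P:1} played there. As in that proof, it suffices to treat $\varepsilon\in(0,\varepsilon_2]$; the remaining range $\varepsilon\in(\varepsilon_2,1]$ will be absorbed into the prefactor by a blow-up/covering argument, at the cost of an additional dependence on $\varepsilon_2$. By the scale invariance of $\mathcal{C}_*$ (analogous to Remark \ref{remark:2.1}), I may also rescale to $r_B=1$ whenever convenient.

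To establish \eqref{pri:20}, for $r\in[\varepsilon\chi_{**}(x/\varepsilon),1]$ the bound follows directly from Proposition \ref{P:3}. For the middle range $\varepsilon\leq r<\varepsilon\chi_{**}(x/\varepsilon)$, I use the trivial volume comparison
\begin{equation*}
\dashint_{D_r(x)}|\nabla u_\varepsilon|^2
\lesssim_d \Big(\frac{\varepsilon\chi_{**}(x/\varepsilon)}{r}\Big)^{d}
\dashint_{D_{\varepsilon\chi_{**}(x/\varepsilon)}(x)}|\nabla u_\varepsilon|^2
\end{equation*}
and apply Proposition \ref{P:3} again to pull the right-hand side to the unit scale, paying a factor $\chi_{**}^{d}(x/\varepsilon)$ before taking square roots. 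For $0<r<\varepsilon$, I rescale $v(y):=\varepsilon^{-1}u_\varepsilon(\varepsilon y+x)$, which solves $\nabla\cdot a\nabla v=0$, and invoke the classical (interior or boundary) Schauder/Lipschitz estimate on $D_1$, whose constant is controlled by a power of $[a]_{C^{0,\sigma}(B_1(x/\varepsilon))}$ via an $L^2$-to-$L^\infty$ gradient bootstrap together with a covering argument in the spirit of \cite[Lemma A.3]{Josien-Otto22}. The three regimes combine to give \eqref{pri:20} with $\mathcal{C}_*(z)$ as stated, and the moment bound $\langle\mathcal{C}_*^\beta\rangle\lesssim 1$ follows from H\"older's inequality together with the stationarity and finite moments of $\chi_{**}$ from Proposition \ref{P:3} and of $[a]_{C^{0,\sigma}(B_1)}$ from \eqref{a:3}.

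For the pointwise estimate \eqref{pri:2.11}, after reducing to $r_B=1$ the small-scale part of the argument above yields
\begin{equation*}
|\nabla u_\varepsilon(x_B)|\lesssim \mathcal{C}_*(x_B)\Big(\dashint_{B\cap\Omega}|\nabla u_\varepsilon|^2\Big)^{1/2}
\end{equation*}
for both interior and boundary-centered balls, after combining with Caccioppoli's inequality to pass from an $L^2$-bound on $u_\varepsilon$ to one on $\nabla u_\varepsilon$. The annealed bound \eqref{pri:2.10} then follows by raising \eqref{pri:2.11} to the $p$-th power, taking $\langle\cdot\rangle$, and applying H\"older's inequality with exponents $\bar p/p$ and $\bar p/(\bar p-p)$; this produces a factor $\langle\mathcal{C}_*^{p\bar p/(\bar p-p)}(x_B)\rangle^{(\bar p-p)/(p\bar p)}$, which is finite by the preceding moment bound, multiplied by the desired $L^{\bar p}(\langle\cdot\rangle)$-norm of the averaged gradient.

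The only delicate point I anticipate lies in the small-scale Schauder step: tracking how the coefficient's $C^{0,\sigma}$-seminorm enters the gradient estimate, in order to justify the exact exponent $\tfrac{1}{\sigma}(\tfrac{d}{2}+1)$ on $[a]_{C^{0,\sigma}(B_1(\cdot/\varepsilon))}$. This is completely deterministic and is dispatched exactly as in the small-scale step for Corollary \ref{corollary:2.1}, so no new stochastic input is required beyond the moment control already supplied by Proposition \ref{P:3} and \eqref{a:3}.
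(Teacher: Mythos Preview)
Your proposal is correct and follows exactly the approach the paper intends: the paper's own proof reads ``The idea is analogous to that given for Corollary~\ref{corollary:2.1}, and so is omitted,'' and your three-scale dissection (large scale via Proposition~\ref{P:3}, middle scale via volume comparison, small scale via classical Schauder with \cite[Lemma~A.3]{Josien-Otto22}) is precisely that analogy. The only minor quibble is that Caccioppoli is not really needed for \eqref{pri:2.11}---the pointwise gradient bound follows directly by combining the small-scale classical Lipschitz estimate with \eqref{pri:20}---but this does not affect correctness.
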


\begin{proof}
The idea is analogous to that given for Corollary $\ref{corollary:2.1}$, so the proof is omitted.
\end{proof}

\begin{remark}
\emph{Both $\mathcal{C}_*$ and $(m_0,R_0^{\tau}M_0)$ shown in Corollary $\ref{corollary:2.2}$
are invariants under the scale transformation.
The estimated constants in $\eqref{pri:2.11}$ and
$\eqref{pri:2.10}$ depend on $m_0$ and $R_0^{\tau}M_0$ at most polynomially, and its specific form is identical to that given for $\eqref{pri:8.1}$ in Lemma $\ref{lemma:8.1}$.
%For consistency with traditional statements, in the following sections, we shortly use the multiplicative constant $C$ depending on $\Omega$ to represent the dependence on $R_0^{\tau}M_0$.
}
\end{remark}

\section{Green functions}\label{sec:3}

\noindent
The main ideas of this section may be found in
the literature \cite{Shen18} for the periodic homogenization.
Based on the work in the previous section,
the estimated constants obtained in this section are invariant under the scale transformation.
To avoid potential confusion, we mention that the notation  $\sigma$ appeared in this section (e.g. in Proposition $\ref{P:2}$) has no relation to that in $\eqref{a:3}$.

\begin{proposition}[annealed decays estimates]\label{P:4}
Let $\Omega\subset\mathbb{R}^d$ (with $d\geq 2$) be a bounded uniform $C^{1,\tau}$ domain with $\tau\in(0,1)$, and
$\varepsilon\in(0,1]$.
Suppose that the ensemble $\langle\cdot\rangle$ satisfies
$\eqref{a:2}$ and $\eqref{a:3}$, and $G_\varepsilon(\cdot,\cdot)$ is the Green function
associated with $\mathcal{L}_\varepsilon$ and $\Omega$.
Then, for any $p\in[1,\infty)$, there hold
\begin{equation}\label{pri:19}
\left\{\begin{aligned}
&\langle|\nabla_x G_\varepsilon(x,y)|^p\rangle^{\frac{1}{p}}
\lesssim
\frac{1}{|x-y|^{d-1}}
\min\Big\{1,\frac{\delta(y)}{|x-y|}\Big\};\\
&\langle|\nabla_y G_\varepsilon(x,y)|^p\rangle^{\frac{1}{p}}
\lesssim \frac{1}{|x-y|^{d-1}}
\min\Big\{1,\frac{\delta(x)}{|x-y|}\Big\};\\
&\langle|\nabla_y\nabla_x G_\varepsilon(x,y)|^p\rangle^{\frac{1}{p}}
\lesssim \frac{1}{|x-y|^{d}}
\end{aligned}\right.
\end{equation}
for any $x,y\in\Omega$ with $x\not=y$,
where we recall that $\delta(x):=\text{dist}(x,\partial\Omega)$, and
the multiplicative constants depend on $\lambda$, $\lambda_1$, $\lambda_2$, $d$, $m_0$, $R_0^{\tau}M_0$, $\tau$, and $p$.
\end{proposition}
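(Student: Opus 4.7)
The plan is to deduce Proposition \ref{P:4} from the quenched (sample-wise) pointwise bounds furnished by Lemma \ref{lemma:3}, namely bounds of the form
$$|\nabla_x G_\varepsilon(x,y)|\lesssim \mathcal{C}_*(x)\,\mathcal{F}(y)\,\frac{1}{|x-y|^{d-1}}\min\Big\{1,\frac{\delta(y)}{|x-y|}\Big\},$$
and its symmetric / mixed counterparts, where $\mathcal{C}_*$ is the random Lipschitz factor produced by Corollary \ref{corollary:2.2} and $\mathcal{F}(y)$ is a local quantity built from $\mathcal{C}_*^*$, the analogous factor associated with the adjoint operator $\mathcal{L}_\varepsilon^*$. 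The starting observation is that $G_\varepsilon(\cdot,y)$ solves $\mathcal{L}_\varepsilon u=0$ in $\Omega\setminus\{y\}$ with zero Dirichlet data, while $G_\varepsilon(x,\cdot)$ solves $\mathcal{L}_\varepsilon^* v=0$ in $\Omega\setminus\{x\}$ with zero Dirichlet data, so Corollaries \ref{corollary:2.1}--\ref{corollary:2.2} apply in either variable (the adjoint operator satisfies the same assumptions and produces the same type of random factors).

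Concretely, the quenched $L^\infty$ Green function bound $|G_\varepsilon(x,y)|\lesssim \mathcal{H}_*(x)\mathcal{H}_*^*(y)|x-y|^{-(d-2)}$ (with a logarithmic correction when $d=2$) follows in Avellaneda--Lin style from the uniform boundary H\"older estimate \eqref{pri:2.6}. Inserting this into the uniform boundary Lipschitz estimate \eqref{pri:2.11} applied to $G_\varepsilon(\cdot,y)$ on the ball $B(x,|x-y|/4)\cap\Omega$ produces the quenched gradient bound with the prefactor $\mathcal{C}_*(x)$; the symmetric bound on $\nabla_y G_\varepsilon$ is obtained by swapping $\mathcal{L}_\varepsilon\leftrightarrow\mathcal{L}_\varepsilon^*$. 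The mixed derivative comes from one more iteration of \eqref{pri:2.11} in $x$ applied to $\nabla_y G_\varepsilon(\cdot,y)$, which still solves $\mathcal{L}_\varepsilon u=0$ in $\Omega\setminus\{y\}$ with zero boundary data; this yields a quenched bound $\mathcal{C}_*(x)\mathcal{C}_*^*(y)/|x-y|^d$. From these quenched bounds, the annealed estimates \eqref{pri:19} follow immediately by taking $p$-th moments and distributing with Cauchy--Schwarz,
$$\big\langle|\nabla_x G_\varepsilon(x,y)|^p\big\rangle^{\frac{1}{p}}\leq \frac{1}{|x-y|^{d-1}}\min\Big\{1,\frac{\delta(y)}{|x-y|}\Big\}\,\big\langle\mathcal{C}_*(x)^{2p}\big\rangle^{\frac{1}{2p}}\big\langle\mathcal{F}(y)^{2p}\big\rangle^{\frac{1}{2p}},$$
and invoking \eqref{random-1}, \eqref{pri:20}, and their adjoint analogues to absorb both factors into a universal constant.

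The only substantive point is securing the \emph{linear} improvement $\delta(y)/|x-y|$ at the quenched level, rather than the H\"older power $(\delta(y)/|x-y|)^\alpha$ which the boundary H\"older estimate would directly give. I would achieve this by exploiting $G_\varepsilon(x,y_0)=0$ at the closest-point projection $y_0$ of $y$ onto $\partial\Omega$ and writing
$$|G_\varepsilon(x,y)|=|G_\varepsilon(x,y)-G_\varepsilon(x,y_0)|\leq \delta(y)\int_0^1|\nabla_y G_\varepsilon(x,y_t)|\,dt,\qquad y_t:=y_0+t(y-y_0);$$
in the only nontrivial regime $\delta(y)\leq |x-y|/2$ every $y_t$ satisfies $|x-y_t|\sim|x-y|$, so inserting the quenched bound on $\nabla_y G_\varepsilon$ already obtained refines $|G_\varepsilon(x,y)|$ by the factor $\delta(y)/|x-y|$, after which one more application of the boundary Lipschitz estimate in $x$ transfers the improvement to $\nabla_x G_\varepsilon$. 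The expected main obstacle is the ``sup-along-segment'' random factor that appears in this intermediate step; I would handle it either by keeping the integral representation and commuting Minkowski's inequality past $\langle\cdot\rangle^{1/p}$---since the annealed Lipschitz bound is uniform in $t$---or by covering $[y,y_0]$ with $O(1)$ balls of radius comparable to $\delta(y)$ and applying the moment bound on $\mathcal{C}_*^*$ at each center via a union bound, which is permitted by the arbitrariness of $p<\infty$.
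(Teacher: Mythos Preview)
Your proposal is correct and follows essentially the same route as the paper: the paper's proof of Proposition~\ref{P:4} consists of the single line ``based upon Lemma~\ref{lemma:3}, a routine computation leads to the stated estimates,'' i.e.\ exactly your plan of taking $p$-th moments of the quenched bounds and absorbing the random prefactors via H\"older and \eqref{pri:3.9}. Your sketch of how Lemma~\ref{lemma:3} itself is obtained also matches the paper's argument (duality plus Corollaries~\ref{corollary:2.1}--\ref{corollary:2.2}, then the vanishing of $\nabla_x G_\varepsilon(x_0,\cdot)$ on $\partial\Omega$ combined with the mixed-derivative bound to extract the linear factor $\delta(y)/|x-y|$); if anything you are slightly more careful than the paper about the sup-along-segment random factor, which the paper handles implicitly.
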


\begin{lemma}[quenched decay estimates I]\label{lemma:3}
Assume the same conditions as in Proposition $\ref{P:4}$.
%Let the stationary random fields $\mathcal{H}_*$, $\mathcal{C}_*$ be given as in Corollaries $\ref{corollary:2.1}$ and $\ref{corollary:2.2}$, respectively.
Then, for any $x,y\in\Omega$ with $x\not=y$, there exist
random fields $C_*$ and $H_*$, satisfying
\begin{equation}\label{pri:3.9}
 \big\langle|(C_*,H_*)|^{\beta}\big\rangle^{\frac{1}{\beta}}
 \lesssim_{\lambda,\lambda_1,\lambda_2,d,\beta} 1  \quad\forall \beta<\infty,
\end{equation}
such that there hold the following decay estimates:
\begin{subequations}
\begin{align}
  &|\nabla_x G_\varepsilon(x,y)|
\lesssim
\frac{C_*(x)}{|x-y|^{d-1}}
\min\Big\{H_*(y),\frac{C_*(y)\delta(y)}{|x-y|}\Big\};
\label{pri:2.5-b}\\
  &|\nabla_y G_\varepsilon(x,y)|
\lesssim \frac{C_*(y)}{|x-y|^{d-1}}
\min\Big\{H_*(x),\frac{C_*(x)\delta(x)}{|x-y|}\Big\}; \label{pri:2.5-c}\\
  &|\nabla_x\nabla_y G_\varepsilon(x,y)|
\lesssim \frac{C_*(x)C_*(y)}{|x-y|^{d}},
\label{pri:2.5-d}
\end{align}
\end{subequations}
where
the multiplicative constants depend on $\lambda$, $d$, $m_0$, $R_0^{\tau}M_0$, and $\tau$.
\end{lemma}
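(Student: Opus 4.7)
The plan is to combine the quenched large-scale regularity of Corollaries $\ref{corollary:2.1}$ and $\ref{corollary:2.2}$ with the classical Gr\"uter--Widman construction of Green's functions (applied in the random setting in the spirit of \cite{Dong-Kim09,Hofmann-Kim07,Taylor-Kim-Brow13}). The random fields $C_*$ and $H_*$ will be taken as stationary local suprema of $\mathcal{C}_*$ and $\mathcal{H}_*$ over unit balls, so that the moment bound $\eqref{pri:3.9}$ is inherited from $\eqref{random-1}$ and $\eqref{pri:20}$ by stationarity. The first ingredient---obtained by transcribing Gr\"uter--Widman, using the H\"older-type upgrade $\eqref{pri:2.6}$ in place of classical Schauder at each application---is the pointwise size bound
\[|G_\varepsilon(x,y)| \lesssim \frac{H_*(x) H_*(y)}{|x-y|^{d-2}}, \qquad x \neq y.\]

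Set $r := |x-y|/8$. Since $z \mapsto G_\varepsilon(x, z)$ solves the homogeneous adjoint equation $\mathcal{L}_\varepsilon^{*} = 0$ in $B_{2r}(y)\cap\Omega$ with zero Dirichlet data (and the adjoint coefficient $a^{*}$ inherits the hypotheses from $a$, so Corollaries $\ref{corollary:2.1}$--$\ref{corollary:2.2}$ apply unchanged), the pointwise Lipschitz bound $\eqref{pri:2.11}$ at $y$ combined with Caccioppoli and the size estimate above yields
\[|\nabla_y G_\varepsilon(x,y)| \lesssim \frac{C_*(y) H_*(x)}{|x-y|^{d-1}},\]
which is the $H_*(x)$-alternative of $\eqref{pri:2.5-c}$; the companion estimate $\eqref{pri:2.5-b}$ follows by the symmetry $G_\varepsilon(x,y) = G^{*}_\varepsilon(y,x)$, where $G^{*}_\varepsilon(x,\cdot)$ is the Green function of $\mathcal{L}_\varepsilon$ with pole at $x$. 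For $\eqref{pri:2.5-d}$, I observe that $z \mapsto \nabla_y G_\varepsilon(z, y)$ also satisfies $\mathcal{L}_\varepsilon = 0$ in $B_{2r}(x)\cap\Omega$ and vanishes on $\partial\Omega \cap B_{2r}(x)$ (differentiate the Dirichlet identity $G_\varepsilon(z, y) = 0$, valid for $z \in \partial\Omega$, in the parameter $y$); applying $\eqref{pri:2.11}$ at $x$, Caccioppoli, and the sup bound for $\nabla_y G_\varepsilon$ just proved then yields $|\nabla_x \nabla_y G_\varepsilon(x,y)| \lesssim C_*(x) C_*(y)/|x-y|^d$.

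For the boundary improvement (the second alternative of $\eqref{pri:2.5-c}$), suppose $\delta(x) \leq r$, pick $x^{*} \in \partial\Omega$ with $|x-x^{*}| = \delta(x)$, and use that $U(z) := \nabla_y G_\varepsilon(z, y)$ vanishes on $\partial\Omega$ by the same parameter-differentiation; integrating $\nabla_z U$ along the local inward-normal segment from $x^{*}$ to $x$ (which stays in $\overline{\Omega}$ by $C^{1,\tau}$-regularity of $\partial\Omega$) and invoking the mixed-gradient bound $\eqref{pri:2.5-d}$ gives
\[|\nabla_y G_\varepsilon(x,y)| \leq \delta(x) \sup_{[x^{*},x]} |\nabla_z \nabla_y G_\varepsilon(\cdot, y)| \lesssim \frac{\delta(x) C_*(x) C_*(y)}{|x-y|^d},\]
after absorbing $\sup_{B_r(x)}\mathcal{C}_*$ into $C_*(x)$; the improvement in $\eqref{pri:2.5-b}$ is symmetric. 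The main obstacle I anticipate is the quenched Step 1: the Gr\"uter--Widman chaining has to be rerun with $\eqref{pri:2.6}$ available only at scales $\geq \varepsilon\chi_*$, supplemented by the local smoothness $\eqref{a:3}$ at finer scales, and the accumulated prefactors across dyadic shells must be carefully controlled (using stationarity of $\mathcal{H}_*$) so as to collapse into the clean two-point product $H_*(x) H_*(y)$ rather than a scale-dependent accumulation; everything else is bookkeeping.
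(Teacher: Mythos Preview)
Your overall route is sound, but it differs from the paper's and carries a small bookkeeping gap. The paper never establishes the pointwise size bound on $G_\varepsilon$ as a prerequisite; instead it argues by duality: for $f$ supported in $U_{r/3}(y_0)$, the pointwise Lipschitz estimate $\eqref{pri:2.11}$ applied to the solution $u_\varepsilon$ of $\mathcal{L}_\varepsilon u_\varepsilon = \nabla\cdot f$ yields $|\nabla u_\varepsilon(x_0)|\lesssim \mathcal{C}_*(x_0,\bar{x}_0)\,r^{-d/2}\|f\|_{L^2}$, which by duality gives directly $(\dashint_{U_{r/3}(y_0)}|\nabla_x\nabla G_\varepsilon(x_0,\cdot)|^2)^{1/2}\lesssim C_*(x_0)r^{-d}$; a second application of $\eqref{pri:2.11}$ at $y_0$ then produces $\eqref{pri:2.5-d}$, and the single-gradient bounds with the $H_*$-alternative come from the same duality with $\eqref{pri:2.6}$ in place of $\eqref{pri:2.11}$. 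The advantage of this over your pointwise-$G$-then-Caccioppoli chain is that the latter picks up an average $(\dashint_{B_{2r}(y)}\mathcal{H}_*^2)^{1/2}$ of radius $r=|x-y|/8$, which does \emph{not} fit inside your unit-ball supremum definition of $C_*(y)$; you would need a maximal-function argument (or simply replace the pointwise bound by the deterministic energy estimate $\int_{\Omega\setminus B_r(x)}|\nabla G_\varepsilon(x,\cdot)|^2\lesssim_\lambda r^{2-d}$, which already gives the $L^2$ average with no random prefactor). Your worry about ``accumulated prefactors across dyadic shells'' in Step~1 is somewhat misplaced: the pointwise bound $|G_\varepsilon(x,y)|\lesssim \mathcal{H}_*(x)\mathcal{H}_*(y)|x-y|^{2-d}$ follows from a \emph{single} duality plus one application of $\eqref{pri:2.6}$ (cf.\ the paper's later Lemma~\ref{lemma:5}, Step~2), not a dyadic iteration; the genuine prefactor issue is at the Caccioppoli step you treat as routine. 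The boundary-improvement argument by integrating $\nabla_z\nabla_y G_\varepsilon$ along the inward normal is exactly what the paper does.
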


\begin{proof}
The basic idea is based upon the interplay between duality and
uniform regularities, which had been leaked out in periodic homogenization
theory (see e.g. \cite{Avellaneda-Lin87}). We provide a proof
for the reader's convenience, and split the proof into two steps.
For any $x_0,y_0\in\Omega$, we set $r=|x_0-y_0|$, and $\text{supp}(f)\subset U_{r/3}(y_0):=B_{r/3}(y_0)\cap\Omega$ (it is fine to assume
$U_{r/3}(y_0)\not=\emptyset$ and merely handle the cases $\delta(y_0)\ll r$ and $\delta(x_0)\ll r$. Otherwise, we just employ the corresponding interior estimates).

\textbf{Step 1.} Show the arguments for $\eqref{pri:2.5-d}$.
On account of the formula $\eqref{eq:2.1}$,
taking the gradient $\nabla_x$ on the both sides of $\eqref{eq:2.1}$, we have
\begin{equation*}
 -\int_{\Omega}\nabla_x\nabla G_\varepsilon(x,\cdot) f
 = \nabla_x u_\varepsilon(x)
 \qquad \forall x\in\Omega.
\end{equation*}
Appealing to Corollary $\ref{corollary:2.2}$ and
energy estimates, it follows that
\begin{equation*}
\begin{aligned}
\Big|\int_{U_{r/3}(y_0)}
&\nabla_x\nabla G_\varepsilon(x_0,\cdot) f\Big|
\leq |\nabla_x u_\varepsilon(x_0)|\\
&\lesssim^{\eqref{pri:2.11},\eqref{pri:20}} \mathcal{C}_*(x_0)
\mathcal{C}_*(\bar{x}_0)
\Big(\dashint_{D_{r/3}(\bar{x}_0)}|\nabla u_\varepsilon|^2\Big)^{\frac{1}{2}}
\lesssim \frac{\mathcal{C}_{*}(x_0,\bar{x}_0)}{r^{\frac{d}{2}}}
\Big(\int_{\Omega}|f|^2\Big)^{\frac{1}{2}},
\end{aligned}
\end{equation*}
where $\mathcal{C}_{*}(x_0,\bar{x}_0):=\mathcal{C}_*(x_0)
\mathcal{C}_*(\bar{x}_0)$ with $\bar{x}_0\in\partial\Omega$ being
such that $\delta(x_0)=|x_0-\bar{x}_0|$. By a duality argument, this further implies
\begin{equation}\label{f:2.18}
\begin{aligned}
\Big(\dashint_{U_{r/3}(y_0)}|\nabla_x\nabla G_\varepsilon(x_0,\cdot)|^2\Big)^{\frac{1}{2}}
\lesssim \frac{\mathcal{C}_{*}(x_0,\bar{x}_0)}{r^{d}}.
\end{aligned}
\end{equation}
Since $\nabla_x G_\varepsilon(x_0,\cdot)$ satisfies
$\mathcal{L}_{\varepsilon}^{*}[
\nabla_x G_\varepsilon(x_0,\cdot)] = 0$ in $U_{\frac{r}{3}}(y_0)$
and $\nabla_x G_\varepsilon(x_0,\cdot) = 0$ on
$B_{\frac{r}{3}}(y_0)\cap\partial\Omega$, by using Corollary $\ref{corollary:2.2}$ again, we consequently derive that
\begin{equation*}
\begin{aligned}
 |\nabla_y \nabla_xG_\varepsilon(x_0,y_0)|
& \lesssim^{\eqref{pri:2.11},\eqref{pri:20}} \mathcal{C}_{*}(y_0,\bar{y}_0)
 \Big(\dashint_{U_{\frac{r}{3}}(y_0)}|\nabla \nabla_x G_\varepsilon(x_0,\cdot)|^2\Big)^{\frac{1}{2}}\\
& \lesssim^{\eqref{f:2.18}} \frac{\mathcal{C}_{*}(x_0,\bar{x}_0)\mathcal{C}_{*}(y_0,\bar{y}_0)}{r^{d}}
=:\frac{C_*(x_0)C_*(y_0)}{r^d}.
\end{aligned}
\end{equation*}
This gives us the decay estimate $\eqref{pri:2.5-d}$.

\textbf{Step 2.} Show the arguments for $\eqref{pri:2.5-c}$
and $\eqref{pri:2.5-b}$.
By noting that $\nabla_x G_\varepsilon(x_0,\cdot) = 0$ on $B_{\frac{r}{3}}(y_0)\cap\partial\Omega$, there exists $\bar{y}_0\in\partial\Omega$ such
that $\delta(y_0)=|y_0-\bar{y}_0|$, and we can arrive at
\begin{equation*}
  |\nabla_x G_\varepsilon(x_0,y_0)|
  \lesssim |\nabla_y\nabla_x G_\varepsilon(x_0,y_0)||y_0-\bar{y}_0|
  \lesssim^{\eqref{pri:2.5-d}} \frac{C_{*}(x_0)C_{*}(y_0)
  \delta(y_0)}{|x_0-y_0|^d},
\end{equation*}
which is the part of the estimate $\eqref{pri:2.5-b}$.
By the same token, we can also obtain
\begin{equation}\label{f:2.2}
 |\nabla_y G_\varepsilon(x_0,y_0)|
 \lesssim \frac{C_{*}(x_0)C_{*}(y_0)
  \delta(x_0)}{|x_0-y_0|^d}.
\end{equation}

Moreover,
inspired by the same argument given for $\eqref{pri:2.5-d}$, we start from
the representation $\eqref{eq:2.1}$ and obtain
\begin{equation*}
\begin{aligned}
\Big|\int_{U_{r/3}(y_0)}
&\nabla G_\varepsilon(x_0,\cdot) f\Big|
\leq |u_\varepsilon(x_0)|
\lesssim^{\eqref{pri:2.6}} \mathcal{H}_*(x_0)\delta(x_0)
\Big(\dashint_{D_{2\delta(x_0)}(\bar{x}_0)}|\nabla u_\varepsilon|^2\Big)^{\frac{1}{2}}\\
&\lesssim^{\eqref{pri:3}} r\mathcal{H}_*(x_0,\bar{x}_0)
\Big(\dashint_{D_{r/6}(\bar{x}_0)}|\nabla u_\varepsilon|^2\Big)^{\frac{1}{2}}
\lesssim \frac{H_{*}(x_0)}{r^{\frac{d}{2}-1}}
\Big(\int_{\Omega}|f|^2\Big)^{\frac{1}{2}},
\end{aligned}
\end{equation*}
where $\mathcal{H}_*(x_0,\bar{x}_0):=\mathcal{H}_*(x_0)
\mathcal{H}_*(\bar{x}_0)$ with $\bar{x}_0\in\partial\Omega$ being such
that $\delta(x_0)=|x_0-\bar{x}_0|$, and we merely set  $H_*(x_0):=\mathcal{H}_*(x_0,\bar{x}_0)$.
This implies
\begin{equation}\label{f:2.28}
\Big(\dashint_{U_{r/3}(y_0)}|\nabla G_\varepsilon(x_0,\cdot)|^2\Big)^{\frac{1}{2}}
\lesssim \frac{H_{*}(x_0)}{r^{d-1}}.
\end{equation}
Therefore, by using Corollary $\ref{corollary:2.2}$ again,
we have
\begin{equation}\label{f:2.29}
\begin{aligned}
& |\nabla_y G_\varepsilon(x_0,y_0)|\\
& \lesssim^{\eqref{pri:2.11},\eqref{pri:20}}
 \mathcal{C}_{*}(y_0)\mathcal{C}_{*}(\bar{y}_0)
 \Big(\dashint_{D_{\frac{r}{3}}(y_0)}|\nabla G_\varepsilon(x_0,\cdot)|^2\Big)^{\frac{1}{2}}
 \lesssim^{\eqref{f:2.28}} \frac{H_{*}(x_0)C_{*}(y_0)}{|x_0-y_0|^{d-1}},
\end{aligned}
\end{equation}
where $\bar{y}_0\in\partial\Omega$ is such that
$\delta(y_0)=|y_0-\bar{y}_0|$,
which coupled with $\eqref{f:2.2}$ gives us the estimate $\eqref{pri:2.5-c}$.

To see the rest part of $\eqref{pri:2.5-b}$,
let $G_\varepsilon^*(y_0,\cdot)$ be the Green function associated with $\mathcal{L}_\varepsilon[G_\varepsilon^*(y_0,\cdot)] = \delta_{y_0}$ in $\Omega$ and
$G_\varepsilon^*(y_0,\cdot) = 0$ on $\partial\Omega$. The counterpart  estimate of $\eqref{f:2.29}$
will hold below, i.e.,
\begin{equation*}
|\nabla_x G_\varepsilon^*(y_0,x_0)|
 \lesssim \frac{C_{*}(x_0)H_{*}(y_0)}{|x_0-y_0|^{d-1}}.
\end{equation*}
This coupled with the property: $G^*_\varepsilon(y,x) = G_\varepsilon(x,y)$ for any
$x,y\in\Omega$ finally leads to
the desired estimate $\eqref{pri:2.5-b}$.
Finally, the estimate $\eqref{pri:3.9}$ follows from
Corollaries $\ref{corollary:2.1}$ and $\ref{corollary:2.2}$.
We have completed the whole proof.
\end{proof}

\begin{remark}
\emph{We actually abuse the notations in Lemma $\ref{lemma:3}$.
Since $a=a^{*}$ is not necessary in this paper, compared with  $\mathcal{H}_*(x)$ and $\mathcal{C}_*(x)$,
$\mathcal{H}_*(y)$ and $\mathcal{C}_*(y)$ presented above should be determined by $a^*$. However, it has little effect on later annealed estimates so we prefer a passive way to show their difference.}
\end{remark}

\medskip

\textbf{Proof of Proposition $\ref{P:4}$.}
Based upon Lemma $\ref{lemma:3}$, a routine computation leads to
the stated estimates $\eqref{pri:19}$, and we are done.
\qed

\begin{proposition}[annealed layer-type estimates]\label{P:2}
Let $\Omega\subset\mathbb{R}^d$ with $d\geq 2$ be a bounded uniform $C^{1,\tau}$ domain with $\tau\in(0,1)$,
and $\varepsilon\in(0,1]$. Suppose that the ensemble $\langle\cdot\rangle$ satisfies $\eqref{a:2}$ and $\eqref{a:3}$,
and $G_\varepsilon(\cdot,\cdot)$ is the Green function
associated with $\mathcal{L}_\varepsilon$ and $\Omega$.
Let $\sigma\in(1/2,1)$ and $R\geq \varepsilon$,
Then, for any $p\in[1,\infty)$, we have
\begin{equation}\label{pri:3.5}
\int_{O_R}\langle|\nabla_y G_\varepsilon(x,y)|^{p}\rangle^{\frac{1}{p}}
dy \lesssim_{\lambda,\lambda_1,\lambda_2,d,m_0,R_0^{\tau}M_0,\tau,\sigma,p} R^{1-\sigma}[\delta(x)]^{\sigma},
\end{equation}
where we recall the notations $O_R:=\{x\in\Omega:\delta(x)\leq R\}$ and
$\delta(x):=\text{dist}(x,\partial\Omega)$.
\end{proposition}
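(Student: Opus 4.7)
The plan is to reduce Proposition \ref{P:2} to a deterministic singular-integral estimate via the annealed gradient bound from Proposition \ref{P:4}, and then to evaluate that integral by slicing $O_R$ along level sets of $\delta$.

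First, Proposition \ref{P:4} gives, for every $p<\infty$ and every $x\neq y$ in $\Omega$,
\begin{equation*}
\big\langle|\nabla_y G_\varepsilon(x,y)|^p\big\rangle^{1/p}
\lesssim \frac{1}{|x-y|^{d-1}}\min\Big\{1,\frac{\delta(x)}{|x-y|}\Big\}.
\end{equation*}
Using the elementary interpolation $\min\{1,s\}\leq s^{\sigma}$, valid for every $s\geq 0$ and $\sigma\in(0,1)$, the right-hand side is bounded by $[\delta(x)]^\sigma|x-y|^{-(d-1+\sigma)}$. Since the integrand on the left of $\eqref{pri:3.5}$ is a deterministic function of $(x,y)$ and $O_R$ a deterministic set, the task reduces to the purely geometric estimate
\begin{equation*}
I(x,R):=\int_{O_R}\frac{dy}{|x-y|^{d-1+\sigma}}\lesssim R^{1-\sigma},
\end{equation*}
uniform in $x\in\Omega$.

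Second, I apply the coarea formula with $|\nabla\delta|=1$ a.e.\ to write
\begin{equation*}
I(x,R)=\int_0^{R}dt\int_{\{\delta=t\}}\frac{dS(y)}{|x-y|^{d-1+\sigma}}.
\end{equation*}
For $t$ below the reach of $\partial\Omega$ the level surface $\{\delta=t\}$ is a $C^{1,\tau}$ hypersurface (the image of $\partial\Omega$ under the inward normal flow) lying at orthogonal distance $|\delta(x)-t|$ from $x$, whose intersection with any ball of radius $r$ has $(d-1)$-dimensional Hausdorff measure $\lesssim r^{d-1}$. A dyadic decomposition of $\{\delta=t\}$ into annuli centered at $x$, starting from the minimal radius $|\delta(x)-t|$, therefore yields
\begin{equation*}
\int_{\{\delta=t\}}\frac{dS(y)}{|x-y|^{d-1+\sigma}}
\lesssim \sum_{k\geq 0}\frac{(2^k|\delta(x)-t|)^{d-1}}{(2^k|\delta(x)-t|)^{d-1+\sigma}}
\lesssim |\delta(x)-t|^{-\sigma},
\end{equation*}
the geometric series converging since $\sigma>0$.

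Third, one integrates in $t$. When $\delta(x)\geq R$, the integral $\int_0^R(\delta(x)-t)^{-\sigma}dt=\frac{\delta(x)^{1-\sigma}-(\delta(x)-R)^{1-\sigma}}{1-\sigma}$ is nonincreasing in $\delta(x)$ with supremum $R^{1-\sigma}/(1-\sigma)$; when $\delta(x)<R$, splitting at $t=\delta(x)$ gives $\frac{\delta(x)^{1-\sigma}+(R-\delta(x))^{1-\sigma}}{1-\sigma}\lesssim R^{1-\sigma}$. Either way $I(x,R)\lesssim R^{1-\sigma}$, and multiplying by $[\delta(x)]^\sigma$ produces $\eqref{pri:3.5}$. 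The only substantive ingredient beyond Proposition \ref{P:4} is the $(d-1)$-rectifiability control on the parallel surfaces $\{\delta=t\}$, classical for $C^{1,\tau}$ domains via the normal-coordinate diffeomorphism; apart from that the argument is a direct computation, and in fact goes through for any $\sigma\in(0,1)$, so the restriction $\sigma>1/2$ seems to be imposed for the later applications rather than being intrinsic to this estimate.
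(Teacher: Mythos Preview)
Your argument is correct and notably more direct than the paper's. The paper proceeds by first establishing two refined pointwise annealed bounds (Lemmas \ref{lemma:3.1} and \ref{lemma:3.2}) that track the dependence on $\delta(y)$ through the H\"older-weighted estimate $\eqref{pri:3.1c}$ on $G_\varepsilon$ itself, and then splits the integral over $O_R$ according to whether $|x-y|\gtrless 2R$; the restriction $\sigma>1/2$ enters only in the far-field piece, where integrability of $r^{-2\sigma}$ over $[R,\infty)$ is required after a coarea step. You bypass all of this: the simpler bound from Proposition \ref{P:4} already yields $[\delta(x)]^\sigma|x-y|^{-(d-1+\sigma)}$ after the interpolation $\min\{1,s\}\leq s^\sigma$, and what remains is the purely geometric layer integral, which your slice-and-dyadic argument handles for every $\sigma\in(0,1)$. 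So your remark that $\sigma>1/2$ is not intrinsic to $\eqref{pri:3.5}$ is on point --- it is an artifact of the paper's route, not of the estimate. One small caveat: you invoke the $(d-1)$-rectifiability control on $\{\delta=t\}$ only for $t$ below the reach, so if $R$ exceeds that threshold the tail $t\in[r_0,R]$ should be addressed explicitly; this is harmless (either note that $|O_R\cap B_r(x)|\lesssim Rr^{d-1}$ for $r\geq R$ directly from the tubular-neighborhood structure, or absorb the finite-volume region $\{\delta\geq r_0\}$ into the constant), but it deserves a sentence.
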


%Here,
%The results stated in Proposition $\ref{P:4}$ would be easily derived, once people established the corresponding decay estimates of quenched version as follows.

\begin{lemma}[quenched decay estimates II]\label{lemma:5}
Let $\Omega\subset\mathbb{R}^d$ with $d\geq 2$ be a bounded uniform $C^{1}$ domain, and $\varepsilon\in(0,1]$.
Suppose the ensemble $\langle\cdot\rangle$ satisfies $\eqref{a:2}$ and $\eqref{a:3}$.
Then, there exists a random field
$H_{*}(\cdot,\cdot)$ satisfying
\begin{equation}\label{pri:3.8}
\big\langle|H_{*}(x,y)|^{\beta}
\big\rangle^{\frac{1}{\beta}}\lesssim_{\lambda,\lambda_1,\lambda_2,d,\beta} 1 \quad\forall \beta\in[1,\infty),~\forall x,y\in\mathbb{R}^d,
\end{equation}
such that, for any $\sigma,\sigma'\in(0,1)$, we have
\begin{subequations}
\begin{align}
&~|G_\varepsilon(x,y)|\lesssim \frac{H_*(y,x)[\delta(x)]^{\sigma}}{|x-y|^{d-2+\sigma}}
\qquad \text{if}~\delta(x)\leq |x-y|/2;\label{pri:3.1a}\\
&~|G_\varepsilon(x,y)|\lesssim \frac{H_*(x,y)[\delta(y)]^{\sigma'}}{|x-y|^{d-2+\sigma'}}
\qquad \text{if}~\delta(y)\leq |x-y|/2; \label{pri:3.1b}\\
&
\left\{\begin{aligned}
&|G_\varepsilon(x,y)|
\lesssim
\frac{\tilde{H}_*(x,y)[\delta(x)]^{\sigma}
[\delta(y)]^{\sigma'}}{|x-y|^{d-2+\sigma+\sigma'}}\\
&\text{if}~\delta(x)\leq |x-y|/2
~\text{or}~\delta(y)\leq |x-y|/2,
\end{aligned}\right.
\label{pri:3.1c}
\end{align}
\end{subequations}
where $\tilde{H}_*(x,y)
:= H_*(y,x)
H_*(x,y)$, and the multiplicative constants depend
on $\lambda, d, \sigma, \sigma'$, $m_0$, and $\omega(t)$ in $\eqref{boundary-2}$ at most.
\end{lemma}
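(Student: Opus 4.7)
The plan is to prove $\eqref{pri:3.1a}$ via the boundary H\"older estimate of Corollary $\ref{corollary:2.1}$, deduce $\eqref{pri:3.1b}$ from the adjoint identity $G_\varepsilon^*(y,x)=G_\varepsilon(x,y)$, and finally obtain $\eqref{pri:3.1c}$ by inserting the stronger branch of $\eqref{pri:2.5-b}$ into the proof of $\eqref{pri:3.1a}$.

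For $\eqref{pri:3.1a}$, fix $y\in\Omega$, set $r:=|x-y|$, and view $v(\cdot):=G_\varepsilon(\cdot,y)$ as a function of the first argument. Since $G_\varepsilon(x,y)=G_\varepsilon^*(y,x)$, the function $v$ satisfies $\mathcal{L}_\varepsilon v=0$ on $\Omega\setminus\{y\}$ and vanishes on $\partial\Omega$. Assume $\delta(x)\leq r/2$, and let $\bar{x}\in\partial\Omega$ realize $\delta(x)=|x-\bar{x}|$. Take $B:=B_{r/4}(\bar{x})$; then $|y-\bar{x}|\geq r-\delta(x)\geq r/2>r/4$, so $y\notin B$ and $v$ is $\mathcal{L}_\varepsilon$-harmonic in $B\cap\Omega$ with zero trace on $B\cap\partial\Omega$. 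When $\delta(x)\leq r/8$ (so $x\in B$), the H\"older seminorm estimate in $\eqref{pri:2.6}$ with $F=0$, together with $v(\bar{x})=0$, yields
\begin{equation*}
|v(x)|\leq [v]_{C^{0,\sigma}(\bar{x})}\,\delta(x)^\sigma
\lesssim r^{1-\sigma}\mathcal{H}_*(\bar{x})\,\delta(x)^\sigma
\Big(\dashint_{B\cap\Omega}|\nabla v|^2\Big)^{1/2}.
\end{equation*}
On $B\cap\Omega$ one has $|s-y|\geq r/4$, so the quenched gradient bound $\eqref{pri:2.5-b}$ reduces to $|\nabla v(s)|\lesssim C_*(s)H_*(y)/r^{d-1}$; hence the $L^2$-average above is controlled by $H_*(y)\,r^{-(d-1)}\bigl(\dashint_B C_*^2\bigr)^{1/2}$. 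Packaging $\mathcal{H}_*(\bar{x})$, $H_*(y)$, and $\bigl(\dashint_B C_*^2\bigr)^{1/2}$ into a single random field $H_*(y,x)$ delivers the claim. The residual range $r/8<\delta(x)\leq r/2$ is handled by integrating $\eqref{pri:2.5-b}$ along the normal segment from $\bar{x}$ to $x$, which produces $|v(x)|\lesssim \delta(x)/r^{d-1}$, followed by the elementary interpolation $\delta(x)\leq\delta(x)^\sigma r^{1-\sigma}$.

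Since the coefficient $a^*$ satisfies the same hypotheses as $a$, Corollaries $\ref{corollary:2.1}$ and $\ref{corollary:2.2}$ apply verbatim to $\mathcal{L}_\varepsilon^*$; rerunning the argument above on $G_\varepsilon^*(y,\cdot)$ (with the roles of $x$ and $y$ interchanged) and invoking $G_\varepsilon^*(y,x)=G_\varepsilon(x,y)$ yields $\eqref{pri:3.1b}$. For $\eqref{pri:3.1c}$, note first that it is already implied by $\eqref{pri:3.1a}$ when $\delta(y)\geq r/2$ (because then $\delta(y)^{\sigma'}\gtrsim r^{\sigma'}$), and symmetrically by $\eqref{pri:3.1b}$ when $\delta(x)\geq r/2$. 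In the remaining case $\delta(x),\delta(y)\leq r/2$, repeat the proof of $\eqref{pri:3.1a}$ but use the other branch of the minimum in $\eqref{pri:2.5-b}$, namely $|\nabla v(s)|\lesssim C_*(s)C_*(y)\delta(y)/r^d$ on $B\cap\Omega$. This injects an extra factor $\delta(y)/r$ and yields $|G_\varepsilon(x,y)|\lesssim \tilde{H}_*(x,y)\,\delta(x)^\sigma\delta(y)/r^{d-1+\sigma}$; the interpolation $\delta(y)\leq\delta(y)^{\sigma'}r^{1-\sigma'}$ (valid since $\delta(y)\leq r$) upgrades this to $\eqref{pri:3.1c}$.

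The main obstacle is the bookkeeping of the composite random fields $H_*(x,y)$ and $\tilde{H}_*(x,y)$. Each application of Corollaries $\ref{corollary:2.1}$--$\ref{corollary:2.2}$ contributes a factor of type $\mathcal{H}_*$ or $\mathcal{C}_*$ anchored at a boundary point associated with $x$ or $y$, plus an $L^2$-average of $C_*$ over a ball of bounded radius. All these individual factors have finite moments of every order by $\eqref{random-1}$ and $\eqref{pri:3.9}$; the $L^2$-average retains all moments by Jensen's inequality; and a product of finitely many such fields retains all moments by H\"older's inequality. This delivers $\eqref{pri:3.8}$ for both $H_*(x,y)$ and $\tilde{H}_*(x,y)$.
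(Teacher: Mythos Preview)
Your argument follows the same skeleton as the paper's---extract the factor $\delta^\sigma$ via the boundary H\"older estimate of Corollary~\ref{corollary:2.1}, then control the remaining $L^2$-average---but you feed in a different ingredient at the key step. The paper does not appeal to Lemma~\ref{lemma:3}; instead it first establishes a self-contained pointwise bound $|G_\varepsilon(x,y)|\lesssim \mathcal{H}_*(x)\mathcal{H}_*(y)|x-y|^{2-d}$ (with a variant for $d=2$) via a duality argument that uses only Corollary~\ref{corollary:2.1}, and then inserts this into the $L^2$-average of $|G_\varepsilon|$ appearing in the first line of~\eqref{pri:2.6}. You instead use the second line of~\eqref{pri:2.6} and bound the $L^2$-average of $|\nabla G_\varepsilon|$ via~\eqref{pri:2.5-b}. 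Your route is shorter and avoids a separate $d=2$ discussion, but it imports Lemma~\ref{lemma:3}, whose proof rests on the Lipschitz estimate of Corollary~\ref{corollary:2.2} and hence on $\partial\Omega\in C^{1,\tau}$. Since Lemma~\ref{lemma:5} is stated for $C^1$ domains, your argument does not establish it at the stated generality---only under $C^{1,\tau}$, which is, however, all that the downstream applications (Lemmas~\ref{lemma:3.1}, \ref{lemma:3.2}, Proposition~\ref{P:2}) actually need. One minor point: in the residual range, integrating~\eqref{pri:2.5-b} along the normal segment produces a line integral $\int_0^{\delta(x)} C_*(\gamma(t))\,dt$; to secure~\eqref{pri:3.8} you should package this as $\delta(x)$ times a line \emph{average} of $C_*$ (which has all moments by Minkowski), not as $\delta(x)$ times a supremum.
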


\begin{proof}
The proof is based upon Corollary $\ref{corollary:2.1}$, and the existence
of Green functions would be find in \cite{Dong-Kim09,Hofmann-Kim07}, as well as in \cite{Conlon-Giunti-Otto17,Giunti-Otto22} for a different approach. We plan to show the main arguments in the case of $d\geq 3$, while the case of $d=2$ follows from a few of modifications (see e.g. \cite{Wang-Zhang23}).
The proof is divided into three steps.

\textbf{Step 1.} Show the estimates $\eqref{pri:3.1a}$, $\eqref{pri:3.1b}$,
and $\eqref{pri:3.1c}$.
We start from the estimate
\begin{equation}\label{f:3.15}
|G_{\varepsilon}(x,y)|\lesssim
\left\{\begin{aligned}
& \mathcal{H}_*(x)\mathcal{H}_*(y)|x-y|^{2-d} &\quad& d\geq 3;\\
& \mathcal{H}_*(x)\mathcal{H}_*(y)|x-y|^{-\sigma_0} &\quad& d=2\text{~with~any~}\sigma_0\in(0,1).
\end{aligned}\right.
\end{equation}

For any $x,y\in\Omega$, let $r:=|x-y|$ and $\delta(y)\leq r/2$. Since $\mathcal{L}_\varepsilon^{*}(G_\varepsilon(x,\cdot)) = 0$
in $\Omega\setminus\{y\}$ with $G_\varepsilon(x,\bar{y})=0$, where
$\bar{y}\in\partial\Omega$ is such that $\delta(y)=|y-\bar{y}|$.
In view of Corollary $\ref{corollary:2.1}$, for any $\sigma'\in(0,1)$ we have
\begin{equation}\label{f:3.14}
\begin{aligned}
|G_\varepsilon(x,y)|
&\leq[\delta(y)]^{\sigma'}[G_\varepsilon(x,\cdot)]_{C^{0,\sigma'}(\bar{y})}\\
&\lesssim^{\eqref{pri:2.6},\eqref{pri:3}}
\big(\frac{\delta(y)}{r}\big)^{\sigma'}
\mathcal{H}_{*}(\bar{y})\Big(\dashint_{B_{\frac{r}{2}}(y)\cap\Omega}
|G_\varepsilon(x,\cdot)|^2\Big)^{\frac{1}{2}}\\
&\lesssim^{\eqref{f:3.15}\text{~for~}d\geq3} \frac{\mathcal{H}_{*}(x)\mathcal{H}_{*}(\bar{y})
[\delta(y)]^{\sigma'}}{r^{d-2+\sigma'}}
\Big(\dashint_{B_{\frac{r}{2}}(y)}\mathcal{H}_{*}^2\Big)^{\frac{1}{2}}
=:\frac{H_*(x,y)[\delta(y)]^{\sigma'}}{r^{d-2+\sigma'}},
\end{aligned}
\end{equation}
which gives the estimate $\eqref{pri:3.1b}$ in the case of $d>2$ (The case $d=2$ is discussed in \textbf{Step 3}). Moreover, in view of $\eqref{random-1}$, we also have $\eqref{pri:3.8}$.
We now turn to the estimate $\eqref{pri:3.1a}$. Consider $\mathcal{L}_\varepsilon(G_\varepsilon^*(y,\cdot)) = 0$
in $\Omega\setminus\{x\}$ with $G_\varepsilon^*(y,\bar{x})=0$, where
$\bar{x}\in\partial\Omega$ is such that $\delta(x)=|x-\bar{x}|$.
By the analogous computations given in $\eqref{f:3.14}$,
for any $\sigma\in(0,1)$, we have
\begin{equation}\label{f:3.16}
\begin{aligned}
|G_\varepsilon^*(y,x)|
&\lesssim \frac{\mathcal{H}_\varepsilon(y)\mathcal{H}_\varepsilon(\bar{x})
[\delta(x)]^{\sigma}}{r^{d-2+\sigma}}
\Big(\dashint_{B_{\frac{r}{2}}(x)}\mathcal{H}_{*}^2\Big)^{\frac{1}{2}}
=:\frac{H_*(y,x)[\delta(x)]^{\sigma}}{r^{d-2+\sigma}}.
\end{aligned}
\end{equation}
This together with $G_\varepsilon^*(y,x)=G_\varepsilon(x,y)$ leads to
the stated estimate $\eqref{pri:3.1a}$. In the end, one may plug
the estimate $\eqref{f:3.16}$ instead of $\eqref{f:3.15}$ back into
$\eqref{f:3.14}$ to get the desired estimate $\eqref{pri:3.1c}$.

\textbf{Step 2.} Show the estimate $\eqref{f:3.15}$.
Consider the equations $\mathcal{L}_\varepsilon(v_\varepsilon) = F$
in $\Omega$ with $v_\varepsilon=0$ on $\partial\Omega$. Similar
to the representation $\eqref{eq:2.1}$, we obtain
\begin{equation*}
 v_\varepsilon(x) = \int_{\Omega} G_\varepsilon(x,y)F(y)dy.
\end{equation*}
It's fine to assume that $\text{supp}(F)\subset U_R(x):=B_R(x)\cap\Omega$,
and it follows from Corollary $\ref{corollary:2.1}$ that
\begin{equation*}
\big|\int_{\Omega} G_\varepsilon(x,y)F(y)dy\big|
\leq |v_\varepsilon(x)|
\lesssim^{\eqref{pri:2.6}} \mathcal{H}_{*}(x)
\left\{\begin{aligned}
&R^2\Big(\dashint_{U_R(x)}|F|^q\Big)^{\frac{1}{q}}&~&d\geq 3;\\
&R^{-\frac{2}{s}+\frac{2}{t}}
\Big(\dashint_{U_R(x)}|F|^2\Big)^{\frac{1}{2}}&~&d=2,
\end{aligned}\right.
\end{equation*}
where $q>(d/2)$, $s>2$, and $1<t<2$ are arbitrarily fixed, and we also employ Sobolev's inequality for
the last inequality above\footnote{We refer the reader to \cite[Lemma 3.12]{Xu16}
and \cite[Theorem 1.3]{Wang-Zhang23} for the details.}.
This implies
\begin{equation}\label{f:3.17}
 \Big(\dashint_{U_R(x)}|G_\varepsilon(x,\cdot)|^p\Big)^{\frac{1}{p}}
\lesssim
\mathcal{H}_{*}(x)
\left\{\begin{aligned}
&R^{2-d}&~&d\geq 3,~p\in[1,\frac{d}{d-2});\\
&R^{-\frac{2}{s}+\frac{2}{t}-2}
&~&d=2,~p=2.
\end{aligned}\right.
\end{equation}
Therefore, for any $x,y\in\Omega$, let $r:=|x-y|$. Noting that $\mathcal{L}_\varepsilon^{*}
(G_\varepsilon(x,\cdot))= 0$ in $\Omega\setminus B_{r/2}(x)$,
and by using Corollary $\ref{corollary:2.1}$ again, there holds
\begin{equation}\label{f:3.18}
|G_\varepsilon(x,y)|\lesssim \mathcal{H}_{*}(y)\dashint_{U_{\frac{r}{2}}(y)}
|G_\varepsilon(x,\cdot)|
\lesssim \mathcal{H}_{*}(y)
\dashint_{U_{2r}(x)}
|G_\varepsilon(x,\cdot)|.
\end{equation}
Then, by setting $R=2r$ and plugging the estimate $\eqref{f:3.17}$
back into $\eqref{f:3.18}$, we can derive the stated estimate $\eqref{f:3.15}$ by denoting $\sigma_0 := \frac{2}{s}-\frac{2}{t}+2$, and one can prefer $s>2$ and $t\in(1,2)$ such that $\sigma_0\in(0,1)$.

\textbf{Step 3.} Some modifications for the case $d=2$ in \textbf{Step 1}.
We first remark that if $\delta(y)\leq r/2$, we have
$|G_\varepsilon(x,y)|\lesssim_{d,\lambda,m_0} 1$ almost surely with respect to the measure introduced by $\langle\cdot\rangle$
(see \cite[Theorem 2.21]{Dong-Kim09} or \cite[Theorem 4.1]{Taylor-Kim-Brow13} for general cases), where $m_0$ is the Lipschitz character of $\Omega$ (here it could be very small due to $\partial\Omega\in C^1$).  Thus,
the estimate $\eqref{f:3.14}$ is correspondingly changed into
$|G_\varepsilon(x,y)|\lesssim \mathcal{H}_{*}(\bar{y})(\delta(y)/r)^{\sigma'}$, and
we can similarly modify the proofs of $\eqref{pri:3.1a}$ and
$\eqref{pri:3.1c}$.
We have completed the whole proof.
\end{proof}

%\begin{remark}\label{remark:1}
%\emph{If $\delta(x)\leq |x-y|$, the estimate $\eqref{pri:3.1a}$ is still true. Obviously, we merely need to handle the case $|x-y|/2< \delta(x)\leq |x-y|$, and it is not hard to get it from
%\begin{equation*}
% |G_\varepsilon(x,y)|
% \lesssim \frac{|x-y|^\sigma}{|x-y|^{d-2+\sigma}}
% \lesssim \frac{[\delta(x)]^\sigma}{|x-y|^{d-2+\sigma}}.
%\end{equation*}
%Similarly, we can obtain for other cases. }
%\end{remark}

%\begin{lemma}
%Furthermore, we have
%\begin{equation}\label{pri:2.5-a}
%\begin{aligned}
%&\langle|G_\varepsilon(x,y)|^p\rangle^{\frac{1}{p}}\lesssim \frac{[\delta(x)]^{\sigma}}{|x-y|^{d-2+\sigma}}
%&\quad&\text{if}~\delta(x)\leq |x-y|/2;\\
%&\langle|G_\varepsilon(x,y)|^p\rangle^{\frac{1}{p}}\lesssim \frac{[\delta(y)]^{\sigma'}}{|x-y|^{d-2+\sigma'}}
%&\quad&\text{if}~\delta(y)\leq |x-y|/2; \\
%& \langle|G_\varepsilon(x,y)|^p\rangle^{\frac{1}{p}}
%\lesssim
%\frac{[\delta(x)]^{\sigma}[\delta(y)]^{\sigma'}}{|x-y|^{d-2+\sigma+\sigma'}}
%&\quad&\text{if}~\delta(x)\leq |x-y|/2
%~\text{and}~\delta(y)\leq |x-y|/2.
%\end{aligned}
%\end{equation}
%\end{lemma}

\begin{lemma}\label{lemma:3.1}
Let $\sigma\in(0,1)$ and $\varepsilon\in(0,1]$.
Assume the same conditions as in Proposition $\ref{P:2}$.
Let $R\geq \varepsilon$ and $p\in[1,\infty)$. Then,
for any $x,y\in O_R$ with $|x-y|\geq 2R$,
we have
\begin{equation}\label{pri:3.3}
 \langle|\nabla_y G_\varepsilon(x,y)|^p\rangle^{\frac{1}{p}}
 \lesssim_{\lambda,\lambda_1,\lambda_2,d,m_0,R_0^{\tau}M_0,\tau,\sigma,p} \frac{R^{\sigma-1}[\delta(x)]^{\sigma}}{|x-y|^{d-2+2\sigma}}.
\end{equation}
\end{lemma}

\begin{proof}
Fix any $x,y\in O_R$ with $|x-y|\geq 2R$,
and it follows from Corollary $\ref{corollary:2.2}$ and
Lemma $\ref{lemma:5}$ that
\begin{equation}\label{f:3.13}
\begin{aligned}
 |\nabla_y G_\varepsilon(x,y)|
& \lesssim^{\eqref{pri:2.11},\eqref{pri:20}} \frac{\mathcal{C}_*(y,\bar{y})}{R}
 \Big(\dashint_{D_{R/4}(\bar{y})}
 |G_\varepsilon(x,\cdot)|^2\Big)^{1/2}\\
& \lesssim^{\eqref{pri:3.1c}} \frac{\mathcal{C}_*(y,\bar{y})[\delta(x)]^{\sigma}}{R}
 \bigg(\dashint_{D_{R/4}(\bar{y})}
 \Big(\frac{\tilde{H}_*(x,z)
 [\delta(z)]^{\sigma}}{|x-z|^{d-2+2\sigma}}\Big)^2
 dz\bigg)^{\frac{1}{2}},
\end{aligned}
\end{equation}
where $\mathcal{C}_*(y,\bar{y}):=\mathcal{C}_*(y)\mathcal{C}_*(\bar{y})$, and
$\bar{y}\in\partial\Omega$ is such that $\delta(y)=|y-\bar{y}|$.
Since $|x-y|\sim |x-z|$ for any $z\in B_{\frac{R}{4}}(\bar{y})$ due
to $|x-y|\geq 2R$, we further derive that
\begin{equation}\label{f:3.9}
 |\nabla_y G_\varepsilon(x,y)|
 \lesssim  \frac{\mathcal{C}_*(y,\bar{y})[\delta(x)]^{\sigma}R^{\sigma-1}}{|x-y|^{d-2+2\sigma}}
 \Big(\dashint_{B_{R/4}(\bar{y})}\tilde{H}_*^2(x,z)dz\Big)^{\frac{1}{2}},
\end{equation}
where we also employ the fact $\delta(z)\leq R$ for any $z\in B_{\frac{R}{4}}(\bar{y})$. Then, taking $\langle(\cdot)^p\rangle^{1/p}$ on the both sides of $\eqref{f:3.9}$, the desired estimate $\eqref{pri:3.3}$ follows.
\end{proof}

\begin{lemma}\label{lemma:3.2}
Let $\sigma\in(0,1)$ and $\varepsilon\in(0,1]$.
Assume the same conditions as in Proposition $\ref{P:2}$.
Let $R\geq \varepsilon$ and $p\in[1,\infty)$. Then,
for any $x,y\in O_R$ with $|x-y|< 2R$,
we have
 \begin{equation}\label{pri:3.2}
  \langle|\nabla_y G_\varepsilon(x,y)|^{p}\rangle^{\frac{1}{p}}
  \lesssim_{\lambda,\lambda_1,\lambda_2,d,m_0,R_0^{\tau}M_0,\tau,\sigma,p} \frac{[\delta(x)]^{\sigma}}{|x-y|^{d-1+\sigma}}.
 \end{equation}
\end{lemma}

\begin{proof}
For any $x,y\in O_R$ with $|x-y|< 2R$,
we set $r:=|x-y|$, and the proof is divided into two cases:
(1) $r\geq \delta(x)/2$; (2) $r<\delta(x)/2$.
We first handle the case (1). If $r/2\leq \delta(x)\leq 2r$, it follows from Proposition $\ref{P:4}$ that
\begin{equation*}
 \langle|\nabla_y G_\varepsilon(x,y)|^p\rangle^{\frac{1}{p}}
 \lesssim^{\eqref{pri:19}}\frac{1}{|x-y|^{d-1}}
 = \frac{|x-y|^{\sigma}}{|x-y|^{d-1+\sigma}}
 \lesssim \frac{[\delta(x)]^{\sigma}}{|x-y|^{d-1+\sigma}}.
\end{equation*}
In this regard, it suffices to show the case $\delta(x) <r/2$.
By using Corollary $\ref{corollary:2.2}$ and Lemma $\ref{lemma:5}$
in order (similar to the computations as in $\eqref{f:3.13}$), we obtain
\begin{equation*}
  |\nabla_y G_\varepsilon(x,y)|
 \lesssim^{\eqref{pri:2.11},\eqref{pri:20}} \frac{\mathcal{C}_*(y,\bar{y})}{r}
 \Big(\dashint_{D_{\frac{r}{2}}(\bar{y})}|G_\varepsilon(x,z)|^2dz\Big)^{\frac{1}{2}}
 \lesssim^{\eqref{pri:3.1a}} \frac{\tilde{C}_*(x,y)[\delta(x)]^{\sigma}}{r^{d-1+\sigma}},
\end{equation*}
where
$\mathcal{C}_*(y,\bar{y}):=\mathcal{C}_*(y)\mathcal{C}_*(\bar{y})$ is taken
with $\bar{y}\in\partial\Omega$ being such that $\delta(y)=|y-\bar{y}|$,
and
\begin{equation*}
\tilde{C}(x,y):=\mathcal{C}_*(y,\bar{y})
\Big(\dashint_{B_{r/2}(\bar{y})}|H_*(z,x)|^2dz\Big)^{1/2}.
\end{equation*}
Moreover, for any
$p\in[1,\infty)$,
we have
\begin{equation*}
\big\langle|\tilde{C}_*(x,y)|^p\big\rangle^{\frac{1}{p}}
\lesssim^{\eqref{random-1},\eqref{pri:20}} 1\quad~\forall x,y\in\mathbb{R}^d.
\end{equation*}
This consequently leads to the stated estimate $\eqref{pri:3.2}$.

Now, we proceed to address the case (2) $r<\delta(x)/2$.
We also start from the Lipschitz estimates (in Corollary  $\ref{corollary:2.2}$), and then use the fact that
$G_\varepsilon(\bar{x},z) =0$ with $\bar{x}\in\partial\Omega$
satisfying $\delta(x)=|x-\bar{x}|$ (where we note that the adjoint Green function satisfies $G_\varepsilon^{*}(z,\cdot)=0$ on $\partial\Omega$ and
$G_\varepsilon^{*}(z,x)=G_\varepsilon(x,z)$ for any
$x,y\in\bar{\Omega}\times\bar{\Omega}$), as well as the
boundary H\"older estimates (in Corollary $\ref{corollary:2.1}$)
in order, we can derive that
\begin{equation*}
\begin{aligned}
 &|\nabla_y G_\varepsilon(x,y)|
 \lesssim^{\eqref{pri:2.11}} \frac{\mathcal{C}_*(y)}{r}
 \Big(\dashint_{B_{\frac{r}{5}}(y)}|G_\varepsilon(x,z)|^2dz\Big)^{\frac{1}{2}}\\
 &\lesssim \frac{\mathcal{C}_*(y)[\delta(x)]^{\sigma}}{r}
 \Big(\dashint_{B_{\frac{r}{5}}(y)}
 [G_\varepsilon(\cdot,z)]_{C^{0,\sigma}(\bar{x})}^2
  dz\Big)^{\frac{1}{2}}\\
 &\lesssim^{\eqref{pri:2.6}} \frac{\mathcal{C}_*(y)\mathcal{H}_*(\bar{x})[\delta(x)]^{\sigma}
 }{r^{\sigma}}
  \Big(\dashint_{B_{\frac{r}{5}}(y)}\dashint_{D_{\frac{r}{5}}(\bar{x})}
 |\nabla G_\varepsilon(\cdot,z)|^2
  dz\Big)^{\frac{1}{2}}
  \lesssim^{\eqref{pri:2.5-b}} \frac{\tilde{C}_{**}(\bar{x},y)[\delta(x)]^{\sigma}}{r^{d-1+\sigma}},
\end{aligned}
\end{equation*}
where $\tilde{C}_{**}(\bar{x},y):=
\mathcal{C}_*(y)\mathcal{H}_*(\bar{x})
(\dashint_{B_{\frac{r}{5}}(y)}\dashint_{D_{\frac{r}{5}}(\bar{x})}
 |C_{*}(\tilde{x})H_{*}(z)|^2
  d\tilde{x}dz)^{\frac{1}{2}}$, and we also employ
$|\bar{x}-y|\sim \delta(x)>2r$. Moreover, there holds
$\langle|\tilde{C}_{**}(\bar{x},y)|^p\rangle^{1/p}\lesssim 1$
for any $p\in[1,\infty)$.
This together with the case (1)
leads to the desired estimate $\eqref{pri:3.2}$.
\end{proof}

\medskip

\textbf{Proof of Proposition $\ref{P:2}$}.
The proof is based upon Lemmas $\ref{lemma:3.1}$ and $\ref{lemma:3.2}$,
and we start from a simple decomposition, i.e., for any $x\in O_{R}$ we
have
\begin{equation}\label{f:3.10}
\int_{O_R}\langle|\nabla_y G_\varepsilon(x,y)|^{p}\rangle^{\frac{1}{p}}
dy \leq \Big\{\int_{O_R\cap B_{2R}(x)}
+ \int_{O_R\cap B_{2R}^{c}(x)}\Big\}
\langle|\nabla_y G_\varepsilon(x,y)|^{p}\rangle^{\frac{1}{p}}
dy,
\end{equation}
where $B_{2R}^{c}(x)$ is the complement of $B_{2R}(x)$.

On the one hand, it follows from Lemma $\ref{lemma:3.2}$ that
\begin{equation}\label{f:3.11}
\begin{aligned}
\int_{O_R\cap B_R(x)}
\langle|\nabla_y G_\varepsilon(x,y)|^{p}\rangle^{\frac{1}{p}}dy
&\lesssim^{\eqref{pri:3.2}}
[\delta(x)]^{\sigma}\int_{B_{2R}(x)}
\frac{dy}{|x-y|^{d-1+\sigma}}\\
&\lesssim R^{1-\sigma}[\delta(x)]^{\sigma}.
\end{aligned}
\end{equation}

On the other hand, in view of Lemma $\ref{lemma:3.1}$, we obtain
\begin{equation}\label{f:3.12}
\begin{aligned}
\int_{O_R\cap B_{2R}^{c}(x)}
\langle|\nabla_y G_\varepsilon(x,y)|^{p}\rangle^{\frac{1}{p}}dy
&\lesssim^{\eqref{pri:3.3}}
R^{\sigma-1}[\delta(x)]^{\sigma}\int_{O_R\cap B_{2R}^{c}(x)}
\frac{dy}{|x-y|^{d-2+2\sigma}}\\
&\lesssim R^{\sigma}[\delta(x)]^{\sigma}
\int_{R}^{\infty}\frac{dr}{r^{2\sigma}}
\lesssim R^{1-\sigma}[\delta(x)]^{\sigma},
\end{aligned}
\end{equation}
where we employ the co-area formula for the second inequality above,
and require the condition $2\sigma>1$ for the convergence.
Plugging the estimates $\eqref{f:3.11}$ and $\eqref{f:3.12}$
back into $\eqref{f:3.10}$ leads to the stated estimate
$\eqref{pri:3.5}$, and we have completed the whole proof.
\qed

\section{Boundary correctors}\label{sec:4}
\begin{proposition}\label{P:5}
Let $\Omega\ni \{0\}$ be a bounded $C^{1,\tau}$ domain with
the uniform interior ball condition and $\tau\in(0,1]$,
and $\varepsilon\in(0,1]$.
Suppose that $\langle\cdot\rangle$ satisfies the spectral gap condition $\eqref{a:2}$, and the
admissible coefficients satisfy the smoothness condition $\eqref{a:3}$. Let $\{\tilde{\Phi}_{\varepsilon,i}\}_{i=1}^d$ be the solution of
the equations $\eqref{pde:10}$.
Then, for any $p\in[1,\infty)$ and $\eta\in(0,1]$, there hold
\begin{subequations}
\begin{align}
&\esssup_{x\in\Omega\atop
i=1,\cdots,d} \big\langle|\tilde{\Phi}_{\varepsilon,i}(x)|^p\big\rangle^{\frac{1}{p}}
 \lesssim \varepsilon\mu_{d}(R_0/\varepsilon); \label{pri:21-1} \\
&\esssup_{x\in\Omega\atop i=1,\cdots,d}\Big\langle\big[\tilde{\Phi}_{\varepsilon,i}
\big]_{C^{0,\eta}(U_\varepsilon(x))}^p
\Big\rangle^{\frac{1}{p}}
\lesssim
\varepsilon^{1-\eta}\mu_d(R_0/\varepsilon).
\label{pri:21-2}
\end{align}
\end{subequations}
Moreover, let $Q_{\varepsilon,i} := \tilde{\Phi}_{\varepsilon,i}-\varepsilon\phi_i^\varepsilon$
with $i=1,\cdots,d$. Then we have
\begin{equation}\label{pri:18}
 \max_{i=1\cdots,d}\big\langle|\nabla Q_{\varepsilon,i}(z)|^p\big\rangle^{\frac{1}{p}}
 \lesssim \mu_d(R_0/\varepsilon)
 \min\Big\{1,\frac{\varepsilon}{\delta(z)}\Big\}
 \qquad \forall z\in\Omega,
\end{equation}
where the multiplicative constants depend on
$\lambda,\lambda_1,\lambda_2,d,m_0,R_0^{\tau}M_0,\tau,p$, and $\eta$ at most.
\end{proposition}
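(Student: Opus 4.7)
The strategy is to reduce all three estimates to properties of the harmonic complement $Q_{\varepsilon,i}:=\tilde{\Phi}_{\varepsilon,i}-\varepsilon\phi_i^\varepsilon$. A direct computation using the corrector equation and \eqref{pde:10} gives $\mathcal{L}_\varepsilon Q_{\varepsilon,i}=0$ in $\Omega$ together with Dirichlet trace $Q_{\varepsilon,i}|_{\partial\Omega}=-\varepsilon\phi_i^\varepsilon$. After anchoring $\phi_i(0)=0$, Lemma~\ref{lemma:*3} yields the uniform annealed bound $\esssup_{y\in\overline{\Omega}}\langle|\varepsilon\phi_i^\varepsilon(y)|^p\rangle^{1/p}\lesssim\varepsilon\mu_d(R_0/\varepsilon)$ for each $p<\infty$. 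The triangle inequality then reduces \eqref{pri:21-1} to an identical bound on $Q_{\varepsilon,i}$ and \eqref{pri:18} to a bound on $\nabla Q_{\varepsilon,i}$ alone, while \eqref{pri:21-2} will be handled by a separate rescaling argument.

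For \eqref{pri:21-1} my plan is to represent $Q_{\varepsilon,i}$ via the adjoint-Green Poisson formula
\begin{equation*}
Q_{\varepsilon,i}(x)=\varepsilon\int_{\partial\Omega}\phi_i^\varepsilon(y)\bigl(n(y)\cdot a^{\ast}(y/\varepsilon)\nabla_y G_\varepsilon(x,y)\bigr)\,dS(y).
\end{equation*}
Applying Minkowski's integral inequality and Cauchy--Schwarz with $2p$-moments reduces the task to bounding $\int_{\partial\Omega}\langle|\nabla_y G_\varepsilon(x,y)|^{2p}\rangle^{1/(2p)}\,dS(y)$ uniformly in $x$. Since for $y\in\partial\Omega$ one has $\min\{1,\delta(x)/|x-y|\}=\delta(x)/|x-y|$, the annealed decay of Proposition~\ref{P:4} further reduces this to $\delta(x)\int_{\partial\Omega}|x-y|^{-d}\,dS(y)$, which after the substitution $|y-\bar{x}|=\delta(x)s$ (with $\bar{x}$ the nearest boundary point) becomes $\int_0^{R_0/\delta(x)}s^{d-2}(1+s^2)^{-d/2}\,ds\lesssim 1$ for any $d\ge 2$.

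The H\"older estimate \eqref{pri:21-2} follows by splitting $\tilde{\Phi}_{\varepsilon,i}=Q_{\varepsilon,i}+\varepsilon\phi_i^\varepsilon$: since $Q_{\varepsilon,i}$ is $\mathcal{L}_\varepsilon$-harmonic, Corollary~\ref{corollary:2.1} at the $\varepsilon$-scale gives $[Q_{\varepsilon,i}]_{C^{0,\eta}(U_\varepsilon(x))}\lesssim\varepsilon^{-\eta}\mathcal{H}_{\ast}(x)\sup_{U_\varepsilon(x)}|Q_{\varepsilon,i}|$, which combined with \eqref{pri:21-1} and the moment bound on $\mathcal{H}_{\ast}$ closes the loop; the contribution of $\varepsilon\phi_i^\varepsilon$ is classical via $[\phi_i]_{C^{0,\eta}(B_1)}$ together with \eqref{a:3}. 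For \eqref{pri:18} I split on $\delta(z)$. When $\delta(z)>\varepsilon$, $Q_{\varepsilon,i}$ is $\mathcal{L}_\varepsilon$-harmonic in $B_{\delta(z)/2}(z)\subset\Omega$, so the interior Lipschitz estimate \eqref{pri:2.11} combined with Minkowski's integral inequality gives
\begin{equation*}
\langle|\nabla Q_{\varepsilon,i}(z)|^p\rangle^{\frac{1}{p}}\lesssim \delta(z)^{-1}\esssup_{x\in B_{\delta(z)/2}(z)}\langle|Q_{\varepsilon,i}(x)|^p\rangle^{\frac{1}{p}}\lesssim\mu_d(R_0/\varepsilon)\,\frac{\varepsilon}{\delta(z)}.
\end{equation*}
When $\delta(z)\le\varepsilon$, I decompose $\nabla Q_{\varepsilon,i}=\nabla\tilde{\Phi}_{\varepsilon,i}-(\nabla\phi_i)^\varepsilon$: the second term has $L^p_\omega$-norm $\lesssim 1$ by Lemma~\ref{lemma:*3}, and the first is bounded by $\mu_d(R_0/\varepsilon)$ via the boundary Lipschitz estimate of Corollary~\ref{corollary:2.2} applied to $\tilde{\Phi}_{\varepsilon,i}$ at scale $\varepsilon$ together with an energy estimate absorbing the forcing $\nabla\cdot(a^\varepsilon e_i)$.

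The chief obstacle lies in the rigorous derivation of the Poisson representation, since the pointwise Poisson kernel need not exist for merely $C^{0,\sigma}$ random coefficients. I sidestep this by expressing $Q_{\varepsilon,i}$ not as a surface integral but as a bulk integral of $\nabla_y G_\varepsilon^{\ast}$ against the gradient of a cutoff-extension of $\varepsilon\phi^\varepsilon$ supported in the $\varepsilon$-layer $O_\varepsilon$; the cutoff derivative contributes a factor $\varepsilon^{-1}$ which is compensated exactly by the annealed layer-type estimate of Proposition~\ref{P:2}. Lemma~\ref{lemma:4.1}, the boundary estimate sharper than energy but weaker than Lipschitz, is precisely what guarantees that this localization argument recovers the factor $\varepsilon$ in \eqref{pri:21-1} that a naive energy estimate would lose.
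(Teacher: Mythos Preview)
Your route to \eqref{pri:21-1} via the Poisson formula is exactly the paper's Step~1, and your worry about the pointwise kernel is unfounded: under \eqref{a:3} the coefficients are a.s.\ locally $C^{0,\sigma}$, so on a $C^{1,\tau}$ domain classical Schauder theory makes $\nabla_y G_\varepsilon(x,\cdot)$ continuous up to $\partial\Omega$ and the surface integral \eqref{eq:4.1} is legitimate. The layer/bulk representation you propose as a workaround is the mechanism behind Lemma~\ref{lemma:4.1}, but it does \emph{not} by itself yield \eqref{pri:21-1}: combined with Proposition~\ref{P:2} at layer width $\varepsilon$ it only produces $\langle|Q_\varepsilon(x)|^p\rangle^{1/p}\lesssim\mu_d(R_0/\varepsilon)\,\varepsilon^{1-\sigma}[\delta(x)]^\sigma$, which for $\delta(x)\sim R_0$ is worse than $\varepsilon\mu_d(R_0/\varepsilon)$ by the factor $(R_0/\varepsilon)^\sigma$.

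The genuine gap lies in the near-boundary cases of \eqref{pri:21-2} and \eqref{pri:18}. Your invocation of Corollary~\ref{corollary:2.1} for $[Q_{\varepsilon}]_{C^{0,\eta}(U_\varepsilon(x))}$ when $U_\varepsilon(x)$ meets $\partial\Omega$ requires $Q_\varepsilon=0$ there, but $Q_\varepsilon=-\varepsilon\phi^\varepsilon\neq 0$ on $\partial\Omega$; and Corollary~\ref{corollary:2.2} applied to $\tilde\Phi_{\varepsilon,i}$ requires a homogeneous equation, whereas $\tilde\Phi_{\varepsilon,i}$ carries the divergence-form source $\nabla\cdot(a^\varepsilon e_i)$, which does not fit that corollary's $L^q$ framework. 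A global energy bound $\int_\Omega|\nabla\tilde\Phi_\varepsilon|^2\lesssim|\Omega|$ does not localize to the required $\varepsilon$-scale average without an additional ingredient. The paper supplies that ingredient via Lemma~\ref{lemma:4.1}, applied at scale $r\sim\varepsilon$ to $Q_\varepsilon$ itself, giving
\begin{equation*}
\Big\langle\Big(\dashint_{D_{6\varepsilon}(x_0)}|\nabla Q_\varepsilon|^2\Big)^{\bar p/2}\Big\rangle^{1/\bar p}\lesssim\mu_d(R_0/\varepsilon);
\end{equation*}
this is then fed into the classical Lipschitz estimate for $\Phi_{\varepsilon,i}:=\tilde\Phi_{\varepsilon,i}+x_i$ (which satisfies the \emph{homogeneous} equation \eqref{pde:9} with smooth boundary data $x_i$) to obtain the boundary Lipschitz bound \eqref{pri:3.4} and hence the cases $\delta(x)\le 2\varepsilon$ of both \eqref{pri:21-2} and \eqref{pri:18}. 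So Lemma~\ref{lemma:4.1} is indeed the key, but its role is the opposite of what you state: it handles the near-boundary gradient, not the $L^\infty$ bound.
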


\begin{lemma}\label{lemma:4.1}
Let $\varepsilon\in(0,1]$, and
assume the same conditions as in Proposition $\ref{P:5}$.
%Let $\Omega\ni\{0\}$ be a bounded $C^{1,\tau}$ domain with $\tau\in(0,1)$,
%and $\varepsilon\in(0,1]$. Suppose that $\langle\cdot\rangle$ satisfies the spectral gap condition $\eqref{a:2}$, and
%(admissible) coefficients satisfy the smoothness condition $\eqref{a:3}$.
Let $\phi$ be the corrector given by the equation $\eqref{corrector}$,
and $v_\varepsilon$ is associated with $\phi$ by the following equations:
\begin{equation}\label{pde:6*}
\left\{\begin{aligned}
\mathcal{L}_\varepsilon(v_{\varepsilon})
&= 0 \quad&\text{in}&\quad \Omega;\\
v_{\varepsilon}
&= -\varepsilon\phi^\varepsilon \quad&\text{on}&\quad \partial\Omega.
\end{aligned}\right.
\end{equation}
Then, for any $p\in[1,\infty)$, $x\in\partial\Omega$, and $r\geq \varepsilon$, there holds
\begin{equation}\label{pri:3.6}
  \Big\langle\Big(\dashint_{B_r(x)\cap\Omega}|\nabla v_\varepsilon|^2\Big)^{\frac{p}{2}}\Big\rangle^{\frac{1}{p}}
  \lesssim_{\lambda,\lambda_1,\lambda_2,d,m_0,R_0^{\tau}M_0,\tau,p} \mu_d(R_0/\varepsilon).
\end{equation}
\end{lemma}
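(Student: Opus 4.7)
The plan is to combine a pointwise annealed bound on $v_\varepsilon$ with a localized Caccioppoli inequality. First, the Green identity for $\mathcal{L}_\varepsilon^{*}G_\varepsilon(x,\cdot)=\delta_x$ paired with $v_\varepsilon$, using $\mathcal{L}_\varepsilon v_\varepsilon=0$ and $G_\varepsilon(x,\cdot)|_{\partial\Omega}=0$, yields the Poisson-type formula
\[
v_\varepsilon(y)=\varepsilon\int_{\partial\Omega}\phi^\varepsilon(z)\,n(z)\cdot a^{\varepsilon,*}(z)\nabla_z G_\varepsilon(y,z)\,dS(z),\qquad y\in\Omega.
\]
Cauchy--Schwarz inside $\langle\cdot\rangle$, the annealed decay $\langle|\nabla_z G_\varepsilon(y,z)|^q\rangle^{1/q}\lesssim |y-z|^{-(d-1)}\min\{1,\delta(y)/|y-z|\}$ from Proposition~\ref{P:4}, the stationary fluctuation bound $\langle|\phi^\varepsilon(z)|^q\rangle^{1/q}\lesssim \mu_d(R_0/\varepsilon)$ from Lemma~\ref{lemma:*3} (recalling $\phi(0)=0$), and the uniform surface estimate $\int_{\partial\Omega}|y-z|^{-(d-1)}\min\{1,\delta(y)/|y-z|\}\,dS(z)\lesssim 1$ together imply
\[
\esssup_{y\in\Omega}\langle|v_\varepsilon(y)|^p\rangle^{1/p}\lesssim \varepsilon\mu_d(R_0/\varepsilon)\qquad\forall\,p<\infty.
\]

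Next, introduce the layer-supported extension $\tilde g(y):=-\varepsilon\phi^\varepsilon(y)\eta_1(y)$, where $\eta_1(y):=\rho(\delta(y)/\varepsilon)$ with $\rho\in C_c^\infty([0,\infty))$, $\rho(0)=1$, $\text{supp}(\rho)\subset[0,1]$. Then $\tilde g|_{\partial\Omega}=-\varepsilon\phi^\varepsilon$, $\text{supp}(\tilde g)\subset O_\varepsilon$, and $|\nabla\tilde g|\lesssim I_{O_\varepsilon}(|\phi^\varepsilon|+|(\nabla\phi)^\varepsilon|)$. Given $x\in\partial\Omega$ and $r\geq\varepsilon$, pick a spatial cutoff $\zeta\in C_c^\infty(B_{2r}(x))$ with $\zeta\equiv 1$ on $B_r(x)$ and $|\nabla\zeta|\lesssim 1/r$; testing the weak form of $\mathcal{L}_\varepsilon v_\varepsilon=0$ against $\zeta^2(v_\varepsilon-\tilde g)\in H^1_0(\Omega)$ and applying Young's inequality yields the Caccioppoli-type bound
\[
\int_{B_r(x)\cap\Omega}|\nabla v_\varepsilon|^2\lesssim \int_{B_{2r}(x)\cap\Omega}|\nabla\tilde g|^2+\frac{1}{r^2}\int_{B_{2r}(x)\cap\Omega}|v_\varepsilon-\tilde g|^2.
\]

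To conclude, I would take $L^p(\langle\cdot\rangle)$-norms of both sides and move the expectation inside the volume averages via Minkowski's integral inequality. The gradient term is controlled by the pointwise bound $\langle|\nabla\tilde g(y)|^p\rangle^{1/p}\lesssim I_{O_\varepsilon}(y)\mu_d(R_0/\varepsilon)$ together with the ratio $|B_{2r}(x)\cap O_\varepsilon|/|B_{2r}(x)|\lesssim \varepsilon/r\leq 1$, which produces a $\mu_d^2(R_0/\varepsilon)$ contribution. The $L^2$ term is controlled by the first step combined with $|\tilde g|\leq\varepsilon|\phi^\varepsilon|$, which gives $\langle|v_\varepsilon(y)-\tilde g(y)|^p\rangle^{1/p}\lesssim \varepsilon\mu_d(R_0/\varepsilon)$, so the $r^{-2}$ prefactor delivers $(\varepsilon/r)^2\mu_d^2(R_0/\varepsilon)\leq \mu_d^2(R_0/\varepsilon)$ thanks to $r\geq\varepsilon$. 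Normalizing by $|B_r(x)\cap\Omega|\sim r^d$ and taking $p$-th roots yields \eqref{pri:3.6}.

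The principal obstacle is the pointwise annealed bound on $v_\varepsilon$: a naive maximum-principle argument would reduce matters to $\|\varepsilon\phi^\varepsilon\|_{L^\infty(\partial\Omega)}$, whose $L^p$ moments, after a union bound over the $\sim(R_0/\varepsilon)^{d-1}$ unit balls needed to cover $\partial\Omega/\varepsilon$, incur a $(R_0/\varepsilon)^{(d-1)/p}$ factor that is too lossy to produce $\mu_d(R_0/\varepsilon)$ on the right-hand side. The Poisson representation circumvents this by integrating $\phi^\varepsilon$ against the annealed-convergent kernel $\nabla_z G_\varepsilon(y,\cdot)$, reducing matters to a pointwise moment bound on $\phi^\varepsilon$ at each boundary point.
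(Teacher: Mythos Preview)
Your proof is correct and takes a genuinely different route from the paper's. Both arguments subtract an extension of the boundary data and apply Caccioppoli, but they differ in how they control the resulting $L^2$ term. The paper chooses an extension $g=-\varepsilon\phi^\varepsilon\varphi$ supported in the thick layer $O_{2r}$, writes $z_\varepsilon:=v_\varepsilon-g$ via the \emph{volume} integral $z_\varepsilon(\tilde x)=-\int_{O_{2r}}\nabla_y G_\varepsilon(\tilde x,y)\cdot a^\varepsilon\nabla g(y)\,dy$, and then invokes the annealed layer-type estimate of Proposition~\ref{P:2} to get $\langle|z_\varepsilon|^p\rangle^{1/p}\lesssim r\mu_d(R_0/\varepsilon)$. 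You instead bound $v_\varepsilon$ itself via the \emph{surface} Poisson formula and use only the basic annealed decay of $\nabla G_\varepsilon$ from Proposition~\ref{P:4}, obtaining the stronger uniform bound $\langle|v_\varepsilon(y)|^p\rangle^{1/p}\lesssim \varepsilon\mu_d(R_0/\varepsilon)$; this lets you take a thin extension $\tilde g$ supported only in $O_\varepsilon$, and the layer-volume ratio $|B_{2r}\cap O_\varepsilon|/|B_{2r}|\lesssim\varepsilon/r$ absorbs the remaining factors.

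Your approach is more elementary in that it bypasses Proposition~\ref{P:2} (and its preparatory Lemmas~\ref{lemma:5}--\ref{lemma:3.2}) altogether. In fact your pointwise step is exactly the argument the paper runs \emph{later}, in Step~1 of the proof of Proposition~\ref{P:5}, to establish $\eqref{pri:2.8}$ for $Q_\varepsilon=v_\varepsilon$; that step is independent of Lemma~\ref{lemma:4.1}, so there is no circularity. The paper's ordering has the mild advantage that its bound on $z_\varepsilon$ is tailored to the scale $r$ (it gives $r\mu_d$ rather than $\varepsilon\mu_d$), which would matter if one wanted to track powers of $\varepsilon/r$; but for the stated estimate $\eqref{pri:3.6}$ your argument is cleaner. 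One cosmetic point: your bookkeeping records a $\mu_d^2$ contribution for each term before the square root, but after taking the root the final bound is indeed $\mu_d(R_0/\varepsilon)$, matching the statement.
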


\begin{proof}
The main idea is inspired by Shen \cite[Lemma 5.4.5]{Shen18}, and
the proof is divided into two steps. Let $\varepsilon\leq r\leq (R_0/4)$
and it suffices to show the desired estimate $\eqref{pri:3.6}$ for the case of $p\geq 2$.
Let $\varphi\in C^1(\Omega)$ be a cut-off function, satisfying
$\varphi =1$ on $O_{r}$, $\varphi =0$ on $\Omega\setminus O_{2r}$ and $|\nabla\varphi|\lesssim 1/r$.

\textbf{Step 1.} Reduction and outline the proof.
For the ease of the statement, let $g := -\varepsilon\phi^\varepsilon\varphi$, and we consider $z_\varepsilon = v_\varepsilon - g$. Then, it follows from the equations $\eqref{pde:6*}$ that
\begin{equation*}
\left\{\begin{aligned}
-\nabla\cdot a^\varepsilon \nabla z_\varepsilon
&= \nabla\cdot a^\varepsilon \nabla g
&\quad&\text{in}\quad\Omega;\\
z_\varepsilon
&= 0
&\quad&\text{on}\quad\partial\Omega.
\end{aligned}\right.
\end{equation*}
For any fixed $x\in\partial\Omega$, it follows from Caccioppoli's inequality (see e.g. \cite{Giaquinta-Martinazzi12}) that
\begin{equation}\label{f:3.4}
 \dashint_{D_r(x)}|\nabla z_\varepsilon|^2
 \lesssim_{\lambda,d}\frac{1}{r^2}\dashint_{D_{2r}(x)}|z_\varepsilon|^2
 + \dashint_{D_{2r}(x)}|\nabla g|^2,
\end{equation}
where $r>0$ is arbitrary.
On the one hand, it is known that
\begin{equation}\label{f:3.2}
\begin{aligned}
   &\Big\langle
   \Big(\dashint_{D_{2r}(x)}|\nabla g|^2\Big)^{\frac{p}{2}}\Big\rangle^{\frac{2}{p}}
   \leq
   \dashint_{D_{2r}(x)}\langle|\nabla g|^p\rangle^{\frac{2}{p}}\\
   &\lesssim_d \dashint_{D_{2r}(x)}\Big(\frac{1}{r^2}\langle|
   \varepsilon\phi^\varepsilon|^{p}\rangle^{\frac{2}{p}}
   + \langle|
   \nabla\phi^\varepsilon|^{p}\rangle^{\frac{2}{p}}\Big)
   \lesssim \mu_d^2(R_0/\varepsilon)+1,
\end{aligned}
\end{equation}
where we use the condition $r\geq\varepsilon$.
On the other hand, it suffices to show
\begin{equation}\label{f:3.3}
   \esssup_{y\in D_{2r}(x)}\langle|z_\varepsilon(y)|^p\rangle^{\frac{1}{p}}
   \lesssim_{\lambda,\lambda_1,\lambda_2,d,m_0,R_0^{\tau}M_0,\tau,p} r \mu_d(R_0/\varepsilon).
\end{equation}

Let us admit the above estimate for a while, and plugging the estimates $\eqref{f:3.2}$ and $\eqref{f:3.3}$ back into
$\eqref{f:3.4}$, one can derive that
\begin{equation}\label{f:3.5}
\Big\langle\Big(\dashint_{D_r(x)}|\nabla z_\varepsilon|^2\Big)^{\frac{p}{2}}\Big\rangle^{\frac{2}{p}}
\lesssim \mu_d^2(R_0/\varepsilon)+1,
\end{equation}
and therefore we obtain that
\begin{equation*}
\begin{aligned}
&\Big\langle\Big(\dashint_{D_r(x)}|\nabla v_\varepsilon|^2\Big)^{\frac{p}{2}}\Big\rangle^{\frac{2}{p}}\\
&\leq
\Big\langle\Big(\dashint_{D_r(x)}|\nabla z_\varepsilon|^2\Big)^{\frac{p}{2}}\Big\rangle^{\frac{2}{p}}
+ \Big\langle\Big(\dashint_{D_r(x)}|\nabla g|^2\Big)^{\frac{p}{2}}\Big\rangle^{\frac{2}{p}}
\lesssim^{\eqref{f:3.5},\eqref{f:3.2}}
\mu_d^2(R_0/\varepsilon)+1.
\end{aligned}
\end{equation*}

\textbf{Step 2. } Show the estimate $\eqref{f:3.3}$.
By the advantage of Green's function,
in view of $\eqref{eq:2.1}$, for any $\tilde{x}
\in D_{2r}(x)$,
we have the representation
\begin{equation*}
  z_\varepsilon(\tilde{x}) = -\int_{\Omega}\nabla_yG_\varepsilon(\tilde{x},y)\cdot (a^\varepsilon\nabla g)(y)dy
  = -\int_{O_{2r}}\nabla_yG_\varepsilon(\tilde{x},y)\cdot (a^\varepsilon\nabla g)(y)dy,
\end{equation*}
where we note that $\text{supp}(g)\subset O_{2r}$. Also, for any
$p_2\geq p_1>p\geq 1$, it follows from the computations
analogous to $\eqref{f:3.2}$ and Proposition $\ref{P:2}$ that,
for any $\tilde{x}\in D_{2r}(x)$,
\begin{equation*}
\begin{aligned}
&\langle|z_\varepsilon(\tilde{x})|^p\rangle^{\frac{1}{p}}
\lesssim_{\lambda,d}
\int_{O_{2r}}\langle|\nabla_y G_\varepsilon(\tilde{x},y)|^{p_1}\rangle^{\frac{1}{p_1}}
\langle|\nabla g|^{p_2}\rangle^{\frac{1}{p_2}}\\
&\lesssim \mu_d(R_0/\varepsilon)
\int_{O_{2r}}\langle|\nabla_y G_\varepsilon(\tilde{x},y)|^{p_1}\rangle^{\frac{1}{p_1}}
\lesssim^{\eqref{pri:3.5}} \mu_d(R_0/\varepsilon)r^{1-\sigma}
[\delta(\tilde{x})]^{\sigma}
\lesssim \mu_d(R_0/\varepsilon)r,
\end{aligned}
\end{equation*}
where we also employ the fact that $\delta(\tilde{x})\leq 2r$ for any
$\tilde{x}\in D_{2r}(x)$ in the last inequality.
This further implies the stated estimate $\eqref{f:3.3}$,
and we are done.
\end{proof}

\textbf{Proof of Proposition $\ref{P:5}$.}
The proof is divided into four steps. By the definition of $Q_{\varepsilon,i}$ and the equation $\eqref{pde:10}$, there holds
\begin{equation}\label{pde:6}
\left\{\begin{aligned}
\nabla\cdot a^\varepsilon\nabla Q_{\varepsilon,i}
&= 0 \quad&\text{in}&\quad \Omega;\\
Q_{\varepsilon,i}
&= -\varepsilon\phi_i^\varepsilon \quad&\text{on}&\quad \partial\Omega,
\quad i=1,\cdots,d.
\end{aligned}\right.
\end{equation}
For the ease of the statement, we omit $i$ in the subscripts of $Q_{\varepsilon,i}$ throughout the proof.

\medskip
\textbf{Step 1.} Arguments for the estimate $\eqref{pri:21-1}$.
On account of $\tilde{\Phi}_{\varepsilon,i} = Q_{\varepsilon,i}+ \varepsilon\phi_i^\varepsilon$ and
the estimate $\eqref{pri:**3}$, it suffices
to establish
\begin{equation}\label{pri:2.8}
 \esssup_{x\in\Omega}\langle |Q_\varepsilon(x)|^p\rangle^{\frac{1}{p}}
 \lesssim \varepsilon\mu_d(R_0/\varepsilon).
\end{equation}

In view of the equations $\eqref{pde:6}$, we have the
Poisson formula,
\begin{equation}\label{eq:4.1}
Q_\varepsilon(x) = -\varepsilon\int_{\partial\Omega}
n(y)\cdot a^{*\varepsilon}(y)\nabla_y G_{\varepsilon}(x,y)\phi^\varepsilon(y)dS(y)
\quad \forall x\in\Omega,
\end{equation}
where $dS$ is the surface measure on $\partial\Omega$, and
$n$ is the outward unit normal vector to $\partial\Omega$.
Let $\Gamma_{N_0}(x_0)\subset\Omega$ be a cone with vertex $x_0$
and aperture $N_0$ (see Subsection $\ref{notation}$), and
$\bar{p}>p$. For any $x\in \Gamma_{N_0}(x_0)$, it follows from
$\eqref{eq:4.1}$ and Proposition $\ref{P:4}$ that
\begin{equation}\label{f:35}
\begin{aligned}
&\quad\langle |Q_\varepsilon(x)|^p\rangle^{\frac{1}{p}}\\
&\lesssim^{\eqref{pri:19}} \varepsilon\delta(x)
\int_{\partial\Omega}\frac{\langle|\phi^\varepsilon(y)|^{\bar{p}}
\rangle^{\frac{1}{\bar{p}}}}{|x-y|^d}dS(y)
\lesssim^{\eqref{pri:**3}} \varepsilon\mu_d(\frac{R_0}{\varepsilon})\delta(x)
\int_{\partial\Omega}\frac{dS(y)}{|x-y|^d}\\
&\lesssim \varepsilon\mu_d(R_0/\varepsilon)
\mathcal{M}_{\partial\Omega}(1)(x_0)\lesssim \varepsilon\mu_d(R_0/\varepsilon),
\end{aligned}
\end{equation}
where we also employ the fact that $\phi(0)=0$ in the second inequality,  and $\mathcal{M}_{\partial\Omega}(f)$ is the Hardy–Littlewood maximal function of $f$ on $\partial\Omega$, defined by
\begin{equation*}
\mathcal{M}_{\partial\Omega}(f)(x_0) := \sup_{0<r<R_0}\Big\{\dashint_{B_r(x_0)\cap\partial\Omega}
 |f(x)|dS(x)\Big\}.
\end{equation*}

Inspired by the definition $\eqref{mf}$, the nontangential maximal function of $\langle |Q_\varepsilon|^p\rangle^{\frac{1}{p}}$ is defined by
\begin{equation*}
\big(\langle |Q_\varepsilon|^p\rangle^{\frac{1}{p}}\big)^*(x_0)
:= \sup_{x\in\Gamma_{N_0}(x_0)}|\langle |Q_\varepsilon(x)|^p\rangle^{\frac{1}{p}}|.
\end{equation*}
%where $\Gamma_{N_0}(x_0):=\{x\in\Omega:|x-x_0|\leq N_0\text{dist}(x,\partial\Omega)\}$, and
%is the cone with vertex $x_0$
%and aperture $N_0$, and $N_0>1$ may be sufficiently large.
Therefore, on account of $\eqref{f:35}$,  one can further derive
that
\begin{equation}\label{f:2.21}
\big(\langle |Q_\varepsilon|^p\rangle^{\frac{1}{p}}\big)^*(x_0)
\lesssim_{\lambda,\lambda_1,\lambda_2,d,m_0,R_0^{\tau}M_0,\tau,p} \varepsilon\mu_d(R_0/\varepsilon).
\end{equation}

Consequently, for any $1<q<\infty$, we arrive at
\begin{equation*}
\begin{aligned}
\bigg(\dashint_{\partial\Omega}
|\big(\langle |Q_\varepsilon|^p\rangle^{\frac{1}{p}}\big)^*|^{q}
\bigg)^{\frac{1}{q}}
\lesssim^{\eqref{f:2.21}} \varepsilon\mu_d(R_0/\varepsilon),
~~\text{and}~
\big\|(\langle |Q_\varepsilon|^p\rangle^{\frac{1}{p}})^*\big\|_{L^\infty(\partial\Omega)}
\lesssim \varepsilon\mu_d(R_0/\varepsilon),
\end{aligned}
\end{equation*}
and the later one actually gives us the desired estimate $\eqref{pri:2.8}$ (by a simple contradiction argument).

\medskip
\textbf{Step 2.}
Arguments for the estimate $\eqref{pri:21-2}$.
Let $\bar{p}>p>\underline{p}>2$ and $\eta\in(0,1]$. We start
from the estimate $\eqref{pri:**3}$ and $\langle|\nabla\phi|^{\bar{p}}\rangle^{\frac{1}{\bar{p}}}\lesssim 1$
(see Lemma $\ref{lemma:*3}$) that
\begin{equation}\label{f:2.23*}
\big\langle [\phi^{\varepsilon}]_{C^{0,\eta}(B_{3r}(x))}^{\bar{p}}
\big\rangle^{\frac{1}{\bar{p}}} \lesssim \varepsilon^{-\eta}\mu_d(R_0/\varepsilon)
\quad \forall x\in\Omega.
\end{equation}

Then,
it is reduced to showing the estimates:
\begin{subequations}
\begin{align}
&\esssup_{x\in\Omega}\big\langle\big[Q_\varepsilon
\big]_{C^{0,1}(U_{\varepsilon}(x))}^{\underline{p}}
\big\rangle^{\frac{1}{\underline{p}}}
\lesssim \mu_d(R_0/\varepsilon); \label{pri:2.9}\\
& \esssup_{x\in\Omega}\big\langle\big[Q_\varepsilon
\big]_{C^{0,\eta'}(U_{\varepsilon}(x))}^p
\big\rangle^{\frac{1}{p}}
\lesssim \varepsilon^{1-\eta'}\mu_d(R_0/\varepsilon)
\quad\forall \eta'\in(0,1).
\label{pri:3.7}
\end{align}
\end{subequations}

If we first admit the estimate of $\eqref{pri:2.9}$, then it is not hard to apply the following interpolation inequality
\begin{equation*}
\big[Q_\varepsilon\big]_{C^{0,\eta}(U_\varepsilon(x))}
\lesssim_{\eta}
\|Q_\varepsilon\|_{L^\infty(U_\varepsilon(x))}^{1-\eta}[
Q_\varepsilon]_{C^{0,1}(U_\varepsilon(x))}^{\eta}
\qquad(\text{where~}\eta\in[0,1])
\end{equation*}
to obtain the stated estimate $\eqref{pri:3.7}$, where we require
the established estimates \eqref{pri:2.8} and \eqref{pri:2.9}.
Therefore, the key ingredient is reduced to showing
the desired estimate $\eqref{pri:2.9}$.

\textbf{Step 3.} Arguments for the estimate $\eqref{pri:2.9}$.
For any $x\in\Omega$, the proof will be split into two cases: (1)
$\delta(x)\geq 2\varepsilon$; (2) $\delta(x)\leq 2\varepsilon$.
We first handle
the case (1). For any $\tilde{x}\in B_\varepsilon(x)$,
the Poisson formula $\eqref{eq:4.1}$ leads to
\begin{equation}\label{f:2.22}
\begin{aligned}
Q_\varepsilon(x)-Q_\varepsilon(\tilde{x}) = \varepsilon\int_{\partial\Omega}
n(y)\cdot a^{*\varepsilon}(y)\big(\nabla_y G_{\varepsilon}(\tilde{x},y)-\nabla_yG_{\varepsilon}(x,y)\big)
\phi^\varepsilon(y)dS(y),
\end{aligned}
\end{equation}
and this implies that
\begin{equation}\label{f:34}
\begin{aligned}
\big[Q_\varepsilon\big]_{C^{0,1}(B_\varepsilon(x))}
\lesssim \varepsilon
\int_{\partial\Omega}
[\nabla_y G_{\varepsilon}(\cdot,y)]_{C^{0,1}(B_\varepsilon(x))}
|\phi^\varepsilon(y)|dS(y).
\end{aligned}
\end{equation}
Note that $\mathcal{L}_\varepsilon(\nabla_yG_\varepsilon^{*}(y,\cdot))=0$
in $B_{3\varepsilon/2}(x)\subset\Omega$, where the uniform interior ball condition can guarantee this, and we also have the fact that  $\nabla_yG_\varepsilon^{*}(y,\cdot) =
\nabla_yG_\varepsilon(\cdot,y)$. The classical Lipschitz estimate leads to
\begin{equation}\label{f:4.1}
\begin{aligned}
\big[\nabla_y G_\varepsilon(\cdot,y)\big]_{C^{0,1}(B_{\varepsilon}(x))}
\lesssim_{\lambda,d} \frac{C_*(x)}{\varepsilon}
\Big(\dashint_{B_{3\varepsilon/2}(x)}|\nabla_y G_\varepsilon(\cdot,y)|^2
\Big)^{\frac{1}{2}},
\end{aligned}
\end{equation}
where $C_*(x) := [a]_{C^{0,\sigma}(B_1(x/\varepsilon))}^{\frac{1}{\sigma}(\frac{d}{2}+1)}$ and we refer the reader to \cite[Lemma A.3]{Josien-Otto22} for the details.
Then, taking $\langle|\cdot|^{p}\rangle^{\frac{1}{p}}$ on the both sides of
the above estimate, by using
H\"older's inequality, the assumption $\eqref{a:3}$, and Proposition
$\ref{P:4}$ in order, we obtain that
\begin{equation*}
\begin{aligned}
\big\langle\big[\nabla_y G_\varepsilon(\cdot,y)\big]_{C^{0,1}(B_{\varepsilon}(x))}^p
\big\rangle^{\frac{1}{p}}
&\lesssim \frac{1}{\varepsilon}
\Big(\dashint_{B_{\frac{3\varepsilon}{2}}(x)}
\langle|\nabla_y G_\varepsilon(\cdot,y)|^{\bar{p}}\rangle^{\frac{2}{\bar{p}}}
\Big)^{\frac{1}{2}}
\lesssim^{\eqref{pri:19}} \frac{\delta(x)}{\varepsilon|x-y|^d}.
\end{aligned}
\end{equation*}
After taking $\langle(\cdot)^{\underline{p}}\rangle^{\frac{1}{\underline{p}}}$ on the both sides of $\eqref{f:34}$, we plug the above estimate back into
its right-hand side, and this yields
\begin{equation}\label{f:2.25}
\big\langle\big[Q_\varepsilon\big]_{C^{0,1}(B_\varepsilon(x))}^{\underline{p}}
\big\rangle^{\frac{1}{\underline{p}}}
\lesssim \delta(x)
\int_{\partial\Omega}\frac{\langle|\phi^\varepsilon(y)|^{\bar{p}}
\rangle^{\frac{1}{\bar{p}}}}{|x-y|^d}dS(y)
\lesssim_{\lambda,\lambda_1,\lambda_2,d,m_0,R_0^{\tau}M_0,\tau,\bar{p}}
\mu_d(R_0/\varepsilon),
\end{equation}
where the last inequality follows from an analogous computation given for
${\eqref{f:35}}$.

Then, we proceed to handle the case (2). It suffices to show that
for any $x_0\in\partial\Omega$,
\begin{equation}\label{pri:3.4}
\big\langle[Q_\varepsilon]_{C^{0,1}(D_{3\varepsilon}(x_0))}^p
\big\rangle^{\frac{1}{p}}
\lesssim_{\lambda,\lambda_1,\lambda_2,d,m_0,R_0^{\tau}M_0,\tau,p}
\mu_d(R_0/\varepsilon).
\end{equation}
For any $x\in\Omega$ with $\delta(x)\leq 2\varepsilon$, there exists  $\bar{x}\in\partial\Omega$ such that $(B_\varepsilon(x)\cap\Omega)
\subset (B_{3\varepsilon}(\bar{x})\cap\Omega)$, and we therefore derive that
\begin{equation*}
 \big\langle\big[Q_\varepsilon\big]_{C^{0,1}(U_{\varepsilon}(x))}^p
\big\rangle^{\frac{1}{p}}
\leq
 \big\langle\big[Q_\varepsilon\big]_{C^{0,1}(D_{3\varepsilon}(\bar{x}))}^p
\big\rangle^{\frac{1}{p}}
\lesssim^{\eqref{pri:3.4}}
\mu_d(R_0/\varepsilon),
\end{equation*}
where we recall the notations $U_\varepsilon(x)=B_\varepsilon(x)\cap\Omega$ and $D_{3\varepsilon}(\bar{x})=B_{3\varepsilon}(\bar{x})\cap\Omega$.
Thus, this together with $\eqref{f:2.25}$ leads to the stated estimate
$\eqref{pri:2.9}$.

Now, we show the estimate $\eqref{pri:3.4}$. Define
$\Phi_{\varepsilon,i}:=\tilde{\Phi}_{\varepsilon,i}+x_i$. Then, there holds
\begin{equation}\label{pde:9}
\left\{\begin{aligned}
\mathcal{L}_\varepsilon(\Phi_{\varepsilon,i})
 &= 0
 \quad &\text{in}&\quad \Omega;\\
 \Phi_{\varepsilon,i} &= x_i
 \quad &\text{on}&\quad \partial\Omega,
\end{aligned}\right.
\quad i=1\cdots,d.
\end{equation}
By the definition of $Q_{\varepsilon,i}$ in Proposition $\ref{P:4}$,
we have $Q_{\varepsilon,i}= \Phi_{\varepsilon,i}-x_i-\varepsilon\phi^{\varepsilon}_i$.
For any $x_0\in\partial\Omega$, in view of the equations $\eqref{pde:9}$ and $\eqref{corrector}$, the classical Lipschitz estimate leads to
\begin{equation*}
\begin{aligned}
\big[Q_\varepsilon\big]_{C^{0,1}(D_{3\varepsilon}(x_0))}
&\leq \|\nabla\Phi_\varepsilon\|_{L^{\infty
}(D_{3\varepsilon}(x_0))}
+ 1 +
\|\nabla\phi\|_{L^\infty(D_{3}(x_0/\varepsilon))}\\
&\lesssim_{\lambda,d,m_0,R_0^{\tau}M_0,\tau} C_*(x_0)\bigg\{\big(\dashint_{D_{6\varepsilon}(x_0)}
|\nabla\Phi_\varepsilon|^2\big)^{\frac{1}{2}}+1
+\big(\dashint_{B_{6}(x_0/\varepsilon)}|\nabla\phi|^2\big)^{\frac{1}{2}}\bigg\},
\end{aligned}
\end{equation*}
where $C_*(x_0)$ is given by the same expression as in $\eqref{f:4.1}$.
Taking $\langle(\cdot)^p\rangle^{\frac{1}{p}}$ on the both sides above,
and then employing Lemma $\ref{lemma:4.1}$, Minkowski's inequality, and the fact that
$\langle|\nabla\phi|^{\beta}\rangle^{\frac{1}{\beta}}\lesssim 1$
in order, we obtain that
\begin{equation*}
\begin{aligned}
\big\langle\big[Q_\varepsilon\big]_{C^{0,1}(D_{3\varepsilon}(x_0))}^p
\big\rangle^{\frac{1}{p}}
&\lesssim^{\eqref{a:3}} \Big\langle
\big(\dashint_{D_{6\varepsilon}(x_0)}
|\nabla Q_\varepsilon|^2\big)^{\frac{\bar{p}}{2}}
\Big\rangle^{\frac{1}{\bar{p}}}
+ \Big\langle
\big(\dashint_{B_{6}(x_0/\varepsilon)}
|\nabla \phi|^2\big)^{\frac{\bar{p}}{2}}
\Big\rangle^{\frac{1}{\bar{p}}} + 1\\
&\lesssim^{\eqref{pri:3.6}}
\mu_d(R_0/\varepsilon)+1,
\end{aligned}
\end{equation*}
which gives us the desired estimate $\eqref{pri:3.4}$.

\medskip
\textbf{Step 4.} Arguments for the estimate $\eqref{pri:18}$.
Note that the estimate $\eqref{pri:2.9}$ has already
shown the Lipschitz estimate in $\eqref{pri:18}$, while we mainly address
the rest part.
In view of Corollary $\ref{corollary:2.2}$ and
Caccioppoli's inequality,
%We show that there exists the stationary random fields $x\mapsto C_{*}^{(1)}(x)$, depends on $[a]_{C^{0,\eta}(B_1(x))}$ and $r_*(x/\varepsilon)$, such that,
for any $z\in\Omega$, we have
\begin{equation}\label{f:29}
 |\nabla Q_\varepsilon(z)|
 \lesssim
 \frac{\mathcal{C}_*(z)}{\delta(z)}
 \Big(\dashint_{B_{\delta(z)}(z)}|Q_\varepsilon|^2\Big)^{\frac{1}{2}}.
\end{equation}

Based upon the estimate $\eqref{pri:2.8}$ in \textbf{Step 1},
taking $\langle(\cdot)^p\rangle^{\frac{1}{p}}$ on the both sides of
$\eqref{f:29}$, we obtain that
\begin{equation*}
\begin{aligned}
 \big\langle|\nabla Q_\varepsilon(z)|^p\big\rangle^{\frac{1}{p}}
 \lesssim^{\eqref{f:29}}
 \frac{1}{\delta(z)}
 \esssup_{x\in\Omega}\langle|Q_\varepsilon(x)|^{\bar{p}}\rangle^{\frac{1}{\bar{p}}}
 \lesssim^{\eqref{pri:2.8}} \frac{\varepsilon\mu_d(R_0/\varepsilon)}{\delta(z)},
\end{aligned}
\end{equation*}
which implies the stated estimate in $\eqref{pri:18}$, and this ends the proof.
\qed

\medskip

\textbf{Proof of Theorem $\ref{thm:2}$.} The estimates $\eqref{pri:1.a},
\eqref{pri:1.b}$, and $\eqref{pri:1.c}$ follow from $\eqref{pri:21-1}$,  $\eqref{pri:21-2}$, and $\eqref{pri:18}$, respectively. We have completed
the whole proof.
\qed

\section{Decay estimates of Green function's expansion}\label{sec:5}
\noindent
Note that we adopt the notations and arguments mostly from \cite{Clozeau-Josien-Otto-Xu}.
Since we merely handle the first-order expansion, a direct duality argument should be accessible
as in \cite{Avellaneda-Lin91,Kenig-Lin-Shen14},
and we refer the reader to the work by Blanc, Josien, and Le Bris \cite{Blanc-Josien-LeBris-20} on this aspect\footnote{It is known from personal communication with Marc Josien that the error estimate for the asymptotic expansion of Green's function at the level of mixed derivatives for the non-periodic case is contained in his PhD thesis (in French), specifically Theorem 1.1.4 on page 23 therein.}.
Instead, we choose another way to carry out a proof, and it relies on the following ingredient.

\begin{lemma}[Bella-Giunti-Otto's lemma of boundary version]\label{lemma:4}
Let $\Omega\subset\mathbb{R}^d$ be a bounded Lipschitz domain
and $r>0$. Suppose that
$\langle\cdot\rangle$ is an ensemble of $\lambda$-uniformly elliptic coefficient fields and $u_\varepsilon$ is
a weak solution of $\nabla \cdot a^\varepsilon\nabla u_\varepsilon
=0$ in $D_r$ with $u_\varepsilon = 0$ on $\Delta_r$. Then,
for all $p\in[1,\infty)$, we obtain that
\begin{equation}\label{pri:27}
 \Big\langle\Big(\dashint_{D_{r/2}}|\nabla u_\varepsilon|^2\Big)^{\frac{p}{2}}
 \Big\rangle^{\frac{1}{p}}
 \lesssim_{\lambda,d,m_0}
 \sup_{w\in C^{\infty}_0(D_r)}\frac{\big\langle|\dashint_{D_r}u_\varepsilon
 w|^p
\big\rangle^{\frac{1}{p}}}{r^3\sup|\nabla^2 w|},
\end{equation}
where $m_0$ is the Lipschitz character of $\Omega$.
\end{lemma}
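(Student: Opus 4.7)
The strategy is to adapt the interior Bella-Giunti-Otto argument from \cite{Bella-Giunti-Otto17} to the boundary setting, exploiting the fact that the zero Dirichlet trace $u_\varepsilon|_{\Delta_r}=0$ allows the trivial extension of $u_\varepsilon$ by zero across the boundary piece $\Delta_r$ to live in $H^1_{\rm loc}(B_r)$. After a scaling reduction to $r=1$, the boundary Caccioppoli inequality (valid on the Lipschitz piece thanks to the character $M_0$) gives
\[
\Big(\dashint_{D_{1/2}}|\nabla u_\varepsilon|^2\Big)^{1/2}\lesssim \Big(\dashint_{D_{3/4}}u_\varepsilon^2\Big)^{1/2},
\]
so the target reduces to a stochastic $L^p$ bound for $\|u_\varepsilon\|_{L^2(D_{3/4})}$ by the weak norm on the right of \eqref{pri:27}.

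To produce such a bound, fix a scale $\rho\in(0,1/32)$, a standard mollifier $\eta_\rho\in C_0^\infty(B_\rho)$ with $\int\eta_\rho=1$, and a cutoff $\chi\in C_0^\infty(D_{7/8})$ equal to $1$ on $D_{3/4}$. Denoting the zero-extension of $u_\varepsilon$ again by $u_\varepsilon$, the product $W:=\chi(u_\varepsilon\ast\eta_\rho)$ lies in $C_0^\infty(D_1)$ and satisfies $\sup|\nabla^2 W|\lesssim \rho^{-2-d/2}\|u_\varepsilon\|_{L^2(D_1)}$ via standard mollifier bounds and Cauchy-Schwarz. Decomposing
\[
\dashint_{D_{3/4}} u_\varepsilon^2 = \dashint_{D_1} u_\varepsilon W + \dashint_{D_1} u_\varepsilon\bigl(u_\varepsilon-u_\varepsilon\ast\eta_\rho\bigr)\chi,
\]
the mollification error is controlled by Cauchy-Schwarz, Poincar\'e, and one further Caccioppoli by $C\rho\,\dashint_{D_{15/16}}u_\varepsilon^2$, while the principal term obeys the pointwise (in $\omega$) duality
\[
\Big|\dashint_{D_1} u_\varepsilon W\Big|\lesssim \rho^{-2-d/2}\,\|u_\varepsilon\|_{L^2(D_1)}\cdot \sup_{w\in C_0^\infty(D_1)}\frac{|\dashint_{D_1} u_\varepsilon w|}{\sup|\nabla^2 w|}.
\]
Raising to the $p/2$-th power, taking $\langle\cdot\rangle$, and applying Cauchy-Schwarz with Young's inequality at the stochastic level, one absorbs the random factor $\langle\|u_\varepsilon\|_{L^2}^p\rangle^{1/p}$ into the left-hand side, leaving on the right precisely $\sup_w\langle|\dashint u_\varepsilon w|^p\rangle^{1/p}/\sup|\nabla^2 w|$, where the supremum safely sits outside the $p$-th expectation because each admissible $w$ is deterministic. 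A Giaquinta-Giusti iteration on nested radii $1/2\leq s<t\leq 1$ with matched mollification scales $\rho\sim(t-s)$ then closes the estimate.

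The principal obstacle is precisely the final Cauchy-Schwarz/Young step: since the test function $W$ realising the duality is random (it depends on $u_\varepsilon$ through the mollification), one cannot simply move the deterministic supremum outside the stochastic $L^p$-norm, and the bookkeeping must be done at the mixed $L^p_\omega L^2_x$ level so that the random bound $\rho^{-2-d/2}\|u_\varepsilon\|_{L^2}$ on $\|\nabla^2 W\|_\infty$ can be factored out and absorbed. A secondary technicality is the validity of the boundary Caccioppoli and Poincar\'e inequalities coupled with the zero-extension across the Lipschitz piece $\Delta_r$, but these are standard once the constants are tracked in terms of $M_0$.
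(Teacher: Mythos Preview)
Your reduction via boundary Caccioppoli to an $L^2_x$-control of $u_\varepsilon$ is correct and matches the paper's first move. The gap is exactly where you flag it, and your proposed fix does not close it. Pointwise in $\omega$ your duality gives
\[
\|u_\varepsilon\|_{L^2(D_s)}^2 \;\lesssim\; \rho^{-2-\frac d2}\,\|u_\varepsilon\|_{L^2(D_t)}\cdot N(\omega) \;+\; C\rho\,\|u_\varepsilon\|_{L^2(D_t)}^2,
\qquad
N(\omega):=\sup_{w\in C_0^\infty(D_1)}\frac{|\dashint_{D_1}u_\varepsilon w|}{\sup|\nabla^2 w|}.
\]
After Young and Giaquinta--Giusti iteration you obtain $\langle\|u_\varepsilon\|_{L^2(D_{1/2})}^p\rangle \lesssim \langle N^p\rangle$. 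But $\langle N^p\rangle=\big\langle\big(\sup_w\cdots\big)^p\big\rangle$ has the supremum \emph{inside} the expectation, whereas the lemma requires $\sup_w\langle(\cdots)^p\rangle$ on the right. The inequality goes the wrong way; the fact that each admissible $w$ is deterministic does not help, because the optimiser in $N(\omega)$ depends on $\omega$ through $u_\varepsilon(\omega)$. Factoring out the random bound on $\sup|\nabla^2 W|$ and absorbing $\|u_\varepsilon\|_{L^2}$ still leaves you with $N(\omega)$, so the ``mixed-level bookkeeping'' you describe cannot by itself push the supremum outside.

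The paper circumvents this by replacing the single random test function $W$ with a \emph{countable deterministic} family: the Dirichlet eigenfunctions $\{w_j\}$ of $\Delta^{2k}$ on $D_1$ for $2k>d/2$. Writing $u_\varepsilon=\Delta^{2k}v$ and expanding $\|v\|_{L^2}^2=\sum_j\lambda_j^{-2}(\int u_\varepsilon w_j)^2$, the finite-trace condition $\sum_j\lambda_j^{-1}<\infty$ permits H\"older's inequality in the form
\[
\Big\langle\Big(\sum_j\lambda_j^{-1}a_j\Big)^{p/2}\Big\rangle
\le\Big(\sum_{j'}\lambda_{j'}^{-1}\Big)^{p/2-1}\sum_j\lambda_j^{-1}\langle a_j^{p/2}\rangle,
\]
which legitimately swaps the sum (hence the sup over $j$) with $\langle\cdot\rangle$. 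It is precisely this $\ell^1$-weighted structure---absent in your single-mollifier construction---that converts the quenched weak norm into the annealed one. To repair your argument you would need some analogous summable decomposition over deterministic test functions; the mollification alone does not provide it.
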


\begin{proof}
The main idea of the proof is based upon that given in \cite[Lemma 4]{Clozeau-Josien-Otto-Xu}, originally developed in \cite[Lemma 4]{Bella-Giunti-Otto17}, and we provide a proof for the reader's convenience.
The whole proof has been divided into three steps.

\medskip

\textbf{Step 1.} Outline the main steps.
It is fine to assume $r=1$ by a rescaling argument. Let $\eta\in C^1_0(B_1)$ be a cut-off function satisfying $\eta = 1$ in $B_{3/4}$
and $\eta = 0$ outside $B_{4/5}$ with $|\nabla \eta|\lesssim_d 1$.
The proof starts from the following interpolation inequality for any
$v\in H_0^{2k}(D_1)$ with $k\in\mathbb{N}$, i.e.,
\begin{equation}\label{f:53}
\|\eta^{4k}\Delta^{2k}v\|_{L^2(D_1)}
\lesssim
 \|\eta^{4k+1}\nabla\Delta^{2k} v\|_{L^2(D_1)}^{\frac{2k}{2k+1}}
 \|v\|_{L^2(D_1)}^{\frac{1}{2k+1}}
 + \|v\|_{L^2(D_1)}.
\end{equation}
(We refer the reader to \cite[Lemma 4]{Bella-Giunti-Otto17} for the proof of $\eqref{f:53}$.) Then, one can continue to construct
the zero-Dirichlet boundary value problem for polyharmonic operator $\Delta^{2k}$ (with $k\geq 1$ being fixed later) as follows:
\begin{equation}\label{pde:21}
\left\{\begin{aligned}
\Delta^{2k}v &= u_\varepsilon &\quad&\text{in}\quad D_1;\\
\frac{\partial^m v}{\partial\nu^m}[v] &= 0 &\quad&\text{on}\quad\partial D_1,
\qquad m=0,\cdots,2k-1,
\end{aligned}\right.
\end{equation}
where $\partial^m/\partial\nu^m:=
\frac{\partial}{\partial\nu}(
\frac{\partial^{m-1}}{\partial\nu^{m-1}})$ and $\partial/\partial\nu=n\cdot\nabla$ with $\partial^0v/\partial\nu^0: = v$.
Thus, plugging the first line of $\eqref{pde:21}$ back into $\eqref{f:53}$, we have
\begin{equation*}
\begin{aligned}
\|\eta^{4k}u_\varepsilon\|_{L^2(D_1)}
&\lesssim
 \|\eta^{4k+1}\nabla u_\varepsilon\|_{L^2(D_1)}^{\frac{4k}{4k+1}}
 \|v\|_{L^2(D_1)}^{\frac{1}{4k+1}}
 + \|v\|_{L^2(D_1)} \\
& \lesssim_{\lambda,d}
 \|\eta^{4k}u_\varepsilon\|_{L^2(D_1)}^{\frac{4k}{4k+1}}
 \|v\|_{L^2(D_1)}^{\frac{1}{4k+1}}
 + \|v\|_{L^2(D_1)},
\end{aligned}
\end{equation*}
where the second inequality follows from Caccioppoli's inequality of $u_\varepsilon$.
By Young's inequality, there holds
\begin{equation}\label{f:5.1}
\|\eta^{4k}u_\varepsilon\|_{L^2(D_1)}
 \lesssim_{\lambda,k,d}
 \|v\|_{L^2(D_1)}.
\end{equation}
Moreover, using Caccioppoli's inequality again, we arrive at
\begin{equation}\label{f:54}
\int_{D_{1/2}}|\nabla u_\varepsilon|^2
\leq \int_{D_{1}}|\eta^{4k+1}\nabla u_\varepsilon|^2
\lesssim_{\lambda,d} \int_{D_{1}}|\eta^{4k}u_\varepsilon|^2
\lesssim^{\eqref{f:5.1}}
\int_{D_1} |v|^2.
\end{equation}
On the other hand, thanks to the complete orthonormal system of  eigenfunctions $\{w_j\}\subset H_{0}^{2k}(D_1)$ and eigenvalues
$\{\lambda_j\}_{j=1}^\infty$, which are associated by
$\Delta^{2k} w_j = \lambda_j w_j$ in $D_1$ with zero-Dirichlet boundary conditions (it's convenient to assume $\|w_j\|_{L^2(D_1)} =1$), we have
\begin{equation}\label{f:56}
\begin{aligned}
\int_{D_1} |v|^2
=\sum_{j}\frac{1}{\lambda_j^2}\big(\int_{D_1}u_\varepsilon w_j \big)^2
=\sum_{j}\frac{1}{\lambda_j}\frac{\big(\int_{D_1}u_\varepsilon w_j
\big)^2}{\int_{D_1}|\nabla^{2k} w_j|^2}.
\end{aligned}
\end{equation}
This together with $\eqref{f:54}$ implies
\begin{equation*}
\int_{D_{1/2}}|\nabla u_\varepsilon|^2
\lesssim
\sum_{j}\frac{1}{\lambda_j}\frac{\big(\int_{D_1}u_\varepsilon w_j
\big)^2}{\int_{D_1}|\nabla^{2k} w_j|^2}.
\end{equation*}
Putting $\langle(\cdot)^{\frac{p}{2}}\rangle$ on the both sides above, and
using H\"older's inequality, we obtain that
\begin{equation}\label{f:55}
\begin{aligned}
\Big\langle\big(\int_{D_{1/2}}|\nabla u_\varepsilon|^2\big)^{\frac{p}{2}}
\Big\rangle
\lesssim \Big(\sum_{j'}\frac{1}{\lambda_{j'}}\Big)^{\frac{p}{2}-1}
\sum_{j}\frac{1}{\lambda_j}\frac{\big\langle|\int_{D_1}u_\varepsilon w_j|^p
\big\rangle}{\big(\int_{D_1}|\nabla^{2k} w_j|^2\big)^{\frac{p}{2}}}.
\end{aligned}
\end{equation}
In addition, due to the finite trace of the inverse of $\Delta^{2k}$
with zero-Dirichlet boundary conditions, one can derive that
\begin{equation}\label{f:5.2}
   \sum_{j} \frac{1}{\lambda_j} <\infty,
\end{equation}
provided $2k>(d/2)$.
Plugging $\eqref{f:5.2}$ back into $\eqref{f:55}$, and then applying for the Sobolev embedding theorem leads to
\begin{equation*}
\begin{aligned}
\Big\langle\big(\int_{D_{1/2}}|\nabla u_\varepsilon|^2\big)^{\frac{p}{2}}
\Big\rangle
\lesssim \sup_{w\in H^k_0(D_1)}\frac{\big\langle|\int_{D_1}u_\varepsilon w|^p
\big\rangle}{\big(\sup|\nabla^2 w|\big)^p},
\end{aligned}
\end{equation*}
which finally implies the desired estimate $\eqref{pri:27}$ after the   rescaling argument.

\textbf{Step 2.} Show the identity $\eqref{f:56}$. Due to the
complete orthonormal system of  eigenfunctions $\{w_j\}$ and the condition
$\|w_j\|_{L^2(D_1)} = 1$, we first have
\begin{equation}\label{f:6.8}
\begin{aligned}
\int_{D_1} |v|^2
= \sum_j \big(\int_{D_1} vw_j\big)^2
&= \sum_j \frac{1}{\lambda_j^2}\big(\int_{D_1} v\Delta^{2k} w_j\big)^2 \\
&= \sum_j \frac{1}{\lambda_j^2}\big(\int_{D_1}\Delta^{2k} v w_j\big)^2
=^{\eqref{pde:21}} \sum_j \frac{1}{\lambda_j^2}\big(\int_{D_1} u_\varepsilon w_j\big)^2,
\end{aligned}
\end{equation}
where we merely employ the equation $\Delta^{2k}w_j = \lambda_j w_j$ in $D_1$ for the second equality and integration by parts for the third one above. Then, by noting that
\begin{equation*}
\begin{aligned}
 \lambda_j = \int_{D_1}|\nabla^{2k}w_j|^2,
\end{aligned}
\end{equation*}
we can plug this back into $\eqref{f:6.8}$ to derive the desired identity
$\eqref{f:56}$.

\textbf{Step 3.} Show the estimate $\eqref{f:5.2}$.
Let $G(\cdot,x)$ be the Green function associated with $\Delta^{2k}$
with zero-Dirichlet boundary conditions, and the notation $(\cdot,\cdot)$
represent either the action of a distribution on a test function
or the scalar product in $L^2(D_1)$. By the definition of Green function
and eigenfunctions, we obtain that
\begin{equation}\label{f:5.3}
\begin{aligned}
\sum_{k=1}^\infty \frac{1}{\lambda_k}
& = \sum_{k=1}^\infty \big((\Delta^{2k})^{-1}w_k, w_k\big)
 = \sum_{k=1}^\infty \Big(\int_{D_1}G(\cdot,y)w_k(y)dy, w_k\Big)\\
%&= \Big(\int_{D_1}G(\cdot,y)\sum_{k=1}^\infty w_k(y) w_k(\cdot) dy\Big)
&= \int_{D_1} \big(G(\cdot,y),\delta_y\big) dy
 = \int_{D_1} G(y,y) dy.
\end{aligned}
\end{equation}
where we employ $\sum_{k=1}^{\infty}w_k(y)w_k=\delta_{y}$ in the third
equality. If the above calculation of $\eqref{f:5.3}$ is meaningful,
it is required $\delta_x\in H^{-2k}(D_1)$, and therefore we need
the precondition $2k>(d/2)$ to guarantee $\eqref{f:5.2}$.
This ends the whole proof.
\end{proof}

\begin{lemma}[annealed Calder\'on-Zygmund estimates]\label{P:7*}
Let $\Omega$ be a bounded (uniform) $C^1$ domain, and $\varepsilon\in(0,1]$.
Let $1<p,q<\infty$. Suppose that $\langle\cdot\rangle$ satisfies the spectral gap condition $\eqref{a:2}$, and
the admissible coefficients satisfy the smoothness condition $\eqref{a:3}$. Let $u_\varepsilon$ be a weak solution to
\begin{equation}\label{pde:20}
\left\{\begin{aligned}
-\nabla\cdot a^\varepsilon\nabla u_\varepsilon &= \nabla\cdot f
&\quad&\text{in}~~\Omega;\\
u_\varepsilon &= 0
&\quad&\text{on}~~\partial\Omega.
\end{aligned}\right.
\end{equation}
Then, for any $p<\bar{p}<\infty$ and $\omega\in A_q$\footnote{
The notation $A_q$ is known as the Muckenhoupt weight class, and its definition is now standard (see e.g. \cite[Definition 1.3]{Wang-Xu23-1}).}, there holds
\begin{equation}\label{pri:12}
\Big(\int_{\Omega}\big\langle|\nabla u_\varepsilon|^p
\big\rangle^{\frac{q}{p}}\omega\Big)^{\frac{1}{q}}
\lesssim \Big(\int_{\Omega}
\big\langle|f|^{\bar{p}}\big\rangle^{\frac{q}{\bar{p}}}\omega
\Big)^{\frac{1}{q}},
\end{equation}
where the multiplicative constant above depends on
$\lambda$, $\lambda_1$, $\lambda_2$, $d$,
$q$, $p$, $\bar{p}$, $m_0$ and $\omega(t)$ in $\eqref{boundary-2}$ at most.
\end{lemma}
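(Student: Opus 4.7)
The strategy is Shen's real-variable method (see, e.g., \cite[Theorem 3.2]{Shen16}) adapted to the mixed Lebesgue setting $L^q_x(L^p_{\langle\cdot\rangle})$. The key observation is that $N_p(x):=\langle|\nabla u_\varepsilon(x)|^p\rangle^{1/p}$ is a deterministic function on $\Omega$ and the left-hand side of \eqref{pri:12} equals $\|N_p\|_{L^q(\Omega)}$, so the problem reduces to a deterministic $L^q$ bound for $N_p$ in terms of the analogous mixed norm of $f$. The pivotal input is the quenched boundary Lipschitz estimate \eqref{pri:2.11} of Corollary \ref{corollary:2.2}, which supplies a pointwise gradient bound with a random multiplicative factor $\mathcal{C}_*$ having all finite stochastic moments.

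For a ball $B$ with either $4B\subset\Omega$ or $x_B\in\partial\Omega$, I would decompose $f=f_1+f_2$ with $f_1:=f\chi_{4B\cap\Omega}$, and correspondingly split $u_\varepsilon=v_\varepsilon+w_\varepsilon$, where $v_\varepsilon$ has datum $f_1$ and $w_\varepsilon$ has datum $f_2$. A standard energy estimate, fiberwise in the probability variable, controls $v_\varepsilon$ in any stochastic moment by the localized norm of $f_1$. The piece $w_\varepsilon$ satisfies the homogeneous equation $\mathcal{L}_\varepsilon w_\varepsilon = 0$ in $4B\cap\Omega$ with zero boundary values on $4B\cap\partial\Omega$, so Corollary \ref{corollary:2.2} yields, for $q_0$ slightly larger than $q$ and $\bar q$ slightly larger than $q_0$,
\begin{equation*}
\bigl\langle|\nabla w_\varepsilon(x_B)|^{q_0}\bigr\rangle^{1/q_0}
\lesssim
\Bigl\langle\Bigl(\dashint_{2B\cap\Omega}|\nabla w_\varepsilon|^2\Bigr)^{\bar{q}/2}\Bigr\rangle^{1/\bar{q}}.
\end{equation*}
This is precisely the annealed reverse-H\"older-type inequality required to feed into Shen's real-variable lemma, whose iteration then delivers the desired $L^q$ bound for $N_p$.

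The main technical obstacle, and the reason for the mild loss $\bar p>p$ on the right-hand side of \eqref{pri:12}, is the bookkeeping between the two integrability scales. Extracting the random factor $\mathcal{C}_*$ from the quenched Lipschitz estimate via H\"older's inequality in the probability variable inevitably perturbs the stochastic exponent upwards, so the final estimate must be stated with a strictly larger annealed exponent on the right. A secondary subtlety is that Shen's lemma in its standard form is stated for scalar functions, so one must verify that replacing $|\nabla u_\varepsilon|$ by its annealed $p$-th root $N_p$ is compatible with the weak-type decomposition argument; this amounts to checking a Minkowski-type inequality $\langle (\dashint |g|^2)^{p/2}\rangle^{1/p}\lesssim (\dashint \langle|g|^p\rangle^{2/p})^{1/2}$ whenever $p\geq 2$. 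Since the authors indicate that the complete argument has been carried out in \cite{Wang-Xu23-1}, the present work may invoke it as a black box.
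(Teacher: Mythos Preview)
Your proposal is correct and matches the paper's treatment: the paper gives no argument at all here, merely citing \cite[Theorem~1.1]{Wang-Xu23-1}, which you also note at the end. The sketch you provide (Shen's real-variable scheme fed by the quenched Lipschitz estimate \eqref{pri:2.11}, with the loss $\bar p>p$ coming from H\"older in probability against $\mathcal{C}_*$) is exactly the expected content of that reference.
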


\begin{proof}
See \cite[Theorem 1.4]{Wang-Xu23-1}.
\end{proof}

%\begin{proposition}[annealed errors]\label{P:6}
%Let $\Omega\ni \{0\}$ be a bounded domain of $C^{1,1}$ type with $\tau\in(0,1)$
%and $\varepsilon\in(0,1]$.
%Suppose that $\langle\cdot\rangle$ satisfies Spectral Gap conditions, and
%the coefficient $a$ additionally satisfies smoothness condition. Let
%$\{\Phi_{\varepsilon,i}\}_{i=1}^d$ be the solution of
%the equations $\eqref{pde:9}$ (we denote its adjoint one by
%$\Phi_{\varepsilon,i}^{*}$), and
%\begin{equation}\label{eq:6.1}
%\mathcal{E}_{\varepsilon}(x,y)
%:= \nabla\nabla G_\varepsilon(x,y)
%+\nabla\Phi_{\varepsilon,i}(x)
%\nabla\Phi_{\varepsilon,j}^{*}(y)\partial_{ij}\overline{G}(x-y),
%\end{equation}
%where $-\nabla\cdot\bar{a}\nabla \overline{G} = \delta_0$ in $\Omega$ with zero boundary conditions.
%Then there holds
%\begin{equation}
% |x-y|^{d+1}
% \big\langle|\mathcal{E}_{\varepsilon}(x,y)|^p\big\rangle^{\frac{1}{p}}
% \lesssim \varepsilon\mu_d^2(R_0/\varepsilon)\ln(|x-y|/\varepsilon+2)
%\end{equation}
%for any $p<\infty$ and any $x,y\in\Omega$ with $|x-y|\geq \varepsilon$.
%\end{proposition}

\textbf{Proof of Theorem $\ref{thm:3}$.}
Throughout the proof, we fix two ``base points'' $x_0,y_0\in\Omega$  with $|x_0-y_0|\geq 2\varepsilon$, and the whole proof will be split into five steps.

\textbf{Step 1.} The first reduction on the annealed error $\eqref{pri:6.1}$.
The introduced notation $\mathcal{E}_\varepsilon$ is merely a truncated version of the error of (two-scale) expansion of Green functions, on the level of the mixed derivatives, and we start from introducing the error of the expansion associated with boundary correctors as follows
\begin{equation}\label{eq:6.2}
  w_{\varepsilon}(x,y)
  := G_\varepsilon(x,y)
  -(1+\tilde{\Phi}_{\varepsilon,i}(x)\partial_i)
  (1+\tilde{\Phi}_{\varepsilon,j}^{*}(y)\partial_j)\overline{G}(x,y),
\end{equation}
where $\tilde{\Phi}_{\varepsilon,i}$ is defined in $\eqref{pde:10}$, and
we use the convention that the variable corresponding to the subscript of the derivatives is the same as the function with the same subscript (i.e.,
we have $\partial_i = \partial_{x_i}$ and
$\partial_j = \partial_{x_j}$ in $\eqref{eq:6.2}$ by this convention).
Recalling the expression of $\mathcal{E}_\varepsilon$ in $\eqref{eq:6.1}$,
a routine computation leads to the mixed derivatives of $w_{\varepsilon}(x,y)$, i.e.,
\begin{equation}\label{eq:6.3}
\begin{aligned}
&\nabla_x\nabla_y w_{\varepsilon}(x,y)
=\mathcal{E}_{\varepsilon}(x,y)
-\nabla\Phi_{\varepsilon,i}(x)
\tilde{\Phi}_{\varepsilon,j}^{*}(y)
e_n\partial_{ijn}\overline{G}(x,y)\\
&+\tilde{\Phi}_{\varepsilon,i}(x)
\nabla\Phi_{\varepsilon,j}^{*}(y)e_m\partial_{imj}\overline{G}(x,y)
+\tilde{\Phi}_{\varepsilon,i}^{}(x)
\tilde{\Phi}_{\varepsilon,j}^{*}(y)e_me_n\partial_{imjn}\overline{G}(x,y).
\end{aligned}
\end{equation}
Although forth-order derivatives of
$\overline{G}$ appeared above, only second-order derivatives are involved for variables $x$ or $y$\footnote{By the convention, we have
$\partial_{ijn}=\partial_{x_iy_jy_n}$,
$\partial_{imj}=\partial_{x_ix_my_j}$ and
$\partial_{imjn}=\partial_{x_ix_my_jy_n}$ in $\eqref{eq:6.3}$.}, and thus the assumption $\partial\Omega\in C^{2,\tau}$ is required.

Let $r:=|x_0-y_0|/2$. It is fine to assume $\delta(y_0)\leq r/2$, and  otherwise
the interior estimates are sufficient for the proof. Let $\bar{y}_0\in\partial\Omega$ be such that $\delta(y_0)=|y_0-\bar{y}_0|$.
Appealing to Proposition $\ref{P:5}$ and decays of Green function  $\overline{G}$, we have
\begin{equation}\label{pri:25}
\begin{aligned}
&\big\langle\big|\mathcal{E}_{\varepsilon}(x_0,y_0) -
\nabla_x\nabla_y w_{\varepsilon}(x_0,y_0)\big|^p
\big\rangle^{\frac{1}{p}}\\
&\lesssim^{\eqref{pri:21-1},\eqref{pri:21-2}} \varepsilon\mu_d^2(R_0/\varepsilon)r^{-d-1}
+ \varepsilon^2\mu_d^2(R_0/\varepsilon)r^{-d-2}
\lesssim \varepsilon\mu_d^2(R_0/\varepsilon)r^{-d-1}.
\end{aligned}
\end{equation}
In this regard, the desired estimate
$\eqref{pri:6.1}$ is reduced to showing
\begin{equation}\label{pri:6.2}
\big\langle |\nabla\nabla w_\varepsilon(x_0,y_0)|^p\big\rangle^{\frac{1}{p}}
\lesssim \mu_d(R_0/\varepsilon)\varepsilon\ln(1/\varepsilon)\ln(1+r)r^{-d-1}.
\end{equation}

For the ease of the statement, we introduce the following notation
\begin{equation}\label{eq:6.5}
\bar{u}(x,\cdot)
:=(1+\tilde{\Phi}_{\varepsilon,i}(x)\partial_i)\overline{G}(x,\cdot).
\end{equation}
Hence, we can rewrite $\eqref{eq:6.3}$ as
\begin{equation*}
 w_{\varepsilon}(x,y)
 = G_\varepsilon(x,y)-(1+\tilde{\Phi}_{\varepsilon,j}^{*}(y)\partial_j)
 \bar{u}(x,y).
\end{equation*}
Taking derivative with respect to $x_k$-variable ($k=1,\cdots,d$), and
then fix $x_0\in \Omega$, we denoted by
\begin{equation}\label{eq:6.6}
 V_{\varepsilon}^{k}(x_0,y)
 := \partial_{x_k}G_\varepsilon(x_0,y)-(1-\tilde{\Phi}_{\varepsilon,j}^{*}(y)
 \partial_j)\partial_{x_k}
 \bar{u}(x_0,y).
\end{equation}
By noting that $\partial_{x_k}
\bar{u}(x_0,\cdot) = 0$ on $\partial\Omega$,
there holds
\begin{equation}\label{pde:11}
\left\{\begin{aligned}
-\nabla\cdot a^{*\varepsilon}\nabla
V_{\varepsilon}^{k}(x_0,\cdot)
&=\nabla\cdot h_{\varepsilon}^k(x_0,\cdot)
+ g_{\varepsilon}^k(x_0,\cdot)
\quad &\text{in}&\quad \Omega;\\
V_{\varepsilon}^{k}(x_0,\cdot)
& =0
\quad &\text{on}&\quad \partial\Omega,
\end{aligned}\right.
\end{equation}
where
\begin{equation}\label{eq:6.4}
\begin{aligned}
h_{\varepsilon}^k(x_0,\cdot)
&:= \big(a^{*\varepsilon}\tilde{\Phi}_{\varepsilon,j}^{*}
-\varepsilon\sigma_j^{*\varepsilon}\big)
\nabla\partial_j\partial_{x_k}\bar{u}(x_0,\cdot);\\
g_{\varepsilon}^{k}(x_0,\cdot)
&:= a^{*\varepsilon}\nabla \tilde{Q}_{\varepsilon,j}^{*}
\cdot\nabla\partial_j\partial_{x_k}\bar{u}(x_0,\cdot).
\end{aligned}
\end{equation}

To fulfill our purpose, we plan to split the equation $\eqref{pde:11}$ into two parts. Let $\mathbf{1}_{D_r(\bar{y}_0)}$ be the indicator function of
$D_r(\bar{y}_0)$, and the near-field part $V_{\varepsilon}^{k,(1)}(x_0,\cdot)$ is  the solution to the following equations:
\begin{equation}\label{pde:12}
\left\{\begin{aligned}
-\nabla\cdot a^{*\varepsilon}\nabla
V_{\varepsilon}^{k,(1)}(x_0,\cdot)
&=\nabla\cdot
h_{\varepsilon}^{k,(1)}(x_0,\cdot)
+ g_{\varepsilon}^{k,(1)}(x_0,\cdot)
\quad &\text{in}&\quad \Omega;\\
V_{\varepsilon}^{k,(1)}(x_0,\cdot)
& =0
\quad &\text{on}&\quad \partial\Omega,
\end{aligned}\right.
\end{equation}
where $h_{\varepsilon}^{k,(1)}(x_0,\cdot):=
\mathbf{1}_{D_r(\bar{y}_0)} h_{\varepsilon}^k(x_0,\cdot)$ and
$g_{\varepsilon}^{k,(1)}(x_0,\cdot)
:=\mathbf{1}_{D_r(\bar{y}_0)}
g_{\varepsilon}^{k}(x_0,\cdot)$.
%\begin{equation*}
%\begin{aligned}
%h_{\varepsilon}^{k,(1)}(x_0,\cdot):=
%\mathbf{1}_{U_r(y_0)} h_{\varepsilon}^k(x_0,\cdot);
%\qquad
%g_{\varepsilon}^{k,(1)}(x_0,\cdot)
%:=\mathbf{1}_{U_r(y_0)}
%g_{\varepsilon}^{k}(x_0,\cdot).
%\end{aligned}
%\end{equation*}

Let $V_{\varepsilon}^{k,(2)}(x_0,\cdot)
:=
V_{\varepsilon}^{k}(x_0,\cdot)
-V_{\varepsilon}^{k,(1)}(x_0,\cdot)$ be far-field part,
which satisfies
\begin{equation}\label{pde:13}
-\nabla\cdot a^{*\varepsilon}\nabla
V_{\varepsilon}^{k,(2)}(x_0,\cdot)
= 0
\quad
\text{in} \quad D_r(\bar{y}_0).
\end{equation}

In terms of the equations $\eqref{pde:20}$, we study the near-field part and manage to show
\begin{equation}\label{f:51}
\begin{aligned}
\langle|\partial_m V_{\varepsilon}^{k,(1)}(x_0,y_0)|^p\rangle^{\frac{1}{p}}
\lesssim
\varepsilon\mu_d^2(R_0/\varepsilon)\ln(r/\varepsilon)
r^{-d-1}.
\end{aligned}
\end{equation}
Concerning the equation $\eqref{pde:13}$,
we proceed to study the far-field part and establish
\begin{equation}\label{f:52}
\langle|\nabla V_{\varepsilon}^{k,(2)}(x_0,y_0)|^p\rangle^{\frac{1}{p}}
\lesssim \varepsilon\mu_d^2(R_0/\varepsilon)\ln(r/\varepsilon)r^{-1-d}.
\end{equation}

Admitting the above two estimates for a while,
therefore, the stated estimate $\eqref{pri:6.2}$ can be reduced to
showing
\begin{equation*}
\begin{aligned}
\langle|\nabla V_{\varepsilon}^{k}(x_0,y_0)|^p\rangle^{\frac{1}{p}}
&\leq  \langle|\nabla V_{\varepsilon}^{k,(1)}(x_0,y_0)|^p\rangle^{\frac{1}{p}}
+\langle|\nabla V_{\varepsilon}^{k,(2)}(x_0,y_0)|^p\rangle^{\frac{1}{p}}\\
&\lesssim^{\eqref{f:51},\eqref{f:52}}
\mu_d^2(R_0/\varepsilon)
\varepsilon\ln(1/\varepsilon)
\ln(1+r)r^{-1-d}.
\end{aligned}
\end{equation*}

\textbf{Step 2.} Show the estimate $\eqref{f:51}$.
Appealing to the representation of the solution via Green's function,
for any $y\in D_r(\bar{y}_0)$ (and $m=1,\cdots,d$), there holds
\begin{equation}\label{f:6.1}
\begin{aligned}
&\partial_{y_{m}} V_{\varepsilon}^{k,(1)}
(x_0,y)
= \int_{U_\varepsilon(y)}
\nabla \partial_{y_{m}}G_\varepsilon^*(y,z)\cdot\big(h_{\varepsilon}^{k}(x_0,z)
-h_{\varepsilon}^{k}(x_0,y)\big) dz\\
&+  h_{\varepsilon}^{k}(x_0,y)\cdot \int_{\partial U_\varepsilon(y)}
n(z) \partial_{y_{m}} G_\varepsilon^*(y,z)dS(z)
+  \int_{D_r(\bar{y}_0)}
\partial_{y_{m}} G_\varepsilon^*(y,z)g_{\varepsilon}^{k}(x_0,z)dz\\
&+  \int_{D_r(\bar{y}_0)\setminus U_{\varepsilon}(y)}
\nabla\partial_{y_{m}} G_\varepsilon^*(y,z)\cdot
h_{\varepsilon}^{k}(x_0,z) dz:=I_1+I_2+I_3+I_4,
\end{aligned}
\end{equation}
where $n$ is the outward unit normal vector to $\partial U_\varepsilon(y)$.
For further calculations, we need a preparation. In terms of decays of Green function $\overline{G}$, Proposition $\ref{P:5}$, and $n=1,2$, by recalling the expression $\eqref{eq:6.5}$, we have
\begin{equation}\label{f:43}
\begin{aligned}
&\sup_{D_r(\bar{y}_0)}\big\langle
|\nabla^n\partial_{x_k}\bar{u}(x_0,\cdot)|^{p}\big\rangle^{\frac{1}{p}}
\lesssim \big\langle
|\partial_k\tilde{\Phi}_{\varepsilon,i}(x_0)|^p\big\rangle^{\frac{1}{p}}
\sup_{D_r(\bar{y}_0)}|\nabla^n\partial_i\overline{G}(x_0,\cdot)|\\
&\qquad\qquad\qquad+\big\langle
|\tilde{\Phi}_{\varepsilon,i}(x_0)|^p\big\rangle^{\frac{1}{p}}
\sup_{D_r(\bar{y}_0)}|\nabla^n\partial_{ki}\overline{G}(x_0,\cdot)|\\
&\lesssim^{\eqref{pri:21-1},\eqref{pri:18},\eqref{pri:8.2},\eqref{pri:8.4},\eqref{pri:8.5}}_{
\lambda,\lambda_1,\lambda_2,d,R_0^{\tau}M_0,R_0^{1+\tau}M_0,\tau,p}\mu_d(R_0/\varepsilon)r^{1-d-n}
+\varepsilon\mu_d(R_0/\varepsilon) r^{-d-n}
\lesssim \mu_d(R_0/\varepsilon)r^{1-d-n},
\end{aligned}
\end{equation}
where we note that the estimated constant is independent of the scale transformation\footnote{Although the present proof does not rely on this property,
it will be convenient in some applications.}.
From the expression of $h_{\varepsilon}^k$ in $\eqref{eq:6.4}$ we
further have
\begin{equation}\label{f:36}
\begin{aligned}
\esssup_{y\in D_r(\bar{y}_0)}
\Big\{\big\langle|h_{\varepsilon}^k(x_0,y)|^p
\big\rangle^{\frac{1}{p}}
&+\varepsilon^\eta\big\langle
[h_{\varepsilon}^k(x_0,\cdot)]_{C^{0,\eta}(U_\varepsilon(y))}^{p}
\big\rangle^{\frac{1}{p}}\Big\}\\
&\lesssim^{\eqref{pri:21-1},\eqref{pri:21-2},\eqref{f:43}}
\varepsilon\mu_d^2(R_0/\varepsilon)
r^{-d-1},
\end{aligned}
\end{equation}
and
\begin{equation}\label{f:36*}
%\esssup_{y\in D_r(\bar{y}_0)}
\big\langle |g_\varepsilon^{k}(x_0,y)|^p\big\rangle^{\frac{1}{p}}
\lesssim^{\eqref{pri:18},\eqref{f:43}} \mu_{d}^2(R_0/\varepsilon)r^{-1-d}
\min\Big\{\frac{\varepsilon}{\delta(y)},1\Big\}
\qquad\forall y\in D_r(\bar{y}_0).
\end{equation}

Now, we can compute the right-hand side of $\eqref{f:6.1}$ term by term.
On account of the estimates $\eqref{pri:19}$ and $\eqref{f:36}$, we can start from
\begin{equation}\label{f:6.2}
\begin{aligned}
&\langle I_1^p\rangle^{\frac{1}{p}}
\lesssim \varepsilon^{1-\eta}\mu_d^2(R_0/\varepsilon)
r^{-d-1}\int_{U_\varepsilon(y)}\frac{dz}{|y-z|^{d-\eta}}
\lesssim
\varepsilon\mu_d^2(R_0/\varepsilon)
r^{-d-1};\\
&\langle I_2^p\rangle^{\frac{1}{p}}
\lesssim
\varepsilon\mu_d^2(R_0/\varepsilon)
r^{-d-1}\int_{\partial U_\varepsilon(y)\setminus\partial\Omega}
\frac{dS(z)}{|z-y|^{d-1}}
\lesssim \varepsilon\mu_d^2(R_0/\varepsilon)
r^{-d-1};\\
&\langle I_4^p\rangle^{\frac{1}{p}}
\lesssim \varepsilon\mu_d^2(R_0/\varepsilon)
r^{-d-1}\int_{D_r(\bar{y}_0)\setminus U_\varepsilon(y)}
\frac{dz}{|z-y|^d}
\lesssim \varepsilon\mu_d^2(R_0/\varepsilon)
r^{-d-1}\ln(r/\varepsilon).
\end{aligned}
\end{equation}
The reminder term is $I_3$, and we can similarly divide it into two parts:
\begin{equation*}
I_3=\Big\{\int_{D_r(\bar{y}_0)\setminus D_\varepsilon(y)}+
\int_{D_\varepsilon(y)}\Big\}
\big|\partial_{y_m} G_\varepsilon^*(y,z)g_{\varepsilon}^{k}(x_0,z)\big|dz
=: I_{31} + I_{32}.
\end{equation*}
Therefore, it follows from the estimates $\eqref{pri:19}$ and $\eqref{f:36*}$ that
\begin{equation}\label{f:6.9}
\begin{aligned}
& \langle |I_{31}|^p\rangle^{\frac{1}{p}}
\lesssim
%\mu_d^2(R_0/\varepsilon)
%\varepsilon r^{-d-1}\int_{D_r(y_0)\setminus D_\varepsilon(y)}
%\langle|\nabla G_\varepsilon(y,z)|^{p_1}\rangle^{\frac{1}{p_1}}
%\frac{dz}{\delta(z)}
\varepsilon\mu_d^2(R_0/\varepsilon)
 r^{-d-1}\int_{D_r(\bar{y}_0)\setminus U_\varepsilon(y)}
\frac{dz}{|z-y|^d}
\lesssim \frac{\varepsilon\mu_d^2(R_0/\varepsilon)\ln(r/\varepsilon)}{r^{d+1}};\\
& \langle |I_{32}|^p\rangle^{\frac{1}{p}}
\lesssim \mu_{d}^2(R_0/\varepsilon)r^{-d-1}\int_{U_{\varepsilon}(y)}
\frac{dz}{|z-y|^{d-1}}
\lesssim \varepsilon\mu_d^2(R_0/\varepsilon)
 r^{-d-1}.
\end{aligned}
\end{equation}

As a result, combining the estimates $\eqref{f:6.1}$, $\eqref{f:6.2}$, and $\eqref{f:6.9}$ yields the following estimate slightly stronger than $\eqref{f:51}$, i.e.,
\begin{equation}\label{pri:6.3}
\sup_{D_r(\bar{y}_0)\atop m=1,\cdots,d}\langle|\partial_m V_{\varepsilon}^{k,(1)}(x_0,\cdot)|^p\rangle^{\frac{1}{p}}
\lesssim \varepsilon\mu_d^2(R_0/\varepsilon)\ln(r/\varepsilon)r^{-d-1},
\end{equation}
which offers us the desired result $\eqref{f:51}$ by simply noting
that $y_0\in D_r(\bar{y}_0)$.

\textbf{Step 3.}
Arguments for $\eqref{f:52}$.
In view of the equation $\eqref{pde:13}$, the
Lipschitz regularity estimate  yields
\begin{equation}\label{f:6.10}
\begin{aligned}
\langle|\nabla V_{\varepsilon}^{k,(2)}
(x_0,y_0)|^p\rangle^{\frac{1}{p}}
&\lesssim^{\eqref{pri:2.10}}
\Big\langle\big(\dashint_{B_{\delta(y_0)}(y_0)}|\nabla V_{\varepsilon}^{k,(2)}(x_0,\cdot)|^2
\big)^{\frac{p_1}{2}}\Big\rangle^{\frac{1}{p_1}}\\
&\lesssim
\Big\langle\big(\dashint_{D_{r}(\bar{y}_0)}|\nabla V_{\varepsilon}^{k,(2)}(x_0,\cdot)|^2
\big)^{\frac{p_2}{2}}\Big\rangle^{\frac{1}{p_2}}:=K,
\end{aligned}
\end{equation}
where $p_2\geq p_1>p$, and
one also employs a large-scale estimate in the second step.
Let $\mathbf{X}:=C^\infty_0(D_r(\bar{y}_0))$, and
appealing to Lemma $\ref{lemma:4}$ one can derive that
\begin{equation}\label{f:6.4}
\begin{aligned}
K
%\lesssim^{\eqref{pri:2.10}}
%\Big\langle\big(\dashint_{B_{\delta(y_0)}(y_0)}|\nabla V_{\varepsilon}^{k,(2)}(x_0,\cdot)|^2
%\big)^{\frac{\bar{p}}{2}}\Big\rangle^{\frac{1}{\bar{p}}}\\
\lesssim^{\eqref{pri:27}}
\sup_{g\in \mathbf{X}}
\frac{\big\langle\big|\dashint_{D_r(\bar{y}_0)}
V_{\varepsilon}^{k}(x_0,\cdot)g\big|^{p_2}
\big\rangle^{\frac{1}{p_2}}}{r^3\sup|\nabla^2 g|}
+\sup_{g\in \mathbf{X}}
\frac{\big\langle\big|\dashint_{D_r(\bar{y}_0)}
V_{\varepsilon}^{k,(1)}(x_0,\cdot)g\big|^{p_2}
\big\rangle^{\frac{1}{p_2}}}{r^3\sup|\nabla^2 g|}.
\end{aligned}
\end{equation}

Let us start by acknowledging that
\begin{equation}\label{pri:6.4}
\begin{aligned}
\Big\langle\big|\dashint_{D_r(\bar{y}_0)}
V_{\varepsilon}^{k}(x_0,\cdot)g\big|^{p_2}\Big\rangle^{\frac{1}{p_2}}
\lesssim \varepsilon\mu_d^2(R_0/\varepsilon) \ln(r/\varepsilon)
r^{2-d}\sup_{D_r(\bar{y}_0)}|\nabla^2 g|
\end{aligned}
\end{equation}
is true for a while, and then the first term in the right-hand side of $\eqref{f:6.4}$ offers us
\begin{equation}\label{f:49}
\sup_{g\in \mathbf{X}}
\frac{\big\langle\big|\dashint_{D_r(\bar{y}_0)}
V_{\varepsilon}^{k}(x_0,\cdot)g\big|^{p_2}
\big\rangle^{\frac{1}{p_2}}}{r^3\sup|\nabla^2 g|}
\lesssim^{\eqref{pri:6.4}} \varepsilon \mu_d^2(R_0/\varepsilon)
\ln(r/\varepsilon)
r^{-d-1}.
\end{equation}

In terms of the second term in the right-hand side of $\eqref{f:6.4}$, by using Minkowski's inequality and Poincar\'e's inequality, we have the following computations
\begin{equation*}
\begin{aligned}
\big\langle\big|\dashint_{D_r(\bar{y}_0)}
V_{\varepsilon}^{k,(1)}(x_0,\cdot)g\big|^{p_2}
\big\rangle^{\frac{1}{p_2}}
&
\lesssim r^3\sup_{D_r(\bar{y}_0)}
\big\langle|\nabla V_{\varepsilon}^{k,(1)}(x_0,\cdot)|^{p_2}\big\rangle^{\frac{1}{p_2}}
\sup_{D_r(\bar{y}_0)}|\nabla^2 g|\\
&\lesssim^{\eqref{pri:6.3}}
\varepsilon\mu_d^2(R_0/\varepsilon)\ln(r/\varepsilon)r^{2-d}
\sup_{D_r(\bar{y}_0)}|\nabla^2 g|,
\end{aligned}
\end{equation*}
which implies
\begin{equation}\label{f:50}
\sup_{g\in \mathbf{X}}
\frac{\big\langle\big|\dashint_{D_r(y_0)}
V_{\varepsilon}^{k,(1)}(x_0,\cdot)g\big|^{\bar{p}}
\big\rangle^{\frac{1}{\bar{p}}}}{r^3\sup|\nabla^2 g|}
\lesssim \varepsilon\mu_d^2(R_0/\varepsilon)\ln(r/\varepsilon)r^{-d-1}.
\end{equation}

Consequently, combining the estimates $\eqref{f:6.10}$, $\eqref{f:6.4}$, $\eqref{f:49}$, and $\eqref{f:50}$ we have the desired estimate $\eqref{f:52}$.
The rest of the proof is devoted to the estimate $\eqref{pri:6.4}$.

\textbf{Step 4.} Make a reduction on the estimate $\eqref{pri:6.4}$.
For any $g\in \mathbf{X}$, let us consider the following Dirichlet problems:
\begin{equation}\label{pde:14}
(1)\left\{\begin{aligned}
-\nabla\cdot a^\varepsilon\nabla
u_\varepsilon &= g  \quad\text{in}\quad\Omega;\\
u_\varepsilon &= 0  \quad\text{on}\quad\partial\Omega,
\end{aligned}\right.
\qquad
(2)\left\{\begin{aligned}
-\nabla\cdot \bar{a}\nabla
u_0 &= g  \quad\text{in}\quad\Omega;\\
u_0 &= 0  \quad\text{on}\quad\partial\Omega.
\end{aligned}\right.
\end{equation}
Obviously, (2) is the homogenized equations of (1). Therefore, we can investigate the homogenization error, which can be represented by
Green functions with the help of $\eqref{eq:6.5}$, i.e., for any $x\in\Omega$,
\begin{equation}\label{pde:7}
\begin{aligned}
  z_\varepsilon(x)  &:= u_\varepsilon(x) - (1+\tilde{\Phi}_{\varepsilon,i}(x)\partial_i)u_{0}(x)
  =\int_{\Omega}\big(G_\varepsilon(x,\cdot)-\bar{u}(x,\cdot)\big)g.
\end{aligned}
\end{equation}
On the one hand, this offers us $\partial_{x_k}z_\varepsilon(x)
=\int_{\Omega}\big(\partial_{x_k} G_\varepsilon(x,\cdot)-\partial_{x_k}\bar{u}(x,\cdot)\big)g$.
On the other hand, by recalling the expression of  $V_{\varepsilon}^{k}(x_0,\cdot)$ in $\eqref{eq:6.6}$, we have
the following formula:
\begin{equation}\label{f:48}
\begin{aligned}
\int_{\Omega} V_{\varepsilon}^{k}(x_0,\cdot)g
&=\int_{\Omega}
\big[\partial_{x_k} G_\varepsilon(x_0,\cdot)
 -(1-\tilde{\Phi}_{\varepsilon,j}^{*}\partial_j)
 \partial_{x_k}\bar{u}(x_0,\cdot)\big] g\\
&=\int_{\Omega}
\big[\partial_{x_k} G_\varepsilon(x_0,\cdot)
 -\partial_{x_k}\bar{u}(x_0,\cdot)
 +\tilde{\Phi}_{\varepsilon,j}^{*}\partial_j
 \partial_{x_k}\bar{u}(x_0,\cdot)\big] g\\
& = \partial_{x_k} z_\varepsilon(x_0)
+ \int_{\Omega}
 \tilde{\Phi}_{\varepsilon,j}^{*}\partial_j
 \partial_{x_k}\bar{u}(x_0,\cdot)g.
\end{aligned}
\end{equation}

Therefore, the desired estimate $\eqref{pri:6.4}$ is reduced to estimating
two parts in the right-hand side of $\eqref{f:48}$.
Again, let us acknowledge that
\begin{equation}\label{pri:23}
\langle|\nabla z_\varepsilon(x_0)|^{p_2}\rangle^{\frac{1}{p_2}}
\lesssim
\varepsilon\mu_d(R_0/\varepsilon) \ln(r/\varepsilon)
r^{2}\sup_{D_r(\bar{y}_0)}|\nabla^2 g|
\end{equation}
is true for a moment. With the aid of Propositions $\ref{P:4}$ and $\ref{P:5}$, the easier part is
\begin{equation}\label{pri:24}
\begin{aligned}
\Big\langle\big|\int_{\Omega}
 \tilde{\Phi}_{\varepsilon,j}^{*}(y)\partial_j
 \partial_{x_k}\bar{u}(x_0,\cdot)g\big|^{p_2}\Big\rangle^{\frac{1}{p_2}}
\lesssim^{\eqref{f:43},\eqref{pri:21-1}}
\varepsilon\mu_d^2(R_0/\varepsilon) r^{2}\sup_{D_r(\bar{y}_0)}|\nabla^2 g|.
\end{aligned}
\end{equation}
Thus, combining the estimates $\eqref{f:48},\eqref{pri:23}$, and $\eqref{pri:24}$ leads to the stated estimate $\eqref{pri:6.4}$.

\textbf{Step 5.} Show the estimate $\eqref{pri:23}$ to complete the whole proof.
From the equations $\eqref{pde:14}$ and $\eqref{pde:7}$, we find that $z_\varepsilon$ satisfies
\begin{equation}\label{pde:8}
\left\{\begin{aligned}
-\nabla\cdot a^{\varepsilon}\nabla
z_{\varepsilon}
&=\nabla\cdot h_{\varepsilon}
+ g_{\varepsilon}
\quad &\text{in}&\quad \Omega;\\
z_{\varepsilon}
& =0
\quad &\text{on}&\quad \partial\Omega,
\end{aligned}\right.
\end{equation}
where $h_{\varepsilon}
:= \big(a^{\varepsilon}\tilde{\Phi}_{\varepsilon,i}
-\varepsilon\sigma_i^{\varepsilon}\big)
\nabla\partial_i u_{0}$ and
$g_{\varepsilon}
:= a^{\varepsilon}\nabla Q_{\varepsilon,i}
\cdot\nabla\partial_i u_0$.
Then, we need to decompose the equation $\eqref{pde:8}$ into the following three equations:
\begin{subequations}
\begin{align}
& (1).~\left\{\begin{aligned}
-\nabla\cdot a^{\varepsilon}\nabla
z_{\varepsilon}^{(1)}
&=\nabla\cdot h_{\varepsilon}\mathbf{1}_{U_r(x_0)}
+ g_{\varepsilon}\mathbf{1}_{U_r(x_0)}
~ &\text{in}&\quad \Omega;\\
z_{\varepsilon}^{(1)}
& =0
~ &\text{on}&\quad \partial\Omega.
\end{aligned}\right.\label{pde:15.1}\\
& (2).~\left\{\begin{aligned}
-\nabla\cdot a^{\varepsilon}\nabla
z_{\varepsilon}^{(2)}
&=\nabla\cdot h_{\varepsilon}\mathbf{1}_{\Omega\setminus U_r(x_0)}
~ &\text{in}&\quad \Omega;\\
z_{\varepsilon}^{(2)}
& =0
~ &\text{on}&\quad \partial\Omega.
\end{aligned}\right.
\label{pde:15.2} \\
&(3).~\left\{\begin{aligned}
-\nabla\cdot a^{\varepsilon}\nabla
z_{\varepsilon}^{(3)}
&=g_{\varepsilon}\mathbf{1}_{\Omega\setminus U_r(x_0)}
~ &\text{in}&\quad \Omega;\\
z_{\varepsilon}^{(3)}
& =0
~ &\text{on}&\quad \partial\Omega.
\end{aligned}\right.
\label{pde:15.3}
\end{align}
\end{subequations}

We start from dealing with the solution $z_\varepsilon^{(1)}$ of
$\eqref{pde:15.1}$, and the main idea is similar to
that given in \textbf{Step 2}. With the help of the representation of
the solution by Green functions, we first have
\begin{equation}\label{eq:6.7}
\begin{aligned}
& \partial_m z_\varepsilon^{(1)}(x_0)
 =
\int_{U_\varepsilon(x_0)}\nabla \partial_{m}G_\varepsilon(x_0,z)\cdot\big(h_{\varepsilon}(z)-h_{\varepsilon}(x_0)\big)dz\\
&+  h_{\varepsilon}(x_0)\cdot\int_{\partial U_\varepsilon(x_0)}n(z)\partial_m G_\varepsilon(x_0,z)dS(z)
 +\int_{U_r(x_0)}\partial_m G_\varepsilon(x_0,z)g_{\varepsilon}(z)dz\\
&+\int_{U_r\setminus U_\varepsilon(x_0)}\nabla\partial_m G_\varepsilon(x_0,z)\cdot h_{\varepsilon}(z)dz=:J_1+J_2+J_3+J_4.
\end{aligned}
\end{equation}

By Proposition $\ref{P:5}$, for any $\beta\in[1,\infty)$, a routine computation leads to
\begin{equation}\label{pri:6.5}
\begin{aligned}
&\esssup_{U_r(x_0)}
\big\langle|h_{\varepsilon}|^{\beta}\big\rangle^{\frac{1}{\beta}}
+\varepsilon^\eta
\big\langle[h_\varepsilon]_{C^{0,\eta}(U_\varepsilon(x_0))}^{\beta}
\big\rangle^{\frac{1}{\beta}}
\lesssim^{\eqref{pri:21-1},\eqref{pri:21-2}} \varepsilon\mu_d(R_0/\varepsilon)
\dashint_{D_r(\bar{y}_0)}|g|;\\
& \big\langle|g_\varepsilon(z)|^{\beta}\big\rangle^{\frac{1}{\beta}}
\lesssim^{\eqref{pri:18}} \mu_d(R_0/\varepsilon)\min\Big\{1,\frac{\varepsilon}{\delta(z)}\Big\}
\dashint_{D_r(\bar{y}_0)}|g|
\qquad\forall z\in U_r(x_0).
\end{aligned}
\end{equation}
This together with Proposition $\ref{P:4}$ yields
\begin{subequations}
\begin{align}
& \langle|J_1|^{p_2}\rangle^{\frac{1}{p_2}}
+ \langle|J_2|^{p_2}\rangle^{\frac{1}{p_2}}
\lesssim \varepsilon\mu_d(R_0/\varepsilon)\dashint_{D_r(\bar{y}_0)}|g|;\label{f:46}\\
& \langle|J_3|^{p_2}\rangle^{\frac{1}{p_2}}
+\langle|J_4|^{p_2}\rangle^{\frac{1}{p_2}}
\lesssim \varepsilon \mu_d(R_0/\varepsilon)\ln(r/\varepsilon)\dashint_{D_r(\bar{y}_0)}|g|.\label{f:44}
\end{align}
\end{subequations}
As a result, collecting the above estimates we obtain
\begin{equation}\label{f:6.5}
\begin{aligned}
\langle|\nabla z_\varepsilon^{(1)}(x_0)|^p\rangle^{\frac{1}{p}}
\lesssim^{\eqref{eq:6.7},\eqref{f:46},\eqref{f:44}}
\varepsilon\mu_d(R_0/\varepsilon)\ln(r/\varepsilon)\sup_{D_r(\bar{y}_0)}|g|.
\end{aligned}
\end{equation}

Then we proceed to address the solution $z_\varepsilon^{(2)}$ of
$\eqref{pde:15.2}$. As a preparation, in view of the equation (2) in $\eqref{pde:14}$, we have
\begin{equation}\label{pri:6.6}
 \Big(\int_{\Omega}|\nabla^2 u_0|^2\Big)^{1/2}
 \lesssim_{\lambda,d,m_0,R_0M_0} \Big(\int_{D_r(\bar{y}_0)} |g|^2\Big)^{1/2}
 \lesssim r^{\frac{d}{2}}\sup_{D_r(\bar{y}_0)}|g|,
\end{equation}
where we require $\partial\Omega\in C^{1,1}$ at least for the above estimate.
Moreover, we claim that there holds
\begin{equation}\label{f:6.11}
\langle|\nabla z_\varepsilon^{(2)}(x_0)|^p\rangle^{\frac{1}{p}}
\lesssim
\frac{1}{r^{\frac{d}{2}}}
\Big(\int_{\Omega}
\big\langle|\nabla z_\varepsilon^{(2)}|^{p_1}
\big\rangle^{\frac{2}{p_1}}\Big)^{\frac{1}{2}}.
\end{equation}
Admitting for a while, this together with Lemma $\ref{P:7*}$ (taking $\omega=1$ therein) leads to
\begin{equation}\label{f:6.6}
\begin{aligned}
\langle|\nabla z_\varepsilon^{(2)}(x_0)|^p\rangle^{\frac{1}{p}}
&\lesssim^{\eqref{f:6.11},\eqref{pri:12}}
\lesssim
\frac{1}{r^{\frac{d}{2}}}
\Big(\int_{\Omega}
\big\langle|h_\varepsilon\mathbf{1}_{\Omega\setminus
U_r(x_0)}|^{p_2}
\big\rangle^{\frac{2}{p_2}}\Big)^{\frac{1}{2}}\\
&\lesssim^{\eqref{pri:21-1},\eqref{pri:6.6}} \varepsilon\mu_d(R_0/\varepsilon)
\sup_{D_r(\bar{y}_0)}
|g|.
\end{aligned}
\end{equation}

Concerning the estimate $\eqref{f:6.11}$, it involves two cases:
(1) $\delta(x)\leq (r/2)$; (2) $\delta(x)>(r/2)$. For the case (1), it follows
from the argument analogous to $\eqref{f:6.10}$ that
\begin{equation*}
\begin{aligned}
\langle|\nabla z_\varepsilon^{(2)}(x_0)|^p\rangle^{\frac{1}{p}}
&\lesssim \Big\langle\big(\dashint_{D_r(\bar{x}_0)}
|\nabla z_\varepsilon^{(2)}|^2\big)^{\frac{p_2}{2}}
\Big\rangle^{\frac{1}{p_2}}\\
&\lesssim
\Big(\dashint_{D_r(\bar{x}_0)}
\langle|\nabla z_\varepsilon^{(2)}|^{p_2}\rangle^{\frac{2}{p_2}}
\Big)^{\frac{1}{2}}
\lesssim  r^{-\frac{d}{2}}\Big(\int_{\Omega}
\langle|\nabla z_\varepsilon^{(2)}|^{p_2}\rangle^{\frac{2}{p_2}}
\Big)^{\frac{1}{2}},
\end{aligned}
\end{equation*}
where $\bar{x}_0\in\partial\Omega$ is
such that $\delta(\bar{x}_0)=|x_0-\bar{x}_0|$, and we employ Minkowski's inequality in the second step. For the case (2), the interior estimate leads to
\begin{equation*}
\begin{aligned}
\langle|\nabla z_\varepsilon^{(2)}(x_0)|^p\rangle^{\frac{1}{p}}
\lesssim^{\eqref{pri:2.10}} \Big\langle\big(\dashint_{B_{\frac{r}{4}}(x_0)}
|\nabla z_\varepsilon^{(2)}|^2\big)^{\frac{p_2}{2}}
\Big\rangle^{\frac{1}{p_2}}
\lesssim
r^{-\frac{d}{2}}\Big(\int_{\Omega}
\langle|\nabla z_\varepsilon^{(2)}|^{p_2}\rangle^{\frac{2}{p_2}}
\Big)^{\frac{1}{2}}.
\end{aligned}
\end{equation*}

We now turn to the last equation $\eqref{pde:15.3}$.
By using the representation of the solution via Green functions
and Propositions $\ref{P:4},\ref{P:5}$, we have
\begin{equation}\label{f:6.7}
\begin{aligned}
&\langle|\nabla z_\varepsilon^{(3)}(x_0)|^p\rangle^{\frac{1}{p}}
\lesssim^{\eqref{pri:18}} \varepsilon\mu_d(R_0/\varepsilon)
\int_{\Omega\setminus U_r(x_0)}
\langle|\nabla G_\varepsilon(x_0,y)|^{\bar{p}}\rangle^{\frac{1}{\bar{p}}}
|\nabla^2 u_0(y)|\frac{dy}{\delta(y)}\\
&\lesssim^{\eqref{pri:19}}
\varepsilon\mu_d(R_0/\varepsilon)
\Big(\int_{\Omega}|\nabla^2 u_0|^2\Big)^{\frac{1}{2}}
\Big(\int_{\Omega\setminus U_r(x_0)}\frac{dy}{|y-x_0|^{2d}}\Big)^{\frac{1}{2}}\\
&\lesssim^{\eqref{pri:6.6}} \varepsilon\mu_d(R_0/\varepsilon)
\sup_{D_r(\bar{y}_0)}|g|.
\end{aligned}
\end{equation}

Consequently, by noting $z_\varepsilon
=z_\varepsilon^{(1)}+z_\varepsilon^{(2)}+z_\varepsilon^{(3)}$ we have
\begin{equation*}
\begin{aligned}
\langle|\nabla z_\varepsilon(x_0)|^p\rangle^{\frac{1}{p}}
&\leq
\langle|\nabla z_\varepsilon^{(1)}(x_0)|^p\rangle^{\frac{1}{p}}
+\langle|\nabla z_\varepsilon^{(2)}(x_0)|^p\rangle^{\frac{1}{p}}
+\langle|\nabla z_\varepsilon^{(3)}(x_0)|^p\rangle^{\frac{1}{p}}\\
& \lesssim^{\eqref{f:6.5},\eqref{f:6.6},\eqref{f:6.7}}
\varepsilon\mu_d(R_0/\varepsilon)
 \ln(r/\varepsilon)
\sup_{D_r(\bar{y}_0)}|g|,
\end{aligned}
\end{equation*}
which together with Poincar\'e's inequality leads to the desired
estimate $\eqref{pri:23}$. This ends the whole proof.
\qed

\medskip

\section{A bootstrap argument}\label{sec:6}

\noindent
The proof of Theorem $\ref{thm:2*}$ relies on the following results, and
the main ideas are inspired by the work of Bella et al. \cite{Bella-Fischer-Josien-Raithel-24}.
The present contribution mainly lies in discussing the more general case of bounded domains and including the situation where the dimension $d = 2$.
Recall that
the notation
``$\lfloor x\rfloor$'' denotes the largest integer not more than $x$, and ``$\lceil x\rceil$'' represents the smallest integer not less than $x$.
\begin{proposition}[CLT-scaling type estimates]\label{P:8}
Let $\Omega\subset\mathbb{R}^d$ with $d\geq 2$ be a bounded $C^{\lfloor d/2\rfloor+1}$ domain.
Suppose that the ensemble $\langle\cdot\rangle$ satisfies
$\eqref{a:2}$ and $\eqref{a:3}$.
Let $1\leq p<\infty$.
Assume that $Q$ is the related boundary corrector\footnote{Recalling
the equations $\eqref{pde:6}$, in view of rescaling, one indeed has the relationship:  $Q:=\frac{1}{\varepsilon}Q_\varepsilon(\varepsilon\cdot)$.} associated with $\phi$ by the equations
$-\nabla\cdot a\nabla Q = 0$ in $\tilde{\Omega}$ and $Q=-\phi$
on $\partial\tilde{\Omega}$, where $\tilde{\Omega}:=\Omega/\varepsilon$
and $\varepsilon\in(0,1]$.  Then, there exists
$\kappa_d$, satisfying
$\kappa_2=0$ if $d=2$ and $\kappa_d=\kappa$ if $d\geq 3$ with
$0<\kappa\ll 1$, such that the following estimate
\begin{equation}\label{pri:7.4}
 \big\langle|\nabla Q(z)|^p\big\rangle^{\frac{1}{p}}
 \lesssim_{\lambda,\lambda_1,\lambda_2,d,p,m_0,
 R_0^{\lfloor d/2\rfloor+\tau}M_0,\kappa} \mu_d^{\lceil\frac{d}{2}\rceil}(R_0/\varepsilon)
 \big[\delta(z)
 \big]^{-\frac{d}{2}+\kappa_d}
\end{equation}
holds for any $z\in\tilde{\Omega}$ and $\delta(z):=\text{dist}(z,\partial\tilde{\Omega})\geq 2$.
\end{proposition}

We introduce a smoothing operator $S_{r}$
with $r>0$, defined as $S_{r}(u):=\int_{\mathbb{R}^d} \zeta_r(y)u(x-y)dy$,
where $\zeta_r(x)=r^{-d}
\zeta(x/r)$ and $\zeta\in C_0^{\infty}(B_{\frac{1}{2}})$ satisfies
$\int_{\mathbb{R}^d}\zeta = 1$ and $\zeta\geq 0$.
Without a proof (see e.g. \cite[pp.37]{Shen18}), for any $u\in H^1_{\text{loc}}(\mathbb{R}^d)$, there holds the following inequality
\begin{equation}\label{f:7.1}
\int_{B_{r}}|u-S_{r}(u)|^2
\lesssim r^2\int_{B_{2r}}|\nabla u|^2.
\end{equation}

\begin{lemma}[improved Caccioppoli's inequality]\label{lemma:7.1}
Let $0<\rho\ll 1$, and $R\geq 1$ (could be very large).
Suppose that $\langle\cdot\rangle$ is an ensemble of $\lambda$-uniformly elliptic coefficient fields, and
$u$ satisfies the equation $\nabla\cdot a\nabla u = 0$ in $B_{2R}$.
Then, there exist two integers $k_0:=\lfloor \rho\log_2 R \rfloor$
and $c_\rho := \lceil2^{\frac{d^2}{2\rho}}\rceil$, and a family of center points $\{x_m^j\}\subset B_R$, such that
\begin{equation}\label{pri:7.1}
 \dashint_{B_{R^{1-\rho}}}|\nabla u|^2
 \lesssim \frac{1}{R^d} \dashint_{B_R}|\nabla u|^2
 + c_\rho^{-1}\sum_{j=1}^{k_0}2^{-\frac{j}{\rho}}
 \sum_{m=1}^{c_\rho}\big|S_{2^{-k_0-\frac{d}{2\rho}+j}R}(\nabla u)(\hat{x}_m^j)\big|^2,
\end{equation}
where the multiplicative constant depends only on $\lambda$ and $d$.
\end{lemma}

\begin{proof}
The main idea may be found in \cite{Bella-Fischer-Josien-Raithel-24,Armstrong-Kuusi-Mourrat-16}, and
we provide a proof for the reader's convenience. The proof is divided into two steps.

\textbf{Step 1.}
Let $0<r\leq R$, and $\theta\in (0,1/2)$ will be fixed later on. Based upon Caccioppoli's inequality and the smoothing operator defined above, we show that there exist $c_\theta\in\mathbb{N}$ and a family of points $\{\hat{x}_m\}_{m=1}^{c_\theta}\subset B_{2r}$ such that
\begin{equation}\label{f:7.3}
\dashint_{B_r}|\nabla u|^2
\lesssim
\theta^2\dashint_{B_{2r}}|\nabla u|^2
+ c_\theta^{-1}
\sum_{m=1}^{c_{\theta}}|S_{\theta r}(\nabla u)(\hat{x}_m)|^2.
\end{equation}

We start from the following Caccioppoli's inequality
\begin{equation}\label{f:7.2}
\begin{aligned}
\dashint_{B_r}|\nabla u|^2
&\lesssim \frac{1}{r^2}\dashint_{B_{\frac{3}{2}r}}|u-c|^2
\lesssim \frac{1}{r^2}\dashint_{B_{\frac{3}{2}r}}|u-S_{\theta r}(u)|^2
+ \frac{1}{r^2}\dashint_{B_{\frac{3}{2}r}}|S_{\theta r}(u)- c|^2\\
&\lesssim^{\eqref{f:7.1}} \theta^2\dashint_{B_{2r}}|\nabla u|^2
+ \dashint_{B_{2r}}| S_{\theta r}(\nabla u)|^2,
\end{aligned}
\end{equation}
where we take $c=\dashint_{B_{2r}}S_{\theta r}(u)$ and use Poincare's inequality for the last inequality above.
To handle the last term in $\eqref{f:7.2}$, we introduce a good covering of
$B_{2r}$ by a family of balls with smaller radius, i.e., there exist $\{x_m\}\subset B_{2r}$ and
$B_{2r}\subset \cup_{m=1}^{c_\theta} B_{\theta r}(x_m)$
where $c_{\theta}:=\lceil\theta^{-d}\rceil$.
By the mean value theorem of integrals, for each $m=1\,\cdots,c_\theta$
there exists $\hat{x}_m\in B_{\theta r}(x_m)$, such that
\begin{equation*}
\int_{B_{2r}}|S_{\theta r}(\nabla u)|^2
\lesssim_d (\theta r)^d
\sum_{m=1}^{c_{\theta}}\dashint_{B_{\theta r}(x_m)}|S_{\theta r}(\nabla u)|^2
\lesssim_d (\theta r)^d
\sum_{m=1}^{c_{\theta}}|S_{\theta r}(\nabla u)(\hat{x}_m)|^2,
\end{equation*}
Plugging this back into $\eqref{f:7.2}$, one can derive the stated estimate $\eqref{f:7.3}$.

\textbf{Step 2.} Show the desired estimate $\eqref{pri:7.1}$ by iterating the estimate $\eqref{f:7.3}$. Fix $\rho\in (0,1)$ small, and let
$k_0\in\mathbb{N}$ be such that $2^{-k_0}R\geq R^{1-\rho}$, which finally\footnote{It is convenient to keep
$k_0 = \rho\log_2 R$ in the later computation of this lemma.} implies that one can choose
$k_0 = \lfloor \rho\log_2 R \rfloor$. Therefore, we start from taking
$r= 2^{-k_0}R$ in $\eqref{f:7.3}$ to implement the iterations, i.e.,
\begin{equation}\label{f:7.4}
\begin{aligned}
\dashint_{B_{R^{1-\rho}}}|\nabla u|^2
&\lesssim_d \dashint_{B_{2^{-k_0}R}}|\nabla u|^2
\lesssim
\theta^2\dashint_{B_{2^{-k_0+1}R}}|\nabla u|^2
+ c_\theta^{-1}
\sum_{m=1}^{c_{\theta}}|S_{\theta 2^{-k_0}R}(\nabla u)(\hat{x}_m^1)|^2\\
&\lesssim \cdots \cdots \\
&\lesssim \theta^{2k_0}
\dashint_{B_{R}}|\nabla u|^2
+ c_{\theta}^{-1}\sum_{j=1}^{k_0}\theta^{2(j-1)}
\sum_{m=1}^{c_{\theta}}|S_{\theta 2^{-k_0+j}R}(\nabla u)(\hat{x}_m^j)|^2,
\end{aligned}
\end{equation}
where $\{\hat{x}_m^j\}$ represent those points $\{\hat{x}_m\}$ selected in the ball $B_{2^{-k_0+j}R}$ during the $j$-th iteration, following the procedure described in \textbf{Step 1}. Since these points change in each iteration, we thus introduce the superscripts of $\{\hat{x}_m^j\}$
to mark this change.

Now, one can choose $\theta\in(0,1/2)$ such that $\theta^{2k_0}=R^{-d}$, which implies that $\theta = 2^{-\frac{d}{2\rho}}$. Then,
it follows from the estimate $\eqref{f:7.4}$ that
\begin{equation}
\begin{aligned}
\dashint_{B_{R^{1-\rho}}}|\nabla u|^2
\lesssim_{d,\mu,\rho} \frac{1}{R^d}
\dashint_{B_{R}}|\nabla u|^2
+ c_{\theta}^{-1}\sum_{j=1}^{k_0}2^{-\frac{j}{\rho}}
\sum_{m=1}^{c_{\theta}}|S_{2^{-k_0-\frac{d}{2\rho}+j}R}(\nabla u)(\hat{x}_m^j)|^2.
\end{aligned}
\end{equation}
By setting $c_\rho := \lceil2^{\frac{d^2}{2\rho}}\rceil$ we therefore have
$c_\rho = c_\theta$, and this have completed the whole proof.
\end{proof}

To state the following result, we introduce the following notation:
$\tilde{\Omega}:=\Omega/\varepsilon$ with $\varepsilon\in(0,1]$,
and therefore we correspondingly have $\delta(x):=\text{dist}(x,\partial\tilde{\Omega})$ without a confusion.
\begin{lemma}[one-order improvement]\label{lemma:7.2}
Let $\Omega\subset\mathbb{R}^d$ with $d\geq 2$ be a bounded $C^{\lfloor d/2\rfloor+1}$ domain.
Suppose that the ensemble $\langle\cdot\rangle$ satisfies
$\eqref{a:2}$ and $\eqref{a:3}$.
Let $\beta>0$.
Assume that $Q$ is the related boundary corrector associated with $\phi$ by the equations
$-\nabla\cdot a\nabla Q = 0$ in $\tilde{\Omega}$ and $Q=-\phi$
on $\partial\tilde{\Omega}$ with the following estimate:
\begin{equation}\label{pri:7.2condition}
 \big\langle|\nabla Q(z)|^p\big\rangle^{\frac{1}{p}}
 \lesssim_{c_0} \mu_d^{\lfloor\beta\rfloor}(R_0/\varepsilon)
 \big[\delta(z)\big]^{-\beta}
 \qquad \forall z\in\tilde{\Omega}\quad\text{with}\quad \delta(z)\geq 2.
\end{equation}
Then, for any deterministic vector field $g$ with the components $g_i:=\zeta_r(\cdot-\hat{x})e_i$ with
$\delta(\hat{x})\geq 2$ and $r = \delta(\hat{x})/2$, there holds
\begin{equation}\label{pri:7.2outcome}
\Big\langle\int_{\tilde{\Omega}}\nabla Q\cdot g \Big\rangle
\lesssim_{} \mu_d^{\lfloor\beta\rfloor+1}(R_0/\varepsilon)
\Big\{r^{-\beta-1}\ln r +r^{-\frac{d}{2}}\Big\},
\end{equation}
where the multiplicative constant depends on $\lambda,\lambda_1,\lambda_2,d,c_0,\tau,m_0,R_0^{\lfloor d/2\rfloor+\tau}M_0$, and $p$.
\end{lemma}

\begin{proof}
The main idea is inspired by
Bella et al. \cite[Proposition 6]{Bella-Fischer-Josien-Raithel-24}, but here we avoid using the stationarity of the boundary corrector in the tangential direction alone the boundary (which usually does not hold for general domains). The whole proof is divided into five steps.

\textbf{Step 1.} Outline the proof and reduction.
By a duality argument, for any deterministic vector field $g$, one can first establish the following identity:
\begin{equation}\label{f:7.5}
\Big\langle\int_{\tilde{\Omega}}\nabla Q\cdot g\Big\rangle
= \int_{\tilde{\Omega}}\Big\langle
(a^{*}\phi_j^{*}-\sigma_j^{*})\nabla\bar{\varphi}_j\cdot\nabla Q\Big\rangle
+\int_{\tilde{\Omega}}
\Big\langle(a^*-\bar{a}^{*})(\nabla\bar{v}-\bar{\varphi}_j)\cdot\nabla Q
\Big\rangle,
\end{equation}
where we consider the adjoint problems:
\begin{equation}\label{pde:7.2}
\left\{\begin{aligned}
-\nabla\cdot a^*\nabla v &= \nabla\cdot g &\quad&\text{in}\quad\tilde{\Omega};\\
v&=0 &\quad&\text{on}\quad\partial\tilde{\Omega},
\end{aligned}\right.
\qquad
\left\{\begin{aligned}
-\nabla\cdot \bar{a}^*\nabla \bar{v} &= \nabla\cdot g &\quad&\text{in}\quad\tilde{\Omega};\\
\bar{v}&=0 &\quad&\text{on}\quad\partial\tilde{\Omega},
\end{aligned}\right.
\end{equation}
and choosing $\bar{\varphi}_j\in H_0^1(\tilde{\Omega})$, which is determined by $\partial_j\bar{v}$, will be explicitly shown later on.

Then, in view of the precondition $\eqref{pri:7.2condition}$ and Lemma $\ref{lemma:*3}$, one can derive that
\begin{equation}\label{f:7.12}
\Big\langle\int_{\tilde{\Omega}}\nabla Q\cdot g \Big\rangle
\lesssim
\mu_d^{\lfloor\beta\rfloor+1}(R_0/\varepsilon)
\Big\{\int_{O_2}\big(|\nabla\bar{v}|+|\nabla^2\bar{v}|\big)
+\int_{\tilde{\Omega}\setminus O_2}|\nabla^2\bar{v}|\delta^{-\beta}\Big\},
\end{equation}
where $O_r:=\{x\in\tilde{\Omega}:\text{dist}(x,\partial\tilde{\Omega})\leq r\}$ represents the layer part of $\tilde{\Omega}$.
By means of the Green's function, one can establish the following estimates:
\begin{subequations}
\begin{align}
&\int_{O_2}\big(|\nabla \bar{v}| + |\nabla^2\bar{v}|\big)
\lesssim r^{-\frac{d}{2}};  \label{f:7.24}\\
&\int_{\tilde{\Omega}\setminus O_2}|\nabla^2\bar{v}|\delta^{-\beta}
\lesssim r^{-\beta-1}\ln r
+ r^{-\frac{d}{2}}.
\label{f:7.25}
\end{align}
\end{subequations}

Consequently, plugging $\eqref{f:7.24}$ and $\eqref{f:7.25}$
into $\eqref{f:7.12}$, we can derive the desired estimate
$\eqref{pri:7.2outcome}$.

%\begin{equation}\label{f:7.8}
%\Big\langle\int_{\tilde{\Omega}}\nabla Q\cdot g \Big\rangle
%\lesssim
%\mu_d^2(R_0/\varepsilon)
%\Big\{\int_{O_2}\big(|\nabla\bar{v}|+|\nabla^2\bar{v}|\big)
%+\int_{\tilde{\Omega}\setminus O_2}|\nabla^2\bar{v}|\delta^{-\beta}\Big\}
%\end{equation}

\textbf{Step 2.} Arguments for $\eqref{f:7.5}$.
The basic idea is appealing to the duality by using the given data $g$.
In view of $\eqref{pde:7.2}$,
consider the error of the two-scale expansion  $z:=v-\bar{v}-\phi_j^*\varphi_j$, where one can choose $\varphi_j :=\eta\partial_j\bar{v} $ to make $z\in H_0^1(\tilde{\Omega})$,
where $\eta\in C_0^1(\tilde{\Omega})$ is a cut-off function satisfying
\begin{equation*}
\eta = 0 ~\text{on}~ O_{1}:=\{x\in\tilde{\Omega}:\text{dist}(x,\partial\tilde{\Omega})\leq 1\};
\quad
\eta =1 ~\text{in}~\tilde{\Omega}\setminus O_2;
\quad
|\nabla \eta|\lesssim 1.
\end{equation*}
This error satisfies the following equation:
\begin{equation}\label{pde:7.1}
-\nabla\cdot a^*\nabla z
= \nabla\cdot\big[(a^*\phi_j^{*}-\sigma_j^{*})\nabla\bar{\varphi}_j
+(a^*-\bar{a}^*)(\nabla\bar{v}-\varphi)\big]
\quad\text{in}\quad\tilde{\Omega}.
\end{equation}
Applying $Q$ to the both sides of $\eqref{pde:7.1}$, we introduce two quantities for further computations:
\begin{equation*}
\begin{aligned}
&\mathfrak{L}:= -\int_{\tilde{\Omega}}\nabla\cdot a^*\nabla z Q;\\
&\mathfrak{R}:=\int_{\tilde{\Omega}}
\nabla\cdot\big[(a^*\phi_j^{*}-\sigma_j^{*})\nabla\bar{\varphi}_j
+(a^*-\bar{a}^*)(\nabla\bar{v}-\varphi)\big]Q.
\end{aligned}
\end{equation*}

The following computation is routine (it is merely integration by parts),
and we provide a proof for the reader's convenience.
We start from the right-hand side,
\begin{equation}\label{f:7.7}
\begin{aligned}
\mathfrak{R} &=
-\int_{\tilde{\Omega}}
\big[(a^*\phi_j^{*}-\sigma_j^{*})\nabla\bar{\varphi}_j
+(a^*-\bar{a}^*)(\nabla\bar{v}-\varphi)\big]\cdot\nabla Q
- \int_{\partial\tilde{\Omega}}n\cdot\big(a^*-\bar{a}^*\big)
\nabla\bar{v}\phi dS,
\end{aligned}
\end{equation}
where we note that $\nabla\bar{\varphi}_j =0$ and $\bar{\varphi}_j=0$ on
$\partial\tilde{\Omega}$. Then, the left-hand side is as follows:
\begin{equation}\label{f:7.6}
\begin{aligned}
\mathfrak{L} &= \int_{\tilde{\Omega}}\nabla\cdot g Q
+ \int_{\tilde{\Omega}}\nabla\cdot a^{*}\nabla
\big(\bar{v}+\phi_j^{*}\bar{\varphi}_j\big)Q \\
&=-\int_{\tilde{\Omega}}\nabla Q\cdot g
- \int_{\tilde{\Omega}}\nabla\big(\bar{v}+\phi_j^*\bar{\varphi}_j\big)
\cdot a\nabla Q -
\int_{\partial\tilde{\Omega}} n\cdot a^*\nabla\bar{v}\phi dS,
\end{aligned}
\end{equation}
where we employ the fact that $\text{supp}(g)\cap\partial\tilde{\Omega}=\emptyset$ and $\bar{\varphi}_j=0$ on $\partial\tilde{\Omega}$. Moreover,
noting that $\nabla\cdot a\nabla Q = 0$ in $\tilde{\Omega}$, and
$\bar{v}$ with $\bar{\varphi}_j$ vanishes on $\partial\tilde{\Omega}$,
we continue to apply integration by parts to the second term above,
\begin{equation*}
\int_{\tilde{\Omega}}\nabla\big(\bar{v}+\phi_j^*\bar{\varphi}_j\big)
\cdot a\nabla Q = 0.
\end{equation*}
Plugging this back into $\eqref{f:7.6}$, we have
\begin{equation*}
\mathfrak{L}
=-\int_{\tilde{\Omega}}\nabla Q\cdot g
 -\int_{\partial\tilde{\Omega}} n\cdot a^*\nabla\bar{v}\phi dS.
\end{equation*}

By using $\mathfrak{L} = \mathfrak{R}$ and combining the equalities
$\eqref{f:7.6}$ and $\eqref{f:7.7}$, we obtain that
\begin{equation*}
\begin{aligned}
&\int_{\tilde{\Omega}}\nabla Q\cdot g
=  \int_{\tilde{\Omega}}
\big[(a^*\phi_j^{*}-\sigma_j^{*})\nabla\bar{\varphi}_j
+(a^*-\bar{a}^*)(\nabla\bar{v}-\varphi)\big]\cdot\nabla Q
+ \int_{\partial\tilde{\Omega}}n\cdot \bar{a}^* \nabla\bar{v}\phi dS\\
&= \int_{\tilde{\Omega}}
\big[(a^*\phi_j^{*}-\sigma_j^{*})\nabla\bar{\varphi}_j
+(a^*-\bar{a}^*)(\nabla\bar{v}-\varphi)\big]\cdot\nabla Q
+ \int_{\tilde{\Omega}}\big(g+\bar{a}^*\nabla\bar{v}\big)\cdot\nabla\phi.
\end{aligned}
\end{equation*}
Therefore,
taking $\langle\cdot\rangle$ on the both sides above leads to
the stated equality $\eqref{f:7.5}$, where we utilize the stationarity of
$\nabla\phi$ to note that
\begin{equation*}
\Big\langle\int_{\tilde{\Omega}}\big(g+\bar{a}^*\nabla\bar{v}\big)\cdot
  \nabla\phi \Big\rangle
=\int_{\tilde{\Omega}}\big(g+\bar{a}^*\nabla\bar{v}\big)\cdot
 \big\langle\nabla\phi \big\rangle = 0.
\end{equation*}

\textbf{Step 3.} Show the estimate $\eqref{f:7.12}$. Let $1<p<\infty$. We start from
the first term in the right-hand side of $\eqref{f:7.5}$, and
appealing to the precondition $\eqref{pri:7.2condition}$, Lemma $\ref{lemma:*3}$, and Theorem $\ref{thm:2}$ in order we have
\begin{equation}\label{f:7.9}
\begin{aligned}
\int_{\tilde{\Omega}}\Big\langle
(a^{*}\phi_j^{*}
&-\sigma_j^{*})\nabla\bar{\varphi}_j
\cdot\nabla Q\Big\rangle
\lesssim^{\eqref{pri:**3}} \mu_d(R_0/\varepsilon)
\bigg\{\int_{O_2\setminus O_1}|\nabla\bar{v}|\big\langle
|\nabla Q|^p\big\rangle^{1/p} + \int_{\tilde{\Omega}\setminus O_1}
|\nabla^2\bar{v}|\big\langle
|\nabla Q|^p\big\rangle^{1/p}\bigg\}\\
&\lesssim^{\eqref{pri:1.b},\eqref{pri:7.2condition}} \mu_d^{\lfloor\beta\rfloor+1}(R_0/\varepsilon)
\bigg\{\int_{O_2}\big(|\nabla \bar{v}| + |\nabla^2\bar{v}|\big)
+ \int_{\tilde{\Omega}\setminus O_2}|\nabla^2\bar{v}|\delta^{-\beta}
\bigg\}.
\end{aligned}
\end{equation}
By the same token, we have the estimates on the second term in the right-hand side of $\eqref{f:7.5}$,
\begin{equation}\label{f:7.10}
\begin{aligned}
\int_{\tilde{\Omega}}
\Big\langle(a^*-\bar{a}^{*})(\nabla\bar{v}-\bar{\varphi}_j)\cdot\nabla Q
\Big\rangle
\lesssim \mu_d^{\lfloor\beta\rfloor}(R_0/\varepsilon)
\int_{O_2}|\nabla \bar{v}|.
\end{aligned}
\end{equation}
Combining the estimates $\eqref{f:7.9}$ and $\eqref{f:7.10}$ leads to
the stated estimate $\eqref{f:7.12}$.

\textbf{Step 4.} Show the estimate $\eqref{f:7.24}$.
Let $\alpha\in\mathbb{N}^d$ be a multi-index satisfying
$|\alpha|:= \lceil d/2\rceil$. Let $\{O_2^i\}$ be a decomposition of
$O_2$, i.e., $O_2 = \cup_i O_2^i$, where $|O_2^i|\sim 2^d$.
We split the proof into two cases: (1) $d\geq 3$; (2) d=2.
We start from the case $d\geq 3$. It follows from the interpolation inequality (see e.g. \cite[Theorem 5.2]{Adams-03}) that
\begin{equation}\label{f:7.15}
\begin{aligned}
\int_{O_2}\big(|\nabla \bar{v}| + |\nabla^2\bar{v}|\big)
\lesssim  \sum_{i}\int_{O_2^i}\big(|\nabla^\alpha\bar{v}|+ |\bar{v}|\big)
\lesssim \int_{O_2}|\nabla^\alpha\bar{v}| +
 \sum_{i}\int_{O_2^i}|\bar{v}|.
\end{aligned}
\end{equation}
Let $q:=d/(d-|\alpha|)$ and $1/q+1/{q'}=1$.
For any $s\in[1,\infty)$, there exists the extension operator
$T^i$ such that
$T^i(\bar{v})=\bar{v}$ in $O_2^i$ and
$\|\nabla^{\alpha}T^i(\bar{v})\|_{L^s(\mathbb{R}^d)}
\leq C_s^i\|\nabla^{\alpha}\bar{v}\|_{L^s(O_2^i)}$
(see \cite[Theorem 1.2]{Chua-92}).
By H\"older's inequality and Sobolev's inequality (see e.g. \cite[Theorem 4.31]{Adams-03}),
one can derive that
\begin{equation}\label{f:7.16}
\begin{aligned}
\sum_{i}\int_{O_2^i}|\bar{v}|
\leq \sum_{i}|O_2^i|^{\frac{1}{q'}}\|\bar{v}\|_{L^q(O_2^i)}
&\lesssim_{d,q'} \sum_{i}\|\bar{v}\|_{L^q(O_2^i)}
\lesssim \sum_{i}\|T^i(\bar{v})\|_{L^q(\mathbb{R}^d)}
\lesssim \sum_{i}\|\nabla^{\alpha} T^i(\bar{v})\|_{L^1(\mathbb{R}^d)}\\
&\lesssim \sum_{i}C_1^i\|\nabla^{\alpha} \bar{v}\|_{L^1(O_2^i)}
\lesssim \max_{i}C_1^{i} \int_{O_2}|\nabla^{\alpha}\bar{v}|
\lesssim \int_{O_2}|\nabla^{\alpha}\bar{v}|,
\end{aligned}
\end{equation}
where $C_1^i$ is uniformly bounded since the geometric shapes of $O_2^i$ are similar. Also, we mention that $\bar{v}$ cannot be a constant function unless $\bar{v} = 0$.
Combining the estimates $\eqref{f:7.15}$ and  $\eqref{f:7.16}$, we obtain that
\begin{equation}\label{f:7.23}
\begin{aligned}
\int_{O_2}\big(|\nabla \bar{v}| + |\nabla^2\bar{v}|\big)
\lesssim \int_{O_2}|\nabla^{\alpha}\bar{v}|.
\end{aligned}
\end{equation}

Moreover,
let $\hat{B}:=B_r(\hat{x})$ with $\text{supp}(g)\subset \hat{B}$
and $r=\delta(\hat{x})$, and the sets $\{\mathcal{R}_{j}\}$
constitute a ring-shaped decomposition of $O_{2}$, i.e.,
$\mathcal{R}_0:=B_{2r}(\hat{x}_0)\cap O_{2}$ with $\hat{x}_0\in\partial\tilde{\Omega}$ being the projection point of $\hat{x}$ on the boundary $\partial\tilde{\Omega}$;
$\mathcal{R}_{j} := \big(B_{2^{j+1}r}(\hat{x}_0)\setminus B_{2^{j}r}(\hat{x}_0)\big)\cap O_{2}$. Also, there hold
$|\mathcal{R}_{j}|=c_d(2^jr)^{d-1}$ with $N:=\log_2(R_0/\varepsilon)$ and
$\text{dist}(\mathcal{R}_j,\hat{B})\sim 2^jr$.
Let $\bar{G}(\cdot,\cdot)$ be Green's function associated with the
elliptic operator $-\nabla\cdot\bar{a}\nabla$ and $\tilde{\Omega}$.
For any integer $k\geq 0$, there hold the decay estimates
\begin{subequations}
\begin{align}
& |\nabla_x\nabla_y \bar{G}(x,y)|
 \lesssim_{\lambda,d,m_0,R_0M_0}^{\eqref{pri:8.2}}|x-y|^{-d}; \label{f:7.27} \\
& |\nabla_x^k\nabla_y \bar{G}(x,y)|
 +  |\nabla_x\nabla_y^k \bar{G}(x,y)|
 \lesssim_{\lambda,d,\tau,k,m_0,R_0^{k-1+\tau}M_0}^{\eqref{pri:8.4}}
 |x-y|^{1-k-d}
 \qquad \forall x,y\in\Omega.
 \label{f:7.27-h}
\end{align}
\end{subequations}
To see this, let $G(\cdot,\cdot)$ be Green's function associated with the
elliptic operator $-\nabla\cdot\bar{a}\nabla$ and $\Omega$. By the uniqueness of the solution we have $G(y,y')=\varepsilon^{2-d}\bar{G}(y/\varepsilon,y'/\varepsilon)$ with
$y,y'\in\Omega$. By setting $x=y/\varepsilon$ and $x'=y'/\varepsilon$, one can derive that
$|\nabla_x\nabla_{x'}\bar{G}(x,x')|=\varepsilon^{d}|\nabla_y\nabla_{y'}G(y,y')|
\lesssim^{\eqref{pri:8.2}}_{\lambda,d,R_0M_0}\varepsilon^{d}|y-y'|^{-d}
=|x-x'|^{-d}$
(where we employ Lemma $\ref{lemma:8.1}$ and take $\tau=1$ therein), which
leads to the stated estimate $\eqref{f:7.27}$. By the same token, we can obtain the estimate $\eqref{f:7.27-h}$.
Then,
it follows from the representation of Green's function and its decay estimates that
\begin{equation}\label{f:7.26}
\begin{aligned}
\int_{O_2}|\nabla^{\alpha}\bar{v}|
&\leq \sum_{j=0}^{N}\int_{\mathcal{R}_j}|\nabla^{\alpha}\bar{v}|
=\sum_{j=0}^{N}
\int_{\mathcal{R}_j}\big|\int_{\hat{B}}\nabla_x^{\alpha}\nabla_y \bar{G}(x,y)g(y)dy\big|dx\\
&\lesssim^{\eqref{f:7.27-h}}
\sum_{j=0}^{N}\int_{\hat{B}}|g(y)|
\int_{\mathcal{R}_j}\frac{dx}{|x-y|^{d+|\alpha|-1}} dy
\lesssim \sum_{j=0}^{N}\frac{1}{r^{|\alpha|-1}}\frac{(2^jr)^{d-1}}{(2^jr)^d}
\lesssim \sum_{j=0}^{N}2^{-j}r^{-|\alpha|}
\lesssim r^{-|\alpha|}.
\end{aligned}
\end{equation}
This together with $\eqref{f:7.23}$ leads to
the stated estimate $\eqref{f:7.24}$ in the case of $d\geq 3$.
For the case $d=2$, by using the same calculation as in
$\eqref{f:7.26}$, we can obtain
\begin{equation*}
\int_{O_2}\big(|\nabla \bar{v}| + |\nabla^2\bar{v}|\big)
\lesssim^{\eqref{f:7.27}}_{\eqref{f:7.27-h}}
\sum_{j=0}^{N}\int_{\hat{B}}|g(y)|\Big\{
\int_{\mathcal{R}_j}\frac{dx}{|x-y|^d}
+\int_{\mathcal{R}_j}\frac{dx}{|x-y|^{d+1}}\Big\} dy
\lesssim
\sum_{j=0}^{N}2^{-j}\big(r^{-1}+r^{-2}\big)
\lesssim r^{-1},
\end{equation*}
and this implies the stated estimate $\eqref{f:7.24}$
in the case of $d=2$.

%Moreover, we decompose $O_2$ as follows:
%\begin{equation}\label{}
% O_2 \subset \cup_{k=1}^{k_0} O_{2}^k;
% \qquad \mathcal{H}^d(O_{2}^k) \approx c_d 2^{k(d-1)},
%\end{equation}
%where $k_0:=\log_2(R_0/\varepsilon)$, and $c_d>0$ is a constant depending only on $d$.  One can construct the following
%auxiliary equations:
%\begin{equation}\label{pde:7.2}
%-\nabla \cdot\bar{a}^*\nabla z = \nabla\cdot g\quad\text{in}\quad\mathbb{R}^d;
%\qquad
%\left\{\begin{aligned}
%\nabla \cdot\bar{a}^*\nabla w&=0 &\quad&\text{in}\quad \tilde{\Omega}, \\
%w &=-z &\quad&\text{on}\quad \partial\tilde{\Omega},
%\end{aligned}\right.
%\end{equation}
%and there obviously holds $\bar{v} = z+w$. Let $1<p<\infty$.
%Thus, from the first equation stated above, it follows from singular integral estimates that
%\begin{equation}\label{}
% \sum_{|\beta|=n}\|\nabla^\beta z\|_{L^p(\mathbb{R}^d)}
% \lesssim_{p,d}\sum_{|\beta'|=n-1}\|\nabla^{\beta'} g\|_{L^p(\mathbb{R}^d)}
%\end{equation}
%For the second equations stated in $\eqref{pde:7.2}$, one has that
%\begin{equation}\label{}
% \|(\nabla^{\beta}w)^{*}\|_{L^p(\partial\tilde{\Omega})}
% \lesssim_{p,d}\|\nabla^{\beta}z\|_{L^p(\partial\tilde{\Omega})}
%\end{equation}

\textbf{Step 5}.  Show the estimate $\eqref{f:7.25}$.
To do so, let $k\in\mathbb{N}$ with $k\in[1,\lceil d/2\rceil]$, and we first claim the following inequality\footnote{In fact, we are only concerned to investigate the situation where $r$ is relatively large, i.e, $r\gg 1$.}
\begin{equation}\label{f:7.34}
\int_{O_{r/4}\setminus O_3} |\bar{v}(x)|\delta^{-k}(x)dx
\lesssim_{k,d}
\int_{O_{r/4}} |\nabla^{k}\bar{v}(x)|dx
 + \ln r\int_{S_3}|\nabla^{k-1}\bar{v}(y)|dS(y),
\end{equation}
where we note that $S_R:=\{x\in\tilde{\Omega}:\text{dist}(x,\partial\tilde{\Omega})=R\}$ with $R>0$ and the reader can find its proof later on. Let $\theta_\beta:=\lceil\beta\rceil-\beta$, and we recall that
$\lceil\beta\rceil$ represents the smallest integer not less than $\beta$.
We then have the following decomposition that
\begin{equation}\label{f:7.20}
\begin{aligned}
&\int_{\tilde{\Omega}\setminus O_2}|\nabla^2\bar{v}|\delta^{-\beta}
\leq \int_{O_{3}}|\nabla^2\bar{v}|
+ r^{\theta_\beta}\int_{O_{r/4}\setminus O_{3}}|\nabla^2\bar{v}|\delta^{-\lceil\beta\rceil}
+ \int_{\tilde{\Omega}\setminus O_{r/4}}|\nabla^2\bar{v}|\delta^{-\beta}\\
&\lesssim^{\eqref{f:7.23}}_{\eqref{f:7.34}} \int_{O_{3}}
|\nabla^\alpha\bar{v}|
+r^{\theta_\beta}
\bigg\{\int_{O_{r/4}}|\nabla^{\lceil\beta\rceil+2}\bar{v}|
+\ln r\int_{S_3}|\nabla^{\lceil\beta\rceil+1}\bar{v}|\bigg\}
+r^{-\beta}\int_{\tilde{\Omega}\setminus O_{r/4}}|\nabla^2\bar{v}|.
\end{aligned}
\end{equation}

Suppose that the sets $\{\mathcal{O}_{j}\}$
constitute a ring-shaped decomposition of $O_{r/4}$, similar to
those given in \textbf{Step 4}. Let $\hat{x}_0\in\partial\tilde{\Omega}$ be  the projection point of $\hat{x}$ on the boundary $\partial\tilde{\Omega}$.
$\mathcal{O}_0:=B_{2r}(\hat{x}_0)\cap O_{r/4}$ and
$\mathcal{O}_{j} := \big(B_{2^{j+1}r}(\hat{x}_0)\setminus B_{2^{j}r}(\hat{x}_0)\big)\cap O_{r/4}$. Moreover, we have
$|\mathcal{O}_{j}|=c_dr(2^jr)^{d-1}$ and $N:=\log_2(R_0/\varepsilon)$. Let
$0<p-1\ll 1$ and $1/p+1/p'=1$, and it follows that
\begin{equation}\label{f:7.19-1}
\begin{aligned}
\int_{O_{r/4}}|\nabla^{\lceil\beta\rceil+2}\bar{v}|
&\leq \sum_{j=0}^{N}
\int_{\mathcal{O}_{j}}|\nabla^{\lceil\beta\rceil+2}\bar{v}|
\lesssim \sum_{j=0}^{N}
  \int_{\hat{B}}|g(y)|\Big(\int_{\mathcal{O}_j}
  |\nabla_x^{\lceil\beta\rceil+2}\nabla_y\bar{G}(x,y)|dx\Big)dy \\
&\lesssim^{\eqref{f:7.27-h}} \sum_{j=0}^{N} \frac{1}{r^{\lceil\beta\rceil+1}}\frac{|\mathcal{O}_j|}{(2^jr)^{d}}
\lesssim r^{-1-\lceil\beta\rceil}.
\end{aligned}
\end{equation}
By the same token, we obtain that
\begin{equation}\label{f:7.19}
\begin{aligned}
\int_{S_3}|\nabla^{\lceil\beta\rceil+1}\bar{v}|
&  \lesssim \sum_{j=0}^{N}
  \int_{\hat{B}}|g(y)|\Big(\int_{\mathcal{O}_j\cap S_3}
  |\nabla_x^{\lceil\beta\rceil+1}\nabla_y\bar{G}(x,y)|dx\Big)dy\\
&  \lesssim^{\eqref{f:7.27-h}}
\sum_{j=0}^{N}\frac{|\mathcal{O}_j\cap S_3|}{(2^jr)^{d+\lceil\beta\rceil}}
\lesssim r^{-1-\lceil\beta\rceil}.
\end{aligned}
\end{equation}

%Combining the estimates $\eqref{f:7.17}$, $\eqref{f:7.18}$, $\eqref{f:7.18**}$, and $\eqref{f:7.19}$, we have
%\begin{equation}\label{f:7.21}
%\begin{aligned}
%\int_{\mathcal{O}_{j}}|\eta\nabla^2\bar{v}|\delta^{-\lceil\beta\rceil}
%\lesssim
%2^{-j}r^{-1-\lceil\beta\rceil}
%+ r^{-(|\alpha|\vee 2)+\frac{d}{p'}}(2^j)^{\frac{d-1}{p'}-1},
%\end{aligned}
%\end{equation}
%where we note that $(|\alpha|\vee 2)-\frac{d}{p'}>\frac{d}{2}$
%and $\frac{d-1}{p'}-1<0$ since $p'\gg 1$.

Let $U:=B_{r+1}(\hat{x})$ and $U\supseteq \hat{B}$. We now handle the last term in $\eqref{f:7.20}$. By using the interior $H^2$-estimate and
the decay of Green's function, as well as the energy estimate,
we obtain that
\begin{equation}\label{f:7.22}
\begin{aligned}
&\int_{\tilde{\Omega}\setminus O_{r/4}}|\nabla^2\bar{v}|
\leq \int_{U}|\nabla^2\bar{v}|
+ \int_{(\tilde{\Omega}\setminus O_{r/4})\setminus U}|\nabla^2\bar{v}|\\
&\leq |U|^{\frac{1}{2}}\Big(\int_{U}|\nabla^2\bar{v}|^2\Big)^{\frac{1}{2}}
+ \int_{(\tilde{\Omega}\setminus O_{r/4})\setminus U}
\big|\int_{\hat{B}}\nabla_x^2\nabla_y\bar{G}(x,y)g(y)dy\big| dx \\
&\lesssim^{\eqref{f:7.27}} |U|^{\frac{1}{2}}\bigg\{\frac{1}{r}
\Big(\int_{\tilde{\Omega}}|\nabla\bar{v}|^2\Big)^{\frac{1}{2}}
+\Big(\int_{2U}|\nabla g|^2\Big)^{\frac{1}{2}}\bigg\}
+ \int_{\hat{B}}|g(y)|\int_{(\tilde{\Omega}\setminus O_{r/4})\setminus U}\frac{dx}{|x-y|^{d+1}}dy\\
&\lesssim
|U|^{\frac{1}{2}}\bigg\{\frac{1}{r}
\Big(\int_{\hat{B}}|g|^2\Big)^{\frac{1}{2}}
+\Big(\int_{\hat{B}}|\nabla g|^2\Big)^{\frac{1}{2}}\bigg\}
+ \frac{1}{r}\int_{1}^{R_0/\varepsilon}\frac{ds}{s^2}
\lesssim r^{-1}.
\end{aligned}
\end{equation}

Plugging the estimates $\eqref{f:7.26}$, $\eqref{f:7.19-1}$, $\eqref{f:7.19}$, and $\eqref{f:7.22}$ back into
$\eqref{f:7.20}$ leads to the desired estimate $\eqref{f:7.25}$.
We now turn to the arguments for the estimate $\eqref{f:7.34}$.
Let $\{Q^{i}_{1,r}\}$ be a family of ``cubes'' such that
$O_{r/4}\setminus O_3 \subset \cup_{i} Q^{i}_{1,r}$ and
$Q^{i}_{1,r}:= \big([0,r/4]\times\tilde{\Delta}_1(z_i)\big)\setminus \big([0,3]\times\tilde{\Delta}_1(z_i)\big)$ with $z_i\in S_3$ and
$|Q^{i}_{1,r}|\sim r$, where $\tilde{\Delta}_1(z_i):=B_1(z_i)\cap S_3$. For any $y\in \tilde{\Delta}_1(z_i)$ and $x\in Q^{i}_{1,r}$,
there hold
\begin{equation*}
 \bar{v}(x) = -\int_{0}^{1}\nabla \bar{v}(x+t(y-x))\cdot(y-x) dt + \bar{v}(y),
 \quad\text{and}\quad |x-y|\leq 2\delta(x).
\end{equation*}
For any $k\in\mathbb{N}$ with $k\in [1,\lceil d/2\rceil]$, this implies
\begin{equation}\label{f:7.36}
\begin{aligned}
& \int_{Q^{i}_{1,r}}|\bar{v}(x)|\delta^{-k}(x)dx
 \lesssim \int_{Q^{i}_{1,r}}|\nabla \bar{v}(x)|\delta^{1-k}(x)dx
 +\int_{\tilde{\Delta}_1(z_i)}|\bar{v}(y)|dS(y)\int_{Q^{i}_{1,r}}\frac{dx}{\delta^k(x)}\\
& \lesssim \int_{Q^{i}_{1,r}}|\nabla^k \bar{v}(x)|dx
+\sum_{j=0}^{(k-2)\vee 0}\int_{\tilde{\Delta}_1(z_i)}|\nabla^j\bar{v}(y)|dS(y) + \ln r\int_{\tilde{\Delta}_1(z_i)}|\nabla^{k-1}\bar{v}(y)|dS(y)\\
&  \lesssim \int_{Q^{i}_{1,r}}|\nabla^k \bar{v}(x)|dx
+\int_{\tilde{\Delta}_1(z_i)}|\bar{v}(y)|dS(y) + \ln r\int_{\tilde{\Delta}_1(z_i)}|\nabla^{k-1}\bar{v}(y)|dS(y),
\end{aligned}
\end{equation}
where we employ the interpolation inequality in the last inequality (see e.g. \cite[Theorem 5.2]{Adams-03}).
Let $z_i'\in\partial\tilde{\Omega}$ be the projection of $z_i$ onto the boundary $\partial\tilde{\Omega}$, and $\Xi_5^i:=D_5(z_i')\cap O_3$ is such that $\partial\Xi_5^i\supseteq \tilde{\Delta}_1(z_i)$.
By using the trace theorem and the interpolation inequality, we have
\begin{equation}\label{f:7.35}
\begin{aligned}
\int_{\tilde{\Delta}_1(z_i)}|\bar{v}(y)|dS(y)
&\leq \int_{\partial\Xi_5^i}|\bar{v}(y)|dS(y)
\lesssim_{d} \int_{\Xi_5^i}\big(|\bar{v}(x)| + |\nabla \bar{v}(x)|\big)dx\\
&\lesssim\int_{D_5(z_i')}\big(|\nabla^{k}\bar{v}(x)|+|\bar{v}(x)|\big)dx
\lesssim\int_{D_5(z_i')}|\nabla^{k}\bar{v}(x)|dx,
\end{aligned}
\end{equation}
where we also employ the same argument given for $\eqref{f:7.16}$ in the last inequality above. Therefore, plugging the estimate $\eqref{f:7.35}$
back into $\eqref{f:7.36}$, it follows from the decomposition
$O_{r/4}\setminus O_3 \subset \cup_{i} Q^{i}_{1,r}$ that
\begin{equation*}
\begin{aligned}
\int_{O_{r/4}\setminus O_3} |\bar{v}(x)|\delta^{-k}(x)dx
&\leq \sum_{i}
\int_{Q^{i}_{1,r}}|\bar{v}(x)|\delta^{-k}(x)dx \\
&\lesssim \int_{O_{r/4}} |\nabla^k\bar{v}(x)|dx
+ \ln r\int_{S_3}|\nabla^{k-1}\bar{v}(y)|dS(y),
\end{aligned}
\end{equation*}
which gives the stated estimate $\eqref{f:7.34}$.
This completes the whole proof.
\end{proof}

\begin{lemma}[fluctuation estimates]\label{lemma:7.3}
Let $\Omega\subset\mathbb{R}^d$ (with $d\geq 2$) be a bounded $C^{2}$ domain and $\Omega\ni\{0\}$. Suppose that $\langle\cdot\rangle$ satisfies the spectral gap condition $\eqref{a:2}$, and
the admissible coefficients satisfy the smoothness condition $\eqref{a:3}$.
Let $\tilde{\Phi}_{\varepsilon}=
\{\tilde{\Phi}_{\varepsilon,i}\}_{i=1}^d$ be the solution of
the equations $\eqref{pde:10}$, and we set $\tilde{\Omega}:=\Omega/\varepsilon$ with $\varepsilon\in(0,1]$; and   $\Psi_{i}:=\frac{1}{\varepsilon}\tilde{\Phi}_{\varepsilon,i}(\varepsilon\cdot)$.
Then, for any deterministic vector field $g\in C_0^1(\tilde{\Omega})$
and $\alpha\in(0,1)$,
there holds
\begin{equation}\label{pri:7.2}
\bigg\langle\Big(\int_{\tilde{\Omega}}\nabla\Psi\cdot g
-\big\langle\int_{\tilde{\Omega}}\nabla\Psi\cdot g \big\rangle
\Big)^{2p}\bigg\rangle^{1/p}
\lesssim_{\lambda,\lambda_1,\lambda_2,d,m_0,R_0^{\tau}M_0,\tau,p} \int_{\tilde{\Omega}}|g|^2 + \Lambda_{d}\int_{\tilde{\Omega}}|\nabla g|^2\delta^{2\alpha},
\end{equation}
where $\Lambda_{2}:= \mu_2^2(R_0/\varepsilon)$ if $d=2$; and $\Lambda_{d}:= 0$ if $d\geq 3$.
Moreover, let $Q$ be given as in Lemma $\ref{lemma:7.2}$, and we have
\begin{equation}\label{pri:7.3}
\bigg\langle\Big(\int_{\tilde{\Omega}}\nabla Q\cdot g
-\big\langle\int_{\tilde{\Omega}}\nabla Q\cdot g \big\rangle
\Big)^{2p}\bigg\rangle^{1/p}
\lesssim_{\lambda,\lambda_1,\lambda_2,d,m_0,R_0^{\tau}M_0,\tau,p} \int_{\tilde{\Omega}}|g|^2
+\Lambda_{d}\int_{\tilde{\Omega}}|\nabla g|^2\delta^{2\alpha}.
\end{equation}
\end{lemma}

\begin{proof}
The basic idea is to use functional inequalities, and the proof is divided into three steps.

\textbf{Step 1.} Outline the arguments for $\eqref{pri:7.2}$.
For the ease of the statement, we set
$F_i(g):=\int_{\tilde{\Omega}}\nabla\Psi_{i}\cdot g$, where $g$ is
associated with $v\in H_0^1(\tilde{\Omega})$ by $-\nabla\cdot a^{*}\nabla v=\nabla\cdot g$ in $\tilde{\Omega}$. To address the estimate $\eqref{pri:7.2}$,
The key ingredient is
appealing to
$L^p$-version spectral gap inequality (which derived from $\eqref{a:2}$ and see \cite[pp.17-18]{Josien-Otto22} for a proof), i.e.,
\begin{equation}\label{f:5.5}
 \big\langle\big(F_i-\langle F_i\rangle\big)^{2p}\big\rangle^{\frac{1}{p}}
 \lesssim_{\lambda_1,p}
 \Big\langle\Big(\int_{\mathbb{R}^d} \big(\dashint_{B_1(x)}\big|\frac{\partial F_i}{\partial a}\big|\big)^2 dx\Big)^{p}\Big\rangle^{\frac{1}{p}}.
\end{equation}

Then, we claim that
\begin{equation}\label{f:5.4}
  \int_{\mathbb{R}^d}\frac{\partial F}{\partial a(x)}:(\delta a)(x) dx
  = \int_{\tilde{\Omega}}\nabla v \cdot\delta a(\nabla\Psi_{i}+e_i),
\end{equation}
where the notation ``:'' represents the tensor’s inner product of second order, and we can set $(\delta a)(x) = 0$ for $x\not\in\tilde{\Omega}$. Admitting $\eqref{f:5.4}$ for a while, the right-hand side of $\eqref{f:5.5}$ can be further controlled by
\begin{equation}\label{f:5.6}
\begin{aligned}
& \Big\langle\big(\int_{\mathbb{R}^d} \big|\frac{\partial F_i}{\partial a(x)}\big|^2 dx\big)^{p}\Big\rangle
 \leq \int_{\tilde{\Omega}}
 \big\langle\big|\nabla v\otimes(\nabla\Psi_{i}+e_i)\big|^{2p}\big\rangle^{\frac{1}{p}} =:I \\
&\leq \esssup_{x\in\tilde{\Omega}}
\big\langle|\nabla\Psi_{i}+e_i|^{4p}\big\rangle^{\frac{1}{2p}}
\int_{\tilde{\Omega}} \langle|\nabla v|^{4p}\rangle^{\frac{1}{2p}}
\lesssim^{\eqref{pri:1.b},\eqref{pri:12}}
\mu_d^{2}(R_0/\varepsilon)\int_{\tilde{\Omega}}|g|^2
\end{aligned}
\end{equation}
where we use Minkowski's inequality in the first inequality. In fact,
if $d>2$, we have already obtained optimal CLT-scaling (not optimal for
$d=2$) in the form of
\begin{equation}\label{f:5.8}
\big\langle\big(F_i-\langle F_i\rangle\big)^{2p}\big\rangle^{\frac{1}{p}}
 \lesssim^{\eqref{f:5.5},\eqref{f:5.6}} \int_{\tilde{\Omega}}|g|^2.
\end{equation}
%By taking  (note that $\int_{\tilde{\Omega}}\nabla\Psi_{i} =0$), the stated estimate $\eqref{pri:4}$ follows from
%a rescaling.
Now, we handle the case of $d=2$ by modifying
the estimate $\eqref{f:5.6}$ as follows. We recall that   $\delta:=\text{dist}(\cdot,\partial\tilde\Omega)$, $\tilde{R}_0:=R_0/\varepsilon$,
and  $1/\gamma+1/\gamma'=1$ with $0<\gamma'-1\ll 1$.
Let $\vartheta\in(0,1)$ and $\alpha:=1-\vartheta$.
Then, it follows from Lemma $\ref{lemma:*3}$, Theorem $\ref{thm:2}$,
and Lemma $\ref{P:7*}$ that
\begin{equation}\label{f:5.7}
\begin{aligned}
 I
&\lesssim^{\eqref{pri:1.c}} \int_{\tilde{\Omega}}
 \langle|\nabla v|^{4p}\rangle^{\frac{1}{2p}}
 +\mu_2^2(\tilde{R}_0)\bigg\{
 \int_{O_1}\langle|\nabla v|^{4p}\rangle^{\frac{1}{2p}}
 + \int_{\tilde{\Omega}\setminus O_1}
 \langle|\nabla v|^{4p}\rangle^{\frac{1}{2p}}\delta^{-2}\bigg\}\\
&\lesssim \int_{\tilde{\Omega}}
 \langle|\nabla v|^{4p}\rangle^{\frac{1}{2p}}
 +\mu_2^2(\tilde{R}_0)
\int_{\tilde{\Omega}}
 \langle|\nabla v|^{4p}\rangle^{\frac{1}{2p}}\delta^{-2\vartheta}\\
&\lesssim^{\eqref{pri:12}}\int_{\tilde{\Omega}}|g|^2
+ \mu_2^2(\tilde{R}_0)
\int_{\tilde{\Omega}}|g|^{2}\delta^{-2\vartheta}
\lesssim \int_{\tilde{\Omega}}|g|^2
+\mu_2^2(\tilde{R}_0)
\int_{\tilde{\Omega}}|\nabla g|^{2}\delta^{2\alpha},
\end{aligned}
\end{equation}
where we employ the weighted Hardy's inequality (see e.g. \cite[Theorem 1.1]{Lehrback-14}) in the last step. This gives the counterpart of  $\eqref{f:5.8}$ in the case of $d=2$, and together with the case $d\geq 3$ shows the desired estimate $\eqref{pri:7.2}$.

%The key ingredient is reduced to establishing
%\begin{equation}\label{f:5.9}
% \big\langle |F_i-\langle F_i\rangle|^{2p}\big\rangle^{\frac{1}{p}}
% \lesssim \int_{\tilde{\Omega}} |g|^2,
%\end{equation}
%and the desired estimate $\eqref{pri:4}$ follows by taking
%$g=\frac{1}{|B_{\tilde{R}_0/4}|}I_{B_{\tilde{R}_0/4}}$ in
%the above estimate. To estimate the left-hand side of $\eqref{f:5.9}$,
%one has
%\begin{equation}\label{f:5.10}
%\big\langle |F_i|^{2p}\big\rangle^{\frac{1}{p}}
%\lesssim
%\big\langle\big(F_i-\langle F_i\rangle\big)^{2p}\big\rangle^{\frac{1}{p}}
%+ \langle F_i\rangle^2,
%\end{equation}
%where we note that $\langle F_i\rangle \not=0$ is due to the non-stationarity of $\nabla\Psi_i$. We first claim that
%\begin{equation}\label{f:5.11}
% \langle F_i\rangle^2 \lesssim \int_{\tilde{\Omega}}|g|^2.
%\end{equation}
%To see the first term in the left-hand side of $\eqref{f:5.10}$,

\textbf{Step 2.} Show the estimate $\eqref{f:5.4}$.
Let $\Sigma$ be the probability space governed by $\langle\cdot\rangle$,
and then we set the space after suitable perturbations by $\Sigma':=\{a_0+\delta a:
a_0\in\Sigma,~ (\delta a)(x)=0~\text{if}~x\not\in\tilde{\Omega}\}$. Then,
for any $a\in\Sigma'$, one can understand $\Psi_i(a):=\Psi_i(a_0)
+\delta\Psi_i(a_0,\delta a)$ as an extension on $\Sigma'$, where the perturbation part
$\delta\Psi(a_0,\delta a)$ is determined by the following equations
\begin{equation}\label{pde:3}
\left\{\begin{aligned}
-\nabla\cdot a\nabla\delta\Psi(a_0,\delta a)&= \nabla\cdot \delta a
(\nabla\Psi_i(a_0)+e_i) &\quad&\text{in}\quad\tilde{\Omega};\\
\delta\Psi(a_0,\delta a) &=0
&\quad&\text{on}\quad\partial\tilde{\Omega}.
\end{aligned}\right.
\end{equation}
Therefore, with the help of the auxiliary function $v$ defined as in \textbf{Step 1} with the new replacement $a\in\Sigma'$,  we obtain
\begin{equation*}
\begin{aligned}
\delta F
&= \int_{\tilde{\Omega}}g\cdot\nabla \delta\Psi_i
= \int_{\tilde{\Omega}}\nabla\cdot a^{*}\nabla v \delta\Psi_i\\
&=\int_{\tilde{\Omega}} v\nabla\cdot a\nabla\delta\Psi_{i}
=^{\eqref{pde:3}}-\int_{\tilde{\Omega}}
v\nabla\cdot \delta a(\Psi_i+e_i)
=\int_{\tilde{\Omega}}
\nabla v\cdot \delta a(\Psi_i+e_i),
\end{aligned}
\end{equation*}
which gives the stated formula $\eqref{f:5.4}$.

\textbf{Step 3}. Arguments for $\eqref{pri:7.3}$.
Let $\tilde{g}:=g\textbf{1}_{\tilde{\Omega}}$. We note that
$\nabla Q = \nabla\Psi - \nabla\phi$, and therefore
the idea is based upon
the stated estimate $\eqref{pri:7.2}$.
\begin{equation*}
\begin{aligned}
\bigg\langle\Big(\int_{\tilde{\Omega}}\nabla Q\cdot g
-\big\langle\int_{\tilde{\Omega}}\nabla Q\cdot g \big\rangle
\Big)^{2p}\bigg\rangle^{\frac{1}{p}}
&\lesssim
\bigg\langle\Big(\int_{\tilde{\Omega}}\nabla\Psi\cdot g
-\big\langle\int_{\tilde{\Omega}}\nabla\Psi\cdot g \big\rangle
\Big)^{2p}\bigg\rangle^{\frac{1}{p}}
+\bigg\langle\big|
\int_{\mathbb{R}^d}\nabla\phi\cdot \tilde{g}\big|^{2p}\bigg\rangle^{\frac{1}{p}}\\
&\lesssim^{\eqref{pri:7.2},\eqref{pri:*2}}
\int_{\tilde{\Omega}}|g|^2,
\end{aligned}
\end{equation*}
which consequently leads to the desired estimate $\eqref{pri:7.3}$. This completes the whole proof.
\end{proof}

\textbf{Proof of Proposition $\ref{P:8}$.}
The case of $d=2$ follows from Theorem $\ref{thm:2}$.
If $d\geq 3$ and $0<\beta<(d/2)$, we can use an induction argument
to get an iteration formula. To do so, one can assume that there holds
\begin{equation}\label{pri:7.2condition**}
 \big\langle|\nabla Q(z)|^p\big\rangle^{\frac{1}{p}}
 \lesssim \mu_d^{\lfloor\beta\rfloor}(R_0/\varepsilon)
 \big[\delta(z)\big]^{-\beta}
 \qquad \forall z\in\tilde{\Omega}\quad\text{with}\quad \delta(z)\geq 2.
\end{equation}
Let $R:=\text{dist}(x_0,\partial\tilde{\Omega})/4$ with
$\text{dist}(x_0,\partial\tilde{\Omega})\geq 2$, and $1\leq p_0<p<\infty$. Set $F_{m,j}:=S_{2^{-k_0-\frac{1}{\rho}+j}R}(\nabla Q)(\hat{x}_m^j)$, where
$\hat{x}_m^j$ is given as in Lemma $\ref{lemma:7.1}$.
By uniform Lipschitz estimates, one derives that
\begin{equation}\label{f:7.30}
\begin{aligned}
& \big\langle\big|\nabla Q(x_0)\big|^{p_0}\big\rangle^{1/{p_0}}
 \lesssim^{\eqref{pri:2.10}} \bigg\langle\Big(\dashint_{B_{R^{1-\rho}}(x_0)}|\nabla Q|^2\Big)^{\frac{p}{2}}\bigg\rangle^{\frac{1}{p}}\\
 &\lesssim^{\eqref{pri:7.1}}
 \frac{1}{R^{\frac{d}{2}}}\bigg\langle\Big(\dashint_{B_{R}(x_0)}|\nabla Q|^2\Big)^{\frac{p}{2}}\bigg\rangle^{\frac{1}{p}}
 +  c_{\theta}^{-1}\sum_{j=1}^{k_0}2^{-\frac{j}{\rho}}
\sum_{m=1}^{c_{\theta}}\big\langle |F_{m,j}|^p\big\rangle^{\frac{1}{p}}\\
&\lesssim
\frac{1}{R^{\frac{d}{2}}}\bigg\langle\Big(\dashint_{B_{R}(x_0)}|\nabla Q|^2\Big)^{\frac{p}{2}}\bigg\rangle^{\frac{1}{p}}
 +  c_{\theta}^{-1}\sum_{j=1}^{k_0}2^{-\frac{j}{\rho}}
\sum_{m=1}^{c_{\theta}}\Big\{\big\langle |F_{m,j}-\langle F_{m,j}\rangle|^p\big\rangle^{\frac{1}{p}} + \langle F_{m,j}\rangle
\Big\},
\end{aligned}
\end{equation}
where we mention that $B_{4R}(x_0)\subset\tilde{\Omega}$, and therefore
the multiplicative constant depends only on $\lambda,\lambda_1,\lambda_2,d$.

Then, apply Lemmas $\ref{lemma:7.2}$ and $\ref{lemma:7.3}$ established above to addressing the right-hand side of $\eqref{f:7.30}$. In view of Lemma $\ref{lemma:7.3}$, taking the deterministic vector field $g$
with the components $g_i:=\zeta_r(\cdot-\hat{x})e_i$ with
$r=2^{j}\theta R^{1-\rho}$,
we first have
\begin{equation}\label{f:7.28}
\big\langle |F_{m,j}-\langle F_{m,j}\rangle|^p\big\rangle^{\frac{1}{p}}
\lesssim^{\eqref{pri:7.3}} \bigg(\int_{\mathbb{R}^d} \zeta_{r}^2(\hat{x}_m^j-\cdot)\bigg)^{\frac{1}{2}}
\lesssim 2^{-jd/2}\theta^{-d/2} R^{-(1-\rho)d/2},
\end{equation}
and it follows from Lemma $\ref{lemma:7.2}$ together with $\eqref{pri:7.2condition**}$ that
\begin{equation}\label{f:7.29}
\big\langle F_{m,j} \big\rangle
\lesssim \mu_d^{\lfloor\beta\rfloor+1}(R_0/\varepsilon)
\Big\{(2^j\theta R^{1-\rho})^{-\beta-1}\ln(2^j\theta R^{1-\rho})+
(2^j\theta R^{1-\rho})^{-\frac{d}{2}}\Big\}.
\end{equation}

Plugging the estimates $\eqref{f:7.28}$ and $\eqref{f:7.29}$
back into $\eqref{f:7.30}$, we obtain
\begin{equation}\label{f:7.31}
\begin{aligned}
\big\langle\big|\nabla Q(x_0)\big|^{p_0}\big\rangle^{1/{p_0}}
& \lesssim \mu_d(R_0/\varepsilon)R^{-\frac{d}{2}}
 + \mu_d^{\lfloor\beta\rfloor+1}(R_0/\varepsilon)
\Big\{
 R^{-1-\beta+\kappa}+R^{-\frac{d}{2}+\kappa}\Big\}\\
&\lesssim\mu_d^{\lfloor\beta\rfloor+1}(R_0/\varepsilon)
\Big\{
 R^{-1-\beta+\kappa}+R^{-\frac{d}{2}+\kappa}\Big\}
\lesssim
\mu_d^{\lfloor\beta\rfloor+1}(R_0/\varepsilon)
R^{-[(1+\beta)\wedge (d/2)]+\kappa},
\end{aligned}
\end{equation}
where $\kappa:=\big(\rho(1+\beta)+\theta\big)\vee\big(\rho(d/2)\big)$ could be very small with $0<\theta\ll1$. Obviously, if $(1+\beta)<(d/2)$,
we can take $\eqref{f:7.31}$ as the starting point of the iteration and repeat the above process until $(1+\beta)\geq (d/2)$ to stop the iteration.
This leads to the desired estimate $\eqref{pri:7.4}$.

The remaining proof will explain the first step. Due to Theorem $\ref{thm:2}$, the estimate $\eqref{pri:1.c}$ plays a role
as the initial step\footnote{One can take $\beta=1$ in
$\eqref{pri:7.2condition**}$ therein.} for this iteration.
For the case of $d\geq 3$, we can verify the first step starting from $\beta=1$ that \begin{equation}\label{f:7.32}
\begin{aligned}
\big\langle\big|\nabla Q(x_0)\big|^{p_0}\big\rangle^{1/{p_0}}
\lesssim
R^{-1-\beta}
+R^{-1-\beta+\kappa}+R^{-\frac{d}{2}+\kappa}
\lesssim
\left\{\begin{aligned}
&R^{-\frac{3}{2}+\kappa},&\quad&\text{if}~d=3; \\
&R^{-2+\kappa},&\quad&\text{if}~d\geq 4.
\end{aligned}\right.
\end{aligned}
\end{equation}
It is not difficult to see that the above iteration can be further carried out only when $d > 4$. In this case, $\eqref{f:7.32}$ will serve as the starting point $\eqref{pri:7.2condition**}$ for the further iteration.
This ends the whole proof.
\qed

\textbf{Proof of Theorem $\ref{thm:2*}$.}
Based up on the Proposition $\ref{P:8}$,
the estimate $\eqref{pri:1.3}$ simply follows from $\eqref{pri:7.4}$ by
the rescaling argument (where we note that $Q=\frac{1}{\varepsilon}Q_\varepsilon(\varepsilon\cdot)$ and
$\Psi = \frac{1}{\varepsilon}\tilde{\Phi}_\varepsilon(\varepsilon\cdot)$ on $\tilde\Omega$). To see the stated estimate $\eqref{pri:1.4}$, let
$g\in C_0^\infty(2B)$ be a test function satisfying $g=\textbf{1}_B$ in $B$ and
$|\nabla^k g|\lesssim 1/r_B^{k+d}$ for any $k\in\mathbb{N}$, where $B$
is given as in Theorem $\ref{thm:2*}$. By rescaling,
set $\tilde{g}:=g(\varepsilon\cdot)$ on $\tilde{\Omega}$ and $R:=r_B/\varepsilon$ (and therefore $|\nabla^k\tilde{g}|\lesssim R^{-k-d}$).
In view of Lemma $\ref{lemma:7.2}$, one can use
the estimate $\eqref{pri:7.4}$ as the precondition therein, and then we can derive that
\begin{equation*}
\begin{aligned}
\Big\langle\big|\int_{\tilde{\Omega}}
\nabla\Psi\cdot\tilde{g}\big|^p\Big\rangle^{\frac{1}{p}}
&\leq \Big\langle\big|\int_{\tilde{\Omega}}
\nabla\Psi\cdot\tilde{g}-
\big\langle\int_{\tilde{\Omega}}
\nabla\Psi\cdot\tilde{g}\big\rangle\big|^p\Big\rangle^{\frac{1}{p}}
+ \big\langle\int_{\tilde{\Omega}}
\nabla\Psi\cdot\tilde{g}\big\rangle \\
&\lesssim^{\eqref{pri:7.2}}
\Big(\int_{\tilde{\Omega}}|\tilde{g}|^2\Big)^{\frac{1}{2}}
+ \big\langle\int_{\tilde{\Omega}}
\nabla Q\cdot\tilde{g}\big\rangle + \big\langle\int_{\tilde{\Omega}}
\nabla \phi\cdot\tilde{g}\big\rangle\\
&\lesssim^{\eqref{pri:*2},\eqref{pri:7.2outcome}}
\Big(\int_{\tilde{\Omega}}|\tilde{g}|^2\Big)^{\frac{1}{2}}
+ \mu_d^{\lceil\frac{d}{2}\rceil+1}(R_0/\varepsilon)
\Big\{
R^{\kappa-\frac{d}{2}-1}+R^{-\frac{d}{2}}\Big\}\\
&\lesssim R^{-\frac{d}{2}} +
\mu_d^{\lceil\frac{d}{2}\rceil+1}(R_0/\varepsilon)R^{-\frac{d}{2}}
\lesssim \mu_d^{\lceil\frac{d}{2}\rceil+1}(R_0/\varepsilon)
R^{-\frac{d}{2}},
\end{aligned}
\end{equation*}
which leads to the stated estimate $\eqref{pri:1.4}$ by rescaling
the left-hand side above. This ends the whole proof.
\qed

\begin{flushleft}
\textbf{Declaration of competing interests}
\end{flushleft}

The two authors declare that none of them has any competing interests.

\begin{flushleft}
\textbf{Acknowledgements}
\end{flushleft}

The authors appreciate Prof. Armstrong Scott for kindly pointing out some improper statements in the previous version of the manuscript (arXiv:2402.18907v1). Also, the authors are grateful to Dr. Claudia Raithel for sharing the details and ideas in her previous papers, especially regarding to the boundary correctors.
This work was supported by the
Young Scientists Fund of the National Natural Science Foundation of China (Grant No. 12501280).

\bibliographystyle{siamplain}
%\bibliography{references}

\begin{thebibliography}{000}

\bibitem{Adams-03}
R.-A. Adams, J.J.F. Fournier,
Sobolev spaces. Second edition. Pure and Applied Mathematics (Amsterdam), 140. Elsevier/Academic Press, Amsterdam, 2003.

\bibitem{Armstrong-Kuusi22}
S. Armstrong, T. Kuusi,
Elliptic homogenization from qualitative to quantitative,
arXiv:2210.06488v1. (2022).

\bibitem{Armstrong-Kuusi-Mourrat19}
S. Armstrong, T. Kuusi, J.-C. Mourrat,
%Armstrong, S., Kuusi, T.,  Mourrat, J.-C.:
Quantitative Stochastic Homogenization and
Large-scale Regularity. Grundlehren der mathematischen Wissenschaften
[Fundamental Principles of Mathematical Sciences], 352. Springer, Cham,
2019.

\bibitem{Armstrong-Kuusi-Mourrat-16}
S. Armstrong, T. Kuusi, J.-C. Mourrat,
Mesoscopic higher regularity and subadditivity in elliptic homogenization. Comm. Math. Phys. 347 (2016), no. 2, 315-361.

\bibitem{Armstrong-Kuusi-Mourrat-Prange17}
%S. Armstrong, T. Kuusi, J.-C. Mourrat, C. Prange,
S. Armstrong, T. Kuusi, J.-C. Mourrat, C. Prange,
Quantitative analysis of boundary layers in periodic homogenization,
Arch. Ration. Mech. Anal. 226 (2017), no. 2, 695-741.


%\bibitem{Armstrong-Shen16}
%S. Armstrong, Z. Shen, Lipschitz estimates in almost-periodic homogenization, Comm. Pure Appl. Math. 69 (2016), no. 10, 1882-1923.

\bibitem{Armstrong-Smart16}
%S. Armstrong, C. Smart,
S. Armstrong, C. Smart,
Quantitative stochastic homogenization of convex integral functionals,
Ann. Sci. \'Ec. Norm. Sup\'er. (4) 49 (2016), no. 2, 423-481.


\bibitem{Avellaneda-Lin87}
%M. Avellaneda, F. Lin,
M. Avellaneda, F. Lin,
Compactness methods in the theory of homogenization.
Comm. Pure Appl. Math. 40 (1987), no. 6, 803-847.


\bibitem{Avellaneda-Lin91}
M. Avellaneda, F. Lin,
%Avellaneda, M., Lin, F. x.
$L^p$  bounds on singular integrals in homogenization,
Comm. Pure Appl. Math. 44 (1991), no. 8-9, 897-910.

\bibitem{Bella-Fischer-Josien-Raithel-24}
P. Bella,  J. Fischer, M. Josien, C. Raithel,
Boundary layer estimates in stochastic homogenization,
arXiv:2403.12911v1 (2024).


\bibitem{Bella-Giunti-Otto17}
%P. Bella, A. Giunti, F. Otto,
P. Bella, A. Giunti, F. Otto,
Quantitative stochastic homogenization:
local control of homogenization error through corrector. Mathematics and materials, 301-327,
IAS/Park City Math. Ser., 23, Amer. Math. Soc., Providence, RI,
2017.

\bibitem{Blanc-Josien-LeBris-20}
X. Blanc, M. Josien, C. Le Bris,
Precised approximations in elliptic homogenization beyond the periodic setting. Asymptot. Anal. 116 (2020), no. 2, 93-137.


\bibitem{Clozeau-Josien-Otto-Xu}
N. Clozeau, M. Josien, F. Otto, Q. Xu,
%Clozeau, N., Josien, M., Otto, F.,  Xu, Q.:
Bias in the representative volume element method:
periodize the ensemble instead of its realization.
Found. Comput. Math. 24 (2024), no. 4, 1305–1387.

\bibitem{Chua-92}
S.-K. Chua,
Extension theorems on weighted Sobolev spaces. Indiana Univ. Math. J. 41 (1992), no. 4, 1027-1076.

\bibitem{Conlon-Giunti-Otto17}
%J. Conlon, A. Giunti, F. Otto,
J. Conlon, A. Giunti, F. Otto, Green's function for elliptic systems: existence and Delmotte-Deuschel bounds. Calc. Var. Partial Differential Equations 56 (2017), no. 6, Paper No. 163, 51 pp.

\bibitem{E-M-Zhang05}
%W. E, P. Ming, P. Zhang,
W. E, P. Ming, P. Zhang,
Analysis of the heterogeneous multiscale method for elliptic homogenization problems. J. Amer. Math. Soc. 18 (2005), no. 1, 121-156.


%\bibitem{Egert-15}
%M. Egert, R. Haller-Dintelmann, J. Rehberg,
%Hardy's inequality for functions vanishing on a part of the boundary. Potential Anal. 43 (2015), no. 1, 49-78.

\bibitem{Dong-Kim09}
%H. Dong, S. Kim,
H. Dong, S. Kim,
Green's matrices of second order elliptic systems with measurable coefficients in two dimensional domains.
Trans. Amer. Math. Soc. 361 (2009), no. 6, 3303-3323.


\bibitem{Fischer-Raithel17}
%J. Fischer, C. Raithel,
J. Fischer, C. Raithel,
Liouville principles and a large-scale regularity theory for random elliptic operators on the half-space. SIAM J. Math. Anal. 49 (2017), no. 1, 82-114.



\bibitem{Gerard-Masmoudi12} %Gérard-Varet, David; Masmoudi, Nader
%D. G\'erard-Varet, N.  Masmoudi,
D. G\'erard-Varet, N.  Masmoudi,
Homogenization and boundary layers. Acta Math. 209 (2012), no.1, 133-178.


\bibitem{Giaquinta-Martinazzi12}
%M. Giaquinta, L.  Martinazzi,
M. Giaquinta, L.  Martinazzi,
An Introduction to the Regularity Theory
for Elliptic Systems, Harmonic Maps and Minimal Graphs. Second edition. Appunti.
Scuola Normale Superiore di Pisa (Nuova Serie)
[Lecture Notes. Scuola Normale Superiore di Pisa (New Series)],
11. Edizioni della Normale, Pisa, 2012.

\bibitem{Gilbarg-Trudinger-01}
D. Gilbarg, N.S. Trudinger,
Elliptic partial differential equations of second order. Reprint of the 1998 edition. Classics in Mathematics. Springer-Verlag, Berlin, 2001.


\bibitem{Giunti-Otto22}
%A. Giunti, F. Otto,
A. Giunti, F. Otto,
On the existence of the Green function for elliptic systems in divergence form. Manuscripta Math. 167 (2022), no. 1-2, 385-402.

\bibitem{Gloria-Otto11}
A. Gloria, F. Otto, An optimal variance estimate in stochastic homogenization of discrete elliptic equations. Ann. Probab. 39 (2011), no. 3, 779-856.

%\bibitem{Gloria-Otto15}
%%A. Gloria, F. Otto,
%A. Gloria, F. Otto,
%The corrector in stochastic homogenization: optimal rates, stochastic integrability, and fluctuations, arXiv:1510.08290v3 (2015).

\bibitem{Gloria-Neukamm-Otto15}
%A. Gloria, S. Neukamm, F. Otto,
A. Gloria, S. Neukamm, F. Otto,
Quantification of ergodicity in stochastic homogenization: optimal bounds via spectral gap on Glauber dynamics, Invent. Math. 199 (2015), no. 2, 455-515.


\bibitem{Gloria-Neukamm-Otto20}
%A. Gloria, S. Neukamm, F. Otto,
A. Gloria, S. Neukamm, F. Otto,
A regularity theory for random elliptic operators.
Milan J. Math. 88 (2020), no. 1, 99-170.

\bibitem{Gloria-Neukamm-Otto21}
%Gloria, Antoine; Neukamm, Stefan; Otto, Felix
%A. Gloria, S. Neukamm, F. Otto,
A. Gloria, S. Neukamm, F. Otto,
Quantitative estimates in stochastic homogenization for correlated coefficient fields. Anal. PDE 14 (2021), no. 8, 2497-2537.

%\bibitem{Gloria-Otto14}
%Gloria, A., Otto, F.:
%The corrector in stochastic homogenization: the corrector in
%stochastic homogenization: optimal rates, stochastic integrability and
%fluctuations, arXiv:1510.08290v3 (2015)



\bibitem{Hofmann-Kim07}
S. Hofmann, S. Kim,  The Green function estimates for strongly elliptic systems of second order, Manuscripta Math. 124
(2007) 139-172.




\bibitem{Josien-Otto22}
M. Josien, F. Otto, The annealed Calder\'on-Zygmund estimate as convenient tool in quantitative stochastic homogenization. J. Funct. Anal. 283 (2022), no. 7, Paper No. 109594, 74 pp.

\bibitem{Jikov-Kozlov-Oleinik94}
V. Jikov, S. Kozlov, O. Oleinik,
 Homogenization of differential operators and integral functionals. Translated from the Russian by G. A. Yosifian. Springer-Verlag, Berlin, 1994.

\bibitem{Kenig-Lin-Shen14}
%C. Kenig, F. Lin, Z. Shen,
C. Kenig, F. Lin, Z. Shen,
Periodic homogenization of Green and Neumann functions,
Comm. Pure Appl. Math. 67 (2014), no. 8, 1219–1262.

\bibitem{Lehrback-14}
Lehrbäck, J.: Weighted Hardy inequalities beyond Lipschitz domains. Proc. Am. Math. Soc. 142 (2014), no. 5, 1705–1715.


\bibitem{Lu-Otto21}
%J. Lu, F. Otto,
J. Lu, F. Otto,
Optimal artificial boundary condition for random elliptic media. Found. Comput. Math. 21 (2021), no. 6, 1643-1702.

\bibitem{Lu-Otto-Wang21}
J. Lu, F. Otto, L. Wang,
Optimal artificial boundary conditions based on second-order correctors for three dimensional random elliptic media. Comm. Partial Differential Equations 49 (2024), no. 7-8, 609-670.


\bibitem{Shen-Zhuge18}
Z. Shen, J. Zhuge,
Boundary layers in periodic homogenization of Neumann problems,
Comm. Pure Appl. Math. 71 (2018), no. 11, 2163-2219.

\bibitem{Shen16}
Z. Shen,
Boundary estimates in elliptic homogenization, Anal. PDE 10 (2017), 653-694.

\bibitem{Shen18}
Z. Shen,
Periodic Homogenization of Elliptic Systems.
Operator Theory: Advances and Applications, 269.
Advances in Partial Differential Equations (Basel). Birkhäuser/Springer, Cham, 2018.

\bibitem{Taylor-Kim-Brow13}
%J. Taylor, S. Kim, R. Brown,
J. Taylor, S. Kim, R. Brown,
The Green function for elliptic systems in two dimensions, Comm. Partial Differential Equations 38 (2013), no. 9, 1574-1600.


%\bibitem{Torquato02}
%S. Torquato, Random heterogeneous materials. Microstructure and macroscopic
%properties, volume 16 of Interdisciplinary Applied Mathematics. Springer-Verlag, New York (2002).

\bibitem{Wang-Xu22}
L. Wang, Q. Xu,
Calder\'on-Zygmund estimates for stochastic elliptic systems on bounded Lipschitz domains. J. Differential Equations 432 (2025), Paper No. 113200, 61 pp.



\bibitem{Wang-Xu23-1}
L. Wang, Q. Xu,
Annealed Calder\'on-Zygmund
estimates for  elliptic operator with random coefficients on $C^{1}$ domains. arXiv:2405.19102v1 (2024).


%\bibitem{Wang-Xu23-2}
%L. Wang, Q. Xu,
%Quenched estimates on nonhomogeneous elliptic operators with random coefficients. (In preparation).

\bibitem{Wang-Zhang23}
W. Wang, T. Zhang,
Homogenization theory of elliptic system with lower order terms for dimension two. Commun. Pure Appl. Anal. 22 (2023), no. 3, 787-824.

\bibitem{Xu16}
Q. Xu,
Uniform regularity estimates in homogenization theory of elliptic system with lower order terms. J. Math. Anal. Appl. 438 (2016), no. 2, 1066-1107.

\end{thebibliography}

\appendix

\section{Appendix}

\setcounter{lemma}{0}
\setcounter{equation}{0}

\renewcommand\thelemma{A.\arabic{lemma}}
\renewcommand\theequation{A.\arabic{equation}}

\begin{lemma}[scaling-invariant estimates]\label{lemma:8.1}
Let $\Omega\subset\mathbb{R}^d$ (with $d\geq 2$) be a bounded uniform $C^{1,\tau}$  domain with $\tau\in(0,1]$. Assume $v$ is a weak solution to $-\nabla\cdot \bar{a}\nabla v = \nabla\cdot f$ in $2D$ and $v=0$ on $2\Delta$
with $x_B\in\partial\Omega$. Then, we have the scaling-invariant estimate
\begin{subequations}\label{}
\begin{align}
& \|\nabla v\|_{L^\infty(D)}
 \lesssim_{\lambda,d,m_0,\tau}\big(R_0^{\tau} M_0\big)^{\frac{1}{\tau}(d+1)} \dashint_{2D}|\nabla v|
 + \big(R_0^{\tau} M_0\big)^{\frac{1}{\tau}-1}r_B^{\tau}\big[f\big]_{C^{0,\tau}(2D)}
 +\big(R_0^{\tau} M_0\big)^{\frac{1}{\tau}}\|f\|_{L^{\infty}(2D)};\label{pri:8.1}\\
&  r_{B}^{\tau}[\nabla v]_{C^{0,\tau}(D)}
 \lesssim_{\lambda,d,m_0,\tau}\big(R_0^{\tau} M_0\big)^{\frac{1}{\tau}(d+1)+1} \dashint_{2D}|\nabla v|
+ \big(R_0^{\tau} M_0\big)^{\frac{1}{\tau}}r_B^{\tau}\big[f\big]_{C^{0,\tau}(2D)}
 +\big(R_0^{\tau} M_0\big)^{\frac{1}{\tau}+1}\|f\|_{L^{\infty}(2D)}.\label{pri:8.1*}
\end{align}
\end{subequations}
Moreover, let $\bar{G}(x,\cdot)$ be Green's function associated with the
elliptic operator $-\nabla\cdot\bar{a}\nabla$. Then, there holds the decay estimate
\begin{equation}\label{pri:8.2}
 |\nabla_x\nabla_y \bar{G}(x,y)|
 \lesssim_{\lambda,d,R_0^{\tau}M_0,\tau}|x-y|^{-d}
 \qquad \forall x,y\in\Omega.
\end{equation}
\end{lemma}

\begin{proof}
A similar result of $\eqref{pri:8.1}$ and $\eqref{pri:8.1*}$ may be found in \cite[Lemma 6.5]{Gilbarg-Trudinger-01}, and note the remark on page 98
regarding the dependence of the estimated constant.
Based on the estimate $\eqref{pri:8.1}$ and
the argument presented in $\eqref{pri:2.5-d}$, we can similarly derive estimate $\eqref{pri:8.2}$, and therefore we do not repeat the demonstration here. Instead, we will provide a proof of $\eqref{pri:8.1}$ for the reader's convenience. Let $B_1(x)$ be such that $x\in\partial\Omega$ or $B_1(x)\subset D$. There holds the interior estimate:
\begin{equation}\label{f:8.1}
 \|\nabla v\|_{C^{0,\tau}(B_{1/2}(x))}
 \lesssim_{\lambda,d} \dashint_{B_{1}(x)}|\nabla v|
 +\big[f\big]_{C^{0,\tau}(B_1(x))}
 +\|f\|_{L^{\infty}(B_1(x))},
\end{equation}
and the boundary estimate:
\begin{equation}\label{f:8.2}
\begin{aligned}
 \|\nabla v\|_{L^{\infty}(D_{1}(x))}
 +
\|\nabla\psi\|_{C^{0,\tau}(\mathbb{R}^{d-1})}^{-1}
& [\nabla v]_{C^{0,\tau}(D_{1}(x))}
\lesssim_{\lambda,d,\tau,m_0}[\nabla\psi]^{\frac{1}{\tau}(d+1)
 }_{C^{0,\tau}(\mathbb{R}^{d-1})}
 \dashint_{D_{2}(x)}|\nabla v| \\
& + [\nabla\psi]_{C^{0,\tau}(\mathbb{R}^{d-1})}^{\frac{1}{\tau}-1}
\big[f\big]_{C^{0,\tau}(D_2(x))}
 +[\nabla\psi]_{C^{0,\tau}(\mathbb{R}^{d-1})}^{\frac{1}{\tau}}\|f\|_{L^{\infty}(D_2(x))},
\end{aligned}
\end{equation}
where we recall that $\psi$ is the local boundary function introduced in
\eqref{boundary-1}, and the dependence of the constant on $[\nabla\psi]_{C^{0,\tau}(\mathbb{R}^{d-1})}$ in the right-hand side above can be referred to
\cite[Lemma A.3]{Josien-Otto22}. To see the dependence on $m_0$, set
$x=\Psi(x')$ with $y=\Psi(y')$ and $x,y\in D_1$, where $\Psi$ is a $C^{1,\tau}$ diffeomorphism (determined by $\psi$). There holds
$\frac{1}{m_0}|x'-y'|\leq |x-y|\leq m_0|x'-y'|$ for any $x,y\in D_1$.
Moreover, let $u(x)=u(\Psi(x'))=:\tilde{u}(x')$, and we can derive that
$\frac{1}{m_0}\|\nabla u\|_{L^{\infty}(D_1)}
\leq \|\nabla \tilde{u}\|_{L^\infty(D_1')}\leq m_0 \|\nabla u\|_{L^{\infty}(D_1)}$, where $D_1':=\Psi^{-1}(D_1)$ and
$\Delta_1' :=\Psi^{-1}(\Delta_1)$ is correspondingly the flat boundary. This leads to the
dependence on $m_0$ in $\eqref{f:8.2}$, and
the appearance of the product factor $\|\nabla\psi\|_{C^{0,\tau}(\mathbb{R}^{d-1})}^{-1}$ in the second term on the left-hand side of $\eqref{f:8.2}$.

The proof is divided into two cases: (1) $0<r_B\leq 1$;
(2) $1<r_B\leq R_0$.

\textbf{Step 1.} Arguments for the case of $0<r_B\leq 1$. The idea is a rescaling argument. By translation it is fine to assume $x_B=0$.
Let $\tilde{v}(y) = \frac{1}{r_B}v(r_B y)$, and $x=r_By$. Hence, there holds $\nabla_y\cdot \bar{a}\nabla_y \tilde{v} = 0$ in $2D_1$ and $v=0$ on $2\Delta_1$. Let $\tilde{\psi}(y_1,\cdots,y_{d-1}):=\frac{1}{r_B}\psi(r_By_1,\cdots,r_By_{d-1})$.
By applying the estimate $\eqref{f:8.2}$ to $\tilde{v}$, one can derive that
\begin{equation*}
\begin{aligned}
\|\nabla v\|_{L^\infty(D)}
=\|\nabla \tilde{v}\|_{L^\infty(D_{1}(0))}
&\lesssim_{\lambda,d,m_0\tau}[\nabla\tilde{\psi}]^{\frac{1}{\tau}(d+1)
 }_{C^{0,\tau}(\mathbb{R}^{d-1})}
 \dashint_{D_{2}(0)}|\nabla \tilde{v}| \\
&\lesssim
\big(r_B^{\tau}[\nabla\psi]_{C^{0,\tau}(\mathbb{R}^{d-1})}\big)^{\frac{1}{\tau}(d+1)
 }
 \dashint_{D}|\nabla v|
 \lesssim M_0^{\frac{1}{\tau}(d+1)
 }
 \dashint_{D}|\nabla v|,
\end{aligned}
\end{equation*}
which implies the stated estimate $\eqref{pri:8.1}$ in the case of $0<r_B\leq 1$.

\textbf{Step 2.} Arguments for the case of $1<r_B\leq R_0$.
The idea is a covering argument. Collect such balls that make estimates $\eqref{f:8.1}$ and $\eqref{f:8.2}$ hold
to form a covering of $D$, i.e.,
$D\subset\cup_{i=1}^N U_1(x_i)$. Then, we have
\begin{equation*}
\begin{aligned}
 \|\nabla v\|_{L^\infty(D)}
& \leq \max_{1\leq i\leq N}\|\nabla v\|_{L^\infty(U_1(x_i))}\\
&\lesssim_{\lambda,d,m_0,\tau}
\max_{1\leq i\leq N}[\nabla\psi_i]^{\frac{1}{\tau}(d+1)
 }_{C^{0,\tau}(\mathbb{R}^{d-1})}
 \int_{2D}|\nabla v|
\lesssim^{\eqref{boundary-1}} \big(R_0^{\tau}M_0\big)^{\frac{1}{\tau}(d+1)}
\dashint_{2D}|\nabla v|.
\end{aligned}
\end{equation*}
This together with the case (1) gives the desired estimate $\eqref{pri:8.1}$. Finally, we point out that $R_0^{\tau}M_0$ is an invariant quantity under the scaling transformation. To see so, let $\delta>0$, $x\in \Omega$, and $y=x/\delta$. Therefore, it concludes
$y\in\hat{\Omega}:=\Omega/\delta$ and $\hat{R}_0:=R_0/\delta$ represents the diameter of $\hat{\Omega}$. Let $\hat{\psi}_{\delta}(y_1,\cdots,y_{d-1}):=\delta^{-1}\psi(\delta y_1,
\cdots,\delta y_{d-1})$ denote the boundary function associated with
$\hat{\Delta}:=\Delta/\delta$, and $\hat{M}_0$ on $\hat{\Omega}$
is the counterpart of $M_0$ shown in $\eqref{boundary-1}$. Therefore,
from the relationship $[\nabla\hat{\psi}_{\delta}]_{C^{0,\tau}(\hat{\Delta})}=\delta^{\tau}
[\nabla\psi]_{C^{0,\tau}(\Delta)}$, it follows that $\hat{M}_0 = \delta^{\tau} M_0$, and one further observe that $R_0^{\tau}M_0
= \hat{R}_0^{\tau}\hat{M}_0$ (which is the scaling-invariant property). This completes the whole proof.
\end{proof}

\begin{lemma}[higher-order counterparts]\label{lemma:8.2}
Let $\Omega\subset\mathbb{R}^d$ (with $d\geq 2$) be a bounded $C^{k,\tau}$ domain with $k\geq 2$ and $\tau\in(0,1]$. Assume $v$ is a weak solution to $-\nabla\cdot \bar{a}\nabla v = \nabla\cdot f$ in $2D$ and $v=0$ on $2\Delta$
with $x_B\in\partial\Omega$. Then, we have the scaling-invariant estimate
\begin{equation}\label{pri:8.3}
r_B^k\|\nabla^k v\|_{L^\infty(D)}
+ r_B^{k+\tau}[\nabla^k v]_{C^{0,\tau}(D)}
 \lesssim
 \dashint_{2D}|v|
 + \sum_{n=0}^{k-1}\Big\{r_B^{n+\tau}\big[f\big]_{C^{n,\tau}(2D)}
 +r_B^{n}\|f\|_{C^{n}(2D)}\Big\},
\end{equation}
where the multiplicative constant depends on
$\lambda$, $d$, $m_0$, $R_0^{\tau}M_0$, $R_0^{k-1+\tau}M_0$, $\tau$, and $k$ at most.
Moreover, let $\bar{G}(x,\cdot)$ be Green's function associated with the
elliptic operator $-\nabla\cdot\bar{a}\nabla$. Then, there hold the decay estimates:
\begin{subequations}
\begin{align}
&  |\nabla_x^k\nabla_y \bar{G}(x,y)|
 +  |\nabla_x\nabla_y^k \bar{G}(x,y)|
 \lesssim_{\lambda,d,\tau,k,R_0^{\tau}M_0,R_0^{k-1+\tau}M_0}|x-y|^{1-k-d}; \label{pri:8.4}\\
& |\nabla_x^k\nabla_y^2 \bar{G}(x,y)|
 +  |\nabla_x^2\nabla_y^k \bar{G}(x,y)|
 \lesssim_{\lambda,d,\tau,k,R_0^{\tau}M_0,R_0^{k-1+\tau}M_0}|x-y|^{-k-d}, \qquad \forall x,y\in\Omega. \label{pri:8.5}
\end{align}
\end{subequations}
\end{lemma}

\begin{proof}
We carry out an induction argument to demonstrate the stated estimate $\eqref{pri:8.3}$, while the estimates $\eqref{pri:8.4}$
and $\eqref{pri:8.5}$ are merely a corollary of $\eqref{pri:8.3}$ and therefore is left to the reader
(referring to the proof of $\eqref{pri:2.5-d}$ for hints). The proof on
$\eqref{pri:8.3}$ is divided into two steps.

\textbf{Step 1.} Show the estimate $\eqref{pri:8.3}$ in the case of $k=2$.
Set $x'=\Phi(x)$ with $x\in D_1$ and therefore $x=\Psi(x')$, where $\Phi$ is a $C^{2,\tau}$ diffeomorphism (determined by $\psi$) straightening out the boundary, and $\Psi=\Phi^{-1}$. Let $v(x)=v(\Psi(x')):=\tilde{v}(x')$,
$D_1'=\Phi(D_1)$ with $\Delta_1' = \Phi(\Delta_1)$ being the flat boundary.
For the ease of the statement, we introduce $\tilde{\nabla}:=\nabla_{x'}$
and $\tilde{\partial}_i := \partial_{x'_i}$. We have
\begin{equation}\label{pde:8.1}
 -\tilde{\nabla} \cdot A\tilde{\nabla} \tilde{v} = \tilde{\nabla} \cdot\tilde{f}
 \quad\text{in}\quad D_1';
 \qquad \tilde{v} = 0 \quad\text{on}\quad\Delta_1',
\end{equation}
where $\tilde{f}(x'):=f(\Psi(x'))$, and $A:=(\nabla\Phi) \bar{a}(\nabla\Phi)^{T}$ is the new elliptic type coefficients satisfying $\frac{\lambda}{m_0^2}|\xi|^2\leq \xi\cdot A\xi\leq\frac{m_0^2}{\lambda}$ for any $\xi\in\mathbb{R}^d$.
Then, we can take the derivative of $\eqref{pde:8.1}$ along the tangent direction. Let $u_n:=\tilde{\partial}_n\tilde{v}$ and $n = 1, ..., d-1$,
and there holds
\begin{equation}\label{pde:8.2}
 -\tilde{\nabla} \cdot A\tilde{\nabla}  u_n = \tilde{\nabla} \cdot\big(\underbrace{\tilde{\partial}_{n}\tilde{f} + \tilde{\partial}_{n}A\nabla\tilde{v}}_{\tilde{f}_1}\big)
 \quad\text{in}\quad D_1';
 \qquad u_n = 0 \quad\text{on}\quad\Delta_1'.
\end{equation}
Let $C_{\psi}^1:=[\nabla\psi]_{C^{0,\tau}}$ represent the first-order scaling-invariant candidate, and the corresponding scaling-invariant
quantity is given by $C_{\psi}^1(R_0):=
R_0^{\tau}[\nabla\psi]_{C^{0,\tau}}$, where we adopt the notation $[\nabla\psi]_{C^{n,\tau}}$ as
the abbreviation of $[\nabla\psi]_{C^{n,\tau}(\mathbb{R}^{d-1})}$
(for $n=0,\cdots,k-1$) throughout the proof.
Then, using the estimate $\eqref{f:8.2}$, we have
\begin{equation}\label{f:8.3}
\begin{aligned}
 \|\nabla u_n\|_{C^{0,\tau}(D_{1}'(x'))}
& \lesssim_{\lambda,d,\tau,m_0,C_{\psi}^1}
 \dashint_{D_{2}'(x')}|\tilde{v}|
 + \big[\tilde{f}_1\big]_{C^{0,\tau}(D_{3/2}'(x'))}
 +\|\tilde{f}_1\|_{L^{\infty}(D_{3/2}'(x'))}\\
&\leq  \dashint_{D_{2}'(x')}|\tilde{v}|
 + \|\tilde{f}\|_{C^{1,\tau}(D_{3/2}'(x'))}
 +\|\nabla\psi\|_{C^{1,\tau}}
 \|\nabla\tilde{v}\|_{C^{0,\tau}(D_{3/2}'(x'))}\\
&\lesssim^{\eqref{f:8.2}}_{\lambda,d,\tau,m_0,C_{\psi}^1}[\nabla\psi]_{C^{1,\tau}}
\bigg\{\dashint_{D_{2}'(x')}|\tilde{v}|
+ \|\tilde{f}\|_{C^{0,\tau}(D_{2}'(x'))}\bigg\}
 + \big[\tilde{f}\big]_{C^{1,\tau}(D_{2}'(x'))},
\end{aligned}
\end{equation}
where we employ the interpolation inequality in the last inequality.
By the equation $\eqref{pde:8.1}$, we have
$|\tilde{\partial}^2_d\tilde{v}|\lesssim_{d,m_0}|\tilde{\nabla} \tilde{f}|+|\tilde{\nabla} A||\nabla\tilde{v}|+|\nabla u|$, where we note that $u=(u_1,\cdots,u_{n-1})$. This together with $\eqref{f:8.3}$ leads to
\begin{equation*}
\|\nabla^2 \tilde{v}\|_{C^{0,\tau}(D_{1}'(x'))}
\lesssim_{\lambda,d,\tau,m_0,C_{\psi}^1}[\nabla\psi]_{C^{1,\tau}}
\bigg\{\dashint_{D_{2}'(x')}|\tilde{v}|
+ \|\tilde{f}\|_{C^{0,\tau}(D_{2}'(x'))}\bigg\}
 + \big[\tilde{f}\big]_{C^{1,\tau}(D_{2}'(x'))}
\end{equation*}
and we consequently arrive at
\begin{equation}\label{f:8.4}
\|\nabla^2 v\|_{C^{0,\tau}(D_{1}(x))}
\lesssim_{\lambda,d,\tau,m_0,C_{\psi}^1} [\nabla\psi]_{C^{1,\tau}}^{\frac{d+3}{1+\tau}}
\bigg\{\dashint_{D_{2}(x)}|v|
+ \|f\|_{C^{1,\tau}(D_{2}(x))}\bigg\},
\end{equation}
where we employ a similar idea that given for \cite[Lemma A.3]{Josien-Otto22} to find out the well dependent power on $\|\nabla\psi\|_{C^{1,\tau}}$ in the above estimate. Let
$C_{\psi}^2:=[\nabla\psi]_{C^{1,\tau}}$ and
$C_{\psi}^2(R_0):=R_0^{1+\tau}[\nabla\psi]_{C^{1,\tau}}$.
Repeating the arguments shown in Lemma $\ref{lemma:8.1}$,
the estimate $\eqref{f:8.4}$ together with the following interior estimate
\begin{equation*}
 \|\nabla^2 v\|_{C^{0,\tau}(B_{1/2}(x))}
 \lesssim_{\lambda,d} \dashint_{B_{1}(x)}|v|
 +\big\|f\big\|_{C^{1,\tau}(B_1(x))}
\end{equation*}
implies the following scaling-invariant estimate
\begin{equation*}
r_B^2\|\nabla^2 v\|_{L^\infty(D)}
+ r_B^{2+\tau}[\nabla^2 v]_{C^{0,\tau}(D)}
 \lesssim_{\lambda,d,\tau,m_0,C_{\psi}^1(R_0),C_{\psi}^2(R_0)}
 \dashint_{2D}|v| +
 + \sum_{n=0}^{1}\Big\{r_B^{n+\tau}\big[f\big]_{C^{n,\tau}(2D)}
 +r_B^{n}\|f\|_{C^{n}(2D)}\Big\},
\end{equation*}
which is the desired estimate $\eqref{pri:8.3}$ in the case of $k=2$,
where we note that $C_{\psi}^1(R_0)\leq C_{\psi}^2(R_0)\leq R_0^{1+\tau}M_0$.

\textbf{Step 2.}
Suppose that the estimate $\eqref{pri:8.3}$ holds for the  $(k-1)^{\text{th}}$ order estimates. We will verify below that the $k^{\text{th}}$ order estimate holds.
Let $u_{\alpha}:=\tilde{\partial}_{\alpha}\tilde{v}$ with
$|\alpha| = k-1$ and $\alpha=(\alpha_1,\cdots,\alpha_{k-1})\in\{1,2,\cdots,d-1\}^{k-1}$,
and we have
\begin{equation}\label{pde:8.3}
 -\tilde{\nabla} \cdot A\tilde{\nabla}  u_\alpha = \tilde{\nabla} \cdot\big(\underbrace{\tilde{\partial}_{\alpha}\tilde{f} + \sum_{j=1}^{k-1}\sum_{|\beta|=j}C_{k-1}^j
 \tilde{\partial}_{\beta}A\tilde{\partial}_{\alpha\setminus\beta}
 \nabla\tilde{v}}_{\tilde{f}_{k-1}}\big)
 \quad\text{in}\quad D_1';
 \qquad u_{\alpha} = 0 \quad\text{on}\quad\Delta_1'.
\end{equation}
Then, using the estimate $\eqref{f:8.2}$, we have
\begin{equation}
\begin{aligned}
& \|\nabla u_\alpha\|_{C^{0,\tau}(D_{1}'(x'))}
 \lesssim_{\lambda,d,\tau,m_0,C_{\psi}^1}
 \dashint_{D_{4/3}'(x')}|u_{\alpha}|
 + \|\tilde{f}_{k-1}\|_{C^{0,\tau}(D_{4/3}'(x'))}\\
&\lesssim  \dashint_{D_{4/3}'(x')}|\nabla^{k-1}\tilde{v}|
 + \big\|\nabla^{k-1}\tilde{f}\big\|_{C^{0,\tau}(D_{4/3}'(x'))}
 +\sum_{|\beta|=1}^{k-1}\|\nabla^{|\beta|}\psi\|_{C^{1,\tau}}
 \|\nabla^{|\alpha-\beta|}\tilde{v}\|_{C^{1,\tau}(D_{3/2}'(x'))}\\
&\lesssim_{\lambda,d,\tau,m_0,C_{\psi}^1,C_{\psi}^{k-1}} \|\nabla\psi\|_{C^{k-1,\tau}}
\bigg\{\dashint_{D_{2}'(x')}|\tilde{v}|
+ \|\tilde{f}\|_{C^{k-1,\tau}(D_{2}'(x'))}\bigg\},
\end{aligned}
\end{equation}
in which we note that $C_{\psi}^{k-1}:=[\nabla\psi]_{C^{k-2,\tau}}$, and we use the inductive hypothesis to obtain the last inequality in the above estimate. Moreover, we claim that
%there exists a constant\footnote{Here, it is not necessary to pursue the magnitude of this constant. Later, one can obtain the size of this constant by a similar method as that employed in
%\cite[Lemma A.3]{Josien-Otto22}.} $\kappa\geq 1$ such that
\begin{equation}\label{f:8.5}
\|\nabla^k \tilde{v}\|_{C^{0,\tau}(D_{1}'(x'))}
\lesssim_{\lambda,d,\tau,m_0,C_{\psi}^1,C_{\psi}^{k-1},k}
[\nabla\psi]_{C^{k-1,\tau}}
\bigg\{\dashint_{D_{2}'(x')}|\tilde{v}|
+ \|\tilde{f}\|_{C^{k-1,\tau}(D_{2}'(x'))}\bigg\}.
\end{equation}
To see this, let $u_{\alpha^{(1)}}:=\tilde{\partial}_{\alpha^{(1)}}\tilde{v}$
with $|\alpha^{(1)}| = k-2$ and $\alpha^{(1)}=(\alpha_1,\cdots,\alpha_{k-2})\in\{1,2,\cdots,d-1\}^{k-2}$,
there holds
\begin{equation*}
 -\tilde{\nabla} \cdot A\tilde{\nabla}  u_{\alpha^{(1)}} = \tilde{\nabla} \cdot\big(\tilde{\partial}_{\alpha^{(1)}}\tilde{f} + \sum_{j=1}^{k-2}\sum_{|\beta|=j}C_{k-2}^j
 \tilde{\partial}_{\beta}A\tilde{\partial}_{\alpha^{(1)}\setminus\beta}
 \nabla\tilde{v}\big)=:\tilde{\nabla}\cdot \tilde{f}_{k-2}
 \quad\text{in}\quad D_1'.
\end{equation*}
This gives that $|\tilde{\partial}^2_{d}\tilde{\partial}_{\alpha^{(1)}}\tilde{v}|
\lesssim_{m_0,d}|\tilde{\nabla} \tilde{f}_{k-2}|+|\tilde{\nabla} A||\nabla\tilde{\partial}_{\alpha^{(1)}}\tilde{v}|+|\nabla u_{\alpha}|$,
and we can estimate $\big|\tilde{\nabla}^2\tilde{\partial}_{\alpha^{(1)}}\tilde{v}\big|$
by the right-hand side of $\eqref{f:8.5}$. Next,
set $u_{\alpha^{(2)}}:=\tilde{\partial}_{\alpha^{(2)}}\tilde{v}$
with $|\alpha^{(2)}| = k-3$ and $\alpha^{(2)}=(\alpha_1,\cdots,\alpha_{k-3})\in\{1,2,\cdots,d-1\}^{k-3}$,
and there correspondingly holds $ -\tilde{\nabla} \cdot A\tilde{\nabla}  u_{\alpha^{(2)}} = \tilde{\nabla}\cdot \tilde{f}_{k-3}$ in $D_1'$, and therefore we have
$ -\tilde{\partial}_d\big[\tilde{\nabla} \cdot A\tilde{\nabla}  u_{\alpha^{(2)}}\big] = \tilde{\nabla}\cdot \tilde{\partial}_{d}\tilde{f}_{k-3}$. This further leads to
$|\tilde{\partial}^3_{d}\tilde{\partial}_{\alpha^{(2)}}\tilde{v}|
\lesssim_{m_0,d}|\tilde{\nabla}^2 \tilde{f}_{k-3}|
+|\tilde{\nabla}^2A||\nabla\tilde{\partial}_{\alpha^{(2)}}\tilde{v}|
+|\tilde{\nabla}A||\nabla^2\tilde{\partial}_{\alpha^{(1)}}\tilde{v}|+|\nabla u_{\alpha}|$, so we can now estimate $\big|\tilde{\nabla}^3\tilde{\partial}_{\alpha^{(2)}}\tilde{v}\big|$
by the right-hand side of $\eqref{f:8.5}$. To get the final result,
we continue to adopt the method of induction, and it is
assumed that $\big|\tilde{\nabla}^{m+1}\tilde{\partial}_{\alpha^{(m)}}\tilde{v}\big|$
with $|\alpha^{(m)}| = k-m-1$ (where  $\alpha^{(m)}=(\alpha_1,\cdots,\alpha_{k-m-1})\in\{1,2,\cdots,d-1\}^{k-m-1}$
and $m=1,\cdots, k-2$) is controlled by the right-hand side of $\eqref{f:8.5}$. From the identity $ -\tilde{\partial}_d^{k-2}\big[\tilde{\nabla} \cdot A\tilde{\nabla}  \tilde{v}\big] = \tilde{\nabla}\cdot \tilde{\partial}_{d}^{k-2}\tilde{f}$,
it follows that
$|\tilde{\partial}^{k}_{d}\tilde{v}|
\lesssim_{m_0,d}|\tilde{\nabla}^{k-1} \tilde{f}|
+\sum_{m=1}^{k-1}|\tilde{\nabla}^mA|
|\tilde{\nabla}^{k-m}\tilde{v}|
+\sum_{m=1}^{k-2}|\tilde{\nabla}^{k-2-m}A|
|\tilde{\nabla}^{m+1}\tilde{\partial}_{\alpha^{(m)}}\tilde{v}|$,
and this together with the inductive hypothesis
and the interpolation inequality
on $\tilde{\nabla}^m A$ consequently leads to
the desired estimate $\eqref{f:8.5}$. Again,
by a similar method as that employed in \cite[Lemma A.3]{Josien-Otto22},
we have
\begin{equation*}
\|\nabla^k v\|_{C^{0,\tau}(D_{1}(x))}
\lesssim_{\lambda,d,\tau,m_0,k,C_{\psi}^1,C_{\psi}^{k-1}} [\nabla\psi]_{C^{k-1,\tau}}^{\frac{k+1+d}{k-1+\tau}}
\bigg\{\dashint_{D_{2}(x)}|v|
+ \|f\|_{C^{k-1,\tau}(D_{2}(x))}\bigg\},
\end{equation*}
and this together the corresponding interior estimates
gives the desired estimate $\eqref{pri:8.3}$, where we set
$C_{\psi}^{k}:=[\nabla\psi]_{C^{k-1,\tau}}$ and
$C_{\psi}^{k}(R_0):=R_0^{k-1+\tau}[\nabla\psi]_{C^{k-1,\tau}}$.
Note that $C_{\psi}^{k-1}(R_0)\lesssim_{k,d}
C_{\psi}^{1}(R_0) + C_{\psi}^{k}(R_0)$, and therefore
the estimated constant in $\eqref{pri:8.3}$ depends only on
$\lambda$, $d$, $m_0$, $\tau$, $k$, $C_{\psi}^{1}(R_0)$, and
$C_{\psi}^{k}(R_0)$. This ends the whole proof.
\end{proof}

\end{document}